\documentclass[a4paper, 11pt]{amsart}
\usepackage[british]{babel}
\usepackage[utf8x]{inputenc}
\usepackage[T1]{fontenc}
\usepackage[
    backend=biber,
    style=alphabetic,
    maxnames=99,
    citestyle=alphabetic,
    maxalphanames=99,
    ]{biblatex}
\renewbibmacro*{volume+number+eid}{
    \printfield{volume}
    \setunit*{\addnbthinspace}
    \printfield{number}
    \setunit{\addcomma\space}
    \printfield{eid}}
  \DeclareFieldFormat[article]{number}{\mkbibparens{#1}}
\renewbibmacro{in:}{}

\usepackage[a4paper,top=3cm,bottom=3cm,left=3cm,right=3cm,marginparwidth=1.75cm]{geometry}

\usepackage{amsmath}
\numberwithin{equation}{section}
\usepackage{amsfonts}
\usepackage{bbm}
\usepackage{amssymb}
\usepackage{graphicx}
\usepackage{dsfont}
\usepackage[colorinlistoftodos]{todonotes}
\usepackage[colorlinks=true, allcolors=blue]{hyperref}
\usepackage{enumitem}
\usepackage{amsthm}
\usepackage{tikz-cd}
\usepackage{quiver}
\usepackage{mathrsfs}
\usepackage[dvipsnames]{xcolor}
\usepackage{hyperref}
\hypersetup{
    linkcolor=purple,
    urlcolor=purple,
    citecolor=purple,
    }

\usepackage{tikz}
\usetikzlibrary{knots}
\usetikzlibrary{matrix}
\usetikzlibrary{decorations.pathreplacing}
\usetikzlibrary{decorations.markings}
\usetikzlibrary{arrows}
\usetikzlibrary{calc}
\usetikzlibrary{shapes.misc}
\usetikzlibrary{fit}
\usepgflibrary{decorations.pathmorphing}
\usepgflibrary{shapes.geometric}
\usepackage{yhmath}
\usepackage{Cobordism}

\newcommand{\pa}[1]{\left( #1 \right)}

\newcommand{\set}[1]{\left\{ #1 \right\}}
\newcommand{\ol}[1]{\overline{#1}}

\newcommand{\Bord}[1]{\mathbf{Bord}_{1,2,3}^{\mathrm{#1}}}
\newcommand{\bord}[1]{\mathbf{Bord}_{2,3}^{\mathrm{#1}}}
\newcommand{\vect}{\mathbf{Vect}_k}
\newcommand{\Vect}{2\mathbf{Vect}_k}
\newcommand{\KV}{2\mathbf{Vect}_k^{\mathrm{KV}}}
\newcommand{\hatC}{\widehat{\mathbf{C}}}
\newcommand{\hatD}{\widehat{\mathbf{D}}}
\newcommand{\parin}{\partial_{\mathrm{in}}}
\newcommand{\parout}{\partial_{\mathrm{out}}}
\newcommand{\parIn}{\partial_{\mathrm{in}}'}
\newcommand{\parOut}{\partial_{\mathrm{out}}'}
\newcommand{\twovect}[2]{\begin{pmatrix}#1 \\ #2 \end{pmatrix}}
\newcommand{\ssig}{_{\mathrm{sig}}}
\newcommand{\ssigh}{_{\mathrm{sig/2}}}

\newcommand{\textcolour}[2]{\textcolor{#1}{#2}}

\newcommand\II{\ensuremath{\mathrm{II}}}
\newcommand\III{\ensuremath{\mathrm{III}}}

\newcommand\fixboundingbox{\path [use as bounding box, draw=none] (current bounding box.north west) rectangle (current bounding box.south east);}
\newcommand\selectpart[2][\selectcolour]{\fixboundingbox\begin{pgfonlayer}{selectionbox}\node [draw=red, fit=#2, inner sep=0.8*\cobordismlinewidth, #1, line width=\cobordismlinewidth] {};\end{pgfonlayer}}

\include{arrows}

\newenvironment{tz}[1][]{\begin{tikzpicture}[baseline={([yshift=-.8ex]current bounding box.center)},#1]}{\end{tikzpicture}}

\usepackage{etoolbox}

\makeatletter
\def\calign@preamble{%
   &\hfil\strut@
    \setboxz@h{\@lign$\m@th\displaystyle{##}$}%
    \ifmeasuring@\savefieldlength@\fi
    \set@field
    \hfil
    \tabskip\alignsep@
}
\let\cmeasure@\measure@
\patchcmd\cmeasure@{\divide\@tempcntb\tw@}{}{}{}
\patchcmd\cmeasure@{\divide\@tempcntb\tw@}{}{}{}
\patchcmd\cmeasure@{\ifodd\maxfields@
  \global\advance\maxfields@\@ne
  \fi}{}{}{}    
\newenvironment{calign}
{%
  \let\align@preamble\calign@preamble
  \let\measure@\cmeasure@
  \align
}
{%
  \endalign
}  
\makeatother

\def\smallbordisms{\scalecobordisms{0.5}\setlength\obscurewidth{0pt}}

\DeclareMathOperator{\Hom}{Hom}

\DeclareMathOperator{\id}{id}
\DeclareMathOperator{\Span}{span}
\DeclareMathOperator{\Ext}{Ext}
\DeclareMathOperator{\tr}{tr}

\newtheorem{thm}{Theorem}[section]
\newtheorem{lem}[thm]{Lemma}
\newtheorem{prop}[thm]{Proposition}
\newtheorem{cor}[thm]{Corollary}
\newtheorem{conj}[thm]{Conjecture}

\newtheorem{thmA}{Theorem}

\newtheorem{propA}[thmA]{Proposition}

\theoremstyle{definition}
\newtheorem{defn}[thm]{Definition}
\newtheorem{eg}[thm]{Example}
\newtheorem{constr}[thm]{Construction}

\theoremstyle{remark}
\newtheorem*{rk}{Remark}

\newcommand*{\newproofname}{Proof}
\newenvironment{proof*}[1][\newproofname]{\begin{proof}[#1]}{\end{proof}}

\title[The half-signature extension of $\Bord{or}$ and its representations]{The half-signature extension of the 3D bordism bicategory and its representations}
\author{Glen Lim}
\address{Mathematical Institute, University of Oxford}
\email{glen.lim@maths.ox.ac.uk}

\begin{document}

\begin{abstract}
    We construct the half-signature bordism bicategory $\Bord{sig/2}$ as a central extension of the 3-dimensional oriented bordism bicategory by $\mathbb{Z}$. This is an index $2$ subextension of the signature bordism bicategory. Following methods of Bartlett--Douglas--Schommer-Pries--Vicary, we construct a presentation of this bicategory and hence show that its linear representations are classified exactly by modular tensor categories.
\end{abstract}

\maketitle

\tableofcontents

\section{Introduction}

A major question in the study of topological quantum field theories (TQFTs) is their classification. Following axiomatic definitions of TQFTs due to Atiyah \cite{Atiyah-TQFT} and Segal \cite{Segal} and of extended TQFTs due to Lawrence \cite{Lawrence}, this is the classification of symmetric monoidal functors $$\mathbf{Bord}_{d-n,\ldots,d} \rightarrow n\mathbf{Vect}_k$$ from some bordism $n$-category of $(d-n)$-dimensional manifolds, $(d-n+1)$-dimensional bordisms, etc. (possibly with some extra topological structure) to some $n$-categorical analogue of the category of vector spaces over some field $k$. Classifications of (extended) TQFTs are then results of the following form: equivalence classes of symmetric monoidal functors $\mathbf{Bord}_{d-n,\ldots,d} \rightarrow n\mathbf{Vect}_k$ are in bijection with equivalence classes of some algebraic object.

In lower dimensions, various classification results of this flavour are known. When $n=1$, such classification results are known for $d=1,2,3$. 1-dimensional oriented TQFTs are in bijection with finite-dimensional vector spaces. 2-dimensional oriented TQFTs are in bijection with commutative Frobenius algebras \cite{Abrams,Kock}. 3-dimensional oriented TQFTs are in bijection with an algebraic structure known as $J$-algebras \cite{Juhasz}. In the $n=2,d=2$ case, once-extended 2-dimensional oriented TQFTs are in bijection with separable symmetric Frobenius algebras (up to a notion of Morita equivalence), and once-extended 2-dimensional unoriented TQFTs are in bijection with stellar separable Frobenius algebras (up to a notion of Morita equivalence) \cite{csp-phd}.

In the $n=1,d=3$ case, many examples of 3-dimensional oriented TQFTs are projective: the compatibility with composition only holds up to multiplication by (invertible) scalars, known as the \emph{anomaly}. In order to include these examples, the common approach is to replace the bordism category $\bord{or}$ with a suitable \emph{central extension} by $\mathbb{Z}$, equipping morphisms with extra data given by an element of a certain $\mathbb{Z}$-torsor. Examples include the $p_1$ bordism category $\mathbf{Bord}_{2,3}^{p_1}$ considered by Blanchet, Habegger, Masbaum and Vogel \cite{BHMV}, whose morphisms consist of bordisms with a $p_1$ structure; and Walker's \cite{Walker} signature bordism category $\bord{sig}$ whose morphisms are bordisms with the extra data of the signature of a bounding 4-manifold.

In their classification of once-extended 3-dimensional oriented TQFTs (i.e. the $n=2,d=3$ case), Bartlett, Douglas, Schommer-Pries and Vicary \cite{BDSV4} thus also classified symmetric monoidal functors out of the bicategorical analogues of the signature and $p_1$ bordism categories. In doing so, they found that simple representations of the oriented bordism bicategory $\Bord{or}$ and its two central extensions $\Bord{sig}$ and $\mathbf{Bord}_{1,2,3}^{p_1}$ are, respectively, classified by modular tensor categories with an extra condition (that the anomaly is trivial), or with extra data (a choice of square root or $6^\mathrm{th}$ root of the anomaly).

In this paper, we construct another symmetric monoidal extension of $\Bord{or}$ by $\mathbb{Z}$, the \emph{half-signature bordism bicategory} $\Bord{sig/2}$, whose simple representations are exactly classified by modular tensor categories. Our main results may be summarised as follows. First, we identify the half-signature bordism bicategory $\Bord{sig/2}$ as an index $2$ symmetric monoidal subbicategory of $\Bord{sig}$.
\begin{thmA}[{Construction \ref{constr:sig/2-out} \& Lemma \ref{lem:sig/2-mon}}] \label{thmA:exists-bordsig/2}
    There exists an index $2$ symmetric monoidal subextension of $\Bord{sig}$.
\end{thmA}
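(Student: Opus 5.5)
The plan is to cut out, inside each $\mathbb{Z}$-torsor of signature labels, a $2\mathbb{Z}$-subtorsor by a parity condition. Recall that in $\Bord{sig}$ a 2-morphism is a pair $(W,n)$, where $W$ is an oriented 3-dimensional bordism with corners and $n\in\mathbb{Z}$ is a signature label. This label should be thought of as $\sigma(X)$ for a 4-manifold $X$ with $\partial X \supset W$, taken relative to Lagrangian data on the boundary. Vertical and horizontal composition add the labels and add a Maslov/Wall non-additivity correction.

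I will look for a function $\varepsilon(W)\in\mathbb{Z}/2$, determined only by the topology of $W$ and its boundary data, such that the following two conditions hold.
\begin{enumerate}
\item[(i)] $\varepsilon(W'\circ W)\equiv \varepsilon(W)+\varepsilon(W')+\mu$ modulo $2$, where $\mu$ is the Maslov correction used in composition in $\Bord{sig}$. I need the analogous identity for horizontal gluing along 1-morphisms as well.
\item[(ii)] $\varepsilon$ is additive under disjoint union and is zero on identities and on all structural 2-morphisms (unitors, associators, braidings, and the coherence data of the symmetric monoidal structure).
\end{enumerate}
Given such an $\varepsilon$, I define the half-signature 2-morphisms to be the pairs $(W,n)$ with $n\equiv \varepsilon(W)\pmod 2$. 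This is Construction \ref{constr:sig/2-out}. Condition (i) makes this class closed under both compositions. Each fibre over $W$ is then a coset of $2\mathbb{Z}$ in the $\mathbb{Z}$-torsor, which gives index $2$. Since the fibre over the identity is nonempty, this is again a central extension of $\Bord{or}$, now by $2\mathbb{Z}\cong\mathbb{Z}$.

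To find $\varepsilon$, I will use the mod-2 congruence $\sigma(X)\equiv \operatorname{rank}(Q_X)\pmod 2$. Here $\operatorname{rank}(Q_X)=b_2(X)-\dim\operatorname{rad}Q_X$, and the radical is the image of $H_2(\partial X)$. Using the Euler characteristic and the long exact sequence of the pair, this rank is expressed mod $2$ through $\chi(X)$ plus Betti numbers of $\partial X$. The dependence on $X$ should cancel against a term such as $\chi(X)$, or be absorbed into the choice of bounding data, and leave a quantity computable from $W$: something like $b_1(W)+\dim$ of the Lagrangian intersections $+\chi$-type terms on the boundary surfaces. Concretely, I will first take the candidate $\varepsilon(W)$ to be the parity forced on $\sigma(X)$ by the closed/glued situation. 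I will then check that it is well defined, i.e. independent of $X$ for fixed boundary data, by Novikov additivity applied to a closed 4-manifold $X\cup\bar X'$. This uses the fact that closed 4-manifolds satisfy $\sigma\equiv\chi\pmod 2$, so all choices agree mod $2$.

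The main obstacle is step (i). I need to show that the Maslov index $\mu(L_1,L_2,L_3)$ appearing in the composition law has exactly the parity needed to make $\varepsilon$ a cocycle. I would attack this with the standard congruence $\mu(L_1,L_2,L_3)\equiv \dim H+\dim(L_1\cap L_2)+\dim(L_2\cap L_3)+\dim(L_3\cap L_1)\pmod 2$, up to the precise normalisation used in the paper. I would then match the intersection dimensions with the Mayer--Vietoris contributions to $b_1$ of the glued 3-manifold. The horizontal (corner) gluing case needs separate care, since there the gluing is along surfaces with boundary.

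Finally, for Lemma \ref{lem:sig/2-mon}, I check that $\varepsilon$ is additive under $\sqcup$, so that the subclass is closed under the monoidal product. I also check that the structural 2-cells of $\Bord{sig}$ (cylinders labelled $0$ or with the canonical Maslov label) satisfy the parity condition. Then the braiding, symmetry and monoidal coherence data restrict, and the inclusion is a symmetric monoidal functor.
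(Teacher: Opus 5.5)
Your framework matches the paper's: keep the pairs $(W,n)$ with $n\equiv\varepsilon(W)\pmod 2$ for a parity function whose mod-$2$ coboundary equals the Wall correction for both compositions, check this via $\sigma(V;A,B,C)\equiv g+\dim(A\cap B)+\dim(B\cap C)+\dim(C\cap A)$ and Mayer--Vietoris, and get monoidality from additivity under $\sqcup$. The gap is that you never produce $\varepsilon$, and your recipe for finding it rests on a false claim. The parity of $\sigma(X)$ is not determined by $\partial X$. Replacing $X$ by $X\sqcup\mathbb{CP}^2$, which is exactly the $\mathbb{Z}$-action on $\Bord{sig}$, fixes the boundary and changes $\sigma$ by $1$. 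So both parities occur over every $W$, and ``all choices agree mod $2$'' fails. Novikov additivity with $\sigma\equiv\chi\pmod 2$ for closed $4$-manifolds gives only $\sigma(X)+\chi(X)\equiv b_0(\partial X)+b_1(\partial X)\pmod 2$. Since $\chi(X)$ is not recorded in the label $n$, this does not single out a subset. Choosing the parity consistently is the whole content of the statement; its mod-$4$ analogue is impossible (cf.\ Proposition~\ref{prop:indiv}). So it cannot be left as ``something like''.

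The closed-manifold quantity your heuristic naturally yields, $b_1(\ol{W})+b_0(\ol{W})$, also fails on its own. For a vertical composite through $(\Sigma,H_\Sigma)$, the Mayer--Vietoris/Maslov-parity computation leaves the residue $\tfrac12 b_1(\ol{\Sigma})+b_0(\ol{\Sigma})$. Correspondingly, this candidate takes that same value on the identity of $(\Sigma,H_\Sigma)$: it is $1$ when $\ol{\Sigma}=S^2$, $H_\Sigma=D^3$, $\ol{W}=S^3$, which violates your requirement (ii).

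The missing idea is an \emph{asymmetric} boundary correction. The paper takes $m(M)=b_1(\ol{M})+b_0(\ol{M})+\tfrac12 b_1(\ol{\parout M})+b_0(\ol{\parout M})$, with the extra term attached to the target only. A source-plus-target term would cancel mod $2$; the source alone also works and gives an isomorphic extension. In the horizontal case this term is controlled by the genus formula for surfaces glued along circles (Lemma~\ref{lem:surface-gluing}). It gives $\tfrac12 b_1(\ol{\Sigma})+b_0(\ol{\Sigma})\equiv b_0(S)$ for $\ol{\Sigma}\cong D_S\cup_S D_S$, and matching $b_0(S)$ contributions from $\ol{\parout M}$, $\ol{\parout M'}$, $\ol{\parout M''}$. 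In terms of your Euler-characteristic idea, the correct condition is equivalently $\chi(X)\equiv\tfrac12 b_1(\ol{\parout M})+b_0(\ol{\parout M})\pmod 2$. With this explicit $m$, the rest of your plan goes through as in the paper: vanishing on mapping cylinders (hence on identities and coherence cells) and additivity under $\sqcup$.
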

Then, following the approach of \cite{BDSV4}, we construct the \emph{global half-signature presentation} $\mathcal{G}$ and show that it presents the symmetric monoidal bicategory $\Bord{sig/2}$.
\begin{thmA}[{Theorem \ref{conj:pres-sig/2}}] \label{thmA:pres-sig/2}
    There exists an equivalence of symmetric monoidal bicategories $|-|_\mathcal{G}: \mathbf{F}(\mathcal{G}) \rightarrow \Bord{sig/2}$ between the symmetric monoidal bicategory generated by the global half-signature presentation and the half-signature bordism bicategory.
\end{thmA}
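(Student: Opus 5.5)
The strategy is to follow \cite{BDSV4}, where the signature presentation is shown to present $\Bord{sig}$ by bootstrapping from the presentation of $\Bord{or}$. Concretely, we compare two central extensions of the same symmetric monoidal bicategory. We build a commutative diagram of symmetric monoidal bicategories
\[
\begin{tikzcd}
\mathbf{F}(\mathcal{G}) \arrow[r] \arrow[d, "|-|_\mathcal{G}"] & \mathbf{F}(\mathcal{G}_{\mathrm{or}}) \arrow[d, "\simeq"] \\
\Bord{sig/2} \arrow[r] & \Bord{or}
\end{tikzcd}
\]
The right vertical map is the equivalence of \cite{BDSV4} for the oriented presentation. The bottom map forgets the half-signature datum. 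The top map kills the extra central generator of $\mathcal{G}$.

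\textbf{Step 1: define $|-|_\mathcal{G}$.} The global half-signature presentation $\mathcal{G}$ consists of the oriented generators, each 2-generator decorated with a chosen integer lift, together with one central invertible 2-cell $z$. To define $|-|_\mathcal{G}$ we send each generator to a chosen lift in $\Bord{sig/2}$, and $z$ to the generator of the $\mathbb{Z}$-torsor action. By the universal property of $\mathbf{F}(\mathcal{G})$, it then suffices to check that every relation holds in $\Bord{sig/2}$.

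For relations between oriented bordisms, both sides already agree in $\Bord{or}$. So we only need their half-signature labels to match, and these labels are computed by the Wall non-additivity (Maslov index) formula for the gluing. The lifts of the generators are chosen, in Construction \ref{constr:sig/2-out}, so that these Maslov corrections are even and halve consistently. The one genuinely anomalous relation, the $S^3$/Dehn twist relation carrying a multiple of the central generator, is chosen exactly to match the computed signature defect divided by $2$.

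\textbf{Step 2: reduce to a central extension argument.} Both horizontal maps are central extensions by $\mathbb{Z}$. They are surjective on 2-morphisms, and their fibres are $\mathbb{Z}$-torsors, since in $\mathbf{F}(\mathcal{G})$ the central generator $z$ acts freely and transitively on lifts. The diagram commutes on generators and hence commutes.

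The functor $|-|_\mathcal{G}$ is then automatically essentially surjective on objects and 1-morphisms, and full on 2-morphisms, because the right vertical arrow is an equivalence and the torsor action is matched.

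For faithfulness, take two 2-cells with the same image. Their images in $\Bord{or}$ agree, so the 2-cells differ by $z^n$. Applying $|-|_\mathcal{G}$ then shows $n\cdot 1=0$ in $\mathbb{Z}$. This uses that $|z|_\mathcal{G}$ acts freely, i.e.\ that $z$ maps to a generator of infinite order of the central $\mathbb{Z}$, and not to twice a generator or to a torsion element.

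\textbf{Main obstacle.} The hardest step is showing that $\mathbf{F}(\mathcal{G})\to\mathbf{F}(\mathcal{G}_{\mathrm{or}})$ really is a central extension with free $\mathbb{Z}$-action, that is, that $z$ is not forced to have finite order in $\mathbf{F}(\mathcal{G})$. I would settle this by exhibiting a functor $\mathbf{F}(\mathcal{G})\to\Bord{sig/2}\to B\mathbb{Z}$ recording the half-signature label, which sends $z\mapsto 1$.

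The second delicate point is the integrality check in Step 1. It requires that the signature defect $\sigma$ of each relation in the signature presentation of \cite{BDSV4} is even, so that it can be halved. I would verify this relation-by-relation, using Lemma \ref{lem:sig/2-mon} for the monoidal and symmetric coherence relations. Handle-slide and the anomaly relation are where the parity of the Maslov index must be computed explicitly.
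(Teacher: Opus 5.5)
\paragraph{Step 2.} This is essentially the paper's argument in Proposition \ref{prop:g-equiv}. You compare with $|-|_\mathcal{O}$, observe that on 2-morphisms the fibres of $\mathbf{F}(\mathcal{G})\to\mathbf{F}(\mathcal{O})$ and of $q$ are $\mathbb{Z}$-torsors, and check that the actions match. Two corrections are needed.
\begin{itemize}
\item The generator acting on every hom-set is $\zeta$, an automorphism of the empty 1-morphism, not $z$, which lives on the cylinder. It is the global $z$ relation ($\zeta\otimes\id = z$ on the cylinder) that makes a single $\mathbb{Z}$ act. Without it you get a $\mathbb{Z}^c$ indexed by components, which is what happens for $\mathcal{H}$ and $\Bord{csig/2}$.
\item Freeness needs no auxiliary functor to $B\mathbb{Z}$. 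Recording the label is not functorial anyway, because of the Wall terms. It suffices that $|\zeta|_\mathcal{G}=(\emptyset,2)$ is one step of the $\mathbb{Z}$-action on $\Bord{sig/2,out}$ (signature $2$ in $\Bord{sig}$), so it has infinite order.
\end{itemize}

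\paragraph{The gap is Step 1.} This is where the content of the theorem lies. $\mathcal{G}$ is a fixed presentation whose generators carry no integer decorations. It consists of $\mathcal{P}$; the generator $z$ with the centrality and extended modularity relations; the anomaly-freeness relation $\epsilon\theta\epsilon^\dag=\id$, which contains no central generator; and $\zeta$ with the global $z$ relation. So you must choose the lifts yourself, and Construction \ref{constr:sig/2-out} does not do this; it only defines the subbicategory.

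Your proposed check breaks down twice if you use the lifts of the signature presentation (all labels $0$):
\begin{itemize}
\item The label-$0$ lifts of $\eta^\dag,\epsilon^\dag,\nu^\dag,\mu^\dag$ are not in $\Bord{sig/2,out}$ at all. Here $\ol{M}=S^3$ while $\ol{\parout M}$ is $S^2\sqcup S^2$, $S^1\times S^1$ or $\emptyset$, so $m$ is odd on them.
\item In the anomaly-freeness relation, the composite side picks up the Wall term $c\ssig(|\epsilon|_\mathcal{O},|\theta\epsilon^\dag|_\mathcal{O})=1$, while the identity side has label $0$. This odd mismatch cannot be halved away.
\end{itemize}

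\paragraph{The missing idea.} Give the daggered generators odd labels: $\eta^\dag,\mu^\dag\mapsto +1$ and $\epsilon^\dag,\nu^\dag\mapsto -1$. All other generators of $\mathcal{P}$ get label $0$, and $z\mapsto(\id,2)$, $\zeta\mapsto(\emptyset,2)$. The opposite signs within each pair are forced by the additional adjunction relations. The value $-1$ on $\epsilon^\dag$ is what anomaly-freeness requires, since it cancels the Wall term.

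With these choices (Construction \ref{constr:modg}), the checks go through:
\begin{itemize}
\item The parity condition holds.
\item The relations of $\mathcal{R}$ and $\mathcal{P}$ hold, because the Wall terms vanish (genus $0$, or coinciding Lagrangians via Lemma \ref{lem:coinc-zero}).
\item Extended modularity holds because both routes have total label $-1$: the upper route gets $-1$ from $\epsilon^\dag$, the lower route gets $-2+1$ from $z^{-1}$ and $\mu^\dag$. This is exactly why $z^{-1}$ appears in that relation.
\item Anomaly-freeness holds because $-1+1=0$.
\end{itemize}
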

Finally, we present a modified version of the arguments in \cite{BDSV4} in order to classify the linear representations of $\Bord{sig/2}$.
\begin{thmA}[{Theorem \ref{thm:main-sig/2}}] \label{thmA:main-sig/2}
    Symmetric monoidal functors $\Bord{sig/2}\rightarrow \Vect$ are classified by finite direct sums of modular tensor categories whose anomalies are equal.
\end{thmA}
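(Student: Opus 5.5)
The plan is to combine Theorem~\ref{thmA:pres-sig/2} with the classification machinery of \cite{BDSV4}. By Theorem~\ref{thmA:pres-sig/2}, symmetric monoidal functors $\Bord{sig/2}\to\Vect$ correspond, up to equivalence, to symmetric monoidal functors $\mathbf{F}(\mathcal{G})\to\Vect$. By the universal property of the free symmetric monoidal bicategory on a presentation, these are the same as assignments of the generating objects, 1-morphisms and 2-morphisms of $\mathcal{G}$ to data in $\Vect$ satisfying the relations of $\mathcal{G}$. So the task reduces to identifying this algebraic data.

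\textbf{Step 1: the oriented part.} The generators and relations of $\mathcal{G}$ that do not involve the central $\mathbb{Z}$-generator should be the same as those of the oriented presentation in \cite{BDSV4}. I would therefore follow their argument verbatim. The image of the point is a finite semisimple linear category $\mathbf{C}$. The pair of pants and its unit give a monoidal structure, and the braiding bordism gives a braiding. The generating 2-morphisms give rigidity, a pivotal or ribbon structure, and modular nondegeneracy. This uses their result that the relations force $S$-matrix invertibility.

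\textbf{Step 2: the anomaly.} The key difference from the oriented case concerns the relation where the anomaly appears. In $\Bord{or}$ one has a relation of the form ``$S$-move composite $=$ identity''. In $\Bord{sig}$ the same relation holds only up to the generator $\sigma$ of the central $\mathbb{Z}$, and its image is forced to be a square root of the anomaly $p_+/p_-$. In $\Bord{sig/2}$ the central generator is half of that in $\Bord{sig}$ (index~2). Accordingly the relation in $\mathcal{G}$ should read ``composite $=\tau$'', where $\tau$ is the central generator and the image of $\tau$ is forced to equal the anomaly itself rather than a root of it. Thus, for a simple target, the image of $\tau$ is uniquely determined by the modular category. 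This is why there is no extra data and no extra condition, in contrast with the two cases of \cite{BDSV4}.

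I would verify this by computing the image of the generating relation, via the Gauss sums, in terms of $\tr(T)$ and $S$. Then I would check that every other relation of $\mathcal{G}$ involving $\tau$ is automatically satisfied once $\tau\mapsto p_+/p_-$. The main obstacle is this step: showing that each relation involving $\tau$ is compatible, including the exact power of $\tau$ carried by each relation, which depends on the Maslov index computations underlying the construction.

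\textbf{Step 3: the non-simple case.} For a general functor, the image of the circle-indexed endomorphism algebra splits $\mathbf{C}$ into a direct sum of blocks, each of which is a modular tensor category. The central generator $\tau$ is a single invertible scalar natural transformation of the identity on the monoidal unit, so it must act by the same scalar on every summand. Hence all summands have equal anomalies. Conversely, given such a direct sum, the componentwise Reshetikhin--Turaev assignment with the common scalar satisfies all the relations.

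\textbf{Step 4: equivalences.} Finally, I would match equivalences of functors with equivalences of these direct sums, exactly as in \cite{BDSV4}.
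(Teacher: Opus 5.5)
Your overall architecture matches the paper's: pass to $\mathbf{F}(\mathcal{G})$ via Theorem~\ref{thmA:pres-sig/2}, run the \cite{BDSV4} analysis, and use that the global generator ($\zeta$ in the paper) acts by one scalar on every summand to force equal anomalies. But Step~2, which carries all the new content, rests on a wrong guess about $\mathcal{G}$ and misses the actual mechanism. The presentation does \emph{not} keep the oriented relations and twist the anomaly relation by the central generator; that is what $\mathcal{N}$ does for $\Bord{sig}$, via $\xi$. Instead, $\mathcal{G}$ keeps the anomaly-freeness relation verbatim and weakens the \emph{modularity} relation to the extended modularity relation, in which $z^{-1}$ appears, together with the relation $\zeta=z$. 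So Step~1 cannot be quoted verbatim either. The \cite{BDSV2} argument now gives $A\circ A^\dag=z^{-1}$ rather than $\id$. This still makes $\III$, hence the $S$-matrix, invertible because $z$ is invertible, but it has to be redone. Similarly, for $x=\epsilon\circ\theta\circ\epsilon^\dag$ one only gets $xx^\dag=z^{-1}$.

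More seriously, ``the image of $\tau$ is determined'' does not imply ``no extra data''. The extra datum in \cite{BDSV4} is the scalar $p$ on which the images of $\eta^\dag,\epsilon^\dag,\nu^\dag,\mu^\dag$ depend. Since $Z(x)=p_+/p$ and $Z(x^\dag)=p_-/p$, the relation $xx^\dag=z^{-1}$ gives $Z(z)=p^2/(p_+p_-)$. Knowing $Z(z)=p_+/p_-$ therefore only yields $p=\pm p_+$, which is exactly the sign ambiguity you claim to have removed. What fixes $p=p_+$ is the anomaly-freeness relation, which forces $Z(x)=1$ (Lemma~\ref{lem:z-value}). It is this uniqueness of $p$ that gives $Z\simeq Z_{\mathcal{C}}$, where $\mathcal{C}$ is the image of the circle.

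Your proposed shape ``composite $=\tau$'' cannot work with modularity kept. Modularity forces $p^2=p_+p_-$, and then a relation $x=\tau$ gives $Z(\tau)=p_+/p$, a square root of the anomaly with a sign choice, not the anomaly itself.

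For the converse you must check that the Reshetikhin--Turaev assignment with $p=p_+$ and $z,\zeta\mapsto p_+/p_-$ satisfies extended modularity; this uses $D=p_+p_-$ (Lemma~\ref{lem:dpp}). No Maslov index computations enter this step. They are needed only to construct $|-|_\mathcal{G}$, which Theorem~\ref{thmA:pres-sig/2} already supplies, and the representation-theoretic side is purely algebraic.

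Two smaller slips: the $\mathbb{Z}$-generator of $\Bord{sig/2}$ corresponds to \emph{twice} that of $\Bord{sig}$ ($\zeta\mapsto\xi^2$), not half; and the generating object is the circle, not the point.
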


Additionally, we also construct the \emph{componentwise half-signature bordism bicategory} in analogy to the componentwise signature bordism bicategory $\Bord{csig}$ considered in \cite{BDSV4}. The corresponding results for this bicategory are given below.
\begin{thmA}[{Theorem \ref{conj:pres-csig/2}}] \label{thmA:pres-csig/2}
    There exists an equivalence of symmetric monoidal bicategories $|-|_\mathcal{H}: \mathbf{F}(\mathcal{H}) \rightarrow \Bord{csig/2}$ between the symmetric monoidal bicategory generated by the half-signature presentation and the componentwise half-signature bordism bicategory.
\end{thmA}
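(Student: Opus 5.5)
The plan is to deduce Theorem~\ref{thmA:pres-csig/2} from the known presentation of the componentwise signature bordism bicategory in \cite{BDSV4}. Concretely, $\Bord{csig/2}$ should sit inside $\Bord{csig}$ as an index~$2$ subextension: the same objects and $1$-morphisms, with $2$-morphisms labelled by a ``half-signature'' integer on each component. This is presumably the analogue of Construction~\ref{constr:sig/2-out} applied componentwise. I would take the half-signature presentation $\mathcal{H}$ to be obtained from the componentwise signature presentation of \cite{BDSV4} by the following changes:
\begin{itemize}
\item the generating $2$-cell shifting the signature by $1$ is replaced by one shifting the half-signature by $1$, which corresponds to signature $2$;
\item the anomaly-type relations are modified to account for the half-integral labels of the generators, for instance the generator whose image carries an odd signature such as the $\mathbb{CP}^2$-type relation.
\end{itemize}
The evaluation $|-|_\mathcal{H}$ is then defined on generators by sending each to the corresponding bordism in $\Bord{csig/2}$. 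One checks the relations generator by generator, which reduces to checking them in $\Bord{csig}$ and confirming that the labels land in the subextension.

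To show $|-|_\mathcal{H}$ is an equivalence, I would compare the two central extensions. Both $\mathbf{F}(\mathcal{H})$ and $\Bord{csig/2}$ are extensions of $\Bord{or}$ by $\mathbb{Z}$ on $2$-morphisms, componentwise on connected bordisms. I would use the oriented presentation of \cite{BDSV4} for $\Bord{or}$, together with the fact that $\mathcal{H}$ adds exactly a central $\mathbb{Z}$-generator plus lifts of the oriented relations twisted by half-signature anomalies. This gives a morphism of short exact sequences
\[
\mathbb{Z}\to \mathbf{F}(\mathcal{H})\to \mathbf{F}(\text{oriented}) \quad\longrightarrow\quad \mathbb{Z}\to \Bord{csig/2}\to \Bord{or}.
\]
The right map is the equivalence of \cite{BDSV4}. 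Essential surjectivity and fullness on $1$-morphisms are inherited, because nothing changes at levels $0$ and $1$. For $2$-morphisms I would argue by a five-lemma style comparison on each hom-set: surjectivity follows because every $2$-morphism is a lift of an oriented one, shifted by the central generator. Injectivity then requires that the central generator acts freely in $\mathbf{F}(\mathcal{H})$ and maps to $1\in\mathbb{Z}$, rather than $2$.

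The main obstacle is this last point: showing the central generator of $\mathcal{H}$ hits the generator of the index~$2$ subgroup exactly, with no hidden torsion or extra identifications from the relations. I would first check this via a well-defined invariant $\mathbf{F}(\mathcal{H})\to\mathbb{Z}$, namely half the total signature computed by Wall non-additivity. This requires verifying that every relation of $\mathcal{H}$ preserves it, which amounts to computing Maslov indices of the Lagrangians in each relation and checking they are even. Should the parity fail for some relation, I would modify $\mathcal{H}$ by including a $\tfrac12$-shifted version of that relation. This is exactly where the index~$2$ subextension of Theorem~\ref{thmA:exists-bordsig/2} must be used.
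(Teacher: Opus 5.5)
Your global strategy matches the paper's: define $|-|_\mathcal{H}$ on generators, verify the relations by tracking signature labels and Wall terms, and then compare fibres over the oriented equivalence $\mathbf{F}(\mathcal{O})\to\Bord{or,exp}$. There the fibre over a $2$-morphism with $c$ components is a $\mathbb{Z}^c$-torsor. The action is by $z$ on each component in $\mathbf{F}(\mathcal{H})$, and by shifting that component's signature by $2$ in $\Bord{csig/2}$. That comparison step is fine. The freeness of the $z$-action is immediate once the functor exists with $|z|_\mathcal{H}=(\id,2)$, so it is not the real obstacle.

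The genuine gap is the construction of the functor itself, which you treat as routine.
\begin{itemize}
\item $\mathcal{H}$ is fixed by the statement. It consists of $\mathcal{P}$, a central invertible $z$ on the cylinder, extended modularity with $z^{-1}$ inserted in the lower route, and anomaly-freeness $\epsilon\circ\theta\circ\epsilon^\dag=\id$.
\item The modular presentation of \cite{BDSV4} has no signature-shifting generator to ``replace''.
\item Modifying $\mathcal{H}$ when a check fails is not available to you. The content of the theorem is that this particular presentation works.
\item ``Send each generator to the corresponding bordism'' with the labels of $|-|_\mathcal{M}$ (all $0$) does not land in $\Bord{csig/2}$. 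For $\eta^\dag,\epsilon^\dag,\nu^\dag,\mu^\dag$ one has $\ol{M}=S^3$, but $\ol{\parout M}$ is $S^2\sqcup S^2$, $S^1\times S^1$, $\emptyset$, $S^2\sqcup S^2$ respectively. So $m(M)$ is odd and the label-$0$ lifts violate the parity condition.
\end{itemize}

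The missing idea is to give exactly these generators odd labels:
\begin{itemize}
\item $|\eta^\dag|_\mathcal{H}=(|\eta^\dag|_\mathcal{O},1)$ and $|\epsilon^\dag|_\mathcal{H}=(|\epsilon^\dag|_\mathcal{O},-1)$;
\item $|\nu^\dag|_\mathcal{H}=(|\nu^\dag|_\mathcal{O},-1)$ and $|\mu^\dag|_\mathcal{H}=(|\mu^\dag|_\mathcal{O},1)$;
\item label $0$ on the remaining generators of $\mathcal{P}$, and $|z|_\mathcal{H}=(\id,2)$.
\end{itemize}
These values are essentially forced by the relations:
\begin{itemize}
\item The additional adjunction, additional rigidity and pivotality relations have vanishing Wall terms by Lemma \ref{lem:coinc-zero}, so the dagger labels must cancel in pairs.
\item Anomaly-freeness forces the label of $\epsilon^\dag$ to cancel the Wall term $c\ssig(|\epsilon|_\mathcal{O},|\theta\epsilon^\dag|_\mathcal{O})=+1$.
\item Extended modularity then reads $-2+1+0=-1+0+0$.
\end{itemize}

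This also shows that your proposed test points the wrong way. Relations do not need even Maslov indices: the anomaly-freeness relation has Wall term $+1$. Likewise, ``half the total signature'' is not an integer-valued invariant on $\Bord{csig/2}$, because the dagger generators must carry odd signature. The index-$2$ subextension enters through the parity condition $n_i\equiv m(M_i)$ on the generators' labels, with odd Wall terms in relations absorbed by those labels, not through evenness of the relations.
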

\begin{thmA}[{Theorem \ref{thm:main-csig/2}}] \label{thmA:main-csig/2}
    Symmetric monoidal functors $\Bord{csig/2}\rightarrow \Vect$ are classified by finite direct sums of modular tensor categories.
\end{thmA}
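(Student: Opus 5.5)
The strategy is to run the argument behind the analogous classification for $\Bord{csig}$ in \cite{BDSV4}, but starting from the presentation of Theorem~\ref{thmA:pres-csig/2}. By that theorem, symmetric monoidal functors $\Bord{csig/2}\rightarrow\Vect$ are the same, up to equivalence, as symmetric monoidal functors $\mathbf{F}(\mathcal{H})\rightarrow\Vect$. By the universal property of the bicategory freely generated by a presentation, the latter are exactly the assignments of generating objects, 1-morphisms and 2-morphisms of $\mathcal{H}$ to objects, 1-morphisms and 2-morphisms of $\Vect$ that satisfy the relations of $\mathcal{H}$. So the work is to read off, from the generators and relations of $\mathcal{H}$, what algebraic structure such an assignment amounts to.

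\textbf{Step 1: the oriented generators.} The generators and relations of $\mathcal{H}$ that come from the oriented presentation of \cite{BDSV4} are treated exactly as there. The image of the point (circle-level) object is a finite semisimple linear category $\mathbf{C}$. It decomposes into finitely many simple summands, and this gives the finite direct sum in the statement. On each summand, the images of the pair of pants, the cap and related generators give a ribbon fusion structure. The non-degeneracy relations force modularity, following the argument in \cite{BDSV4}.

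\textbf{Step 2: the half-signature generator.} The new feature is the generating invertible 2-morphism recording a unit of half-signature; call it $z$, the generator of the central $\mathbb{Z}$. Its image is an invertible scalar $\lambda$ on each summand. The anomaly relation of $\mathcal{H}$ sets a specific composite of oriented generators equal to $z$ (or $z^{-1}$). In \cite{BDSV4}, the corresponding relation in $\Bord{csig}$ sets that composite equal to $z^2$. Here it forces $\lambda$ to equal the anomaly $p_+/p_-$ itself, or its inverse depending on conventions, where $p_\pm$ are the Gauss sums. In the $\Bord{csig}$ case the relation instead says $\lambda^2$ is the anomaly, which is why a choice of square root appears as extra data there.

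\textbf{Step 3: componentwise independence.} The central element is componentwise: in $\Bord{csig/2}$ there is a separate copy of $z$ acting on each connected component. So the scalar $\lambda$ is determined independently on each simple summand. Nothing forces the anomalies of different summands to agree, in contrast to Theorem~\ref{thmA:main-sig/2}, where a single global $z$ acts on all summands at once.

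\textbf{Step 4: bijection on equivalence classes.} For surjectivity, every finite direct sum of modular tensor categories gives a representation: take the oriented data from \cite{BDSV4} and send $z$ to the anomaly on each summand. Because $\lambda$ is uniquely determined, no extra data is involved. For injectivity up to equivalence, symmetric monoidal natural equivalences of representations correspond to ribbon equivalences of the underlying categories. These automatically preserve $\lambda$, since $\lambda$ is expressed in terms of the ribbon structure.

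\textbf{Main obstacle.} I expect the hardest part to be Step 2, namely verifying that the anomaly relation in $\mathcal{H}$ really introduces exactly one copy of $z$, as opposed to two as in \cite{BDSV4}. This requires tracking the half-signature (Maslov/Wall index divided by two) of the specific bordism composite in that relation, consistent with the index-2 subextension of Theorem~\ref{thmA:exists-bordsig/2}. I would first compute that index on the relation's composite in $\Bord{sig}$, then halve it, and check that the result is integral and equal to $\pm1$.
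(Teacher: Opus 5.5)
Your reduction to representations of $\mathbf{F}(\mathcal{H})$ via Theorem~\ref{conj:pres-csig/2}, and the overall shape of your argument, match the paper. The gap is in Steps~2 and~4, which is exactly where this result differs from the $\Bord{csig}$ classification: you misidentify where the square-root datum lives, so the uniqueness claim is not proved.

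Once the ribbon category $\mathcal{C}$ assigned to the circle is fixed, the images of $\eta^\dagger,\epsilon^\dagger,\nu^\dagger,\mu^\dagger$ are still determined only up to a nonzero scalar $p$. By Lemma~\ref{lem:x-action}, the anomaly composite $x=\epsilon\circ\theta\circ\epsilon^\dagger$ acts by $p_+/p$, and its variant $x^\dagger$ (with $\theta^{-1}$) acts by $p_-/p$. In \cite{BDSV4} the modularity relation gives $xx^\dagger=\id$, hence $p^2=p_+p_-=D$. The sign of $p$ (equivalently, the square root $Z(x)$ of the anomaly) \emph{is} the extra datum there. Suppose, as your Step~1 says, the inherited relations, in particular modularity, are kept unchanged and you only add a relation equating some composite with $z^{\pm1}$. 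Then that relation merely expresses $\lambda$ in terms of $p$ and leaves $p$ determined only up to sign. So ``because $\lambda$ is uniquely determined, no extra data is involved'' does not follow, because $\lambda$ was never the only free parameter.

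The paper's mechanism is different. In $\mathcal{H}$ the modularity relation is \emph{replaced} by extended modularity (Definition~\ref{defn:pres-mprime}). This is where $z$ enters: it gives $xx^\dagger=z^{-1}$, so $p$ is no longer tied to $D$. The anomaly-freeness relation is $x=\id$ and contains no $z$. Together these force $p=p_+$ exactly and $Z(z)=p_+/p_-$ (Lemma~\ref{lem:z-value}). It is the uniqueness of $p$ that yields $Z\simeq Z_{\mathcal{C}}$.

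The same issue breaks your existence step. Data taken from \cite{BDSV4}, with $p^2=D$, has $Z(x)=p_+/p\neq1$ whenever the anomaly is nontrivial, which violates $x=\id$. You must instead set $p=p_+$, which in general violates the old modularity relation, and verify extended modularity directly using $D=p_+p_-$.

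Step~1 also needs a matching tweak: now $A\circ A^\dagger=z^{-1}$ rather than $\id$. This is still invertible, so modularity still follows.

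Finally, the ``main obstacle'' you name, computing the signature of the relation's composite via Wall's invariant, belongs to the proof of the presentation theorem (it is Lemma~\ref{lem:modg-g}), which you are already assuming. The real crux here is the algebraic determination of $p$.
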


It should be noted that Theorems \ref{thmA:pres-sig/2} and \ref{thmA:pres-csig/2} (and so Theorems \ref{thmA:main-sig/2} and \ref{thmA:main-csig/2}, along with the corresponding results of \cite{BDSV4}) depend on the analogous result on the presentation of oriented bordism bicategory $\Bord{or}$, which we state as Theorem \ref{conj:pres-or}. The Cerf-theoretic work in proving this has largely been completed by Sytilidis \cite{filippos-thesis} and Haïoun \cite{haioun}; upcoming work by Bartlett, Douglas and Sytilidis \cite{pres-bord} will put these together to finish the proof. In a separate paper \cite{partB}, we present an alternate way of proving Theorems \ref{thmA:main-sig/2} and \ref{thmA:main-csig/2} (as well as the corresponding results of \cite{BDSV4}) independently of Theorem \ref{conj:pres-or}.

\subsection{Central extensions of the 3-dimensional bordism (bi)category}

We provide a deeper overview of the objects of study in this paper. When relevant, we provide a sketch of the methods used to prove our main results.

The main object in this paper is the half-signature bordism bicategory $\Bord{sig/2}$. This is a symmetric monoidal extension by $\mathbb{Z}$ of the oriented bordism bicategory $\Bord{or}$. In order to put this into context, we shall discuss the study of extensions of $\Bord{or}$.

An apt starting point for this would be the central extensions of mapping class groups. Diffeomorphisms of surfaces naturally give rise to bordisms by considering their mapping cylinders, and isotopic diffeomorphisms give bordisms which are diffeomorphic relative to the boundary. Mapping class groups may thus be viewed as automorphism groups of objects of $\bord{or}$ (which are in turn 1-morphisms of $\Bord{or}$).

The central extensions by $\mathbb{Z}$ of the mapping class group $\Gamma_g$ of a closed genus $g$ surface are classified by the second group cohomology $H^2(\Gamma_g;\mathbb{Z})$. This may be computed from the first and second homologies via the universal coefficient theorem. The computation of the first homology may be attributed to Mumford for $g=2$ \cite{Mumford} and Powell for $g\ge 3$ \cite{Powell}, though a quick proof based on a group presentation of $\Gamma_g$ can be found in Korkmaz's survey \cite{Korkmaz}. On the other hand, the second homology was computed for $g\ge 5$ by Harer \cite{Harer} and for lower genus (up to a torsion term in the case $g=3$) by Korkmaz and Stipsicz \cite{korkstip}. Overall, one concludes that $H^2(\Gamma_g;\mathbb{Z})$ is the cyclic group $\mathbb{Z}_{12}$ for $g=1$,  $\mathbb{Z}_{10}$ for $g=2$, and $\mathbb{Z}$ for all $g\ge3$.

An explicit construction of such a central extension of $\Gamma_g$ by $\mathbb{Z}$ is the \emph{signature central extension} given by Meyer \cite{Meyer} in terms of signatures of certain surface bundles. Meyer additionally showed that for $g\ge 3$, the pairing of the corresponding cocycle against $H_2(\Gamma_g;\mathbb{Z})$ has image $4\mathbb{Z}$. Together with the fact that $H^2(\Gamma_g;\mathbb{Z})\cong\mathbb{Z}$, this shows that the signature central extension corresponds to $4$ times a generator. This was later extended by Masbaum and Roberts \cite{MR} to arbitrary genus by constructing a group presentation for the signature central extension. As an element of $H^2(\Gamma_g;\mathbb{Z})$, the signature central extension is represented by the Shale--Weil cocycle \cite{Walker,LV}.

The relationship between the signature central extension and TQFTs was observed by Walker \cite{Walker}, building upon work of Atiyah \cite{Atiyah-2framings} and Witten \cite{Witten}. By paring down the dependence of Witten's framed 3-manifold invariants on the framing, Walker constructed the \emph{signature bordism category} $\bord{sig}$ which, at the level of mapping class groups, restricts to Meyer's signature central extension. Walker's model of $\bord{sig}$ was constructed in such a way so as to minimise the amount of extra information attached to each surface $\Sigma$; this turned out to be a choice of Lagrangian subspace of the symplectic vector space $H_1(\Sigma;\mathbb{R})$. We will instead be using a model of a more geometric flavour, which Walker constructed as an intermediate step. This associates to each surface $\Sigma$ a 3-manifold $M$ with $\partial M = \Sigma$, and to each 3-dimensional bordism an integer, which may be thought of as the signature of a bounding 4-manifold.

This was extended even further in \cite{BDSV3} to construct the \emph{signature bordism bicategory} $\Bord{sig}$, which we reproduce in Definition \ref{defn:bordsig}. An alternate model more similar in spirit to Walker's construction with Lagrangian subspaces can be found in \cite{dr-thesis}.

The construction and classification of subextensions of the signature central extension of mapping class groups and the signature bordism category has been well-studied in the literature. At the level of mapping class groups, the fact that the signature central extension corresponds to $4$ times a generator in group cohomology implies the existence of index $2$ and $4$ subextensions. Group presentations of both of these are given in \cite{MR}.

An index $2$ subextension $\bord{sig/2}$ of the signature bordism category $\bord{sig}$ was constructed explicitly by Gilmer \cite{Gilmer}. Under this construction, $\bord{sig/2}$ is the symmetric monoidal subcategory of $\bord{sig}$ whose 2-morphisms satisfy a certain condition relating the parity of the signature term to the Betti numbers of the underlying bordism. Gilmer showed that the composition of such morphisms also satisfies this parity condition, and hence this forms an index $2$ subextension of $\bord{sig}$. (Moreover, the compatibility with disjoint unions is evident from the parity condition, and hence this is in fact a symmetric monoidal subextension.) Upon restricting to mapping class groups, this also provides an explicit description of an index $2$ subextension of the signature central extension.

Independently of Gilmer, the problem of constructing categorical and bicategorical subextensions was also studied by Bartlett, Douglas, Schommer-Pries and Vicary in the early 2010s via homotopy-theoretic methods. Their approach studied symmetric monoidal extensions of the $\infty$- and $(\infty,2)$-categorical analogues of $\bord{or}$ and $\Bord{or}$ respectively; these are the bordism $\infty$-category $\mathscr{B}\mathit{ord}_{2,3}^\mathrm{or}$ and $(\infty,2)$-category $\mathscr{B}\mathit{ord}_{1,2,3}^\mathrm{or}$ whose morphisms and 2-morphisms respectively consist of moduli spaces of bordisms. Via a straightening-unstraightening argument sketched in \cite{csp-inv-tft}, symmetric monoidal extensions of these correspond to certain cohomology groups of Madsen--Tillmann spectra. On the other hand, by pulling back along the symmetric monoidal functors $\tau:\mathscr{B}\mathit{ord}_{2,3}^\mathrm{or}\rightarrow \bord{or}$ and $\tau: \mathscr{B}\mathit{ord}_{1,2,3}^\mathrm{or}\rightarrow \Bord{or}$ which take connected components on the spaces of morphisms/2-morphisms, symmetric monoidal extensions of $\bord{or}$ and $\Bord{or}$ give rise to their symmetric monoidal extensions of $\infty$- and $(\infty,2)$-categorical analogues.

Following the homotopy-theoretic computations in \cite{BDSV3}, which later appear in \cite{csp-inv-tft}, the signature $\infty$- and $(\infty,2)$-categories $\mathscr{B}\mathit{ord}_{2,3}^\mathrm{or}$ and $\mathscr{B}\mathit{ord}_{1,2,3}^\mathrm{or}$ both correspond to $2$ times a generator in their respective cohomology groups. In particular, this implies the existence of a symmetric monoidal $(\infty,2)$-subextension $\mathscr{B}\mathit{ord}_{1,2,3}^\mathrm{sig/2}$. The linear representations of this were then stated, but not proven, in private communication to correspond exactly to modular tensor categories (i.e. Theorem \ref{thmA:main-sig/2}).

Additionally, \cite{BDSV3} contains a proposed construction which they conjectured to provide an isomorphism between the groups of symmetric monoidal extensions of $\mathscr{B}\mathit{ord}_{1,2,3}^\mathrm{or}$ and $\Bord{or}$.
\begin{conj}[{\cite{BDSV3}}] \label{conj:inf-2}
    The symmetric monoidal extensions of $\mathscr{B}\mathit{ord}_{1,2,3}^\mathrm{or}$ and $\Bord{or}$ are related in the following way:
    \begin{enumerate}
        \item Given a symmetric monoidal extension of $\mathscr{B}\mathit{ord}_{1,2,3}^\mathrm{or}$, its truncation is a symmetric monoidal extension of $\Bord{or}$.
        \item This construction induces an isomorphism between groups of symmetric monoidal extensions of $\mathscr{B}\mathit{ord}_{1,2,3}^\mathrm{or}$ and $\Bord{or}$ by $\mathbb{Z}$.
    \end{enumerate}
\end{conj}
It was then expected that the image of $\mathscr{B}\mathit{ord}_{1,2,3}^\mathrm{sig/2}$ under this conjectural isomorphism would also satisfy the statement of Theorem \ref{thmA:main-sig/2}.

More recently, half-signature structures were studied by Freed, Scheimbauer and Teleman. In the initial version of \cite{FST}, these were defined in terms of the twisted Anderson dual cohomologies of the classifying space $BSO(3)$. We understand that future work of the same authors will address this approach in greater detail.

In this paper, our construction of the half-signature bordism bicategory $\Bord{sig/2}$ is an explicit geometric construction, similar in approach to Gilmer's construction of $\bord{sig/2}$. Our proof of Theorem \ref{thmA:exists-bordsig/2} identifies $\Bord{sig/2}$ as the symmetric monoidal subbicategory of $\Bord{sig}$ consisting of 2-morphisms whose signature terms have the same parity as some expression in terms of the Betti numbers of the underlying bordism. Thus, this directly extends Gilmer's construction to bicategories.

Additionally, in providing an explicit geometric construction, we are then able to modify the presentations in \cite{BDSV4} to obtain a presentation of $\Bord{sig/2}$. In particular, the construction of the equivalence $|-|_\mathcal{G}: \mathbf{F}(\mathcal{G})\rightarrow \Bord{sig/2}$ in Construction \ref{constr:modg} relies on the parity condition. We then check that it is compatible with the corresponding equivalence relating $\Bord{or}$ to its presentation, and use this to deduce Theorem~\ref{thmA:pres-sig/2} from Theorem~\ref{conj:pres-or}.

Finally, this construction also proves the first half of Conjecture \ref{conj:inf-2}. Indeed, $\mathscr{B}\mathit{ord}_{1,2,3}^\mathrm{sig/2}$ naturally arises as the pullback of $\Bord{sig/2}$, so truncating it yields $\Bord{sig/2}$ again. Since all symmetric monoidal extensions of $\mathscr{B}\mathit{ord}_{1,2,3}^\mathrm{or}$ are multiples of $\mathscr{B}\mathit{ord}_{1,2,3}^\mathrm{sig/2}$, the same must hold for them as well.

As for the index $4$ subextension of the signature central extension of mapping class groups, an explicit cocycle for it was constructed by Turaev \cite{Turaev-Maslov}. Based on this, Gilmer and Masbaum \cite{GM} constructed the index $4$ subextension of the signature central extension by writing down a condition relating the signature term modulo $4$ to the image of the underlying mapping class group element under the standard symplectic representation. They then asked if such a construction could be extended to $\bord{sig}$.

This was answered by Schommer-Pries \cite{csp-inv-tft} in the negative through the abovementioned homotopy-theoretic methods. Following the computations of \cite{csp-inv-tft} (which in turn are based on those in \cite{BDSV3}), the index $2$ subextension of $\mathscr{B}\mathit{ord}_{2,3}^\mathrm{sig}$ corresponds to a generator of the relevant cohomology group. Thus, it cannot admit a symmetric monoidal subextension at the level of symmetric monoidal $\infty$-categories, and hence at the level of symmetric monoidal categories as well.

We provide an alternate proof of this fact, working purely at the level of cocycles. In fact, this proof makes no reference to the monoidal structure, so it shows that $\bord{sig}$ does not admit an index $4$ subextension at the level of ordinary categories:

\begin{propA}[{Proposition \ref{prop:indiv}}]
    There does not exist an index $4$ subextension of $\bord{sig}$.
\end{propA}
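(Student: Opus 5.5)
Suppose, for contradiction, that there is an index $4$ subextension of $\bord{sig}$. Concretely, this means a subcategory $\mathbf{C}\subset\bord{sig}$ with the same objects, such that for every bordism $M:\Sigma\to\Sigma'$ of $\bord{or}$ exactly one class of signature labels modulo $4$ lies in $\mathbf{C}$. Shifting the label by $n$ corresponds to composing with the central element $n\in\mathbb{Z}$. Equivalently, there is a function $\phi$ assigning to each morphism $M$ of $\bord{or}$ a residue $\phi(M)\in\mathbb{Z}/4$, namely the allowed signature class. This function must satisfy the cocycle-type identity
$$\phi(N\circ M)\equiv\phi(N)+\phi(M)+\mu(M,N)\pmod 4,$$
where $\mu(M,N)$ is the Maslov/Wall correction term appearing in the composition law of $\bord{sig}$ (Definition \ref{defn:bordsig}, restricted to categories). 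So the plan is to show that the $\mathbb{Z}/4$-reduction of the composition cocycle of $\bord{sig}$ is not a coboundary on $\bord{or}$. The difficulty is that we cannot just use the known index-$4$ subextension of mapping class groups from \cite{GM,Turaev-Maslov}: that subextension exists on each $\Gamma_g$, so the obstruction must use non-invertible bordisms linking surfaces of different genus.

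The first step is to exhibit a closed loop of compositions whose total correction term is $\not\equiv 0 \pmod 4$ while the $\phi$ contributions cancel. The natural candidate uses the genus-one mapping class group together with the solid torus. Let $D:\emptyset\to T^2$ be the solid torus and $D^\dagger$ its reverse. Take a mapping class $f\in\Gamma_1$ and its mapping cylinder $C_f$. Compare the two decompositions of a closed $3$-manifold
$$D^\dagger\circ C_{f}\circ C_{g}\circ D \quad\text{versus}\quad D^\dagger\circ C_{fg}\circ D.$$
Summing the identity for $\phi$ along both routes shows that $\phi(C_f)+\phi(C_g)-\phi(C_{fg})$ must equal a specific Maslov index mod $4$. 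That is, on $\Gamma_1$ the function $\phi$ is forced to trivialise the signature cocycle mod $4$. This alone is consistent, so a second relation is needed.

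The key extra constraint comes from $S^3=D^\dagger\circ C_S\circ D$, with $S$ the $S$-matrix mapping class, and similar closed manifolds such as $S^2\times S^1=D^\dagger\circ D$. For closed manifolds, the value $\phi$ of a morphism $\emptyset\to\emptyset$ is itself constrained by $\phi(X\sqcup\ldots)$ and by $\phi(\mathrm{id}_\emptyset)=0$. Using $D\circ(\text{cap})$ decompositions through $S^2$, such as cutting $S^3$ into two balls and $S^2\times S^1$ along spheres where no Maslov correction arises (all Lagrangians being trivial on $H_1(S^2)=0$), pins down $\phi(S^3)$ and $\phi(S^2\times S^1)$ independently of the torus route. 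I would then pick a relation in $\Gamma_1$ whose associated signature is $\pm 1$ or $\pm 2$ modulo $4$, for instance $(ST)^3=S^2$ or $S^4=1$, where the Meyer/Maslov sum gives a nonzero value mod $4$ (the total relation lifts to $\pm 3$ in the signature extension, cf. the $\mathbb{Z}_{12}$ computation). Chaining these decompositions evaluates $\phi$ of the same closed manifold in two ways that differ by $2\bmod 4$.

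The main obstacle is the bookkeeping: computing the Wall–Maslov correction terms $\mu$ at each gluing along $T^2$, with the Lagrangians given by kernels of $H_1(T^2)\to H_1(\text{solid torus})$. I would first try the single identity $D^\dagger\circ C_S\circ C_S\circ D$ against $D^\dagger\circ C_{S^2}\circ D$ combined with $S^2=$ hyperelliptic involution, whose mapping cylinder extends over $D$. This forces $\phi(C_{S^2})$ to equal its value via $D$, and should produce the parity-$2$ discrepancy in the $\mathbb{Z}/4$ reduction. Index $2$ survives this test (consistent with Gilmer), which serves as a sanity check on the computation.
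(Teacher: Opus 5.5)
You have the right setting: a $\mathbb{Z}/4$-valued $\phi$ on morphisms of $\bord{or}$ trivialising $c\ssig$ mod $4$, the torus with solid-torus caps $D,D^\dagger$, and a genus-one Maslov term played against a closed-manifold identification. But the proposal never derives a contradiction, and the one computation you commit to cannot produce one. Since $S_*$ swaps $L$ and $S_*L$, every Lagrangian occurring in the subcategory generated by $C_S$, $D$, $D^\dagger$ lies in $\set{L,S_*L}$. So at every gluing there, two of the three Lagrangians coincide, and $c\ssig$ vanishes identically by Lemma \ref{lem:coinc-zero}. Hence $\phi=0$ on that subcategory is consistent. Comparing $D^\dagger\circ C_S\circ C_S\circ D$ with $D^\dagger\circ C_{S^2}\circ D$, together with $\phi(C_{S^2})=0$, yields only $2\phi(C_S)=0$. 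Likewise $S^4=1$ has Maslov sum $0$, not a nonzero one. Your step that ``pins down'' $\phi(S^3)$ and $\phi(S^2\times S^1)$ by cutting along spheres is also empty: it only gives $\phi(S^3)=\phi(B)+\phi(B^\dagger)$ with both summands unconstrained. What is needed is not the value of $\phi$ on a closed manifold. It is the cancellation of the cap terms when one closed manifold is decomposed in two ways, which holds because $c\ssig(M,D)=0=c\ssig(D^\dagger,M')$ for all $M,M'$.

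The missing ingredient is a mapping class with a nonzero Maslov term that is tied to the caps. The paper uses the order-$3$ class $f$ with $f_*=\begin{pmatrix}-1&-1\\1&0\end{pmatrix}$.
\begin{itemize}
\item $c\ssig(f,f^2)=0$, but $c\ssig(f,f)=\pm1$ since $L,f_*L,f^2_*L$ are pairwise distinct.
\item Hence $\phi(C_{f^2})=-\phi(C_f)$ and $3\phi(C_f)=\mp1$, so $\phi(C_f)$ is odd and $\phi(C_f)\neq\phi(C_{f^2})$ in $\mathbb{Z}/4$.
\item Yet $D^\dagger\circ C_f\circ D$ and $D^\dagger\circ C_{f^2}\circ D$ are both $S^3$, so cancelling the cap terms gives $\phi(C_f)=\phi(C_{f^2})$, a contradiction.
\end{itemize}
Your $(ST)^3=S^2$ suggestion can also be completed, but only with a fact you did not invoke. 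Take $T$ to be the Dehn twist along the meridian. It extends over the solid torus, so $\phi(C_T)=0$ and $c\ssig$ vanishes on every pair involving $T$. Then $c\ssig(ST,ST)=\pm1$ and $c\ssig(ST,(ST)^2)=0$ give $0=\phi(C_{S^2})=3\phi(C_S)\pm1$. So $\phi(C_S)$ is odd, contradicting $2\phi(C_S)=0$. In both versions the same equations are consistent mod $2$, because an odd residue equals its negative mod $2$ but not mod $4$; this is exactly your index-$2$ sanity check.
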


\subsection{Overview of this paper}

The structure of this paper is as follows. In \textsection\ref{section:ext-bicat}, we provide a description of (central) extensions of bicategories via cocycles, which serves as the framework on which $\Bord{sig/2}$ will be defined. This endows the set of equivalence classes of extensions with a group structure; we show via direct computation in Appendix \ref{section:equiv-ext} that an equivalence of bicategories induces an isomorphism of these groups.

The next three sections are devoted to $\Bord{sig/2}$. In \textsection\ref{section:sig}, we recall the definition of $\Bord{sig}$ from \cite{BDSV3} and rewrite it in terms of cocycles. In \textsection\ref{section:sig/2}, we then construct an explicit cocycle for $\Bord{sig/2}$ and realise the half-signature bordism bicategory as an index $2$ symmetric monoidal subextension of $\Bord{sig}$, hence giving it a symmetric monoidal structure. We also construct a componentwise variant $\Bord{csig/2}$, which has representation-theoretic significance. In \textsection\ref{section:indiv}, we show that $\bord{sig/2}$ is indivisible as an extension of $\bord{or}$, and hence $\Bord{sig/2}$ is indivisible as well.

In the final two sections, we adapt the methods in \cite{BDSV4} to classify the linear representations of $\Bord{sig/2}$ and $\Bord{csig/2}$. In \textsection\ref{section:pres}, we construct presentations for these two symmetric monoidal bicategories. Then, in \textsection\ref{section:rep-mtc}, we use these presentations to classify the linear representations of $\Bord{sig/2}$ and $\Bord{csig/2}$.

\subsection{Notations and conventions} Throughout this paper, we adopt the following conventions.

\subsubsection*{Base field} Throughout, $k$ denotes an algebraically closed field of arbitrary characteristic. All notions of linearity are over $k$.

\subsubsection*{Cardinality}  All categories and bicategories considered here are essentially small. Throughout, we implicitly take small models of these. In particular, this allows us to define extensions of bicategories in terms of cocycles. 

\subsubsection*{Equivalent models of bicategories} At various points, it will be useful to consider alternate models of the oriented bordism bicategory and its extensions. We use slightly different notations to make clear which model we are working with, but it will be helpful to bear in mind that $\Bord{or},\Bord{or,exp},\Bord{or,lag}$ all refer to equivalent symmetric monoidal bicategories. The same comment holds for $\Bord{sig}$ and for $\Bord{sig/2}$.

\subsubsection*{The bicategorical analogue of $\vect$} As in \cite{BDSV4}, we let $\Vect$ denote the symmetric monoidal bicategory of Cauchy-complete $k$-linear categories, functors, and natural transformations. There are other bicategorical analogues in the literature, but as shown in \cite[Appendix A]{BDSV4}, these all admit symmetric monoidal functors to $\Vect$ which are fully faithful on 2-morphisms. Using one of these other symmetric monoidal bicategories instead would not change our results.

\subsection{Acknowledgements}

We are grateful to André Henriques for proposing this project, as well as continued guidance and support throughout this process. Thanks are also due to Christopher Douglas for many helpful discussions and useful insights, as well as his feedback on the historical context of our results and the structure of this paper. We also thank Filippos Sytilidis for helpful discussions as well as Luciana Basualdo Bonatto and Thomas Wasserman for their comments on a previous iteration of this paper.

\section{Extensions of bicategories} \label{section:ext-bicat}

We describe a notion of extensions of a bicategory $\mathbf{C}$ by an abelian group $A$. Roughly speaking, this is a bicategory $\hatC$ with the same objects and 1-morphisms as $\mathbf{C}$, but now the 2-morphisms are instead replaced by $A$-families of $2$-morphisms with compatible $A$-actions. This may be viewed as a generalisation of central extensions of groups. As with the case for groups, we may classify such extensions with a notion of cocycles, which endows the set of isomorphism classes of extensions $\Ext(\mathbf{C},A)$ with a group structure and relates it to the cohomology of $\mathbf{C}$.

\subsection{Extensions of categories}

We first provide a quick overview of extensions of ordinary categories, of which central extensions of groups are a special case.

\begin{defn}
    Let $\mathbf{C}$ be a category and $A$ an abelian group. An \emph{extension of $\mathbf{C}$ by $A$} is a category $\hatC$ with a functor $q: \hatC \rightarrow \mathbf{C}$ such that:
    \begin{itemize}
        \item For every two objects $x,y$, there is a free $A$-action on $\Hom_{\hatC}(x,y)$.
        \item This $A$-action is compatible with composition, i.e. $ (a\cdot g)\circ f = g\circ(a\cdot f) = a\cdot(g\circ f)$ for all composable $f,g$ and for all $a\in A$.
        \item $q: \hatC \rightarrow \mathbf{C}$ is a quotient of $\hatC$ by the action of $A$. In particular, $q$ is a bijection on objects.
    \end{itemize}

    Given two extensions $\hatC, \hatC'$ of $\mathbf{C}$ with quotient functors $q, q'$, an \emph{isomorphism} between them is an equivalence of categories $F: \hatC \rightarrow \hatC'$ such that $q = q'\circ F$ and $F(a\cdot f) = a\cdot Ff$ for each morphism $f$ and each $a\in A$. Let $\Ext(\mathbf{C},A)$ denote the set of isomorphism classes of extensions of $\mathbf{C}$ by $A$.
\end{defn}

This is indeed a generalisation of central extensions of groups:
\begin{eg}
    Let $0\rightarrow A\rightarrow \widehat{G} \overset{q}\rightarrow G \rightarrow 0$ be a central extension of $G$ by $A$. Then, $Bq: B\widehat{G}\rightarrow BG$ is an extension of $BG$ by $A$.
\end{eg}

The data of a central extension $q:\widehat{G}\rightarrow G$ of groups may be captured by a \emph{cocycle} $c:G\times G \rightarrow A$: for each $g\in G$, pick a lift $\widehat{g}\in \widehat{G}$, and for each $g,h\in G$, let $c(g,h)$ be the unique element $a\in A$ such that $\widehat{g}\widehat{h} = a\cdot\widehat{gh}$. Associativity of multiplication in $\widehat{G}$ is equivalent to the cocycle condition
\begin{equation}\label{eq:cocyc-cat}
    c(g,f) + c(h,gf) = c(h,g) + c(hg,f).
\end{equation}
Two cocycles determine isomorphic extensions if and only if they differ by a \emph{coboundary}: these are functions of the form
\begin{equation}\label{eq:cobound-cat}
    c(g,h) = a(g) - a(h)
\end{equation}
where $a:G\rightarrow A$ is a function corresponding to a change in choice of lift of each $g\in G$. The set of extensions of central extensions of $G$ by $A$ (up to isomorphism) is thus in bijection with the quotient of the abelian group of cocycles by the subgroup of coboundaries, and so obtains an abelian group structure. This is in fact isomorphic to the group cohomology $H^2(G;A)$.

An analogous result holds for categories, as laid out in \cite{cohom-cat}. Given a category $\mathbf{C}$, the data of an extension is captured by a cocycle $c$, which takes as input two composable morphisms in $\mathbf{C}$ and outputs an element of $A$. This has to satisfy the cocycle condition (\ref{eq:cocyc-cat}) for each composable $f,g,h$. Given any function $a$ assigning to each morphism of $\mathbf{C}$ an element of $A$, its coboundary is given as in (\ref{eq:cobound-cat}). Then, $\Ext(\mathbf{C},A)$ is in bijection with the quotient of the group of cocycles by the subgroup of coboundaries, and so obtains an abelian group structure. This is exactly $H^2(N(\mathbf{C});A)$, the second cohomology of the nerve of $\mathbf{C}$.

In the rest of this section, we will be extending these ideas to bicategories.

\subsection{Extensions of bicategories}

We define extensions of bicategories analogously to extensions of categories. We also define a related concept, monoidal extensions of monoidal bicategories. For the rest of this section, we will use the same notation and terminology for bicategories and lax functors as in \cite[\textsection 2.1 \& \textsection 4.1]{JY}.

In the case for ordinary categories, an extension $\hatC$ of $\mathbf{C}$ has the same objects, but the morphisms are replaced by $A$-families of morphisms. For bicategories, we instead want $\hatC$ to have the same objects and 1-morphisms as $\mathbf{C}$, and $A$-families of 2-morphisms. In particular, we require that both bicategories have the same identity 1-morphisms and that their 1-morphisms compose in the same way. Hence, we want the quotient map $q:\hatC\rightarrow\mathbf{C}$ to be a \emph{strict} functor, i.e. the natural transformations associated to lax functoriality and lax unitality are identity 2-morphisms. 

\begin{defn}
    Let $\mathbf{C}$ be a bicategory and $A$ be an abelian group. An \emph{extension of $\mathbf{C}$ by $A$} is a bicategory $\hatC$ with a strict functor $q: \hatC \rightarrow \mathbf{C}$ such that:
    \begin{itemize}
        \item For every pair of 1-morphisms $f,g$ with the same source and target, there is a free $A$-action on $\Hom_{\hatC}(f,g)$.
        \item This $A$-action is compatible with both horizontal and vertical composition, i.e. $(g \cdot \beta) \circ \alpha = g \cdot (\beta \circ \alpha) = \beta \circ (g \cdot \alpha)$ for all vertically composable $\beta,\alpha$ and for all $g\in A$, and similarly, $(g \cdot \beta) \star \alpha = g \cdot (\beta \star \alpha) = \beta \star (g \cdot \alpha)$ for all horizontally composable $\beta,\alpha$ and for all $g\in A$.
        \item $q: \hatC \rightarrow \mathbf{C}$ is a quotient of $\hatC$ by the action of $A$.
    \end{itemize}

    Given two extensions $\hatC, \hatC'$ of $\mathbf{C}$ with quotient functors $q, q'$, an \emph{equivalence} between them is an equivalence of bicategories $F: \hatC \rightarrow \hatC'$ such that $q = q'\circ F$ and $F(g\cdot\alpha) = g\cdot F\alpha$ for each 2-morphism $\alpha$ and each $g\in A$. In particular, this must map the corresponding objects and 1-morphisms of $\hatC$ and $\hatC'$ to each other. However, we allow the component 2-cells $F^2_{g,f}, F^0_x$ associated to the laxity constraints not to be identity 2-morphisms. Instead, they may be any lifts of $\id_{gf}, \id_{\id_x}$ respectively. Let $\Ext(\mathbf{C},A)$ denote the set of equivalence classes of extensions of $\mathbf{C}$ by $A$.
\end{defn}

The oriented bordism bicategory (as defined in the next section) also has a symmetric monoidal structure. For such bicategories, we may also require the $A$-action to be compatible with the symmetric monoidal structure:

\begin{defn} \label{defn:ext}
    Let $\mathbf{C}$ be a (symmetric) monoidal bicategory and $A$ an abelian group. A \emph{(symmetric) monoidal extension of $\mathbf{C}$ by $A$} is a (symmetric) monoidal bicategory $\hatC$ with a (symmetric) monoidal functor $q: \hatC \rightarrow \mathbf{C}$ such that $\hatC$ is an extension of $\mathbf{C}$ and the $A$-action on $\hatC$ is compatible with the (symmetric) monoidal structure, i.e. $(g \cdot \alpha) \otimes \beta = g \cdot (\alpha \otimes \beta) = \alpha \otimes (g \cdot \beta)$ for all 2-morphisms $\alpha, \beta$ and for all $g\in A$.
\end{defn}

\subsection{Extensions as cocycles} As with groups and ordinary categories, we may describe an extension of a bicategory $\mathbf{C}$ (non-uniquely) via a cocycle. We describe the data of a cocycle, and the conditions it must satisfy.

Let $\mathbf{C}$ be a small bicategory and $q: \hatC \rightarrow \mathbf{C}$ be a extension by $A$. Choose a section of this, i.e. for each 2-morphism $\alpha$ of $\mathbf{C}$, choose some $\widehat{\alpha} \in q^{-1}(\alpha)$. Then, we may write the 2-morphisms of $\hatC$ as pairs $(\alpha, g) = g \cdot \widehat{\alpha}$, where $\alpha$ is a 2-morphism in $\mathbf{C}$ and $g\in A$. Additionally, for each 1-morphism $f$ in $\mathbf{C}$, we set $\widehat{\id}_f$ to be the identity 2-morphism of $f$ in $\hatC$. (We may choose to omit this requirement, but this will result in the conditions for cocycles to be even more complicated than they already are.)

To describe $\hatC$, one has to specify the data of horizontal and vertical composition of 2-morphisms, as well as the associators and unitors. Note that composition is determined purely by the 2-morphisms $(\alpha,0) = \widehat{\alpha}$: Suppose $$(\beta, 0) \circ (\alpha, 0) = (\beta \circ \alpha, c(\beta, \alpha)),$$ where $c(\beta, \alpha) \in A$. Then for all $a, b \in A$, we have $$(\beta, b) \circ (\alpha, a) = (\beta \circ \alpha, a + b + c(\beta, \alpha)).$$ The analogous result for horizontal composition holds.

In other words, it suffices to specify the following data:
\begin{itemize}
    \item For each pair of vertically composable 2-morphisms $\alpha,\beta$ in $\mathbf{C}$, some $c(\beta, \alpha) \in A$ such that $$(\beta, 0) \circ (\alpha, 0) = (\beta \circ \alpha, c(\beta, \alpha)).$$
    \item For each pair of horizontally composable 2-morphisms $\alpha,\beta$ in $\mathbf{C}$, some $\tilde{c}(\beta, \alpha) \in A$ such that $$(\beta, 0) \star (\alpha, 0) = (\beta \star \alpha, \tilde{c}(\beta, \alpha)).$$
    \item For each composable 1-morphisms $f,g,h$ in $\mathbf{C}$ (and so $\hatC$), some $a(f,g,h) \in A$ such that $(\alpha_{h,g,f}, a(h,g,f))$ is the associator of $f,g,h$ in $\hatC$, where $\alpha_{h,g,f}$ is the associator of $f,g,h$ in $\mathbf{C}$.
    \item For each 1-morphism $f$ in $\mathbf{C}$ (and so $\hatC$), some $\ell(f), r(f) \in A$ such that $(\lambda_f, \ell(f))$, $(\rho_f, r(f))$ are the left and right unitors of $f$ in $\hatC$, where $\lambda_f, \rho_f$ are the left and right unitors of $f$ in $\mathbf{C}$. 
\end{itemize}

As $\hatC$ is a bicategory, these have to satisfy the following properties:

    \begin{itemize}
    \item (Associativity of vertical composition) Let $\alpha, \beta, \gamma$ be vertically composable 2-morphisms in $\mathbf{C}$. Then the associativity of $\widehat{\alpha}, \widehat{\beta}, \widehat{\gamma}$ is equivalent to
    \begin{equation}\label{eq:cocyc-1}
        c(\gamma, \beta) + c(\gamma \circ \beta, \alpha) = c(\beta, \alpha) + c(\gamma, \beta \circ \alpha).
    \end{equation}
    For ease of writing, we denote both sides by $c(\gamma, \beta, \alpha)$.
    \item (Unitality of vertical composition) Let $\alpha: f \rightarrow g$ be a 2-morphism in $\mathbf{C}$. Then as $\widehat{\id}_f, \widehat{\id}_g$ are identities in $\hatC$, we have
    \begin{equation}\label{eq:cocyc-2}
        c(\id_g,\alpha) = c(\alpha, \id_f) = 0.
    \end{equation}
    \item (Middle four exchange) Let $f,f',f'': x \rightarrow y$ and $g,g',g'': y \rightarrow z$ be 1-morphisms and $\alpha:f\rightarrow f', \alpha': f'\rightarrow f'', \beta: g \rightarrow g', \beta': g' \rightarrow g''$ be 2-morphisms in $\mathbf{C}$. Then the equality $$\pa{\widehat{\beta}'\circ\widehat{\beta}}\star\pa{\widehat{\alpha}'\circ\widehat{\alpha}} = \pa{\widehat{\beta}'\star\widehat{\alpha}'}\circ\pa{\widehat{\beta}\star\widehat{\alpha}}$$ is equivalent to
    \begin{equation} \label{eq:middle-four}
        c(\alpha',\alpha) + c(\beta',\beta) + \tilde{c}(\beta'\circ\beta,\alpha'\circ\alpha) = \tilde{c}(\beta,\alpha) + \tilde{c}(\beta',\alpha') + c(\beta'\star\alpha',\beta\star\alpha).
    \end{equation}
    In particular, putting $\alpha = \alpha' = \id_f$ and $\beta = \beta' = \id_g$ gives $\tilde{c}(\id_g, \id_f) = 0$.
    \item (Associativity of horizontal composition) Let $\alpha: f \rightarrow f'$, $\beta: g \rightarrow g'$, $\gamma: h \rightarrow h'$ be horizontally composable 2-morphisms in $\mathbf{C}$. Then the equality $$\pa{\widehat{\gamma}\star\pa{\widehat{\beta}\star\widehat{\alpha}}} \circ (\alpha_{h,g,f}, a(h,g,f)) = (\alpha_{h',g',f'}, a(h',g',f')) \circ \pa{\pa{\widehat{\gamma}\star\widehat{\beta}}\star\widehat{\alpha}}$$ is equivalent to 
    \begin{equation} \label{eq:horiz-assoc}
        \begin{aligned}
            &\tilde{c}(\beta,\alpha) + \tilde{c}(\gamma,\beta\star\alpha) + a(h,g,f) + c(\gamma\star(\beta\star\alpha), \alpha_{h,g,f}) \\
            &\quad= \tilde{c}(\gamma,\beta) + \tilde{c}(\gamma\star\beta,\alpha) + a(h',g',f') + c(\alpha_{h',g',f'}, (\gamma\star\beta)\star\alpha).
        \end{aligned}
    \end{equation}
    \item (Naturality of unitors) Let $\alpha: f \rightarrow f'$ be a 2-morphism in $\mathbf{C}$ where $f,f': x \rightarrow y$. Then the equality $$(\lambda_f, \ell(f)) \circ \widehat{\alpha} = \pa{\widehat{\id}_{\id_y} \star \widehat{\alpha}} \circ (\lambda_{f'}, \ell(f'))$$ is equivalent to
    \begin{equation}\label{eq:cocyc-5}
        c(\lambda_f, \alpha) + \ell(f) = \tilde{c}(\id_{\id_y}, \alpha) + c(\id_{\id_y} \star \alpha, \lambda_{f'}) + \ell(f').
    \end{equation}
    Likewise, we must have for the right unitors
    \begin{equation}
        c(\alpha, \rho_f) + r(f) = \tilde{c}(\alpha,\id_{\id_y}) + c(\rho_{f'}, \alpha \star \id_{\id_y}) + r(f').
    \end{equation}
    \item (Unity axiom) Let $f: x \rightarrow y$ and $g: y \rightarrow z$ be 1-morphisms in $\mathbf{C}$. Then the unity axiom $$(\rho_g, r(g)) \star \widehat{\id}_f = \pa{\widehat{\id}_g\star(\lambda_f, \ell(f))}\circ (\alpha_{g,\id_y,f}, a(g,\id_y,f))$$ is equivalent to
    \begin{equation}\label{eq:cocyc-unity}
        \tilde{c}(\rho_g, \id_f) + r(g) = \tilde{c}(\id_g, \lambda_f) + c(\id_g\star\lambda_f, \alpha_{g,\id_y,f}) + a(g,\id_y,f) + \ell(f).
    \end{equation}
    \item (Pentagon axiom) Let $f,g,h,i$ be composable 1-morphisms in $\mathbf{C}$. Then the pentagon axiom is equivalent to
    \begin{equation}\label{eq:cocyc-0}
        \begin{aligned}
            &a(ih,g,f) + a(i,h,gf) + c(\alpha_{ih,g,f},\alpha_{i,h,gf}) \\
            &\quad= a(i,h,g) + a(i,hg,f) + a(h,g,f) \\
            &\quad\qquad + \tilde{c}(\alpha_{i,h,g},\id_f) + \tilde{c}(\id_i, \alpha_{h,g,f}) + c(\id_i\star\alpha_{h,g,f}, \alpha_{i,hg,f}, \alpha_{i,h,g}\star\id_f).
        \end{aligned}
    \end{equation}
\end{itemize}

Conversely, any tuple of data $(c,\tilde{c},a,\ell,r)$ satisfying the conditions (\ref{eq:cocyc-1})-(\ref{eq:cocyc-0}) produces an extension $\hatC$ of $\mathbf{C}$. Hence, we take this to be our definition of cocycle:

\begin{defn}
    Let $\mathbf{C}$ be a small bicategory and $A$ an abelian group. A \emph{cocycle} is a tuple $(c,\tilde{c},a,\ell,r)$ of functions with values in $A$ and whose domains are, respectively, pairs of vertically composable 2-morphisms in $\mathbf{C}$, pairs of horizontally composable 2-morphisms in $\mathbf{C}$, triples of composable 1-morphisms in $\mathbf{C}$, 1-morphisms in $\mathbf{C}$ and 1-morphisms in $\mathbf{C}$, satisfying equations (\ref{eq:cocyc-1})-(\ref{eq:cocyc-0}).

    Let $Z(\mathbf{C},A)$ denote the set of cocycles. Then this is naturally an abelian group by componentwise addition.
\end{defn}

\begin{rk}
    This definition of a cocycle may seem unwieldy, but the conditions simplify greatly when $a=\ell=r=0$. For example, this happens when $\mathbf{C}$ and $\hatC$ are 2-categories, i.e. their associators and unitors are identity 2-morphisms. In such cases, the conditions reduce to requiring that $c,\tilde{c}$ satisfy associativity of vertical composition (\ref{eq:cocyc-1}), middle four exchange (\ref{eq:middle-four}) and a simplified version of assocativity of horizontal composition:
    \begin{equation}
        \tilde{c}(\beta,\alpha) + \tilde{c}(\gamma,\beta\star\alpha) = \tilde{c}(\gamma,\beta) + \tilde{c}(\gamma\star\beta,\alpha).
    \end{equation}
\end{rk}

We have now defined a cocycle, which captures the data of an extension. However, multiple cocycles may produce equivalent extensions. To get a group structure on $\Ext(\mathbf{C},A)$, we need to take the quotient of $Z(\mathbf{C},A)$ by the subgroup of coboundaries, described next.

\subsection{Equivalences of extensions as coboundaries}

We describe a notion of a coboundary, such that two cocycles produce equivalent extensions if and only if they differ by a coboundary.

Let $(F, F^2, F^0): \hatC \rightarrow \hatC'$ be an equivalence of extensions of $\mathbf{C}$ by $A$. As in the previous subsection, we may let the 2-morphisms of $\hatC$ and $\hatC'$ be of the form $(\alpha, g)$, where $\alpha$ is a 2-morphism of $\mathbf{C}$ and $g\in A$. Let the resultant cocycles be $(c,\tilde{c},a,\ell,r)$ and $(c',\tilde{c}',a',\ell',r')$ respectively.

For each 2-morphism $\alpha$ in $\mathbf{C}$, $F$ sends $(\alpha,0)$ in $\hatC$ to some $(\alpha, m(\alpha))$ in $\hatC'$. As identity 2-morphisms are sent to identity 2-morphisms, we have $m(\id_f) = 0$ for every 1-morphism $f$. Now, the functoriality of $F$ on each hom space of $\hatC$ is equivalent to
\begin{equation}\label{eq:cob-1}
    c'(\beta,\alpha) - c(\beta,\alpha) = m(\beta\alpha) - m(\alpha) - m(\beta).
\end{equation}

For each composable 1-morphisms $f,g$, $F^2_{g,f}$ must be of the form $(\id_{gf}, n(g,f))$, some $n(g,f) \in A$. Let $\alpha:f\rightarrow f', \beta:g\rightarrow g'$ be 2-morphisms in $\mathbf{C}$. Then the naturality of $F^2$ is equivalent to
\begin{equation}
    \tilde{c}'(\beta,\alpha) - \tilde{c}(\beta,\alpha) = m(\beta\alpha) - m(\alpha) - m(\beta) + n(g,f) - n(g',f').
\end{equation}

Let $f,g,h$ be composable 1-morphisms. The lax associativity condition on $F^2$ is equivalent to
\begin{equation}
    a'(h,g,f) - a(h,g,f) = n(h,g) + n(hg,f) - n(g,f) - n(h,gf) + m(\alpha_{h,g,f}).
\end{equation}

Finally, for each object $x$ in $\mathbf{C}$, $F^0_x$ must be of form $(\id_{\id_x}, k(x))$, some $k(x)\in A$. For a 1-morphism $f:x\rightarrow y$ in $\mathbf{C}$, the lax unity conditions are equivalent to 
\begin{equation}
    \ell'(f) - \ell(f) = k(y) + n(\id_y,f) - m(\lambda_f)
\end{equation}
and
\begin{equation}\label{eq:cob-0}
    r'(f) - r(f) = k(x) + n(f, \id_x) - m(\rho_f).
\end{equation}

Conversely, given two cocycles $(c,\tilde{c},a,\ell,r)$ and $(c',\tilde{c}',a',\ell',r')$, if there exist $m,n,k$ such that equations (\ref{eq:cob-1}) to (\ref{eq:cob-0}) are satisfied, then we may construct an equivalence $(F,F^2,F^0)$ between the two extensions that these induce.

Hence, we have the following definition of coboundary:

\begin{defn}\label{defn:coboundary}
    Let $\mathbf{C}$ be a small bicategory and $A$ an abelian group. Let $(m,n,k)$ be a tuple of functions with values in $A$ and whose domains are, respectively, 2-morphisms in $\mathbf{C}$, pairs of composable 1-morphisms in $\mathbf{C}$, objects in $\mathbf{C}$, satisfying $m(\id_f) = 0$ for every 1-morphism $f$. The \emph{coboundary} of $(m,n,k)$ is the tuple $(c,\tilde{c},a,\ell,r)$ whose components $c(\beta,\alpha), \tilde{c}(\beta,\alpha), a(h,g,f), \ell(f), r(f)$ are the right-hand sides of the equations (\ref{eq:cob-1}) to (\ref{eq:cob-0}).

    The coboundaries form a group under addition; denote it by $B(\mathbf{C},A)$.
\end{defn}

From our arguments above, two cocycles induce equivalent extensions if and only if they differ by a coboundary. Hence, we have shown:

\begin{prop} \label{prop:ext-gp}
    $\Ext(\mathbf{C},A)$ is in bijection with $Z(\mathbf{C},A)/B(\mathbf{C},A)$.
\end{prop}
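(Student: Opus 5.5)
We construct mutually inverse maps between $\Ext(\mathbf{C},A)$ and $Z(\mathbf{C},A)/B(\mathbf{C},A)$, building directly on the computations in the two preceding subsections.

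\emph{The map from extensions to cohomology classes.} Given an extension $q:\hatC\to\mathbf{C}$, choose a section $\alpha\mapsto\widehat{\alpha}$ with $\widehat{\id}_f=\id_f$. The bicategory axioms of $\hatC$ then force the resulting tuple $(c,\tilde c,a,\ell,r)$ to satisfy (\ref{eq:cocyc-1})--(\ref{eq:cocyc-0}), as derived above, so it lies in $Z(\mathbf{C},A)$.

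\emph{Independence of choices.} There are two things to check.
\begin{itemize}
\item Changing the section. A different section has the form $\widehat{\alpha}'=m(\alpha)\cdot\widehat{\alpha}$ with $m(\id_f)=0$. The identity functor, with $F^2=\id$ and $F^0=\id$, is then an equivalence of extensions whose data is $(m,0,0)$. Hence the two cocycles differ by the coboundary of $(m,0,0)$.
\item Changing the extension within its equivalence class. If $F:\hatC\to\hatC'$ is an equivalence of extensions, the derivation of (\ref{eq:cob-1})--(\ref{eq:cob-0}) shows that the cocycles differ by the coboundary of $(m,n,k)$.
\end{itemize}
So the map $\Ext(\mathbf{C},A)\to Z/B$ is well defined.

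\emph{The map back, and the two inverse relations.} Given a cocycle, define $\hatC$ to have the objects and $1$-morphisms of $\mathbf{C}$, and $2$-morphisms the pairs $(\alpha,g)$. Composition is twisted by $c$ and $\tilde c$, and the associators and unitors are $(\alpha_{h,g,f},a(h,g,f))$, $(\lambda_f,\ell(f))$ and $(\rho_f,r(f))$. The inverses of these $2$-cells are $(\alpha^{-1},-a-c(\alpha^{-1},\alpha))$, and similarly for the unitors, so they are invertible.

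The equations (\ref{eq:cocyc-1})--(\ref{eq:cocyc-0}) are exactly the bicategory axioms:
\begin{itemize}
\item vertical associativity and unitality;
\item the interchange law, i.e. functoriality of horizontal composition, including $\tilde c(\id,\id)=0$ from (\ref{eq:middle-four});
\item naturality of the associator and unitors;
\item the unity and pentagon axioms.
\end{itemize}
The forgetful projection is strict, and $A$ acts freely and compatibly on $\hatC$. Cohomologous cocycles give equivalent extensions: given $(m,n,k)$, define $F$ by $(\alpha,g)\mapsto(\alpha,g+m(\alpha))$, with $F^2=(\id,n)$ and $F^0=(\id,k)$. Equations (\ref{eq:cob-1})--(\ref{eq:cob-0}) are then precisely functoriality, naturality of $F^2$, lax associativity and lax unity. $F$ is bijective on objects, $1$-morphisms and $2$-morphisms, so it is an equivalence.

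Finally, the two maps are inverse to each other:
\begin{itemize}
\item Starting from a cocycle, building $\hatC$, and using the tautological section $\alpha\mapsto(\alpha,0)$ returns the same cocycle.
\item Starting from an extension, the choice of section identifies $\hatC$ with the constructed bicategory. The identification $(\alpha,g)\mapsto g\cdot\widehat{\alpha}$ is a strict isomorphism that commutes with $q$ and with the $A$-action.
\end{itemize}

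The main obstacle is the step that derives the pentagon and unity axioms of the constructed $\hatC$ from (\ref{eq:cocyc-unity}) and (\ref{eq:cocyc-0}), in both directions. It requires carefully expanding iterated horizontal and vertical composites of $2$-morphisms of the form $(\alpha,g)$, using (\ref{eq:cocyc-1}) to rebracket the $c$-terms. I would handle it by first establishing the general formula $(\beta,b)\circ(\alpha,a)=(\beta\circ\alpha,a+b+c(\beta,\alpha))$, together with its horizontal analogue. Every axiom then becomes an identity in $A$ that is read off termwise.
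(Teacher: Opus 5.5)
Your proposal is correct and follows the paper's own argument. The paper's proof is the preceding discussion. There, the conditions (\ref{eq:cocyc-1})--(\ref{eq:cocyc-0}) are derived as equivalent to the bicategory axioms for an extension with a chosen section, and (\ref{eq:cob-1})--(\ref{eq:cob-0}) as equivalent to the data of an equivalence of extensions. You make explicit some steps the paper leaves implicit:
\begin{itemize}
\item the invertibility of the lifted associators and unitors;
\item the change of section, treated as the identity functor with data $(\pm m,0,0)$;
\item the identification of a given extension with the bicategory built from its cocycle.
\end{itemize}
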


This endows $\Ext(\mathbf{C},A)$ with the structure of an abelian group.

\section{The signature bordism bicategory} \label{section:sig}

\subsection{The oriented bordism bicategory}

The main object whose extensions we will be considering is the oriented bordism bicategory $\Bord{or}$. Roughly, this is a bicategory with oriented 1-manifolds as objects, bordisms between them as 1-morphisms, and bordisms between bordisms as 2-morphisms. This is described rigorously in \cite[\textsection 3]{csp-phd}, and we provide an overview below.

\begin{defn}
    Let $M, M'$ be closed oriented $(d-1)$-dimensional manifolds. An \emph{oriented bordism} from $M$ to $M'$ is an oriented $d$-manifold $X$ with an orientation-preserving diffeomorphism $\partial X \cong (-M) \sqcup M'$, where $-M$ is $M$ with the opposite orientation.

    Given such a bordism $X$, write $\parin X = M$ and $\parout X = M'$.
\end{defn}

For any $d$, the closed oriented $d$-dimensional manifolds and the bordisms between them form a symmetric monoidal category:

\begin{defn}
    The \emph{oriented bordism category} $\mathbf{Bord}_{d-1,d}^{\mathrm{or}}$ is the category with the following objects and morphisms:
    \begin{itemize}
        \item Objects are closed oriented $(d-1)$-dimensional manifolds.
        \item Morphisms from $M$ to $M'$ are oriented bordisms from $M$ to $M'$, taken up to diffeomorphism relative to the boundary.
    \end{itemize}
    Given bordisms $X, X'$ with $\parout X = \parin X' = M$, their composition is defined to be $X \cup_M X'$.

    The monoidal structure is given by taking disjoint unions.
\end{defn}

For composition to be well-defined, we need to equip the boundaries of bordisms with collars, so that we may get a smooth structure on the glued bordism $X\cup_M X'$. Different choices of collars give bordisms which are diffeomorphic with respect to the boundary, and so this is indeed a well-defined category.

To extend this to a bicategory, we need to add $2$-morphisms, which requires a notion of bordisms between bordisms. These are \emph{manifolds with corners}, which are topological manifolds with boundary equipped with a maximal smooth atlas of charts to $[0,\infty)^d \subseteq \mathbb{R}^d$. For a manifold with corners $M$, the \emph{index} of a point $x\in M$ is the number of coordinates in $\varphi(x)$ which are $0$, where $\varphi$ is a chart at $x$. In the case of bordisms between bordisms, points will only have index $0$ (corresponding to the interior), $1$ (corresponding to the boundary minus the corners) or $2$ (corresponding to the corners). We write $\partial M$ for the union of the index $1$ and $2$ points.

\begin{defn}\label{defn:bordism-corners}
    Let $X, X'$ be oriented bordisms between closed oriented $(d-2)$-dimensional manifolds $M$ and $M'$. An \emph{oriented bordism with corners} from $X$ to $X'$ is an oriented $d$-dimensional manifold with corners $Y$ with orientation-preserving diffeomorphism $$\partial Y \cong \pa{(-X) \sqcup X'} \cup_{(-M) \sqcup M \sqcup (-M') \sqcup M'} \pa{I \times (-M) \sqcup I \times M'}.$$

    Here, the two copies of $M$ and the two copies of $M'$ are the corners (index $2$) of $Y$, while the rest of $\partial Y$ has index $1$.

    This has a \emph{horizontal boundary} which consists of the manifolds with boundary $\parin Y = X$ and $\parout Y = X'$, as well as a \emph{vertical boundary} which consists of the manifolds with boundary $\parIn Y = I \times M$ and $\parOut Y = I \times M'$.
\end{defn}

By considering bordisms with corners, we may extend $\mathbf{Bord}_{d-1,d}^{\mathrm{or}}$ to a bicategory:

\begin{defn}\label{defn:bordor}
    The \emph{oriented bordism bicategory} $\mathbf{Bord}_{d-2,d-1,d}^{\mathrm{or}}$ is the category with the following objects and morphisms:
    \begin{itemize}
        \item Objects are closed oriented $(d-2)$-dimensional manifolds.
        \item 1-morphisms from $M$ to $M'$ are oriented bordisms from $M$ to $M'$.
        \item 2-morphisms from $X$ to $X'$ with $\parin X = \parin X'$ and $\parout X = \parout X'$ are bordisms with corners from $X$ to $X'$, taken up to diffeomorphism relative to the boundary.
    \end{itemize}
    Composition can be done either vertically or horizontally, by gluing corresponding components of the horizontal or vertical boundaries respectively.

    As in the ordinary category case, the monoidal structure is given by taking disjoint unions.
\end{defn}

As with the case of the bordism category, we need to equip our bordisms with collars to have a smooth structure upon gluing. However, there is now some subtlety that arises from the fact that 1-morphisms are not diffeomorphism classes of bordisms. When gluing two 1-morphisms, different choices of collars result in different (though diffeomorphic) smooth structures, which are now different (though isomorphic) 1-morphisms. This is resolved in \cite{csp-phd} by showing that any two such choices result in a bordisms which admit a diffeomorphism that is canonical up to isotopy, and so despite some arbitrary choices being made, $\mathbf{Bord}_{d-2,d-1,d}^\mathrm{or}$ as defined is indeed a bicategory.

Henceforth, we shall focus on the case $d=3$, i.e. the bicategory $\Bord{or}$. The arguments made will be topological in nature, and so we will not concern ourselves with the subtleties which arise from choosing collars.

We wish to study the symmetric monoidal extensions of $\Bord{or}$ by $\mathbb{Z}$. First, we describe one such extension, the signature bordism bicategory.

\subsection{The signature bordism bicategory}

The signature bordism bicategory $\Bord{sig}$, as first described in \cite{BDSV3}, is not a symmetric monoidal extension of the model of $\Bord{or}$ presented in Definition \ref{defn:bordor}. Rather, it is a symmetric monoidal extension of $\Bord{or,exp}$, which is equivalent as a symmetric monoidal bicategory to $\Bord{or}$. One may then obtain a symmetric monoidal extension of $\Bord{or}$ by pulling $\Bord{sig}$ back along an equivalence between $\Bord{or}$ and $\Bord{or,exp}$. We shall present a geometric description of $\Bord{or,exp}$ and $\Bord{sig}$ as well as a characterisation in terms of cocycles.

\begin{defn}\label{defn:bordexp}
    The \emph{oriented bordism bicategory with expanded manifolds} $\Bord{or,exp}$ is the bicategory with:
    \begin{itemize}
        \item Objects: Pairs $(S, D_S)$ where $S$ is a closed oriented 1-dimensional manifold and $D_S$ a 2-manifold with boundary $\partial D_S = S$. $D_S$ is oriented such that the orientation induced on $\partial D_S$ is the same as the orientation on $S$.
        \item 1-morphisms: Pairs $(\Sigma, H_\Sigma)$ where $\Sigma$ is a oriented 2-dimensional bordism, and $H_\Sigma$ is a 3-manifold with boundary $$\partial H_{\Sigma} = \ol{\Sigma}:= D_{\parin \Sigma} \cup_{\parin \Sigma} \Sigma \cup_{\parout \Sigma} \pa{-D_{\parout \Sigma}}.$$ $H_\Sigma$ is oriented such that the orientation induced on $\partial H_\Sigma$ is the same as the orientation on $\ol{\Sigma}$.
        \item 2-morphisms: oriented bordisms with corners, taken up to diffeomorphism relative to the boundary.
    \end{itemize}
    The monoidal structure is given by taking disjoint unions.
\end{defn}
From the definition, it is clear that the symmetric monoidal functor $\Bord{or,exp}\rightarrow\Bord{or}$ given by forgetting the extra structure on the objects and 1-morphisms is an equivalence of symmetric monoidal bicategories.

\begin{defn}\label{defn:bordsig}
    The \emph{signature bordism bicategory} $\Bord{sig}$ is the symmetric monoidal bicategory with:
    \begin{itemize}
        \item Objects and 1-morphisms: as in $\Bord{or,exp}$ in Definition \ref{defn:bordexp}.
        \item 2-morphisms: Equivalence classes of pairs $(M, X)$ where $M$ is a 3-dimensional oriented bordism with corners, and $X$ is a 4-manifold whose boundary $\partial X = \ol{M}$ is obtained from $M$ by gluing $H_{\parin M}$ and $-H_{\parout M}$ onto its horizontal boundaries and $V_{\parIn M}:= I\times D_{\parin\parin M}$ and $-V_{\parOut M}:= -(I\times D_{\parout\parin M})$ onto its vertical boundaries.
        \begin{figure}[hbt!]
            \includegraphics[scale=0.5]{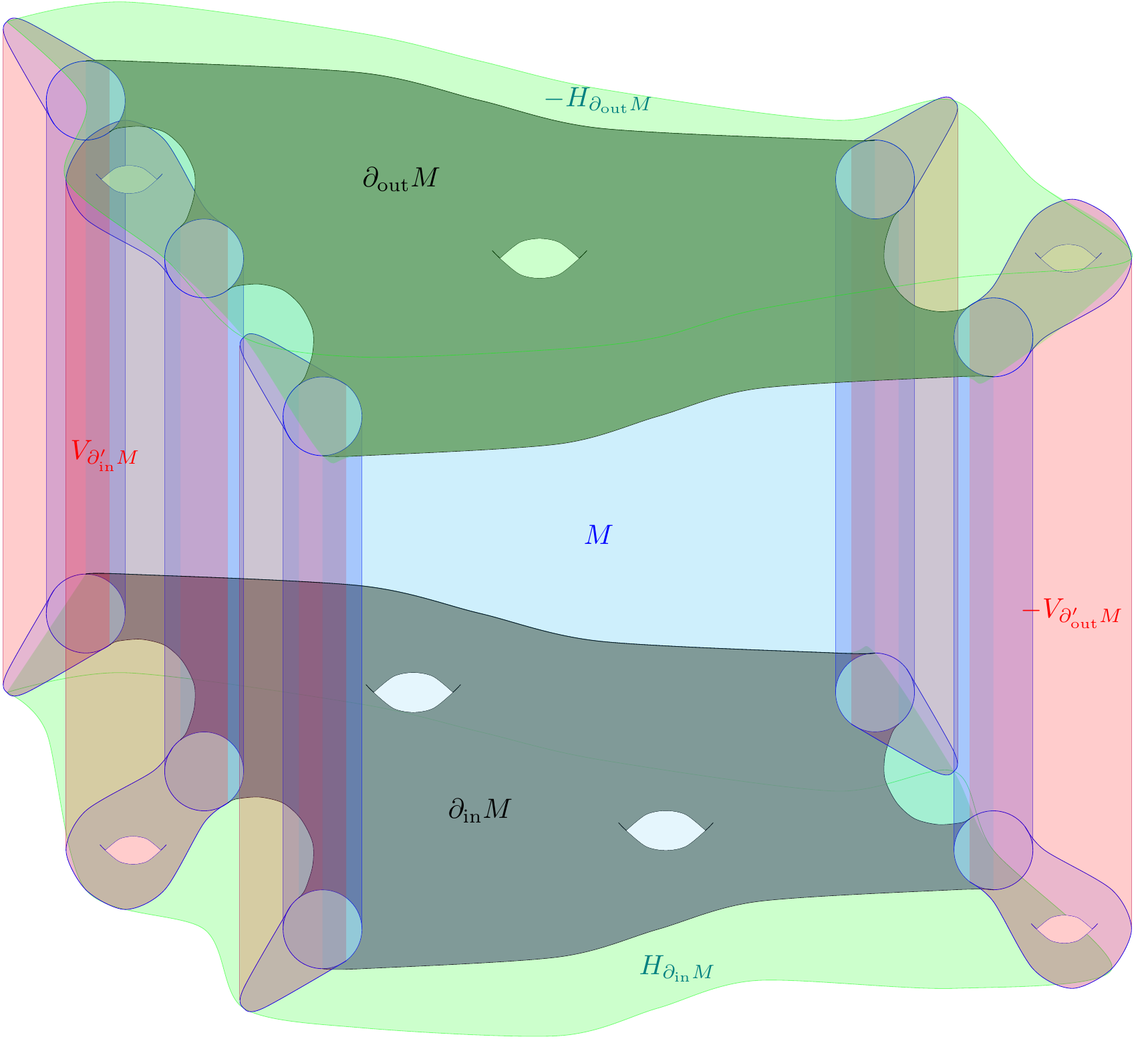}
            \caption{Schematic for $\ol{M}$.} \label{fig:bordsig-2mor}
        \end{figure}
        Figure \ref{fig:bordsig-2mor} shows a schematic for $\ol{M}$.

        The pairs $(M,X)$ and $(M',X')$ are equivalent if: $M$ and $M'$ are diffeomorphic relative to the boundary, which induces a diffeomorphism $\ol{M} \cong \ol{M'}$; and the closed 4-manifold $(-X) \cup_{\ol{M}} X'$ is cobordant to $\emptyset$.
    \end{itemize}
    The composition of 1-morphisms $(\Sigma, H_{\Sigma})$ and $(\Sigma', H_{\Sigma'})$ with  $\parout \Sigma = \parin \Sigma' = S$ is given by $$(\Sigma \cup_S \Sigma', H_{\Sigma} \cup_{D_S} H_{\Sigma'}).$$
    Similarly, the vertical composition of two 2-morphisms $(M, X)$ and $(M', X)$ with, respectively, target and source $(\Sigma, H_\Sigma)$ is given by $$\pa{M \cup_\Sigma M', X\cup_{H_\Sigma}X'}$$
    and similarly, the horizontal composition of two 2-morphisms $(M, X)$ and $(M', X')$ with $\parOut M \cong \parIn M' \cong \Sigma$ is $$\pa{M \cup_\Sigma M', X\cup_{V_\Sigma}X'}$$

    The monoidal structure is given by taking disjoint unions.
\end{defn}

Based on this definition, it is not immediately obvious how $\Bord{sig}$ might be an extension of $\Bord{or,exp}$ by $\mathbb{Z}$, or even that composition respects the equivalence relations on the $(M,X)$. This can be seen via the relation between cobordant 4-manifolds and the signature.

\begin{defn}
    Let $X$ be an oriented $4n$-manifold with boundary. Consider the map on cohomology $q^*: H^{2n}(X,\partial X; \mathbb{R}) \rightarrow H^{2n}(X; \mathbb{R})$. The cup product induces a symmetric, non-degenerate bilinear form on $\mathrm{Im}(q^*)$. The \emph{signature} of $X$, written $\sigma(X)$, is the signature of this bilinear form.
\end{defn}

For 4-manifolds, this exactly classifies when 4-manifolds are cobordant. Specifically, two closed 4-manifolds are cobordant if and only if they have the same signature \cite{Rohlin}. Moreover, by Novikov's additivity of the signature \cite[pp. 587--589]{AS-Novikov}, the signature of a closed 4-manifold which arises from gluing two 4-manifolds along their common boundary is given by the sum of the signatures of the two 4-manifolds. Thus, we have$$\sigma((-X) \cup_{\ol{M}} X') = \sigma(-X) + \sigma(X') = -\sigma(X) + \sigma(X').$$ Hence, returning to the equivalence relation on 2-morphisms, $(M,X)$ and $(M',X')$ are equivalent if and only if $M, M'$ are diffeomorphic relative to the boundary and $\sigma(X) = \sigma(X')$.

Let $\mathbb{Z}$ act on the 2-morphisms $\Bord{sig}$ as follows:
\begin{equation*}
    n \cdot (M, X) = 
    \begin{cases}
        \pa{M, X \sqcup \coprod^n \mathbb{CP}^2}, &n \ge 0 \\
        \pa{M, X \sqcup \coprod^{n} (-\mathbb{CP}^2)}, &n < 0 
    \end{cases}
\end{equation*}
This is compatible with composition of 2-morphisms in both directions. Thus, the quotient by the $\mathbb{Z}$-action $\Bord{sig}\rightarrow\Bord{or,exp}$ defines an extension by $\mathbb{Z}$. Since the signature is additive along disjoint unions, this is in fact a symmetric monoidal extension.

Henceforth, we regard the 2-morphisms from $\Sigma$ to $\Sigma'$ in $\Bord{sig}$ as pairs $(M, n)$, where $M$ is a bordism with corners from $\Sigma$ to $\Sigma'$, taken up to diffeomorphism relative to the boundary, and $n \in \mathbb{Z}$. Here, $n$ represents the signature of the 4-manifold $X$ with boundary $\ol{M}$.

We now move from this geometric definition of $\Bord{sig}$ to an algebraic definition given by cocycles. 

\subsection{$\Bord{sig}$ in terms of cocycles}

Considering $\Bord{sig}$ as an extension of $\Bord{or,exp}$ (so ignoring the symmetric monoidal structure), its bicategorical structure may be captured with the data of a cocycle $$\mathbf{c}\ssig = (c\ssig,\tilde{c}\ssig,a\ssig,\ell\ssig,r\ssig)$$ as described in \textsection\ref{section:ext-bicat}. We now compute this cocycle, which will provide an algebraic description for the signature extension. In \textsection\ref{section:sig/2} and \textsection\ref{section:indiv}, we will then use this to study the subextensions of $\Bord{sig}$.

As explained in \textsection\ref{section:ext-bicat}, to compute the cocycle for an extension, we have to first pick a choice of lift for each 2-morphism. There is an obvious choice for this: for each 2-morphism $M$, choose its lift to be $(M,0)$. (It's not immediately clear a priori that identity 2-morphisms lift to identity 2-morphisms; this will be checked later in Lemma \ref{lem:c-norm}.)

To compute $c\ssig$ and $\tilde{c}\ssig$, we need to determine how the signature term $n$ is affected by composition of 2-morphisms. Composition of 2-morphisms in $\Bord{sig}$ involves gluing two $X,X'$ 4-manifolds along some portion of their boundaries $H$. We wish to determine how the signature of this new 4-manifold $X \cup_H X'$ relates to the signatures of the original two 4-manifolds $X, X'$. This is an application of Wall's non-additivity formula, which we outline below.

Wall \cite{Wall} showed that $\sigma(X \cup_H X')$ may be written in terms of $\sigma(X)$, $\sigma(X')$ and an invariant based on the vector space $V := H_1(\partial H; \mathbb{R})$, which has a symplectic bilinear form given by the intersection form, along with three specific Lagrangian subspaces of $V$.

In general, let $(V, \omega)$ be a finite-dimensional symplectic vector space and $A, B, C$ three Lagrangian subspaces. $\omega$ induces a bilinear form $\langle\cdot,\cdot\rangle$ on $W = \frac{A \cap (B+C)}{(A \cap B) + (A \cap C)}$ as follows:
for $a, a' \in A \cap (B+C)$, let $a + b + c = a' + b' + c' = 0$, where $b,b' \in B$ and $c, c' \in C$. Then define $\langle a,a'\rangle := \omega(a, b')$.

Wall showed that $\langle\cdot,\cdot\rangle$ is a well-defined non-degenerate symmetric bilinear form, and hence it makes sense to define:

\begin{defn}
    Let $(V, \omega)$ be a finite-dimensional symplectic vector space and $A, B, C$ three Lagrangian subspaces. \emph{Wall's invariant} $\sigma(V; A, B, C)$ is the signature of $\langle\cdot,\cdot\rangle$.
\end{defn}
\begin{rk}
    There is another invariant of Lagrangian subspaces of a symplectic vector space, called the Maslov index, which first appears in the literature in \cite[\textsection 1.5]{LV}. As Wall's invariant coincides with this \cite[\textsection 12]{CLM}, it is often referred to as the Maslov index.
\end{rk}
\begin{rk}
    Interchanging the roles of any two of $A,B,C$ flips the sign of $\langle\cdot,\cdot\rangle$. Indeed, $\omega(a,b') = \omega(a,a'+b') = -\omega(a,c')$ and so interchanging the roles of $A,C$ flips the sign. A similar argument can be made for the other two swaps.
\end{rk}

We now note a simple condition for $\sigma(V;A,B,C)$ to be zero:
\begin{lem} \label{lem:coinc-zero}
    If any two of $A,B,C$ coincide, then $\sigma(V; A, B, C) = 0$.
\end{lem}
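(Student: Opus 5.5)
By the preceding Remark, interchanging any two of $A,B,C$ only flips the sign of $\langle\cdot,\cdot\rangle$, and so only changes $\sigma(V;A,B,C)$ by a sign. It therefore suffices to treat the two cases $B=C$ and $A=B$. In each case the plan is to show that the quotient space $W$ itself is zero, so that its signature vanishes trivially.

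\emph{Case $B = C$.} Here $B+C = B$, so $A\cap(B+C) = A\cap B$. This is already contained in $(A\cap B)+(A\cap C) = A\cap B$. Hence $W = \frac{A\cap B}{A\cap B} = 0$.

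\emph{Case $A = B$.} Here $A\cap(B+C) = A\cap(A+C) = A$, since $A\subseteq A+C$. Also $(A\cap B)+(A\cap C) = A + (A\cap C) = A$. Hence again $W = A/A = 0$.

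By symmetry, the remaining case $A = C$ is identical to the case $A = B$: one gets $A\cap(B+A) = A$ and $(A\cap B)+A = A$, so $W=0$. In every case $W=0$, so the form $\langle\cdot,\cdot\rangle$ lives on the zero space and $\sigma(V;A,B,C)=0$. This is a direct check, and no step is a real obstacle. The only subtlety is remembering that $W$ is not symmetric in $A,B,C$ on the nose, which is why I check each coincidence directly rather than relying only on the Remark. Note that the Lagrangian hypothesis is not even needed for this argument.
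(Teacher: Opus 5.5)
Your proof is correct and follows essentially the same approach as the paper. The paper also handles $B=C$ and $A=B$ by showing $W=0$ exactly as you do; your explicit check of $A=C$ fills in the case the paper leaves implicit.
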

\begin{proof}
    If $B=C$, then $A\cap(B+C) = A \cap B = (A\cap B)+(A\cap C)$, so $W = \frac{A\cap(B+C)}{(A\cap B)+(A\cap C)}$ is the zero vector space, and thus has signature $0$.

    If $A=B$, then $A \cap (B+C) = A$ while $(A\cap B) + (A\cap C) = A + (A\cap C) = A$, so again $W$ is the zero vector space, and so has signature $0$.
\end{proof}

Wall's invariant measures the failure of the signature to be additive when two $4k$-manifolds are glued along subspaces of their boundary.

\begin{thm}[\cite{Wall}]\label{thm:wall}
    Let $M_-,H,M_+$ be $(4k-1)$-manifolds with common boundary $-\partial M_- = \partial H = \partial M_+ = \Sigma$ and $X_-, X_+$ be $4k$-manifolds with $\partial X_+ = M_+ \cup_\Sigma (-H), \partial X_- = X_0 \cup_\Sigma M_-$, and $X_0 = X_+ \cup_{H} X_-$, as in Figure \ref{fig:wall-diagram}. Then the signatures of $X_0$ and $X_\pm$ are related by $$\sigma(X_0) = \sigma(X_-) + \sigma(X_+) - \sigma(V; A, B, C)$$
    where $V = H_{2k-1}(\Sigma; \mathbb{R})$ and $A, B, C$ are the Lagrangian subspaces of $V$ associated to the inclusions $\Sigma \hookrightarrow M_-$, $\Sigma \hookrightarrow H$, $\Sigma \hookrightarrow M_+$ respectively.
    \begin{figure}[hbt!]
        \includegraphics[scale=0.5]{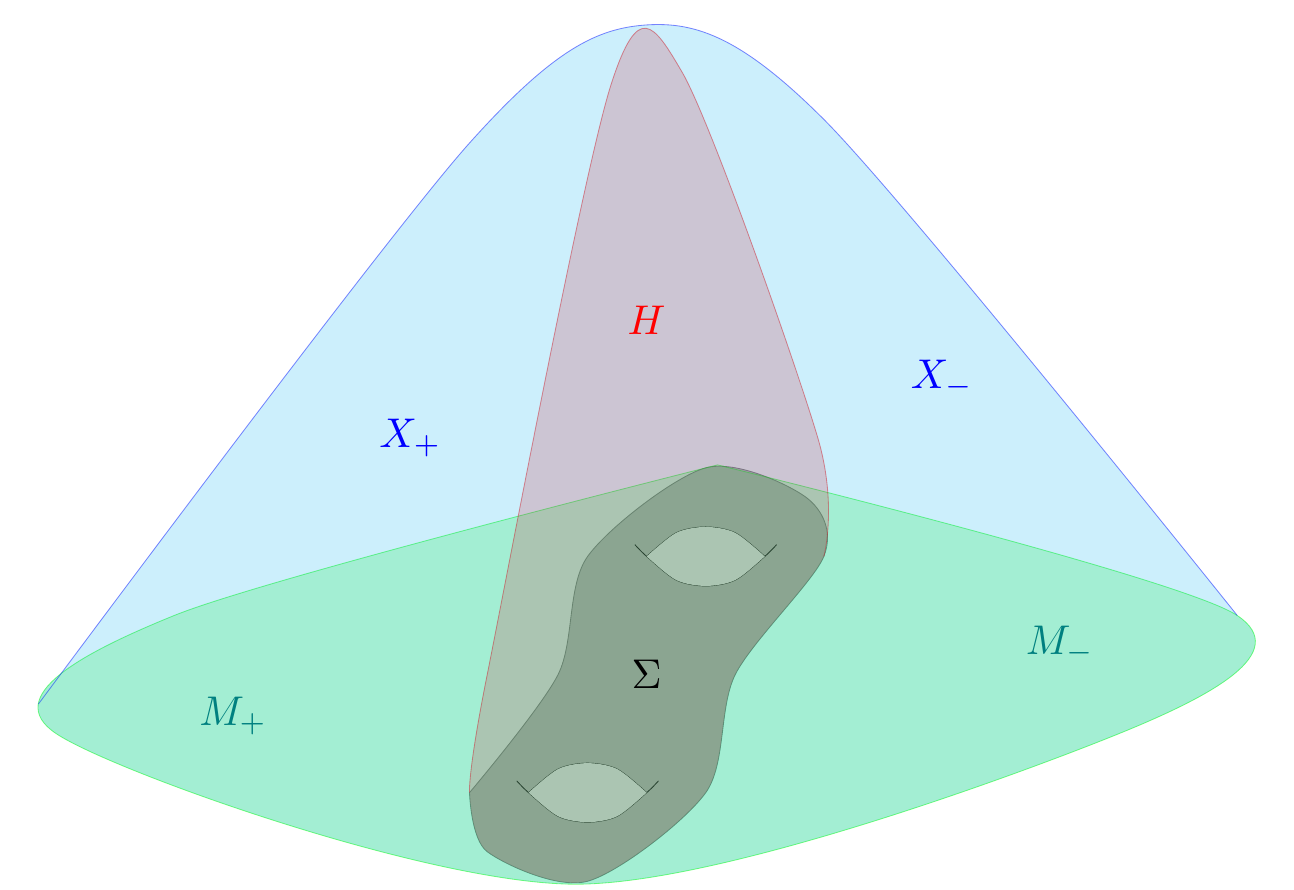}
        \caption{Gluing $X_\pm$ to form $X_0$ in Wall's Theorem.} \label{fig:wall-diagram}
    \end{figure}
\end{thm}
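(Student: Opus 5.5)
We would prove Theorem~\ref{thm:wall} by a Mayer--Vietoris analysis of the intersection form of $X_0$, working throughout with real coefficients and in homology. The signature of a $4k$-manifold with boundary is the signature of the intersection pairing on $H_{2k}(X)$ modulo its radical, which is the kernel of $H_{2k}(X)\to H_{2k}(X,\partial X)$. Here $\partial X_0 = M_-\cup_\Sigma M_+$. The two pieces $X_\pm$ meet in $H$, and $H$ meets $\partial X_0$ in $\Sigma$. We would write $j_\pm: H_{2k}(X_\pm)\to H_{2k}(X_0)$ for the maps induced by inclusion.

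\emph{Step 1: the images of $X_+$ and $X_-$ are orthogonal and recover $\sigma(X_+)+\sigma(X_-)$.} Any class in $X_+$ can be pushed off $H$ into the interior of $X_+$ using a collar, and likewise for $X_-$. Hence $\mathrm{Im}\,j_+$ and $\mathrm{Im}\,j_-$ are orthogonal, and on each of them the pairing is the pullback of the pairing on $X_\pm$. Let $U=\mathrm{Im}\,j_++\mathrm{Im}\,j_-$. We would show that the form restricted to $U$, modulo its radical, has signature $\sigma(X_+)+\sigma(X_-)$. This requires checking that the kernel of $j_\pm$ and the classes that become radical in $X_0$ only affect radicals. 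That check uses the exact sequences of the pairs $(X_\pm,\partial X_\pm)$.

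\emph{Step 2: build classes from $W$.} Take $a\in A\cap(B+C)$, and write $a+b+c=0$ with $a,b,c$ the images in $V=H_{2k-1}(\Sigma)$ of cycles that bound in $M_-$, $H$ and $M_+$ respectively. Choose chains $\alpha\subset M_-$, $\beta\subset H$, $\gamma\subset M_+$ bounding representatives of these cycles. Push $\alpha\cup\beta$ into $X_-$ and $\beta\cup\gamma$ into $X_+$ using collars. Glue these chains together with a thin band in $\Sigma$ realising the relation $a+b+c=0$. The result is a $2k$-cycle $\xi_a$ in $X_0$. We would then check three things.
\begin{enumerate}
\item Changing the choices alters $\xi_a$ only by elements of $U$.
\item If $a\in (A\cap B)+(A\cap C)$, then $\xi_a\in U$ modulo the radical.
\item Mayer--Vietoris ($H_{2k}(X_0)\to H_{2k-1}(H)$ together with the boundary data) shows that every class of $H_{2k}(X_0)$ is of the form $u+\xi_a$ with $u\in U$.
\end{enumerate}
This gives a surjection from $W$ onto $H_{2k}(X_0)/U$ modulo radicals. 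Computing $\xi_a\cdot\xi_{a'}$ by pushing the two cycles off each other in opposite collar directions should leave only the contribution at $\Sigma\times\mathrm{pt}$. That contribution is $\pm\omega(a,b')=\pm\langle a,a'\rangle$, and the pairing $\xi_a\cdot u$ for $u\in U$ is computable in the same way.

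\emph{Step 3: assemble the signature.} In general $U$ is neither nondegenerate nor a direct summand. So we would use the standard linear-algebra lemma: if $Q$ is a symmetric form and $U$ a subspace, then $\sigma(Q)=\sigma(Q|_U)+\sigma(Q|_{U^\perp}\ \mathrm{mod}\ U\cap U^\perp)$, up to corrections from isotropic pieces, which pair hyperbolically and contribute zero. We would identify $U^\perp$ modulo $U\cap U^\perp$, after removing radicals, with $W$ carrying the form $-\langle\cdot,\cdot\rangle$. That would give $\sigma(X_0)=\sigma(X_-)+\sigma(X_+)-\sigma(V;A,B,C)$.

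The main obstacle is this last identification. Specifically, one must show that the classes $\xi_a$ can be corrected by elements of $U$ so as to become orthogonal to $U$. One must also show that the degenerate part of $U$ (classes meeting $H$ nontrivially in homology) pairs hyperbolically with the remaining classes rather than contributing to the signature. Finally, the sign must be fixed consistently with the orientation conventions $\partial X_+=M_+\cup(-H)$ and $\partial X_-=H\cup M_-$. We would first handle the sign by testing a simple example, such as $k=1$ with $\Sigma$ a torus and $A,B,C$ three distinct slopes, where all three signatures can be computed directly.
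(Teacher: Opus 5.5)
The paper gives no proof of this statement; it quotes it from Wall. Your overall strategy is the standard one: split off $U=\mathrm{Im}\,j_++\mathrm{Im}\,j_-$, then account for the remainder by classes built from $A\cap(B+C)$. Step 1 is fine. As written, however, the argument contains a false step and leaves its core unproved.

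\textbf{Step 2(3) is false.} Mayer--Vietoris only gives $H_{2k}(X_0)/U\cong\ker\bigl(H_{2k-1}(H)\to H_{2k-1}(X_+)\oplus H_{2k-1}(X_-)\bigr)$, and this is in general larger than $i_H(A\cap(B+C))$. For example, take $\Sigma=\emptyset$ and $S^{2k}\times S^{2k}=(D^{2k}\times S^{2k})\cup_{S^{2k-1}\times S^{2k}}(D^{2k}\times S^{2k})$. Then $V=0$, so there are no $\xi_a$, and $U=\mathbb{R}[\mathrm{pt}\times S^{2k}]$. Yet $H_{2k}(X_0)$ is $2$-dimensional; the extra class pairs hyperbolically with $U$. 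The correct statement concerns $U^\perp$, and proving it needs Lefschetz duality, which your proposal never uses:
\begin{itemize}
\item Write $x=[z_++z_-]\in U^\perp$ with $z_\pm\subset X_\pm$ and $\partial z_+=-\partial z_-=h\subset H$.
\item Then $x\cdot j_+(y)$ is the Lefschetz pairing of $[z_+]\in H_{2k}(X_+,\partial X_+)$ with $y$. This pairing is perfect, so $[z_+]=0$, and hence $[h]=0$ in $H_{2k-1}(\partial X_+)$.
\item Mayer--Vietoris for $\partial X_+=M_+\cup_\Sigma H$ gives $h\in i_H(C)$. Symmetrically, $h\in i_H(A)$.
\item Since $\ker i_H=B$, it follows that $h=i_H(a)$ with $a\in A\cap(B+C)$. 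So $x-\xi_a\in U$ for a suitable such $a$.
\end{itemize}

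Once this is in place, the obstacles you list are not the real ones. Take $\xi_a=\alpha+\beta+\gamma-\delta$: a single cycle supported in $M_-\cup H\cup M_+$, where $\delta\subset\Sigma$ bounds $a+b+c$. (Two pushed copies of $\beta$, one in each of $X_\pm$, do not close up.) This cycle is disjoint from cycles pushed into the interiors of $X_\pm$. Hence:
\begin{itemize}
\item $\xi_a\in U^\perp$ with no correction needed.
\item Every change of choices, and every failure of linearity in $a$, lies in $U\cap U^\perp$.
\item $U^\perp=(U\cap U^\perp)+\mathrm{span}\{\xi_a\}$.
\end{itemize}
The identity $\sigma(Q)=\sigma(Q|_U)+\sigma(Q|_{U^\perp})$ holds exactly for any subspace $U$, degenerate $Q$ included. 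To see this, pass to $H_{2k}(X_0)/\mathrm{rad}\,Q$, split off a nondegenerate complement of $U\cap U^\perp$ in $U$, and use that an isotropic $R$ in a nondegenerate space $E$ has $\sigma(Q|_{R^\perp})=\sigma(E)$. So no hyperbolic pieces need to be analysed.

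Everything therefore reduces to the one computation your Step 2 only asserts: $\xi_a\cdot\xi_{a'}=\epsilon\,\omega(a',b)=\epsilon\langle a,a'\rangle$, with $\epsilon$ a universal sign. One way to do it is in the local model $\Sigma\times(\text{half-disc})$. Push $\xi_{a'}$ so that its three sheets emanate from an interior point off the $H$-ray. Then only the pushed $M_-$-sheet of $\xi_{a'}$ crosses the $H$-sheet of $\xi_a$. You must actually carry this out. Only once $\epsilon$ is shown to be universal can a single example, such as your torus one, fix it to match the minus sign in the statement.
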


This allows us to compute the terms $c\ssig$ and $\tilde{c}\ssig$.

\subsubsection*{Vertical composition}

We use the notation of Definition \ref{defn:bordsig}. We want to determine the signature of $X \cup_{H_\Sigma}X'$. This is an application of Wall's theorem where $X_+ = X, X_- = X', M_+ = \ol{M} \setminus \mathring{H}_{\Sigma'}, X_0 = H_{\Sigma'}, X = \ol{M'} \setminus \mathring{H}_{\Sigma'}$. The vertical composition of $(M, n)$ and $(M', n')$ is thus $$\pa{M \cup_\Sigma M', n + n' - \sigma(V; A, B, C)}$$ where
\begin{equation} \label{eq:csig}
    \begin{split}
        V &= H_1(\ol{\Sigma}; \mathbb{R}) \\
        A &= \ker(H_1(\ol{\Sigma}; \mathbb{R}) \rightarrow H_1(\ol{M'}\setminus \mathring{H}_\Sigma; \mathbb{R})) \\
        B &= \ker(H_1(\ol{\Sigma}; \mathbb{R}) \rightarrow H_1(H_\Sigma; \mathbb{R})) \\
        C &= \ker(H_1(\ol{\Sigma}; \mathbb{R}) \rightarrow H_1(\ol{M}\setminus \mathring{H}_\Sigma; \mathbb{R}))
    \end{split}
\end{equation}
Hence, we have
\begin{equation}\label{eq:csig-def}
    c\ssig(M',M) = -\sigma(V;A,B,C).
\end{equation}

\subsubsection*{Horizontal composition}

We again use the notation of Definition \ref{defn:bordsig}. This time, we instead apply Wall's theorem to $X\cup_{V_\Sigma}X'$, where $\Sigma = \parOut M = \parIn M'$, and the computation is similar to the previous case. Let $\ol{\Sigma} = D_{\parout\parin M} \cup_{\parout\parin M} \Sigma \cup_{\parout\parout M} (-D_{\parout\parout M})$, and let
\begin{equation} \label{eq:ctildesig}
    \begin{split}
        V &= H_1(\ol{\Sigma}; \mathbb{R}) \\
        A &= \ker(H_1(\ol{\Sigma}; \mathbb{R}) \rightarrow H_1(\ol{M'}\setminus \mathring{V}_\Sigma; \mathbb{R})) \\
        B &= \ker(H_1(\ol{\Sigma}; \mathbb{R}) \rightarrow H_1(V_\Sigma; \mathbb{R})) \\
        C &= \ker(H_1(\ol{\Sigma}; \mathbb{R}) \rightarrow H_1(\ol{M}\setminus \mathring{V}_\Sigma; \mathbb{R}))
    \end{split}
\end{equation}
Then we have
\begin{equation}\label{eq:ctildesig-def}
    \tilde{c}\ssig(M',M) =  -\sigma(V;A,B,C).
\end{equation}

Earlier, we made the assumption that our chosen section lifts identity 2-morphisms to identity 2-morphisms. We may now verify this by proving a slightly stronger lemma.

\begin{lem}\label{lem:c-norm}
    Let $(\Sigma, H_\Sigma)$ and $(\Sigma',H_{\Sigma'})$ be 1-morphisms in $\Bord{or,exp}$. Let $M$ be a mapping cylinder of an orientation-preserving diffeomorphism from $\Sigma$ to $\Sigma'$ which sends $\ker(H_1(\ol{\Sigma};\mathbb{R})\rightarrow H_1(H_\Sigma;\mathbb{R}))$ to $\ker(H_1(\ol{\Sigma'};\mathbb{R})\rightarrow H_1(H_{\Sigma'};\mathbb{R}))$. Then:
    \begin{enumerate}
        \item For any 1-morphism $M'$ in $\Bord{or,exp}$ with target $(\Sigma, H_\Sigma)$, we have $$c\ssig(M,M') = 0.$$
        \item For any 1-morphism $M''$ in $\Bord{or,exp}$ with source $(\Sigma', H_{\Sigma'})$, we have $$c\ssig(M'',M) = 0.$$
    \end{enumerate}    
    In particular, if $M = I\times\Sigma$ is the mapping cylinder of the identity diffeomorphism $\id:\Sigma\rightarrow\Sigma$, then $(I\times\Sigma, 0)$ is the identity 2-morphism of $(\Sigma, H_\Sigma)$ in $\Bord{sig}$.
\end{lem}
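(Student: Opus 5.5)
The plan is to compute $c\ssig(M,M') = -\sigma(V;A,B,C)$ directly from (\ref{eq:csig}) and show that two of the three Lagrangians coincide, so that Lemma \ref{lem:coinc-zero} gives zero. (Note that $M'$, $M''$ here should be read as 2-morphisms composable with $M$.) In case (1) the gluing surface is $\ol{\Sigma}$, with $V = H_1(\ol{\Sigma};\mathbb{R})$ and $B = \ker(H_1(\ol{\Sigma}) \to H_1(H_\Sigma))$. The 2-morphism $M$ plays the role whose complement $\ol{M}\setminus\mathring{H}_\Sigma$ determines one of $A$ or $C$; by the ordering conventions in (\ref{eq:csig}), with $M$ the later factor, this is $A$. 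So the key claim is that this Lagrangian equals $B$.

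First I will identify $\ol{M}\setminus\mathring{H}_\Sigma$. Write $M = I\times\Sigma \cup_\phi \Sigma'$ for the mapping cylinder of $\phi$. Then $\ol{M}$ is obtained by capping the horizontal ends with $H_\Sigma$ and $-H_{\Sigma'}$, and the vertical ends with $I\times D$ for the boundary discs $D$. Removing $\mathring{H}_\Sigma$ leaves a 3-manifold that deformation retracts (up to a collar) onto $-H_{\Sigma'}$, because the cylinder $I\times\ol{\Sigma}$ (the mapping cylinder together with the vertical caps $I\times D$) collapses onto its far end. The inclusion $\ol{\Sigma}\hookrightarrow \ol{M}\setminus\mathring{H}_\Sigma$ therefore becomes, up to homotopy, $\ol{\phi}:\ol{\Sigma}\to\ol{\Sigma'}\hookrightarrow H_{\Sigma'}$. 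Here $\ol{\phi}$ is $\phi$ extended by the identity on the caps $D$, which makes sense because $\phi$ restricts to the identity on the boundary circles. It follows that the kernel equals $\ol{\phi}_*^{-1}\ker(H_1(\ol{\Sigma'})\to H_1(H_{\Sigma'}))$, and by the hypothesis on $\phi$ this is $\ker(H_1(\ol{\Sigma})\to H_1(H_\Sigma)) = B$. Lemma \ref{lem:coinc-zero} then gives $c\ssig(M,M')=0$.

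Case (2) is symmetric. The gluing surface is now $\ol{\Sigma'}$, with $B = \ker(H_1(\ol{\Sigma'})\to H_1(H_{\Sigma'}))$, and $\ol{M}\setminus\mathring{H}_{\Sigma'}$ retracts onto $H_\Sigma$ via $\ol{\phi}^{-1}$. The relevant kernel is therefore $\ol{\phi}_*\ker(H_1(\ol{\Sigma})\to H_1(H_\Sigma))$, which by hypothesis is again $B$. For the final statement, $M=I\times\Sigma$ satisfies the hypothesis trivially. Then (1) and (2) say $(I\times\Sigma,0)\circ(M',n) = (M',n)$ and $(M'',n)\circ(I\times\Sigma,0) = (M'',n)$ for all composable 2-morphisms, using that $I\times\Sigma$ is the identity in $\Bord{or,exp}$. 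A two-sided unit is the identity, so $(I\times\Sigma,0) = \id$.

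The main obstacle is the identification of $\ol{M}\setminus\mathring{H}_\Sigma$ with $H_{\Sigma'}$ compatibly with the boundary inclusion. One subtlety is that the vertical pieces $V_{\parIn M}$ and $V_{\parOut M}$ must attach to the sides of the cylinder to give exactly $I\times\ol{\Sigma}$. The other is tracking orientations and the sign of $-H_{\Sigma'}$, which do not affect kernels but must be stated. I would do this by an explicit collar decomposition, checking that the boundary components match under the decomposition in Figure \ref{fig:bordsig-2mor}.
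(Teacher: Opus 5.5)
Your proposal is correct and takes the same approach as the paper. In both, the Lagrangian coming from $\ol{M}\setminus\mathring{H}_\Sigma$ (resp.\ $\ol{M}\setminus\mathring{H}_{\Sigma'}$) equals $B$, giving $A=B$ in case (1) and $B=C$ in case (2); Lemma~\ref{lem:coinc-zero} then applies, and the identity statement follows from two-sided unitality. Your deformation retraction through the mapping cylinder of $\ol{\phi}$ just supplies the justification that the paper states without proof; note the caps $D_S$ need not be discs, but this does not affect the argument.
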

\begin{proof}
    In the first case, the Lagrangian subspaces $A$ and $B$ in (\ref{eq:csig}) are the same, and so we may apply Lemma \ref{lem:coinc-zero}. Likewise, in the latter case, $B$ and $C$ are the same Lagrangian subspace.

    Finally, applying (1) and (2) to the case where $M$ is the mapping cylinder of the identity diffeomorphism, we have verified that $(\Sigma \times I, 0)$ is the identity 2-morphism of $(\Sigma, H_\Sigma)$ in $\Bord{sig}$.
\end{proof}

Now, notice that the associators and unitors in $\Bord{or,exp}$ are mapping cylinders of this form, and the associators and unitors in $\Bord{sig}$ do not change the signature of the $4$-manifold associated to the 2-morphisms. Hence, we have $$a\ssig=\ell\ssig=r\ssig=0.$$

In conclusion, the cocycle $\mathbf{c}\ssig=(c\ssig,\tilde{c}\ssig,a\ssig,\ell\ssig,r\ssig)$ we have computed has $c\ssig$ as in (\ref{eq:csig-def}), $\tilde{c}\ssig$ as in (\ref{eq:ctildesig-def}) and all other terms $0$.

\begin{rk}
    There is another choice of model for $\Bord{or}$ which results in a slightly nicer expression for $\tilde{c}\ssig$. Consider the full subbicategory of $\Bord{or,exp}$ whose objects are $(S,D_S)$ where $D_S$ is a disjoint union of discs, and whose 1-morphisms are $(\Sigma,H_\Sigma)$ where $H_\Sigma$ is a disjoint union of handlebodies. The inclusion of this into $\Bord{or,exp}$ is an equivalence of symmetric monoidal bicategories.

    Upon pulling back $\mathbf{c}\ssig$ to this subbicategory, $\tilde{c}\ssig$ is now $0$, since all the $\parIn M$ and $\parOut M$ are disjoint unions of $S^2$, and so have trivial $H_1$.
\end{rk}

\subsection{$\bord{sig}$ and Walker's category of extended surfaces and $3$-manifolds} In \cite[\textsection 1]{Walker}, Walker defined a category of extended surfaces and $3$-manifolds which is a symmetric monoidal extension of a symmetric monoidal category $\bord{or,lag}$ equivalent to $\bord{or}$. This is closely related to the restriction of $\Bord{sig}$ to closed surfaces and the bordisms between them. The aim of this subsection is to clarify the relation between these categories.

\begin{defn}
    The \emph{oriented bordism category with expanded manifolds} $\bord{or,exp}$ has
    \begin{itemize}
        \item Objects: Pairs $(\Sigma, H_{\Sigma})$ where $\Sigma$ is a closed oriented surface and $H_\Sigma$ is a 3-manifold with boundary with $\partial H_\Sigma = \Sigma$. This is oriented such that the orientation induced on $\partial H_\Sigma$ is the same as the orientation on $\Sigma$.
        \item Morphisms: Bordisms $M$ between closed surfaces, taken up to diffeomorphism relative to the boundary.
    \end{itemize}
\end{defn}

In other words, considering the object $(\emptyset,\emptyset)$ in $\Bord{or,exp}$ given by the empty 1-manifold bounding the empty surface, $\bord{or,exp}$ is the hom category $$\bord{or,exp} = \Hom_{\Bord{or,exp}}((\emptyset,\emptyset),(\emptyset,\emptyset)).$$

Now, for any (symmetric monoidal) extension $\Bord{ext}\rightarrow\Bord{or,exp}$, we may restrict it to $\bord{or,exp}$: let $\bord{ext}$ be the hom category $$\bord{ext} := \Hom_{\bord{ext}}((\emptyset,\emptyset),(\emptyset,\emptyset)),$$ and then $\bord{ext}\rightarrow\bord{or,exp}$ defines a (symmetric monoidal) extension. As in the bicategorical case, the forgetful functor $\bord{or,exp}\rightarrow\bord{or}$ is an equivalence of symmetric monoidal categories, and so this is equivalent to defining an (symmetric monoidal) extension of $\bord{or}$.

Ignoring the symmetric monoidal structure, we may describe this at the level of cocycles, in terms of a homomorphism between $\Ext(\Bord{or,exp},\mathbb{Z})$ and $\Ext(\bord{or,exp},\mathbb{Z})$.
\begin{lem}\label{lem:res-cocyc}
    For any abelian group $A$, the map $\Ext(\Bord{or,exp},A)\rightarrow\Ext(\bord{or,exp},A)$ sending $[\mathbf{c}] = [(c,\tilde{c},a,l,r)]$ to $[c]$ defines a group homomorphism.
\end{lem}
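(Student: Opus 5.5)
We need to show the map is well defined and additive. The plan is to identify $\bord{or,exp}$ with the hom category $\Hom_{\Bord{or,exp}}((\emptyset,\emptyset),(\emptyset,\emptyset))$. We then check three things in turn: that the restriction of $c$ is a category cocycle, that coboundaries restrict to coboundaries, and that the resulting map on quotients is additive.

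\emph{Step 1: cocycles restrict to cocycles.} Given a bicategory cocycle $\mathbf{c}=(c,\tilde{c},a,\ell,r)\in Z(\Bord{or,exp},A)$, restrict $c$ to pairs of vertically composable 2-morphisms between 1-morphisms $(\emptyset,\emptyset)\to(\emptyset,\emptyset)$. These are exactly pairs of composable morphisms of $\bord{or,exp}$.
\begin{itemize}
\item Condition (\ref{eq:cocyc-1}) restricted to such triples is precisely the category cocycle condition (\ref{eq:cocyc-cat}).
\item Condition (\ref{eq:cocyc-2}) gives the normalisation on identities.
\end{itemize}
Hence $c|$ is a cocycle for $\bord{or,exp}$. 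Conceptually, the restricted cocycle describes the hom category of the extension $\hatC$ at $(\emptyset,\emptyset)$, so this step is formal.

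\emph{Step 2: coboundaries restrict to coboundaries.} Suppose $\mathbf{c}$ is the coboundary of $(m,n,k)$ as in Definition \ref{defn:coboundary}. Its $c$-component is
\[
c(\beta,\alpha)=m(\beta\alpha)-m(\alpha)-m(\beta),
\]
where $\beta\alpha$ is vertical composition. Restricting $m$ to 2-morphisms in the hom category gives a function on morphisms of $\bord{or,exp}$, and $c|$ is exactly its category coboundary (\ref{eq:cobound-cat}), up to the sign convention used there. The normalisation $m(\id_f)=0$ matches the category normalisation.

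The only point needing care is a sign or convention mismatch between (\ref{eq:cobound-cat}) and (\ref{eq:cob-1}). I expect this to be the main (minor) obstacle. I would resolve it by noting that the category statement that cohomologous cocycles give isomorphic extensions is characterised by the existence of a function $m$ with $c'-c=\delta m$. That is exactly what restricting the functor $F$ to the hom category $\Hom((\emptyset,\emptyset),(\emptyset,\emptyset))$ provides: $F$ restricts to an isomorphism of category extensions, because $F$ is the identity on objects and 1-morphisms and acts by $(\alpha,0)\mapsto(\alpha,m(\alpha))$ on 2-morphisms. The $n$ and $k$ terms play no role, since they only enter $\tilde{c},a,\ell,r$.

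\emph{Step 3: the induced map is a homomorphism.} Restriction $\mathbf{c}\mapsto c|$ is a homomorphism of abelian groups $Z(\Bord{or,exp},A)\to Z(\bord{or,exp},A)$, since both group structures are componentwise addition. By Step 2 it sends $B(\Bord{or,exp},A)$ into the coboundaries of $\bord{or,exp}$, so it descends to quotients. Via Proposition \ref{prop:ext-gp} and its category analogue, we obtain a group homomorphism $\Ext(\Bord{or,exp},A)\to\Ext(\bord{or,exp},A)$ sending $[\mathbf{c}]\mapsto[c]$. This agrees with the restriction of extensions described before the lemma.
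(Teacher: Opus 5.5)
Your proposal is correct and follows the same route as the paper: the category cocycle condition comes from (\ref{eq:cocyc-1}), coboundaries restrict to coboundaries via the $c$-component (\ref{eq:cob-1}), and additivity holds because both group structures are componentwise addition. The sign mismatch you flag is only an artefact of how (\ref{eq:cobound-cat}) is printed; the category coboundary of $m$ is $m(\beta\alpha)-m(\alpha)-m(\beta)$ (an overall sign would be harmless), which is exactly what (\ref{eq:cob-1}) restricts to, so your conceptual fallback is not actually needed.
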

\begin{proof}
    The cocycle condition (\ref{eq:cocyc-cat}) for $c\ssig$ is satisfied, by the condition (\ref{eq:cocyc-1}) on $\mathbf{c}\ssig$. Coboundaries for $\Bord{or,exp}$ are sent to coboundaries for $\bord{or,exp}$; refer to the computation of a coboundary in (\ref{eq:cob-1}). Thus, the map is well-defined. Moreover, this map respects the group operation on the two $\Ext$ groups, and so we have a well-defined homomorphism $\Ext(\Bord{or,exp})\rightarrow\Ext(\bord{or,exp})$.
\end{proof}
Under this homomorphism, $[\mathbf{c}\ssig]$ corresponding to the signature bordism bicategory is sent to $[c\ssig]$, where $c\ssig$ is the cocycle given by (\ref{eq:csig-def}). This defines an extension $\bord{sig}$ of $\bord{or,exp}$ which is in fact a symmetric monoidal extension.

In fact, this cocycle may be rewritten purely in terms of the Lagrangian subspaces $\ker(H_1(\ol{\Sigma};\mathbb{R})) \rightarrow H_1(H_{\Sigma};\mathbb{R})$ for each $\Sigma$. Indeed, $B$ is exactly such a subspace while $A,C$ may be obtained from the Lagrangian subspaces corresponding to $\Sigma'',\Sigma$ via some Mayer-Vietoris computations. This then motivates one to consider another model of $\bord{sig}$ whose objects are equipped with a choice of Lagrangian subspace.
\begin{defn}
    The \emph{oriented bordism category with Lagrangian subspaces} $\bord{or,lag}$ has
    \begin{itemize}
        \item Objects: Pairs $(\Sigma, L)$ where $\Sigma$ is a closed oriented surface and $L$ is a Lagrangian subspace of $H_1(\Sigma;\mathbb{R})$.
        \item Morphisms: Bordisms $M$ between closed surfaces, taken up to diffeomorphism relative to the boundary.
    \end{itemize}
\end{defn}
\begin{rk}
    The pair $(\Sigma,L)$ is called an \emph{extended surface} in \cite{Walker}.
\end{rk}

Then, the forgetful functors $\bord{or,exp}\rightarrow\bord{or,lag}\rightarrow\bord{or}$ are equivalences of symmetric monoidal categories. These define isomorphisms between their groups of extensions, as the cohomology of nerves of equivalent categories are isomorphic. The corresponding extension $\bord{sig,lag}$ of $\bord{or,lag}$ is exactly the category of extended surfaces and $3$-manifolds as described in \cite{Walker}.

Analogously, there is another model $\Bord{or,lag}$ of $\Bord{or}$ whose 1-morphisms are equipped with Lagrangian subspaces, which correspondingly has an extension $\Bord{sig,lag}$. This $\Bord{sig,lag}$ is another model of $\Bord{sig}$ which is used, for example, in \cite{dr-thesis}.

\section{Index $2$ subextensions of $\Bord{sig}$} \label{section:sig/2}

In the previous section, we described a symmetric monoidal extension $\Bord{sig}$ of $\Bord{or,exp}$. As a bicategory, its data is captured by an element $[\mathbf{c}\ssig]$ of the abelian group $\Ext(\Bord{or,exp},\mathbb{Z})$. In this section, we show that $[\mathbf{c}\ssig]$ may be divided by $2$, giving rise to an extension $\Bord{sig/2}$ of $\Bord{or,exp}$. This may be viewed (non-uniquely) as a subbicategory of $\Bord{sig}$, forming an index $2$ subextension. $\Bord{sig/2}$ then inherits a symmetric monoidal structure from $\Bord{sig}$, making it a symmetric monoidal extension of $\Bord{or,exp}$.

In the $(\infty,2)$-categorical setting, the existence of such a symmetric monoidal subextension is implied by the results of \cite[\textsection 7]{csp-inv-tft}, which is in turn based on computations in \cite{BDSV3}. Symmetric monoidal extensions of the bordism $(\infty,2)$-category $\mathscr{B}\mathit{ord}_{1,2,3}^\mathrm{or}$ by $\mathbb{Z}$ correspond (via a straightening-unstraightening argument sketched in \cite[\textsection 7.5]{csp-inv-tft}) to certain cohomology groups of Madsen--Tillmann spectra. The relevant cohomology group for $\mathscr{B}\mathit{ord}_{1,2,3}^\mathrm{or}$ is $\mathbb{Z}$, with the signature extension corresponding to twice a generator. A preliminary version of this argument is also in \cite{BDSV3}, where it is additionally conjectured that symmetric monoidal extensions of $\mathscr{B}\mathit{ord}_{1,2,3}^\mathrm{or}$ by $\mathbb{Z}$ correspond to symmetric monoidal extensions of $\Bord{or}$.

More recently, half-signature structures were defined as twisted tangential structures in the initial draft of \cite[\textsection 8]{FST}. These are defined in terms of the twisted Anderson dual cohomologies of the classifying space $BSO(3)$. We understand that future work of the same authors will address this in more detail.

Here, our definition of $\Bord{sig/2}$ is written purely in terms of the Betti numbers of the bordisms, providing an explicit description of the half-signature bordism bicategory. This description will in particular be useful in proving later results such as Proposition  \ref{prop:fg-sig/2} and \cite[Proposition 4.18]{partB}.

\subsection{Subextensions of $\bord{sig}$ and $\Bord{sig}$} \label{subsection:sig/2-overview}

As described in the previous section, we may define an extension $\bord{sig}$ of $\bord{or,exp}$ by taking the image of $[\mathbf{c}\ssig]$ under the group homomorphism defined in Lemma \ref{lem:res-cocyc}. This element $[c\ssig]\in\Ext(\bord{or,exp},\mathbb{Z}) \cong \Ext(\bord{or,lag},\mathbb{Z})$ is well-studied.

Gilmer \cite[\textsection 7]{Gilmer}, while working with $\bord{or,lag}$, constructed an index $2$ subextension of $\bord{sig}$ by defining certain morphisms in $\bord{sig}$ to be even morphisms, and showing that the composition of even morphisms is even. For each morphism in $\bord{or,lag}$, the corresponding $\mathbb{Z}$-family of morphisms in $\bord{sig}$ has morphisms which alternate between even and odd, and so the category given by only the even morphisms is an index $2$ subextension of $\bord{sig}$.

At the level of cocycles, this argument is equivalent to showing that $[c\ssig]$ is divisible by $2$ in $\Ext(\bord{or,exp},\mathbb{Z})$. Indeed, let $m(M)$ be $0$ if $(M,0)$ is even and $1$ otherwise. Then, the composition of even morphisms being even is equivalent to the coboundary of $m$ being equal to $c\ssig$ modulo $2$. Hence, subtracting this coboundary from $c\ssig$ and dividing by $2$ yields a cocycle $c\ssigh$ which satisfies $2[c\ssigh] = [c\ssig] \in \Ext(\bord{or,exp},\mathbb{Z})$. By construction, this cocycle $c\ssigh$ exactly determines the index $2$ subextension of $\bord{sig}$ consisting only of the even morphisms.

We will generalise this to the bicategory case by constructing a cocycle $\mathbf{c}\ssigh$ such that $2[\mathbf{c}\ssigh] = [\mathbf{c}\ssig] \in \Ext(\Bord{or,exp},\mathbb{Z})$, producing an extension $\Bord{sig/2}$. This new extension will also turn out to be a symmetric monoidal and can be viewed as an index 2 subextension of $\Bord{sig}$.

\subsection{Constructing $\Bord{sig/2}$}

We will provide an explicit expression for $\mathbf{c}\ssigh$, a cocycle which satisfies $2[\mathbf{c}\ssigh] = [\mathbf{c}\ssig] \in \Ext(\Bord{or,exp},\mathbb{Z})$. This will be done by constructing a function $m$ defined on $\mathbb{M}$, the set of 2-morphisms of $\Bord{or,exp}$, with values in $\mathbb{Z}$, such that $m$ sends identity 2-morphisms to $0$ and subtracting the coboundary of $(m,0,0)$ from $\mathbf{c}\ssig$ results in a cocycle with values in $2\mathbb{Z}$. We can then divide it by $2$ to obtain $\mathbf{c}\ssigh$.

To find a valid $m$, it suffices to work modulo $2$. Hence, we seek $m:\mathbb{M}\rightarrow\set{0,1}$ sending identity 2-morphisms to $0$ such that if $M,M'$ are vertically composable 2-morphisms in $\Bord{or,exp}$, then
\begin{equation}\label{eq:condA}
    c\ssig(M',M) \equiv m(M \cup_\Sigma M') - m(M) - m(M') \pmod 2
\end{equation}
and for horizontally composable 2-morphisms,
\begin{equation}\label{eq:condB}
    \tilde{c}\ssig(M',M) \equiv m(M \cup_\Sigma M') - m(M) - m(M') \pmod 2.
\end{equation}

We present an explicit $m$ which we will check to satisfy the conditions:
\begin{defn}\label{defn:sig/2}
    Let $\mathbb{M}$ be the set of 2-morphisms of $\Bord{or,exp}$. Define the function $m:\mathbb{M}\rightarrow\mathbb{Z}/2$ as
    \begin{equation*}
        m(M) = b_1(\ol{M}) + b_0(\ol{M}) + \frac12 b_1(\ol{\parout M}) + b_0(\ol{\parout M}),
    \end{equation*}
    where $b_i$ denotes the $i^\text{th}$ Betti number, $b_i(X) := \dim H_i(X;\mathbb{R})$.

    For ease of writing, we will let $m_1(M)$ denote the sum of the first two terms on the right-hand side and $m_2(\parout M)$ the sum of the last two terms.
\end{defn}

The next few results will be devoted to checking that this $m$ satisfies (\ref{eq:condA}) and (\ref{eq:condB}). All computations in proofs are taken modulo $2$ and all homologies are taken with coefficients in $\mathbb{R}$. We first present a useful computational lemma on the parity of Wall's invariant.
\begin{lem} \label{lem:maslow}
    Let $V$ be a symplectic vector space of dimension $2g$ and $A, B, C$ Lagrangian subspaces.
    Then
    \begin{equation*}
        \sigma(V; A, B, C) \equiv g + \dim(A \cap B) + \dim(B \cap C) + \dim(C \cap A) \pmod2.
    \end{equation*}
\end{lem}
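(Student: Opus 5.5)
The signature of a nondegenerate form on $W$ has the same parity as $\dim W$. Indeed, if the signature is $p-q$, then $p+q=\dim W$, so $p-q\equiv \dim W \pmod 2$. It therefore suffices to compute $\dim W \bmod 2$ for
\[
W=\frac{A\cap(B+C)}{(A\cap B)+(A\cap C)}.
\]
Since Wall showed $\langle\cdot,\cdot\rangle$ is non-degenerate, this reduction is immediate.

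We compute $\dim W$ exactly. First,
\[
\dim\big((A\cap B)+(A\cap C)\big)=\dim(A\cap B)+\dim(A\cap C)-\dim(A\cap B\cap C),
\]
since $(A\cap B)\cap(A\cap C)=A\cap B\cap C$. Next, for $\dim(A\cap(B+C))$ we use symplectic orthogonality. Because $A$, $B$, $C$ are Lagrangian, $(B+C)^\omega=B^\omega\cap C^\omega=B\cap C$. Hence
\[
\dim(A\cap(B+C)) = \dim A+\dim(B+C)-\dim(A+B+C),
\]
and $(A+B+C)^\omega=A\cap B\cap C$ gives $\dim(A+B+C)=2g-\dim(A\cap B\cap C)$. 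Also $\dim(B+C)=2g-\dim(B\cap C)$. Substituting, with $\dim A=g$, gives
\[
\dim(A\cap(B+C)) = g-\dim(B\cap C)+\dim(A\cap B\cap C).
\]

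Subtracting the two computations, the triple-intersection terms cancel:
\[
\dim W = g-\dim(A\cap B)-\dim(B\cap C)-\dim(C\cap A).
\]
Reducing mod $2$, the minus signs become plus signs, which gives the claimed congruence. As a sanity check, this also shows that the maximal-dimension case ($A,B,C$ pairwise transverse) gives $\dim W=g$.

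There is no real obstacle. The only steps needing care are the identities $(B+C)^\omega=B\cap C$ and $(A+B+C)^\omega=A\cap B\cap C$. These follow from $L^\omega=L$ for Lagrangian $L$, together with $\dim U^\omega = 2g-\dim U$ by non-degeneracy of $\omega$.
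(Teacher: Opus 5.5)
Your route is the same as the paper's. You use non-degeneracy to reduce to the parity of $\dim W$, then expand using $\dim(X+Y)+\dim(X\cap Y)=\dim X+\dim Y$ together with $(A+B+C)^\omega=A\cap B\cap C$.

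The one error is the line claiming the triple-intersection terms cancel. They do not. Your expression for $\dim(A\cap(B+C))$ carries $+\dim(A\cap B\cap C)$. Subtracting $\dim\big((A\cap B)+(A\cap C)\big)$ contributes a second $+\dim(A\cap B\cap C)$. The correct exact formula is therefore
\[
\dim W = g-\dim(A\cap B)-\dim(B\cap C)-\dim(C\cap A)+2\dim(A\cap B\cap C).
\]
Your version is false in general. For $A=B=C$ it gives $\dim W=-2g$, whereas $W=0$ (this is the paper's Lemma~\ref{lem:coinc-zero}).

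The discrepancy is $2\dim(A\cap B\cap C)$, which is even, so the congruence you want still follows. To repair the proof, either correct that line, or work modulo $2$ from the start as the paper does. In the paper's version the surplus terms satisfy $\dim(A+B+C)+\dim(A\cap B\cap C)=2g$, so they vanish mod $2$.
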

\begin{proof}
    As $\sigma$ is non-degenerate, the signature is the same parity of the dimension of the vector space, i.e.
    \begin{equation*}
        \sigma(V; A, B, C) = \dim\pa{\frac{A \cap (B+C)}{(A\cap B) + (A\cap C)}}.
    \end{equation*}
    Applying the fact that $\dim(X+Y) + \dim(X\cap Y) = \dim X + \dim Y$ repeatedly, we obtain
    \begin{equation*}
        \begin{split}
            \sigma(V; A, B, C) 
            &= \dim\pa{A \cap (B+C)} + \dim\pa{(A\cap B) + (A\cap C)} \\
            &= g + \dim(B+C) + \dim(A \cap B) + \dim(A \cap C) \\
            &\qquad + \dim(A+B+C) +  \dim(A \cap B \cap C) \\
            &= g + \dim(B \cap C) + \dim(A \cap B) + \dim(A \cap C) \\
            &\qquad + \dim(A+B+C) +  \dim(A \cap B \cap C). \\
        \end{split}
    \end{equation*}
    Finally, notice that 
    \begin{equation*}
        (A+B+C)^\perp = A^\perp \cap B^\perp \cap C^\perp = A\cap B\cap C
    \end{equation*}
    and hence
    \begin{equation*}
        \dim(A+B+C) +  \dim(A \cap B \cap C) = 0
    \end{equation*}
    and so the two extra terms cancel out.
\end{proof}

Hence, by working modulo $2$, the $\sigma(V; A, B, C)$ terms may be written in terms of dimensions of vector subspaces of $H_1(\ol{\Sigma})$ and we may disregard the bilinear form $\langle\cdot,\cdot\rangle$. We now proceed to check the two conditions on $m$. The arguments here do not depend on orientations, and so we drop the minus signs which correspond to orientation reversal.

\begin{prop} \label{prop:condA}
    $m$ satisfies (\ref{eq:condA}).
\end{prop}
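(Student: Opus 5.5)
We reduce everything to dimension counts modulo $2$, using Lemma \ref{lem:maslow} and Mayer--Vietoris. Fix vertically composable $M:\Sigma\to\Sigma'$ (here we write $\Sigma$ for the middle 1-morphism, as in (\ref{eq:csig})), so that $M\cup_\Sigma M'$ has $\ol{M\cup_\Sigma M'} = (\ol{M}\setminus\mathring{H}_\Sigma)\cup_{\ol\Sigma}(\ol{M'}\setminus\mathring{H}_\Sigma)$. Note that $\parout(M\cup M') = \parout M'$, so the $m_2$ terms of $m(M\cup_\Sigma M')$ and $m(M')$ cancel. The right-hand side of (\ref{eq:condA}) is therefore $m_1(M\cup M') + m_1(M) + m_1(M') + m_2(\Sigma)$, with $m_2(\Sigma)$ coming from $m(M)$. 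By (\ref{eq:csig-def}) and Lemma \ref{lem:maslow}, the left-hand side is $g+\dim(A\cap B)+\dim(B\cap C)+\dim(C\cap A)$ with $2g=b_1(\ol\Sigma)$. The $g$ is matched by the $\tfrac12 b_1(\ol\Sigma)$ in $m_2(\Sigma)$, so it remains to show
\begin{equation*}
\dim(A\cap B)+\dim(B\cap C)+\dim(C\cap A) \equiv m_1(M\cup M')+m_1(M)+m_1(M')+b_0(\ol\Sigma) \pmod 2.
\end{equation*}

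Write $N=\ol{M}\setminus\mathring H_\Sigma$, $N'=\ol{M'}\setminus\mathring H_\Sigma$ and $H=H_\Sigma$, all with common boundary $\ol\Sigma$. Then $\ol M = N\cup_{\ol\Sigma}H$, $\ol{M'}=N'\cup_{\ol\Sigma}H$ and $\ol{M\cup M'}=N\cup_{\ol\Sigma}N'$. The key tool is a parity formula for gluing two compact 3-manifolds $P,Q$ along a common closed boundary surface $F$ with kernels $K_P,K_Q\subseteq H_1(F)$:
\begin{equation*}
b_0(P\cup_F Q)+b_1(P\cup_F Q)\equiv \bigl(b_0+b_1\bigr)(P)+\bigl(b_0+b_1\bigr)(Q)+b_0(F)+\dim(K_P\cap K_Q)\pmod 2.
\end{equation*}
To prove it, we run the Mayer--Vietoris sequence. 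We take the alternating sum of dimensions in degrees $\le 1$, truncated at the connecting map $H_2(P\cup Q)\to H_1(F)$. The image of that map is $K_P\cap K_Q$, since it equals $\ker(H_1F\to H_1P\oplus H_1Q)$. Exactness then gives
\begin{equation*}
b_0(P\cup Q)-b_1(P\cup Q)=b_0P+b_0Q-b_1P-b_1Q-b_0F+b_1F-\dim(K_P\cap K_Q).
\end{equation*}
Reducing mod $2$ and using that $b_1F$ is even gives the formula.

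We apply the formula to the three gluings $N\cup H$, $N'\cup H$ and $N\cup N'$, with kernels $C,B$; $A,B$; and $C,A$ respectively (as in (\ref{eq:csig})), and sum. Each of $(b_0+b_1)(N),(b_0+b_1)(N'),(b_0+b_1)(H)$ appears twice and cancels mod $2$, and $b_0(\ol\Sigma)$ appears three times. The result is $m_1(M)+m_1(M')+m_1(M\cup M')\equiv b_0(\ol\Sigma)+\dim(A\cap B)+\dim(B\cap C)+\dim(C\cap A)$, which is exactly the required identity.

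The main obstacle is the bookkeeping of which kernel corresponds to which piece, and checking that the boundary of each piece really is the closed surface $\ol\Sigma$. The horizontal/vertical boundary pieces $V$ sit inside $N,N'$ and are harmless here. Two further points need care: that the Mayer--Vietoris connecting map has image $K_P\cap K_Q$, and that the $m_2$ bookkeeping of in/out boundaries matches the convention in Definition \ref{defn:sig/2}. Identity normalisation, $m(I\times\Sigma)=0$, is a separate check and is not needed for (\ref{eq:condA}) itself.
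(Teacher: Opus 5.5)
Your proof is correct and follows essentially the same argument as the paper. Both run three Mayer--Vietoris computations, for $N\cup_{\ol\Sigma}H_\Sigma$, $N'\cup_{\ol\Sigma}H_\Sigma$ and $N\cup_{\ol\Sigma}N'$ with kernel intersections $B\cap C$, $A\cap B$ and $A\cap C$. Summing these mod $2$ and combining with Lemma \ref{lem:maslow} and $\parout(M\cup_\Sigma M')=\parout M'$ gives the result; isolating the gluing parity formula as a standalone lemma is only a tidier packaging of the same computation, though note the small slip where you write $M:\Sigma\to\Sigma'$ but then use $\Sigma$ as the target of $M$.
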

\begin{proof}
    We use the notation in (\ref{eq:csig}). Let $N = \ol{M} \setminus \mathring{H}_\Sigma$, $N' = \ol{M'} \setminus \mathring{H}_\Sigma$ and $M'' = M \cup_\Sigma M'$, so we have $$\ol{M''} = \ol{M \cup_\Sigma M'} = N \cup_{\ol{\Sigma}} N'.$$

    Consider the Mayer-Vietoris sequence of this union:
    \begin{equation*}
        \begin{split}
            \cdots &\rightarrow H_1(\ol{\Sigma}) \overset{\alpha}\rightarrow H_1(N) \oplus H_1(N') \rightarrow H_1(\ol{M''}) \\
            &\rightarrow H_0(\ol{\Sigma}) \rightarrow H_0(N) \oplus H_0(N') \rightarrow H_0(\ol{M''}) \rightarrow 0
        \end{split}
    \end{equation*}
    Note that $\ker \alpha$ is exactly $A \cap C$, and hence we have
    \begin{equation*}
        b_1(\ol{M''}) + b_0(\ol{M''})= \dim(A\cap C) + b_1(N) + b_1(N') 
        + b_0(\ol{\Sigma}) + b_0(N) + b_0(N').
    \end{equation*}
    Replacing $N'$ with $H_\Sigma$, since $\ol{M} = N \cup_{\ol{\Sigma}} H_{\Sigma}$, we instead obtain
    \begin{equation*}
        b_1(\ol{M}) + b_0(\ol{M})= \dim(B\cap C) + b_1(N) + b_1(H_\Sigma) 
        + b_0(\ol{\Sigma}) + b_0(N) + b_0(H_\Sigma).
    \end{equation*}
    Similarly, we have 
    \begin{equation*}
        b_1(\ol{M'}) + b_0(\ol{M'})= \dim(A\cap B) + b_1(N') + b_1(H_\Sigma) 
        + b_0(\ol{\Sigma}) + b_0(N') + b_0(H_\Sigma).
    \end{equation*}
    Notice that, upon adding these three equations, most terms on the right-hand sides cancel, leaving us with
    \begin{equation*}
        m_1(M) + m_1(M') + m_1(M'') = \dim(A\cap B) + \dim(A\cap C) + \dim(B\cap C) + b_0(\ol{\Sigma}).
    \end{equation*}
    Additionally, using the fact that $\parout M'' = \parout M'$ and  applying Lemma \ref{lem:maslow}, we get
    \begin{equation*}
        \begin{split}
            m(M) + m(M') + m(M'')
            &= m_1(M) + m_1(M') + m_1(M'') + m_2(\parout M) \\
            &= b_0 (\ol{\Sigma}) + \frac12 b_1(\ol{\Sigma}) + \sigma(V;A,B,C) + b_0(\ol{\Sigma}) + \frac12 b_1(\ol{\Sigma}) \\
            &= \sigma(V;A,B,C)
        \end{split}
    \end{equation*}
    which by (\ref{eq:csig-def}) gives us the desired (\ref{eq:condA}).
\end{proof}

This automatically lets us deduce that $m$ sends identity 2-morphisms to $0$. We prove a slightly stronger version of this:
\begin{lem} \label{lem:m-norm}
    Let $(\Sigma, H_{\Sigma})$ and $(\Sigma', H_{\Sigma'})$ be 1-morphisms in $\Bord{or,exp}$. Let $M$ be a mapping cylinder of an orientation-preserving diffeomorphism from $\Sigma$ to $\Sigma'$ which extends to a diffeomorphism between $H_{\Sigma}$ and $H_{\Sigma'}$. Then, $m(M) = 0$.

    In particular, $m$ sends identity 2-morphisms to $0$.
\end{lem}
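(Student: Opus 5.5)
The plan is to show that $m(M)=0$ by deducing it from Proposition \ref{prop:condA}, following the remark that the lemma should follow ``automatically'', with a direct computation of $m(M)$ as a backup. Throughout, $\phi:\Sigma\to\Sigma'$ denotes the given diffeomorphism and $\Phi:H_\Sigma\to H_{\Sigma'}$ its extension.

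\emph{Step 1 (reduction to Proposition \ref{prop:condA}).} First I would note that the hypothesis implies the hypothesis of Lemma \ref{lem:c-norm}. Indeed, $\Phi$ restricted to $\partial H_\Sigma=\ol{\Sigma}$ induces a map on $H_1$ that carries $\ker(H_1(\ol{\Sigma})\to H_1(H_\Sigma))$ onto $\ker(H_1(\ol{\Sigma'})\to H_1(H_{\Sigma'}))$. Hence $c\ssig(M,M')=0$ for any $M'$ with target $\Sigma$.

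Take $M'=I\times\Sigma$. Its composite with $M$ is $M$ itself, up to diffeomorphism relative to the boundary. Proposition \ref{prop:condA} then gives
\[
0\equiv m(M)-m(M)-m(I\times\Sigma),
\]
so $m(I\times\Sigma)=0$. This settles identities but not $M$ itself, so a second step is needed.

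\emph{Step 2 (direct computation of $m(M)$).} I would compute $\ol{M}$ explicitly. Gluing $H_\Sigma$ and $H_{\Sigma'}$ onto the two horizontal ends of the mapping cylinder, and the vertical pieces $I\times D$ onto its sides, gives
\[
\ol{M}\cong H_\Sigma\cup_{\ol{\Sigma}} H_{\Sigma'},
\]
where the gluing map is $\phi$ extended over the discs. Since this gluing map extends to $\Phi$, the identification is with the double $D(H_\Sigma)=H_\Sigma\cup_{\ol{\Sigma}}H_\Sigma$.

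Next apply the Mayer--Vietoris sequence exactly as in the proof of Proposition \ref{prop:condA}. The kernel of $H_1(\ol{\Sigma})\to H_1(H_\Sigma)^{\oplus 2}$ is $B\cap B=B$, which has dimension $\tfrac12 b_1(\ol{\Sigma})$. Working modulo $2$, this gives
\[
b_1(\ol{M})+b_0(\ol{M})\equiv \tfrac12 b_1(\ol{\Sigma})+2b_1(H_\Sigma)+b_0(\ol{\Sigma})+2b_0(H_\Sigma)\equiv \tfrac12 b_1(\ol{\Sigma})+b_0(\ol{\Sigma}).
\]
Since $\parout M=\Sigma'\cong\Sigma$, the second part $m_2(\parout M)$ equals this same quantity. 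Adding, $m(M)\equiv 0$.

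\emph{Expected obstacle.} The main work is the identification of $\ol{M}$ with the double in Step 2. Two points need care. First, the vertical boundary pieces $V$ must cap off the mapping-cylinder sides consistently with the discs $D$, which are fixed because $\phi$ is a bordism diffeomorphism. Second, the homological computation uses only the Lagrangian $B$ and the Betti numbers of the pieces, so we only need $\Phi$ to match the two copies of $H$ homologically; orientations can be ignored, as the paper already does. Once this is in place, the final claim of the lemma about identity 2-morphisms is the special case $\phi=\id$, $\Phi=\id$.
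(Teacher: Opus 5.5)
Your argument is correct, but your Step~2 takes a different route from the paper's.

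\textbf{The paper's route.} The paper never computes $\ol{M}$ for the mapping cylinder itself. It postcomposes $M$ with an arbitrary $M'$ with source $(\Sigma',H_{\Sigma'})$. It notes two facts: $\ol{M\cup_{\Sigma'}M'}\cong\ol{M'}$, because $\Phi$ identifies $H_\Sigma$ glued to the mapping cylinder with $H_{\Sigma'}$; and $\parout(M\cup_{\Sigma'}M')=\parout M'$. Hence $m(M\cup_{\Sigma'}M')=m(M')$, and (\ref{eq:condA}) with part (2) of Lemma~\ref{lem:c-norm} gives $m(M)\equiv -c\ssig(M',M)=0$.

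This is precisely your Step~1 with the auxiliary bordism placed after $M$ instead of before it. Placing it before makes $m(M)$ cancel out of (\ref{eq:condA}), which is why you only recovered the identity case there.

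\textbf{Your route.} Your Step~2 identifies $\ol{M}$ with the double $H_\Sigma\cup_{\ol{\Sigma}}H_\Sigma$ and runs Mayer--Vietoris directly. It uses that $B$ is Lagrangian, so has dimension $\frac12 b_1(\ol{\Sigma})$. It also uses, implicitly, that $b_1(\ol{\Sigma})$ is even, so the $b_1(\ol{\Sigma})$ term of the alternating sum can be dropped mod~$2$; the paper does the same in Proposition~\ref{prop:condA}.

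\textbf{Comparison.} The paper's route is shorter and recycles Proposition~\ref{prop:condA}, so no new homology computation is needed. Yours is self-contained and bypasses Wall's invariant and Lemmas~\ref{lem:maslow} and~\ref{lem:coinc-zero} altogether. It also shows transparently why $m$ vanishes: for a double, $b_0+b_1$ has the same parity as $\frac12 b_1(\ol{\Sigma})+b_0(\ol{\Sigma})$, which is exactly the term $m_2(\parout M)$. Both routes rest on the same geometric input, namely that $\Phi$ lets you replace $H_\Sigma\cup M$ by $H_{\Sigma'}$. Finally, your Step~1 is redundant, since the identity case is the special case $\phi=\id$ of Step~2.
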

\begin{proof}
    Let $M'$ be any 2-morphism in $\Bord{or,exp}$ whose source is $(\Sigma', H_{\Sigma'})$. Then, the composition $M''$ of $M,M'$ has $\ol{M''}$ diffeomorphic to $\ol{M'}$ and $\parout M'' = \parout M'$, and hence $m(M'') = m(M')$. By (\ref{eq:condA}), $m(M)$ has the same parity as $c\ssig(M',M)$, which is $0$ by Lemma \ref{lem:c-norm}.
\end{proof}

Before we proceed to the other condition, we first prove a similar result for surfaces glued along their common boundary.

\begin{lem} \label{lem:surface-gluing}
    Let $\Sigma,\Sigma'$ be orientable surfaces with common boundary $\partial\Sigma = \partial\Sigma' = S$. Let $\Sigma'' := \Sigma\cup_S\Sigma'$. Then 
    \begin{equation*}
        g(\Sigma'') - b_0(\Sigma'') = g(\Sigma) + g(\Sigma') - b_0(\Sigma) - b_0(\Sigma') + b_0(S)
    \end{equation*}
    where $g(\Sigma)$ denotes the sum of the genus over the components of $\Sigma$.
\end{lem}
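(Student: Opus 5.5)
We will prove the identity via Euler characteristics, which are additive under gluing along the closed $1$-manifold $S$. Since $\chi(S) = 0$, the Mayer--Vietoris sequence (or inclusion--exclusion for Euler characteristics) gives
\begin{equation*}
    \chi(\Sigma'') = \chi(\Sigma) + \chi(\Sigma') - \chi(S) = \chi(\Sigma) + \chi(\Sigma').
\end{equation*}
It then suffices to express each $\chi$ in terms of genus, number of components and number of boundary circles. For a compact orientable surface $F$ with components $F_1,\dots,F_k$, where $F_j$ has genus $g_j$ and $n_j$ boundary circles, we have $\chi(F_j) = 2 - 2g_j - n_j$. Summing gives
\begin{equation*}
    \chi(F) = 2b_0(F) - 2g(F) - b_0(\partial F).
\end{equation*}

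Now apply this to all three surfaces. We have $\partial\Sigma = \partial\Sigma' = S$, and $\Sigma''$ is closed, so $b_0(\partial\Sigma'') = 0$. Substituting into the Euler characteristic identity gives
\begin{equation*}
    2b_0(\Sigma'') - 2g(\Sigma'') = 2b_0(\Sigma) - 2g(\Sigma) - b_0(S) + 2b_0(\Sigma') - 2g(\Sigma') - b_0(S).
\end{equation*}
Dividing by $-2$ yields exactly
\begin{equation*}
    g(\Sigma'') - b_0(\Sigma'') = g(\Sigma) + g(\Sigma') - b_0(\Sigma) - b_0(\Sigma') + b_0(S).
\end{equation*}

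The only point needing care is the implicit assumption in the statement: $\Sigma''$ is closed, i.e.\ $\partial\Sigma = \partial\Sigma' = S$ is the entire boundary of each surface, and the orientations are compatible so that $\Sigma''$ is an orientable surface whose components have a well-defined genus. If one instead wants a version where $\Sigma,\Sigma'$ have further boundary not glued, the same computation goes through with an extra $b_0(\partial\Sigma'')$ term. For the statement as given, though, there is no real obstacle.

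Alternatively, one could argue directly with the Mayer--Vietoris sequence in $H_0$ and $H_1$, in the style of Proposition \ref{prop:condA}. Since $\chi$ is the alternating sum of the Betti numbers, this is the same computation, so we opt for the cleaner Euler characteristic argument.
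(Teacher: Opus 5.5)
Your proof is correct, but it takes a different route from the paper's. The paper moves up a dimension. It caps $S$ off with a union of discs $D$ and picks unions of handlebodies $H,H'$ bounding $\Sigma\cup_S D$ and $\Sigma'\cup_S D$. It notes that $H''=H\cup_D H'$ is a union of handlebodies bounding $\Sigma''$. It then takes the alternating sum of dimensions in the Mayer--Vietoris sequence for $H''$, which starts at $H_1$ because $H_1(D)=0$, using $b_1(H)=g(\Sigma)$, $b_0(H)=b_0(\Sigma)$ and $b_0(D)=b_0(S)$.

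This is the same Euler-characteristic additivity you use, but applied to $\chi(H'')=\chi(H)+\chi(H')-\chi(D)$. A union of handlebodies has $\chi=b_0-g$, and the gluing locus now contributes $\chi(D)=b_0(S)$ rather than $\chi(S)=0$. So the combination $g-b_0$ appears directly, with no division by $2$.

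Your two-dimensional argument is more economical: it needs only $\chi(F)=2b_0(F)-2g(F)-b_0(\partial F)$ and $\chi(S)=0$. It avoids the auxiliary handlebodies and the facts that gluing them along boundary discs yields a handlebody whose $b_1$ is the genus of its boundary. The paper's version is chosen to match the handlebody/Mayer--Vietoris computations of Proposition~\ref{prop:condA}.

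Your caveat that the gluing must be orientation-compatible, so that $\Sigma''$ has a well-defined genus, is apt. The paper's proof relies on it implicitly as well, since otherwise $H''$ could be, e.g., a solid Klein bottle.
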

\begin{proof}
    Let $D$ be a disjoint union of discs with boundary $S$, and $H,H'$ be disjoint unions of handlebodies with boundaries $\Sigma\cup_S D$ and $\Sigma'\cup_S D$. Then, $H''=H\cup_D H'$ is a handlebody with boundary $\Sigma''$. Consider the Mayer-Vietoris sequence for this union:
    \begin{equation*}
        \begin{split}
            0 &\rightarrow H_1(H) \oplus H_1(H') \rightarrow H_1(H'') \\
            \rightarrow H_0(D) &\rightarrow H_0(H) \oplus H_0(H') \rightarrow H_0(H'') \rightarrow 0
        \end{split}
    \end{equation*}
    The alternating sum of dimensions of terms in this long exact sequence is $0$. Noting that $b_0(D) = b_0(S)$, $b_0(H) = b_0(\Sigma)$ and  $b_1(H) = g(\Sigma)$ and that the analogous results hold for $H',H''$, our result follows.
\end{proof}

\begin{prop} \label{prop:condB}
    $m$ satisfies (\ref{eq:condB}).
\end{prop}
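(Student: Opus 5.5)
We follow the strategy of Proposition \ref{prop:condA}, now for horizontal composition. Let $M, M'$ be horizontally composable with $\Sigma = \parOut M = \parIn M'$, and use the notation of (\ref{eq:ctildesig}). Set
\[
N = \ol{M}\setminus\mathring{V}_\Sigma, \qquad N' = \ol{M'}\setminus\mathring{V}_\Sigma, \qquad M'' = M\cup_\Sigma M',
\]
so that $\ol{M''} = N\cup_{\ol{\Sigma}} N'$, $\ol{M} = N\cup_{\ol{\Sigma}} V_\Sigma$ and $\ol{M'} = V_\Sigma\cup_{\ol{\Sigma}} N'$.

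Applying the Mayer--Vietoris argument of Proposition \ref{prop:condA} to each of these three unions gives, modulo $2$, three identities. The kernels of the maps $H_1(\ol\Sigma)\to H_1(N)\oplus H_1(N')$, $H_1(\ol\Sigma)\to H_1(N)\oplus H_1(V_\Sigma)$ and $H_1(\ol\Sigma)\to H_1(V_\Sigma)\oplus H_1(N')$ are $A\cap C$, $B\cap C$ and $A\cap B$ respectively. Summing the three identities, the $N$, $N'$ and $V_\Sigma$ terms cancel in pairs, leaving
\[
m_1(M)+m_1(M')+m_1(M'') \equiv \dim(A\cap B)+\dim(B\cap C)+\dim(C\cap A)+b_0(\ol\Sigma).
\]
By Lemma \ref{lem:maslow} and (\ref{eq:ctildesig-def}), this equals $\tilde c\ssig(M',M) + \tfrac12 b_1(\ol\Sigma) + b_0(\ol\Sigma)$. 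So it remains to show
\[
m_2(\parout M)+m_2(\parout M')+m_2(\parout M'') \equiv \tfrac12 b_1(\ol\Sigma)+b_0(\ol\Sigma).
\]

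This $m_2$ identity is the main obstacle. Unlike the vertical case, $\parout M''$ is not equal to either $\parout M$ or $\parout M'$. Instead it is the horizontal gluing $\parout M'' = \parout M\cup_{S}\parout M'$ along $S = \parout\parout M = \parin\parout M'$. The key steps are as follows.

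First, rewrite $m_2$ in terms of genus. For any 1-morphism surface $\Sigma_0$, the closed surface $\ol{\Sigma_0}$ satisfies $\tfrac12 b_1(\ol{\Sigma_0}) = g(\ol{\Sigma_0})$. The capping discs do not change the components of $\Sigma_0$ that meet the boundary, so $g(\ol{\Sigma_0}) = g(\Sigma_0)$.

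Second, check $b_0(\ol{\Sigma_0}) \equiv b_0(\Sigma_0)$. This needs care, because the caps $D_{\parin}$ and $D_{\parout}$ are arbitrary surfaces with the given boundary, not discs. If that congruence fails, one should instead apply Lemma \ref{lem:surface-gluing}, or its Mayer--Vietoris proof, directly to the closed surfaces $\ol{\Sigma_0}$, tracking the $D$ pieces explicitly.

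Third, apply Lemma \ref{lem:surface-gluing} to $\parout M'' = \parout M\cup_S\parout M'$. This gives
\[
m_2(\parout M)+m_2(\parout M')+m_2(\parout M'') \equiv b_0(S).
\]

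Finally, show $\tfrac12 b_1(\ol\Sigma)+b_0(\ol\Sigma) \equiv b_0(S)$ as well, where $\ol\Sigma$ is the closure of $\Sigma$ with boundary $\parout\parin M \sqcup \parout\parout M$. Write
\[
\ol\Sigma = D_{\parout\parin M}\cup\Sigma\cup D_{\parout\parout M},
\]
and use that $\Sigma$ is a bordism from $\parout\parin M$ to $\parout\parout M$. I would try an Euler-characteristic computation, or again the handlebody Mayer--Vietoris trick of Lemma \ref{lem:surface-gluing}. Combining this with the second and third steps gives (\ref{eq:condB}).
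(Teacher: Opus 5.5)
Your treatment of the $m_1$ terms matches the paper's. The paper also reduces the claim to the same two congruences: $m_2(\parout M)+m_2(\parout M')+m_2(\parout M'')\equiv b_0(S)$ and $\tfrac12 b_1(\ol\Sigma)+b_0(\ol\Sigma)\equiv b_0(S)$. The gap is that you prove neither, and your route to the first rests on false claims. In $\Bord{or,exp}$ the caps $D_S$ are arbitrary surfaces with boundary $S$, not discs. So your first step fails: cap a cylinder $I\times S^1$ by a disc at one end and a once-punctured torus at the other. The result is a torus, so the genus jumps from $0$ to $1$ and $\tfrac12 b_1+b_0$ changes parity. Your second step fails too, since an annulus capping $S^1\sqcup S^1$ merges two components. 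Your third step applies Lemma \ref{lem:surface-gluing} outside its hypotheses: besides $S$, the surfaces $\parout M$ and $\parout M'$ have the free boundary circles $\parin\parout M$ and $\parout\parout M'$. The congruence $\equiv b_0(S)$ you write down is nevertheless true. The parity error from each object's cap occurs exactly twice among $\parout M$, $\parout M'$, $\parout M''$ and cancels. That cap bookkeeping is exactly the missing argument, and your fallback leaves it undone. The paper handles it by capping the free ends first. Set $K=\parout M\cup D_{\parin\parout M}$ and $K'=\parout M'\cup D_{\parout\parout M'}$. Then $\ol{\parout M}=K\cup_S D_S$, $\ol{\parout M'}=D_S\cup_S K'$ and $\ol{\parout M''}=K\cup_S K'$. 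The lemma applies verbatim to each; the $K$, $K'$, $D_S$ terms each appear twice, leaving $3b_0(S)\equiv b_0(S)$.

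For the last congruence you only say what you would try. The structure you invoke, that $\Sigma$ is some bordism from $\parout\parin M$ to $\parout\parout M$, is not enough. A genus-one $\Sigma$ between two circles with disc caps gives an even left-hand side while $b_0(S)=1$. The missing observation is that $\Sigma=\parOut M=I\times S$ is a cylinder whose two ends are capped by the same $D_S$. Hence $\ol\Sigma\cong D_S\cup_S D_S$, and Lemma \ref{lem:surface-gluing} gives the congruence at once.

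Your Euler-characteristic idea would give a complete proof of both congruences if carried out, using three facts:
\begin{enumerate}
\item For a closed oriented surface $F$, $\tfrac12 b_1(F)+b_0(F)\equiv\tfrac12\chi(F)\pmod 2$.
\item $\chi$ is additive under gluing along circles.
\item For any compact surface $F$, $\chi(F)\equiv b_0(\partial F)\pmod 2$.
\end{enumerate}
These give $\tfrac12\chi(\ol\Sigma)=\chi(D_S)\equiv b_0(S)$. They also show that half the total Euler characteristic of $\ol{\parout M}$, $\ol{\parout M'}$, $\ol{\parout M''}$ equals $\chi(D_{S_0})+\chi(\parout M)+\chi(\parout M')+\chi(D_{S_2})+\chi(D_S)\equiv b_0(S)$, where $S_0=\parin\parout M$ and $S_2=\parout\parout M'$.
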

\begin{proof}
    We use the notation in (\ref{eq:ctildesig}). Let $N = \ol{M} \setminus \mathring{V}_\Sigma$, $N' = \ol{M'} \setminus \mathring{V}_\Sigma$ and $M'' = M \cup_\Sigma M'$, so we have $$\ol{M''} = \ol{M \cup_\Sigma M'} = N \cup_{\ol{\Sigma}} N'.$$

    The same computation as in Proposition \ref{prop:condA} gives us
    \begin{equation} \label{eq:condB-intermediate}
        m_1(M) + m_1(M') + m_1(M'') = \sigma(V;A,B,C) + b_0(\ol{\Sigma}) + \frac12 b_1(\ol{\Sigma}).
    \end{equation}

    Let $S = \parout\parout M = \parin\parout M'$. Then, $\ol{\Sigma}$ is diffeomorphic to $D_S \cup_{S} D_S$. Applying Lemma \ref{lem:surface-gluing} to this union, we obtain 
    \begin{equation*}
        b_0(\ol{\Sigma}) + \frac12 b_1(\ol{\Sigma}) = b_0(S).
    \end{equation*}

    On the other hand, let $$K = \parout M \cup_{\parin\parout M} D_{\parin\parout M}, \quad K' = \parout M' \cup_{\parout\parout M'} D_{\parout\parout M'},$$ so we have $$\ol{\parout M} = K\cup_S D_S, \quad \ol{\parout M'} = K\cup_S D_S, \quad \ol{\parout M''} = K \cup_S K'.$$
    Applying Lemma \ref{lem:surface-gluing} to these three unions, we obtain:
    \begin{equation*}
        \begin{split}
            m_2(\parout M) &= g(K) + g(D_S) + b_0(K) + b_0(D_S) + b_0(S) \\
            m_2(\parout M') &= g(K') + g(D_S) + b_0(K') + b_0(D_S) + b_0(S) \\
            m_2(\parout M') &= g(K) + g(K') + b_0(K) + b_0(K') + b_0(S)
        \end{split}
    \end{equation*}
    Adding these three equations to (\ref{eq:condB-intermediate}), we obtain
    \begin{equation*}
        m(M) + m(M') + m(M'') = \sigma(V;A,B,C) + 4b_0(S) = \sigma(V;A,B,C)
    \end{equation*}
    which by (\ref{eq:ctildesig-def}) gives us the desired (\ref{eq:condB}).
\end{proof}

Hence, we may indeed define $\mathbf{c}\ssigh$ as outlined in \textsection\ref{subsection:sig/2-overview}: 
\begin{defn}
    Let $\partial\mathbf{m}$ denote the coboundary of $(m,0,0)$, viewing $m$ as a function with values in $\set{0,1} \subset \mathbb{Z}$. Then, the cocycle $\mathbf{c}\ssig - \partial\mathbf{m}$ takes values in $2\mathbb{Z}$, and so we may define
    \begin{equation*}
        \mathbf{c}\ssigh := \frac{\mathbf{c}\ssig - \partial\mathbf{m}}2.
    \end{equation*}
    The \emph{half-signature bordism bicategory} $\Bord{sig/2}$ is the extension of $\Bord{or,exp}$ corresponding to this cocycle $\mathbf{c}\ssigh$.
\end{defn}

Thus, we have proven by explicit construction the following:
\begin{thm} \label{thm:exists-sig/2}
    The class $[\mathbf{c}\ssig]$ is divisible by $2$ in $\Ext(\Bord{or,exp},\mathbb{Z})$.
\end{thm}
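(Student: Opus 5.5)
The plan is to show that $\mathbf{c}\ssigh$, as defined just above, is a genuine cocycle with $2[\mathbf{c}\ssigh]=[\mathbf{c}\ssig]$. Since $\partial\mathbf{m}$ is a coboundary, we have $[\mathbf{c}\ssig]=[\mathbf{c}\ssig-\partial\mathbf{m}]$ in $\Ext(\Bord{or,exp},\mathbb{Z})$ by Proposition~\ref{prop:ext-gp}. It therefore suffices to check three things: that $(m,0,0)$ is an admissible triple in the sense of Definition~\ref{defn:coboundary}; that every component of $\mathbf{c}\ssig-\partial\mathbf{m}$ lies in $2\mathbb{Z}$; and that half of this tuple still satisfies (\ref{eq:cocyc-1})--(\ref{eq:cocyc-0}).

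For the first point, admissibility needs $m(\id_f)=0$, which is Lemma~\ref{lem:m-norm} once $m$ is viewed as $\{0,1\}$-valued. For the second, we go through the components of $\partial\mathbf{m}$ using (\ref{eq:cob-1})--(\ref{eq:cob-0}) with $n=k=0$.
\begin{itemize}
\item The $c$- and $\tilde c$-components are $m(\beta\alpha)-m(\alpha)-m(\beta)$ for the vertical and horizontal composites. These agree with $c\ssig$ and $\tilde c\ssig$ modulo $2$ by Propositions~\ref{prop:condA} and~\ref{prop:condB}; signs are irrelevant modulo $2$.
\item The $a$-component is $m(\alpha_{h,g,f})$. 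The $\ell$- and $r$-components are $-m(\lambda_f)$ and $-m(\rho_f)$.
\end{itemize}
Since $a\ssig=\ell\ssig=r\ssig=0$, we need $m$ to vanish on associators and unitors. These are mapping cylinders of diffeomorphisms that extend over the chosen $3$-manifolds: the glued $H$'s on both sides are canonically diffeomorphic. So Lemma~\ref{lem:m-norm} applies and gives $m=0$ on them. Hence every component of $\mathbf{c}\ssig-\partial\mathbf{m}$ is even.

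For the third point, the group $Z(\Bord{or,exp},\mathbb{Z})$ is cut out by linear equations with integer coefficients, and $\mathbb{Z}$ is torsion-free. So if $2\mathbf{x}$ satisfies (\ref{eq:cocyc-1})--(\ref{eq:cocyc-0}), then so does $\mathbf{x}$. This means $\mathbf{c}\ssigh$ is a cocycle, and $2[\mathbf{c}\ssigh]=[\mathbf{c}\ssig-\partial\mathbf{m}]=[\mathbf{c}\ssig]$.

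The only step with real content is the parity of the composition terms, and that is already handled by Propositions~\ref{prop:condA} and~\ref{prop:condB}. The remaining subtlety is making sure Lemma~\ref{lem:m-norm} genuinely applies to associators and unitors in $\Bord{or,exp}$. I would check this directly: for each such 2-morphism $M$, the closed-up $\ol{M}$ is diffeomorphic to a double of $H_\Sigma$-type pieces, namely $\ol{M}\cong \ol{I\times\Sigma}$. This makes $m(M)=m(\id)=0$ immediate.
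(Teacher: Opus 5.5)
Your proposal is correct and follows the paper's own argument: the paper proves the theorem by construction, setting $\mathbf{c}_{\mathrm{sig}/2}=(\mathbf{c}_{\mathrm{sig}}-\partial\mathbf{m})/2$, and relies on the same ingredients you use, namely the function $m$, Propositions~\ref{prop:condA} and~\ref{prop:condB}, and Lemma~\ref{lem:m-norm}. You also make explicit two points the paper leaves implicit at this stage: that $m$ vanishes on associators and unitors (recorded only in the later proposition identifying $\Bord{sig/2,out}$), and that halving preserves the homogeneous linear cocycle identities because $\mathbb{Z}$ is torsion-free.
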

\begin{proof}
    By construction, $2[\mathbf{c}\ssigh] = [\mathbf{c}\ssig]$.
\end{proof}
\begin{rk}
    By Theorem \ref{thm:equiv-ext}, an equivalence of bicategories induces an isomorphism on the groups of extensions. Thus, this result is independent of the choice of model of $\Bord{or}$.
\end{rk}

Based on the explicit formula of $m$ given in Definition \ref{defn:sig/2}, we may identify $\Bord{sig/2}$ as a subbicategory of $\Bord{sig}$.
\begin{constr} \label{constr:sig/2-out}
    Call a 2-morphism $(M,n)$ of $\Bord{sig}$ \emph{even} if $n \equiv m(M) \pmod{2}$, and \emph{odd} otherwise. Then, in each $\mathbb{Z}$-family corresponding to the preimage of a 2-morphism under the quotient $\Bord{sig}\rightarrow\Bord{or,exp}$, 2-morphisms alternate between even and odd. By (\ref{eq:condA}) and (\ref{eq:condB}), the horizontal and vertical compositions of even 2-morphisms are even. Hence, we have a subbicategory $\Bord{sig/2,out}$ of $\Bord{sig}$ consisting of all its objects and 1-morphisms, but only the even 2-morphisms.

    There is an action of $\mathbb{Z}$ on the 2-morphisms if $\Bord{sig/2,out}$, given by $a\cdot (M,n) = (M,n+2a)$ for each $a\in\mathbb{Z}$. Then, the composition $\Bord{sig/2,out} \rightarrow \Bord{sig} \rightarrow \Bord{or,exp}$ is a quotient by this action, and so $\Bord{sig/2,out}$ is an extension of $\Bord{or,exp}$. By construction, it is an index $2$ subextension of $\Bord{sig}$.
\end{constr}
\begin{rk}
    The subbicategory $\Bord{sig/2,out}$, upon restriction to the hom category between $(\emptyset,\emptyset)$ and itself, corresponds to Gilmer's subextension of even morphisms, cf. \cite[Proposition 7.3]{Gilmer}.
\end{rk}

\begin{prop}
    $\Bord{sig/2,out}$ corresponds to $[\mathbf{c}\ssigh]\in\Ext(\Bord{or,exp},\mathbb{Z})$.
\end{prop}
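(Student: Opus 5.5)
The plan is to compute the cocycle of $\Bord{sig/2,out}$ directly from a convenient choice of section and to check that it equals $\mathbf{c}\ssigh$ on the nose. For each 2-morphism $M$ of $\Bord{or,exp}$, the even lifts of $M$ in $\Bord{sig/2,out}$ are the pairs $(M,n)$ with $n\equiv m(M)\pmod 2$. Regarding $m$ as valued in $\set{0,1}\subset\mathbb{Z}$ as in the definition of $\partial\mathbf{m}$, I choose the lift $\widehat{M} := (M, m(M))$. By Lemma \ref{lem:m-norm}, $m(I\times\Sigma)=0$, so identity 2-morphisms lift to $(I\times\Sigma,0)$. By Lemma \ref{lem:c-norm}, these are the identities in $\Bord{sig}$, and hence also in $\Bord{sig/2,out}$, as the section convention of \textsection\ref{section:ext-bicat} requires. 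Under the $\mathbb{Z}$-action $a\cdot(M,n)=(M,n+2a)$, the 2-morphism $(M,m(M)+2a)$ corresponds to the pair $(M,a)$.

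Next I compute the vertical component. In $\Bord{sig}$ we have $(M',m(M'))\circ(M,m(M)) = (M'\circ M,\ m(M)+m(M')+c\ssig(M',M))$. In the $\Bord{sig/2,out}$ coordinates this is $(M'\circ M, a)$, where
\begin{equation*}
    2a = c\ssig(M',M) + m(M) + m(M') - m(M'\circ M) = c\ssig(M',M) - \partial\mathbf{m}(M',M),
\end{equation*}
using the sign conventions of (\ref{eq:cob-1}) for the coboundary of $(m,0,0)$. Proposition \ref{prop:condA} guarantees that this is even, so the vertical component of the cocycle of $\Bord{sig/2,out}$ is exactly $\frac12(c\ssig - \partial\mathbf{m})$. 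The horizontal component $\tilde{c}$ follows by the identical computation, with Proposition \ref{prop:condB} supplying the parity and the coboundary formula taken with $n=0$.

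For the remaining components, recall that the associators and unitors of $\Bord{sig}$ are $(\alpha,0)$, $(\lambda,0)$ and $(\rho,0)$, where the underlying 2-morphisms are mapping cylinders of diffeomorphisms of the type treated in Lemma \ref{lem:m-norm}. That lemma gives $m=0$ on them, so they are even and equal our chosen lifts. Hence $a=\ell=r=0$ for $\Bord{sig/2,out}$. On the other side, $a\ssig=\ell\ssig=r\ssig=0$, and the coboundary terms $m(\alpha_{h,g,f})$, $m(\lambda_f)$, $m(\rho_f)$ in the formulas following (\ref{eq:cob-1}) also vanish, so $\mathbf{c}\ssigh$ has zero $a,\ell,r$ components as well. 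The two cocycles therefore coincide, and $\Bord{sig/2,out}$ corresponds to $[\mathbf{c}\ssigh]$.

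The main obstacle is bookkeeping of signs. One has to check that the coboundary conventions (the sign of the $m$ terms in (\ref{eq:cob-1}) and in the unitor formulas) match the direction in which $\partial\mathbf{m}$ was subtracted in the definition of $\mathbf{c}\ssigh$. One also has to confirm that the structure 2-cells of $\Bord{or,exp}$ genuinely fall under the hypotheses of Lemma \ref{lem:m-norm}, that is, that the relevant diffeomorphisms extend over the handlebody-type fillings $H_\Sigma$. If a sign mismatch occurs, it is harmless: replacing the lift $(M,m(M))$ by $(M,-m(M))$ changes the cocycle only by a coboundary, so the class $[\mathbf{c}\ssigh]$ is unaffected.
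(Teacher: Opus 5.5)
Your proposal is correct and is essentially the paper's proof: lift each $M$ to $(M,m(M))$, observe that the composite of lifts is $(M'', m(M'') + 2c\ssigh(M',M))$ (and likewise for horizontal composition), and use Lemma \ref{lem:m-norm} to see that associators and unitors are the chosen lifts, so $a\ssigh=\ell\ssigh=r\ssigh=0$. Your sign bookkeeping against (\ref{eq:cob-1}) matches exactly, so the fallback of changing lifts is never needed.
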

\begin{proof}
    For each 2-morphism $M$ in $\Bord{or,exp}$, we choose its lift in $\Bord{sig/2,out}$ to be $(M,m(M))$. Then, if $M,M'$ are vertically composable 2-morphisms in $\Bord{or,exp}$ with composition $M''$, the composition of their lifts is 
    \begin{equation*}
        \begin{split}
            (M'', m(M) + m(M') + c\ssig(M',M)) &= (M'', m(M'') + 2c\ssigh(M',M)) \\
            &= c\ssigh(M',M) \cdot (M'', m(M'')).
        \end{split}
    \end{equation*}
    A similar computation for horizontal computation holds.

    For the $a,\ell,r$ terms, note that $m$ sends associators and unitors to $0$, by Lemma \ref{lem:m-norm}. Hence, we have $a\ssigh = \ell\ssigh = r\ssigh = 0$. Correspondingly, the associators and unitors of $\Bord{sig/2,out}$ are those of $\Bord{sig}$, which all have signature term $0$.
\end{proof}

Thus, we have identified $\Bord{sig/2}$ with an index $2$ subextension of $\Bord{sig}$.

\begin{lem} \label{lem:sig/2-mon}
    In Construction \ref{constr:sig/2-out}, even 2-morphisms are closed under the monoidal product.
\end{lem}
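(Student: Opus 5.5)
The plan is to show that $m$ is additive under disjoint union and that the signature term is additive as well. Then evenness is preserved.

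Let $(M,n)$ and $(M',n')$ be even 2-morphisms of $\Bord{sig}$. Their monoidal product is $(M\sqcup M', n+n')$. The second component adds because the bounding 4-manifold of $\ol{M\sqcup M'}$ may be taken to be $X\sqcup X'$, and the signature is additive under disjoint union. So it suffices to prove
\begin{equation*}
    m(M\sqcup M') \equiv m(M)+m(M') \pmod 2 .
\end{equation*}
Then $n+n'\equiv m(M)+m(M')\equiv m(M\sqcup M')$, as required.

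To prove the displayed congruence, I would check term by term that each summand in Definition \ref{defn:sig/2} is additive.

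\textbf{Expanded manifolds.} By construction of $\Bord{or,exp}$, the monoidal product of objects and 1-morphisms is componentwise disjoint union, so the expanded manifolds satisfy $\ol{M\sqcup M'} = \ol{M}\sqcup\ol{M'}$ and $\ol{\parout(M\sqcup M')} = \ol{\parout M}\sqcup\ol{\parout M'}$. This holds because every piece glued on ($H_{\parin M}$, $H_{\parout M}$, $V_{\parIn M}$, $V_{\parOut M}$, and the discs $D_S$) is the disjoint union of the corresponding pieces.

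\textbf{Betti numbers.} Homology with real coefficients takes disjoint unions to direct sums, so each Betti number $b_0,b_1$ is additive. Hence $m_1$ is additive. The term $\tfrac12 b_1(\ol{\parout M})$ is also additive, since $b_1$ of a closed orientable surface is even, so halving is legitimate on each piece separately.

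The only point requiring care is that the reduction modulo $2$ happens after halving. Each of $\tfrac12 b_1(\ol{\parout M})$ and $\tfrac12 b_1(\ol{\parout M'})$ is already an integer, so additivity holds over $\mathbb{Z}$ and hence modulo $2$.

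\textbf{Symmetry and unit constraints.} I would also note that the symmetric monoidal structure maps of $\Bord{sig}$, namely the braiding, associativity and unit constraints for $\sqcup$, are realised by mapping cylinders of diffeomorphisms that extend over the chosen $H_\Sigma$. These have signature term $0$ and, by Lemma \ref{lem:m-norm}, $m=0$, so they are even. This last point is the only one needing any thought: one must check that the relevant diffeomorphisms extend over the expanded manifolds. That is immediate here, since the structure maps merely permute disjoint components.

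Putting these together, $\Bord{sig/2,out}$ is closed under the monoidal product and contains all the monoidal structure 2-cells.
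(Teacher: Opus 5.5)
Your proposal is correct and follows the paper's proof: the paper likewise reduces the lemma to $m(M\sqcup M')\equiv m(M)+m(M') \pmod 2$ and justifies this by noting that the homology of a disjoint union is the direct sum of the homologies. Your closing paragraph on the symmetric monoidal structure 2-cells is not needed for the lemma as stated, but it makes explicit a point the paper leaves implicit when it concludes that $\Bord{sig/2,out}$ is a symmetric monoidal subbicategory.
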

\begin{proof}
    This is equivalent to showing that
    \begin{equation}
        m(M \sqcup M') = m(M) + m(M') \pmod2,
    \end{equation}
    which is true as the homology of a disjoint union is a direct sum of homologies.
\end{proof}

Thus, $\Bord{sig/2,out}$ is in fact a symmetric monoidal subbicategory of $\Bord{sig}$, and hence $\Bord{sig/2}$ is a symmetric monoidal extension of $\Bord{or,exp}$.
\begin{rk}
    In the formula for $m$, we could have replaced all instances of $\parout M$ with $\parin M$ and obtained a different $m$ for which all the results above still hold. However, we will show later that this choice ends up producing an equivalent symmetric monoidal extension.
\end{rk}

\subsection{An automorphism of $\Bord{sig}$} \label{subsection:aut-bordsig}

In the previous subsection, we constructed a symmetric monoidal extension $\Bord{sig/2}$ of $\Bord{or,exp}$. This depended on a function $m$, which is explicitly defined in Definition \ref{defn:sig/2}. However, there is another obvious choice for $m$, given by replacing the instances of $\parout M$ in its definition with $\parin M$. This also satisfies (\ref{eq:condA}) and (\ref{eq:condB}), by analogous arguments to Propositions \ref{prop:condA} and \ref{prop:condB}.

Then, by following Construction \ref{constr:sig/2-out} with this new choice of $m$, we may define a symmetric monoidal extension $\Bord{sig/2,in}$ of $\Bord{or,exp}$. We shall show that this is isomorphic to $\Bord{sig/2,out}$, and so this alternate choice of $m$ results in the same extension. In fact, there exists a symmetric monoidal automorphism of $\Bord{sig}$ which sends $\Bord{sig/2,in}$ and $\Bord{sig/2,out}$ to each other.

Let $\hatC\rightarrow\mathbf{C}$ be an extension of bicategories by an abelian group $A$. We want to construct strict isomorphisms $\hatC\rightarrow\hatC$ respecting the quotient to $\mathbf{C}$ as well as the action by $A$. This is equivalent to assigning to each 2-morphism $\alpha$ of $\mathbf{C}$ an element $\delta(\alpha)$ of $A$, and then mapping each lift $(\alpha,a)$ of $\alpha$ in $\hatC$ to $(\alpha,a+\delta(a))$. This is an isomorphism if and only if $\delta:\mathbf{C}\rightarrow B^2A$ defines a strict functor. If $\mathbf{C}$ is a symmetric monoidal extension and we require the isomorphism to also preserve the symmetric monoidal structure, then this is equivalent to $\delta$ being strictly monoidal in the sense that $\delta(\alpha\otimes\beta) = \delta(\alpha) + \delta(\beta)$ for all 2-morphisms $\alpha,\beta$.

In the case $\mathbf{C} = \Bord{or,exp}$, in order to construct our desired automorphism of $\Bord{sig}$, we have to check the following:
\begin{lem}
    Let $\mathbb{M}$ be the set of 2-morphisms of $\Bord{or,exp}$. The function $\delta:\mathbb{M}\rightarrow\mathbb{Z}$ given by
    \begin{equation*}
            \delta(M) = \pa{\frac12 b_1(\ol{\parin M};\mathbb{R}) - b_0(\ol{\parin M};\mathbb{R})} 
            - \pa{\frac12 b_1(\ol{\parout M};\mathbb{R}) - b_0(\ol{\parout M};\mathbb{R})}
    \end{equation*}
    defines a strict monoidal functor $\Bord{or,exp}\rightarrow B^2\mathbb{Z}$.
\end{lem}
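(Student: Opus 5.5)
The plan is to observe that $\delta$ is a ``difference of a potential on 1-morphisms''. For a 1-morphism $(\Sigma,H_\Sigma)$ of $\Bord{or,exp}$, set
\begin{equation*}
    f(\Sigma) := \tfrac12 b_1(\ol{\Sigma}) - b_0(\ol{\Sigma}) = g(\ol{\Sigma}) - b_0(\ol{\Sigma}),
\end{equation*}
where the second equality holds because $\ol{\Sigma}$ is a closed oriented surface, so $b_1(\ol{\Sigma}) = 2g(\ol{\Sigma})$ with $g$ the summed genus as in Lemma \ref{lem:surface-gluing}. In particular $f$ is integer-valued, so $\delta(M) = f(\parin M) - f(\parout M)$ indeed lands in $\mathbb{Z}$. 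A strict monoidal functor $\Bord{or,exp}\rightarrow B^2\mathbb{Z}$ amounts to four conditions:
\begin{itemize}
    \item $\delta$ vanishes on identity 2-morphisms;
    \item $\delta$ is additive under vertical composition;
    \item $\delta$ is additive under horizontal composition;
    \item $\delta$ is additive under disjoint union.
\end{itemize}
I will check these in turn.

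Identities and vertical composition are formal. The identity 2-morphism $I\times\Sigma$ has $\parin = \parout = \Sigma$, so $\delta = 0$. For a vertical composite $M\cup_\Sigma M'$, the incoming boundary is $\parin M$ and the outgoing boundary is $\parout M'$, while $\parout M = \parin M' = \Sigma$. Hence the sum telescopes: $\delta(M)+\delta(M') = f(\parin M) - f(\Sigma) + f(\Sigma) - f(\parout M') = \delta(M\cup_\Sigma M')$. Monoidality follows because Betti numbers are additive under disjoint union, and $\ol{\Sigma\sqcup\Sigma'} = \ol{\Sigma}\sqcup\ol{\Sigma'}$.

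The only real content is horizontal composition, which I expect to be the main (though mild) obstacle. Here I will show that $f$ is additive under composition of 1-morphisms up to a correction term depending only on the middle object $(S,D_S)$. For composable $\Sigma,\Sigma'$ through $S$, write $\ol{\Sigma} = K\cup_S D_S$, $\ol{\Sigma'} = K'\cup_S D_S$ and $\ol{\Sigma\cup_S\Sigma'} = K\cup_S K'$, exactly as in the proof of Proposition \ref{prop:condB}. Applying Lemma \ref{lem:surface-gluing} (now over $\mathbb{Z}$, not mod $2$) to these three unions and subtracting should give
\begin{equation*}
    f(\Sigma\cup_S\Sigma') - f(\Sigma) - f(\Sigma') = -b_0(S) - 2g(D_S) + 2b_0(D_S) =: \kappa(S,D_S).
\end{equation*}
Now take a horizontal composite $M\star M'$ along the vertical boundary $\Sigma$. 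Its incoming boundary is the composite of $\parin M$ and $\parin M'$, and its outgoing boundary is the composite of $\parout M$ and $\parout M'$. Both composites are glued along the same object $(S,D_S)$, since the 2-morphisms share source and target 1-morphisms' endpoints. So the two correction terms $\kappa(S,D_S)$ cancel in $\delta(M\star M') = f(\parin(M\star M')) - f(\parout(M\star M'))$, and we obtain $\delta(M\star M') = \delta(M)+\delta(M')$.

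The points needing care are two. First, the signs in Lemma \ref{lem:surface-gluing} must be tracked honestly over $\mathbb{Z}$ rather than mod $2$. Second, I must confirm that the same $(S,D_S)$ appears for both the in- and out-composites; this holds because 2-morphisms in $\Bord{or,exp}$ fix the objects at the ends of their source and target 1-morphisms. Orientation reversals do not affect Betti numbers, so they can be ignored throughout.
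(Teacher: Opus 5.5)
Your proof is correct and is essentially the paper's argument: the paper also treats identities, vertical composition and disjoint union as immediate, and handles horizontal composition by applying Lemma \ref{lem:surface-gluing} to $\ol{\Sigma} = K\cup_S D_S$, $\ol{\Sigma'} = K'\cup_S D_S$ and $\ol{\Sigma\cup_S\Sigma'} = K\cup_S K'$, so that the correction terms for the in- and out-boundaries coincide and cancel. Your bookkeeping is slightly more careful than the paper's: your $\kappa(S,D_S) = -b_0(S)-2g(D_S)+2b_0(D_S)$ avoids the paper's simplification $b_0(D_S)=b_0(S)$, which holds only when every component of $D_S$ has exactly one boundary circle; this makes no difference, since the term cancels either way.
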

\begin{proof}
    This is clear for vertical composition and for disjoint unions. It suffices to consider horizontal composition. We use the same notation as Proposition \ref{prop:condB}.
    
    Applying Lemma \ref{lem:surface-gluing} and noting that $b_0(D_S) = b_0(S)$, we obtain
    \begin{equation*}
        \begin{split}
            \frac12 b_1(\ol{\parin M}) - b_0(\ol{\parin M}) &= g(K) + g(D_S) - b_0(K) \\
            \frac12 b_1(\ol{\parin M'}) - b_0(\ol{\parin M'}) &= g(K') + g(D_S) - b_0(K') \\
            \frac12 b_1(\ol{\parin M''}) - b_0(\ol{\parin M''}) &= g(K) + g(K') - b_0(K) - b_0(K') + b_0(S)
        \end{split}
    \end{equation*}
    A similar computation holds for the $\parout$ case. Thus, we obtain
    \begin{equation*}
        \delta(M'') - \delta(M) - \delta(M') = \pa{b_0(S) - 2g(D_S)} - \pa{b_0(S) - 2g(D_S)} = 0.
    \end{equation*}
\end{proof}

Hence, for any (symmetric monoidal) extension $\Bord{ext}\rightarrow\Bord{or,exp}$, this $\delta$ induces an isomorphism $\Delta_{\mathrm{ext}}:\Bord{ext}\rightarrow\Bord{ext}$ of (symmetric monoidal) bicategories respecting the quotient to $\Bord{or,exp}$ and the $\mathbb{Z}$-action:
\[\begin{tikzcd}
	\Bord{ext} & {} & \Bord{ext} \\
	& \Bord{or,exp}
	\arrow["\Delta_{\mathrm{ext}}", from=1-1, to=1-3]
	\arrow[from=1-1, to=2-2]
	\arrow[from=1-3, to=2-2]
\end{tikzcd}\]
Moreover, in the case where the extension is $\Bord{sig}$, the automorphism $\Delta\ssig$ sends $\Bord{sig/2,in}$ to $\Bord{sig/2,out}$ and vice versa. These two are thus isomorphic as symmetric monoidal extensions, and it thus makes sense to denote both of them as $\Bord{sig/2}$.

Consider an inclusion $\iota:\Bord{sig/2}\hookrightarrow\Bord{sig}$ as an index $2$ subextension. The $\mathbb{Z}$-action on $\Bord{sig/2}$ corresponds to twice the $\mathbb{Z}$-action on $\Bord{sig}$, i.e. for a 2-morphism $\hat{\alpha}$ in $\Bord{sig/2}$, $\iota(n\cdot\hat{\alpha}) = 2n\cdot\iota(\hat{\alpha})$. Hence, we have a way of comparing $\Delta\ssig$ and $\Delta\ssigh$; the following diagram commutes.
\[\begin{tikzcd}
    \Bord{sig/2} & \Bord{sig} \\
    \Bord{sig/2} & \Bord{sig}
    \arrow["{\iota}", from=1-1, to=1-2]
    \arrow["{\Delta_{\mathrm{sig}/2}}"', from=1-1, to=2-1]
    \arrow["{\Delta\ssig^2}", from=1-2, to=2-2]
    \arrow["{\iota}"', from=2-1, to=2-2]
\end{tikzcd}\]

In fact, letting $\iota_n := \Delta_{\mathrm{sig}}^n\circ\iota$ for each $n\in\mathbb{Z}$, what we really have is a $\mathbb{Z}$-family of inclusions $\Bord{sig/2}\hookrightarrow\Bord{sig}$. In the case where $\iota$ is as defined in Construction \ref{constr:sig/2-out}, the image of $\iota_n$ in $\Bord{sig}$ is $\Bord{sig/2,in}$ if $n$ is odd and $\Bord{sig/2,out}$ if $n$ is even.

\subsection{Componentwise versions} In \cite{BDSV4}, there is also a componentwise version of $\Bord{sig}$ for which a 2-morphism with $c$ connected components is given the extra data of a $c$-tuple of integers, thought of as signatures of 4-manifolds bounded by each of the connected components. While this is not an extension of $\Bord{or}$ in the sense of Definition~\ref{defn:ext}, it has some representation-theoretic significance. We record a half-signature analogue of this symmetric monoidal bicategory; as will be shown in \textsection\ref{section:rep-mtc}, its linear representations have a classification in terms of modular tensor categories.
\begin{defn}\label{defn:bordcsig}
    The \emph{componentwise signature bordism bicategory} $\Bord{csig}$ is the symmetric monoidal bicategory with 
    \begin{itemize}
        \item Objects and 1-morphisms: same as $\Bord{or,exp}$
        \item 2-morphisms: equivalence classes of $(M,\mathbf{n})$, where $M = \sqcup_{i=1}^c M_i$, each $M_i$ connected, and $\mathbf{n} = (n_1,\ldots,n_c)$ is an $c$-tuple of integers.
    \end{itemize}
    When a vertical or horizontal composition involves multiple connected components, the signature term assigned to the combined connected component is given by the sum of all the signature terms, plus the $c\ssig$ term given by Wall's invariant.

    The \emph{componentwise half-signature bordism bicategory} $\Bord{csig/2}$ is the symmetric monoidal subbicategory of $\Bord{csig}$ consisting only of the 2-morphisms $(M,\mathbf{n})$ such that $n_i \equiv m(M_i) \pmod{2}$ for each $i$.
\end{defn}

These admit symmetric monoidal functors to $\Bord{sig}$ and $\Bord{sig/2}$ respectively, given by summing up the entries of the tuples $\mathbf{n}$.

\section{Minimality of $\bord{sig/2}$} \label{section:indiv}

Starting from the extension $\Bord{sig}$ of $\Bord{or,exp}$, we have shown that it is divisible by $2$ as an element in $\Ext(\Bord{or,exp},\mathbb{Z})$. A natural question to ask then is whether it can be further divided, i.e. if $\Bord{sig/2}$ is minimal.

This question has been well-studied at the level of mapping class groups, which arise as automorphism groups of 1-morphisms in $\Bord{or}$. Upon restriction to these, $\Bord{sig}$ yields a central extension of mapping class groups by $\mathbb{Z}$, commonly known as the \emph{signature central extension}. It turns out that this signature central extension not only admits an index $2$ subextension, but also an index $4$ one.

In the light of this fact, Gilmer and Masbaum \cite[Remark 7.5]{GM} asked if the same would be true for the signature bordism category $\bord{sig}$. Via homotopy-theoretic methods, this was answered by Schommer-Pries \cite[\textsection 7.5]{csp-inv-tft} in the negative for symmetric monoidal extensions: Gilmer's index two symmetric monoidal subextension $\bord{sig/2}$ of $\bord{sig}$ does not admit any further symmetric monoidal subextensions.

In this section, we present a proof that $\bord{sig/2}$ in fact does not admit any subextensions (so without any reference to the monoidal structure), purely via cocycle computations.

\subsection{Restriction to mapping class groups}

First, we review the signature central extension of mapping class groups. Let $\Gamma_g$ be the mapping class group for a closed oriented surface $\Gamma_g$ of genus $g$. Then, there is an inclusion functor $B\Gamma_g \rightarrow \bord{or,exp}$ sending the one object to $(\Gamma_g,H_{\Gamma_g})$ for some choice of handlebody $H_{\Gamma_g}$, and each $[f]\in\Gamma_g$ to the mapping cylinder $M_f$ of $f$. Different choices of $f$ result in diffeomorphic choices of $M_f$, i.e. the same morphism in $\bord{or,exp}$.

This inclusion induces a homomorphism $\Ext(\bord{or,exp},\mathbb{Z}) \rightarrow H^2(\Gamma_g;\mathbb{Z})$, given by restricting cocycles. Composing this with the homomorphism in Lemma \ref{lem:res-cocyc}, we obtain a restriction homomorphism $\Ext(\Bord{or,exp},\mathbb{Z}) \rightarrow H^2(\Gamma_g;\mathbb{Z})$. Under this, $[\mathbf{c}\ssig]$ and $[\mathbf{c}\ssigh]$ are sent to some $[c\ssig], [c\ssigh]\in H^2(\Gamma_g;\mathbb{Z})$.

The central extension of $\Gamma_g$ represented by $[c\ssig]$ is commonly known as the \emph{signature central extension}. This was first defined by Meyer \cite{Meyer} using the signatures of certain surface bundles and later reformulated by Walker \cite{Walker} in the more familiar terms of Lagrangian subspaces:

Let $L = \ker(H_1(\Sigma_g)\rightarrow H_1(H_{\Sigma_g}))$ be the Lagrangian subspace associated to the handlebody bounded by $\Sigma_g$. Then, for $[f],[f']$ in the mapping class group of $\Sigma_g$, applying (\ref{eq:csig-def}) to $M = M_f$ and $M' = M_{f'}$ gives
\begin{equation}\label{eq:sw}
    c\ssig([f'],[f]) = -\sigma(V;A,B,C) = \sigma(\mathbb{R}^{2g}, f_*L,L,(f')_*^{-1}L) = \sigma(\mathbb{R}^{2g};(f'f)_*L,f_*'L,L).
\end{equation}
In the second step, we swapped the positions of $A,C$ in order to remove the minus sign. The expression on the right-hand side is commonly known as the \emph{Shale--Weil cocycle} \cite{Walker,LV}.

By finding a group presentation for the signature central extension, Masbaum and Roberts \cite{MR} identified $[c\ssig]$ as an element of $H^2(\Gamma_g)$, which is a cyclic group for all $g$:
\begin{thm}[{\cite[Theorem 3.10]{MR}}]\label{thm:csig-4}
    The cocycle $[c\ssig]$ is $4$ times a generator in $H^2(\Gamma_g)$.
\end{thm}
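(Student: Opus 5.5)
The statement is a theorem of Masbaum and Roberts, and I would prove it by splitting on the genus, since $H^2(\Gamma_g;\mathbb{Z})$ is $\mathbb{Z}$ for $g\ge 3$ and finite for $g=1,2$. Throughout I use the universal coefficient decomposition
\[
H^2(\Gamma_g;\mathbb{Z})\cong \mathrm{Hom}(H_2(\Gamma_g),\mathbb{Z})\oplus \Ext(H_1(\Gamma_g),\mathbb{Z}).
\]
By (\ref{eq:sw}), $[c\ssig]$ is the class of the Shale--Weil (Meyer) cocycle $\sigma(\mathbb{R}^{2g};(f'f)_*L,f'_*L,L)$. A different choice of $L$ changes this cocycle only by a coboundary, so I may fix $L$ conveniently.

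\emph{Case $g\ge 3$.} Here $H_1(\Gamma_g)=0$ (Powell), so the class is detected entirely by its pairing with $H_2(\Gamma_g)\cong\mathbb{Z}$. An element of $H_2(\Gamma_g)$ is represented by a surface bundle $E\to B$ over a closed surface. Meyer's theorem, which comes from Wall non-additivity (Theorem \ref{thm:wall}) applied to a pants decomposition of $B$, identifies the pairing of the Meyer cocycle with this class as $\pm\sigma(E)$. So I need two facts.
\begin{enumerate}
\item Every such $E$ has $\sigma(E)\equiv 0\pmod 4$. The total space is a closed $4$-manifold fibred over a surface, and its intersection form restricted to the relevant part is even; alternatively, use Meyer's mod-$4$ argument via the symplectic representation $\Gamma_g\to Sp(2g,\mathbb{Z})$ and the known class of the Maslov cocycle on $Sp(2g,\mathbb{Z})$.
\item There is a bundle with $\sigma(E)=\pm 4$, for instance an Atiyah--Kodaira-type construction or Endo's examples, transported to every $g\ge 3$ by stabilisation.
\end{enumerate}
Together these give that the pairing has image exactly $4\mathbb{Z}$, so the class is $4$ times a generator.

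\emph{Cases $g=1,2$.} Here $H_2$ is torsion, so the class lives in $\Ext(H_1(\Gamma_g),\mathbb{Z})$: $\mathbb{Z}_{12}$ for $g=1$ and $\mathbb{Z}_{10}$ for $g=2$. I would compute it from a presentation, in the following steps.
\begin{enumerate}
\item Write $\Gamma_g$ with Dehn twist generators and relations: $S^4=1$, $(ST)^3=S^2$ for $SL_2\mathbb{Z}$, and the braid, chain and hyperelliptic relations for $g=2$.
\item Choose lifts $\widehat{T}_i$ of the Dehn twists to the extension.
\item Compute, using the Maslov cocycle and Lemma \ref{lem:maslow}-style dimension counts to control each term, the integer $r_R\in\mathbb{Z}$ by which each relator $R$ lifts to the central element. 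The braid relations lift trivially, since all the twists are conjugate.
\item Read off the class from the $r_R$: in $\Ext(\mathbb{Z}_n,\mathbb{Z})\cong\mathbb{Z}_n$, with all generators mapping to $1\in H_1$, it is the value on the defining relator of $H_1$, taken modulo $n$.
\end{enumerate}
The target for the last step is a class of order $3$ in $\mathbb{Z}_{12}$ and of order $5$ in $\mathbb{Z}_{10}$. Concretely, for $g=1$ the lifted relation should produce $\pm 4$ modulo $12$ after normalising the lifts, and the analogous count for $g=2$ should give a multiple of $4$ generating the order-$5$ subgroup.

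The main obstacle is the explicit evaluation of the Maslov index on the specific triples of Lagrangians arising from iterated products of twists in the chain relation. Here I would follow Masbaum--Roberts: use the simplification that a twist $T_c$ moves $L$ only within a $2$-dimensional symplectic summand. This reduces each Wall invariant to a sign of a $1\times 1$ form, which can then be summed along the relation.
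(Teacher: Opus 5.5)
\textbf{Genus $g\ge 3$.} This part is the argument the paper recalls in its introduction, and it is sound provided both halves are quoted from Meyer. Powell and Harer/Korkmaz--Stipsicz reduce the class to its pairing with $H_2(\Gamma_g)$, and Meyer identifies that pairing with $\pm\sigma(E)$ and shows its image is exactly $4\mathbb{Z}$.

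Discard your first justification of $4\mid\sigma(E)$, because it is false. The form $Q(a,b)=\langle\omega(a\cup b),[B]\rangle$ on $H^1(B;\mathcal{H}_{\mathbb{Z}})/\mathrm{tors}$, whose signature is $\sigma(E)$, is unimodular by twisted Poincar\'e duality, since $\omega$ identifies $\mathcal{H}$ with $\mathcal{H}^*$. So evenness would force $8\mid\sigma(E)$, contradicting your item (2). For a non-unimodular form, evenness gives no divisibility at all. If you want an argument independent of Meyer, use $\sigma(E)=\tfrac13\langle\kappa_1,[B]\rangle=4\langle\lambda,[B]\rangle$, which follows from Hirzebruch's signature theorem and Mumford's relation $\kappa_1=12\lambda$, where $\lambda$ is the integral Hodge class. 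For attainment, the classical Atiyah--Kodaira bundles have signatures much larger than $4$; quote Meyer, or Endo's genus-$3$ example together with stabilisation.

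\textbf{Genus $g=1,2$: the genuine gap.} Here every surface bundle has $\sigma(E)=0$, so Meyer's theorem says nothing. The statement is then exactly the value of the class in $\mathbb{Z}/12$, respectively $\mathbb{Z}/10$, and your plan only predicts that value (``should produce $\pm4$'').

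Your step 4 is also imprecise for $g=2$. No relator has abelianised image $\pm10$: the chain relator maps to $30$ and the hyperelliptic relator to $20$. So the class must be read off from a combination such as $\phi(r_{\mathrm{chain}})-\phi(r_{\mathrm{hyp}})$, not from a single relator.

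There is a much shorter route that uses only computations already in the paper.
\begin{enumerate}
\item For $g\le2$, $H_1(\Gamma_g)$ is finite and $H_2(\Gamma_g)$ is torsion. The Bockstein therefore gives a natural isomorphism $H^2(\Gamma_g;\mathbb{Z})\cong\Hom(H_1(\Gamma_g),\mathbb{Q}/\mathbb{Z})$.
\item Consequently the class is detected by restricting to cyclic subgroups whose images in $H_1(\Gamma_g)$ exhaust its primary parts.
\item On a cyclic subgroup $\langle x\rangle\cong\mathbb{Z}/n$, the restricted class is $\sum_{k=1}^{n-1}c\ssig(x,x^k) \bmod n$, which is the exponent of $\hat{x}^n$.
\end{enumerate}

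For $g=1$, take $S=\left(\begin{smallmatrix}0&-1\\1&0\end{smallmatrix}\right)$ and the order-$3$ element $f$ of Proposition~\ref{prop:indiv}. Their images in $H_1=\mathbb{Z}/12$ have orders $4$ and $3$, so together they detect the whole group.
\begin{itemize}
\item Since $S^2=-1$ fixes every Lagrangian, each term $c\ssig(S,S^k)$ involves two equal Lagrangians. It therefore vanishes by Lemma~\ref{lem:coinc-zero}, so the $4$-part of the class is trivial.
\item For $f$ the sum is $c\ssig(f,f)+c\ssig(f,f^2)=\pm1+0$, exactly as computed in Proposition~\ref{prop:indiv}. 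So the $3$-part is nonzero.
\end{itemize}
Hence the class is $\pm4\in\mathbb{Z}/12$, which is $4$ times a generator.

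For $g=2$, the hyperelliptic involution acts by $-1$ on homology and has image of order $2$ in $H_1(\Gamma_2)$. By the same lemma it kills the $2$-part. The $5$-part is then settled by one Maslov-index computation in $\mathbb{R}^4$ for an order-$5$ element with image of order $5$, for instance the square of the product of the twists along a $4$-chain.
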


In particular, for $g\ge3$, we have $H^2(\Gamma_g)\cong\mathbb{Z}$ \cite{Harer,korkstip}. Then, under this isomorphism, $[c\ssigh]$ corresponds to $2$, so it can be divided by $2$ and no further. An explicit construction of its index $2$ subextension is given in \cite[\textsection 5]{GM}.

\subsection{Indivisibility of $[c\ssigh]$ by $2$.}

In the light of the fact that $[c\ssigh]$ can indeed be further subdivided by $2$ at the level of mapping class groups, Gilmer and Masbaum \cite[Remark 7.5]{GM} asked if the same is true over $\bord{or,lag}$ (and so equivalently, over $\bord{or,exp}$ or $\bord{or}$). The following proposition answers this in the negative:
\begin{prop} \label{prop:indiv}
    $[c\ssig]\in\Ext(\bord{or,exp},\mathbb{Z})$ is not divisible by $4$.
\end{prop}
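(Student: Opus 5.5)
We argue by contradiction at the level of cocycles. Suppose $[c\ssig]$ is divisible by $4$ in $\Ext(\bord{or,exp},\mathbb{Z})$. Then there exist a cocycle $c'$ on $\bord{or,exp}$ and a function $a$ on morphisms, with $a(\id)=0$, such that
\begin{equation*}
    c\ssig(g,f) = 4c'(g,f) + a(g\circ f) - a(f) - a(g)
\end{equation*}
for all composable $f,g$. Reducing modulo $4$, $c\ssig$ becomes a coboundary of a $\mathbb{Z}/4$-valued function $a$. The idea is to find a finite family of composable pairs such that, in a suitable signed sum of these equations, all the $a$-terms cancel, while the corresponding signed sum of $c\ssig$-values is $\not\equiv 0 \pmod 4$ (I aim for $2$).

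Theorem \ref{thm:csig-4} shows that mapping class groups alone cannot give such a sum: there $[c\ssig]$ is already $4$ times a class. So I will use non-invertible bordisms passing through the empty surface. I take the torus $T$ with a fixed solid torus $H_T$, whose Lagrangian is $B\subset H_1(T;\mathbb{R})\cong\mathbb{R}^2$. I consider solid tori $f_i\colon(\emptyset,\emptyset)\to(T,H_T)$ with meridian lines $\lambda_i$, and solid tori $g_j\colon(T,H_T)\to(\emptyset,\emptyset)$ with meridian lines $\mu_j$. By (\ref{eq:csig-def}),
\begin{equation*}
    c\ssig(g_j,f_i)=-\sigma(\mathbb{R}^2;\mu_j,B,\lambda_i)\in\{-1,0,1\},
\end{equation*}
and this is computable from the cyclic order of three lines in the plane. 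The composite $g_j\circ f_i$ is a closed lens space (or $S^3$, or $S^1\times S^2$) determined by the pair of slopes $(\lambda_i,\mu_j)$.

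Now take the ``square'' of pairs
\begin{equation*}
    (g_1,f_1),\quad (g_1,f_2),\quad (g_2,f_2),\quad (g_2,f_1),
\end{equation*}
and form the alternating sum. The terms $a(f_i)$ and $a(g_j)$ cancel automatically. The terms $a(g_jf_i)$ cancel provided $g_1f_1\cong g_2f_2$ and $g_1f_2\cong g_2f_1$ as closed $3$-manifolds, because morphisms are diffeomorphism classes and $a$ depends only on the morphism. This leaves
\begin{equation*}
    \sum \pm\,\sigma(\mu_j,B,\lambda_i)\equiv 0\pmod 4.
\end{equation*}
To get a contradiction I need slopes for which the two diffeomorphism coincidences hold but this sum is $\pm2$. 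If a single square does not suffice, I will chain several such squares, or use more than two $f$'s and $g$'s. The identities $g_jf_i\cong g_{j'}f_{i'}$ come from the classification of lens spaces: for instance $L(p,q)\cong L(p,q^{\pm1})$, and the slope pair $(\mu_j,\lambda_i)$ determines $p=|\lambda_i\cdot\mu_j|$. In addition, the lines $\mu_j$ may be moved by the mapping class group of $(T,H_T)$, which is a freedom I have not yet used.

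The main obstacle is exactly this combinatorial search. The lens-space coincidences tend to force the signed Maslov sum to be divisible by $4$, as in the naive attempt where all four terms have the same sign and give $4$. So the configuration must be chosen so that an odd number of the four Maslov terms flip sign, or so that some terms vanish because two of the lines coincide (Lemma \ref{lem:coinc-zero}), for example by taking some $\mu_j=B$. My first attempt will use $\lambda_i,\mu_j$ among the slopes $0,1,\infty,-1$ with $B$ of slope $0$. I will tabulate the resulting lens spaces and the signs $\sigma(\mu_j,B,\lambda_i)$ via Lemma \ref{lem:maslow} and the cyclic-order rule, and search for a closed cycle of pairs with total $2\bmod 4$. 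If every four-term cycle fails, I will pass to longer cycles through $S^3$ and $S^1\times S^2$, which arise from many different slope pairs.
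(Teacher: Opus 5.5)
This is not yet a proof. It stops exactly where the argument has to happen: you never exhibit slopes $\lambda_i,\mu_j$ for which the closed-manifold terms cancel and the signed Maslov sum is $\not\equiv 0 \pmod 4$. You also leave open whether a square suffices, whether longer cycles are needed, or whether any such configuration exists at all.

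In addition, the cancellation criterion you state is wrong. In the alternating sum $+(g_1,f_1)-(g_1,f_2)+(g_2,f_2)-(g_2,f_1)$, the closed-manifold contribution is $a(g_1f_1)+a(g_2f_2)-a(g_1f_2)-a(g_2f_1)$. Your conditions $g_1f_1\cong g_2f_2$ and $g_1f_2\cong g_2f_1$ therefore leave $2a(g_1f_1)-2a(g_1f_2)$, which need not vanish modulo $4$. For example, take $f_1=f_\lambda$, $f_2=f_\mu$, $g_1=g_\lambda$, $g_2=g_\mu$ with $\lambda,\mu$ Farey-adjacent; this gives $S^1\times S^2$ twice and $S^3$ twice. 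What you actually need is that $\{g_1f_1,g_2f_2\}$ and $\{g_1f_2,g_2f_1\}$ agree as multisets of diffeomorphism classes. For instance, $g_1f_1\cong g_1f_2$ and $g_2f_2\cong g_2f_1$ suffices.

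The gap can be filled within your own first attempt. Take $B$ of slope $0$, let $f_1,f_2$ have meridian slopes $1,-1$, and let $g_1,g_2$ have meridian slopes $0,\infty$.
\begin{itemize}
\item Each of the four slope pairs has determinant $\pm1$, so all four composites are $S^3$. Since $S^3$ admits an orientation-reversing diffeomorphism, these are a single morphism of $\bord{or,exp}$, and every $a$-term cancels.
\item Since $\mu_1=B$, Lemma \ref{lem:coinc-zero} gives $c\ssig(g_1,f_1)=c\ssig(g_1,f_2)=0$.
\item The reflection $\mathrm{diag}(1,-1)$ is anti-symplectic, fixes the lines of slopes $0$ and $\infty$, and swaps those of slopes $\pm1$. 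Hence $\sigma(\mathbb{R}^2;\infty,0,-1)=-\sigma(\mathbb{R}^2;\infty,0,1)$, and both are $\pm1$ because the three lines are pairwise distinct.
\end{itemize}
The alternating sum is therefore $c\ssig(g_2,f_2)-c\ssig(g_2,f_1)=2\sigma(\mathbb{R}^2;\infty,0,1)=\pm2\not\equiv 0\pmod 4$, which is the desired contradiction.

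Completed this way, your argument genuinely differs from the paper's. The paper uses the order-$3$ element $T\in SL_2(\mathbb{Z})$. The relations $c\ssig([f],[f^2])=0$ and $c\ssig([f],[f])=\pm1$ force $m(M_f)\neq m(M_{f^2})$ in $\mathbb{Z}/4$. Capping both with $H_\Sigma$ and $H_\Sigma^\dag$, for which $c\ssig$ vanishes, then yields two copies of $S^3$ with different values of $m$.

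Your route needs no mapping cylinders, identities or solving in $\mathbb{Z}/4$. It uses only the pairing $\Hom((\emptyset,\emptyset),(T,H_T))\times\Hom((T,H_T),(\emptyset,\emptyset))\to\Hom((\emptyset,\emptyset),(\emptyset,\emptyset))$ and one sign computation. This makes transparent that the obstruction comes entirely from capping off, consistent with your remark about Theorem \ref{thm:csig-4}.
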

\begin{proof}
    Let $\mathbb{M}'$ denote the morphisms of $\bord{or,exp}$. It suffices to show that there does not exist a function $m: \mathbb{M}' \rightarrow \mathbb{Z}/4$ satisfying
    \begin{equation}\label{eq:div-4}
        c\ssig(M',M) \equiv m(M\cup_\Sigma M') - m(M) - m(M') \pmod4
    \end{equation}
    for all composable morphisms $M,M'$.

    Suppose there is such a function $m$. Applying (\ref{eq:div-4}) to the case where $M=M'$ is the identity morphism, we see that $m$ evaluates to $0$ on all identity morphisms.

    Now, we restrict our attention to the objects $(\emptyset, \emptyset)$ and $(\Sigma, H_\Sigma) = (S^1 \times S^1, D^2 \times S^1)$ (with the natural inclusion $S^1\times S^1 \hookrightarrow D^2 \times S^1$). Identify $H_1(\Sigma)$ with $\mathbb{R}^2$ in the obvious way, so $\ker(H_1(\Sigma) \rightarrow H_1(H_\Sigma)) = \Span\set{\twovect10}$. Let this Lagrangian subspace be $L$. Then, we can apply (\ref{eq:sw}) to the automorphisms of $(\Sigma,H_\Sigma)$.

    Viewing $\Sigma$ as $\mathbb{R}^2/\mathbb{Z}^2$, let $f:\Sigma\rightarrow\Sigma$ be the diffeomorphism induced by $T = \begin{pmatrix}-1 & -1 \\ 1 & 0\end{pmatrix} \in SL_2(\mathbb{Z})$. Then $f_* = T$, which has order $3$. Specifically,
    \begin{equation*}
        T^{-1} = T^2 = \begin{pmatrix}0 & 1 \\-1 & -1\end{pmatrix}.
    \end{equation*}
    Applying (\ref{eq:sw}) to $f, f^2$, by Lemma \ref{lem:coinc-zero}, we have
    \begin{equation*}
        c\ssig([f],[f^2]) = \sigma(\mathbb{R}^2; L, f_*L, L) = 0.
    \end{equation*}
    Hence, we have from (\ref{eq:div-4}) on $M_f,M_{f^2}$
    \begin{equation}\label{eq:f-f2}
        m(M_f) + m(M_{f^2}) = m(M_{\id_\Sigma}) = 0 \in \mathbb{Z}/4.
    \end{equation}

    On the other hand, applying (\ref{eq:sw}) to $f, f$, we instead have
    \begin{equation*}
        \sigma(V; A, B, C) = \sigma\pa{\mathbb{R}^2; \Span\set{\twovect0{-1}}, \Span\set{\twovect{-1}1}, \Span\set{\twovect10}}.
    \end{equation*}
    As the three vectors are pairwise linearly independent, this evaluates to $\pm 1$ (depending on the sign convention of the intersection form on $H_1(\Sigma)$.)
    In any case, applying (\ref{eq:div-4}) to the composition $M_{f^2} = M_f \cup_\Sigma M_f$ gives
    \begin{equation}\label{eq:f-f}
        2m(M_f) - m(M_{f^2}) = \pm1 \in\mathbb{Z}/4.
    \end{equation}
    Solving (\ref{eq:f-f2}) and (\ref{eq:f-f}) simultaneously, we see that $m(M_f), m(M_{f^2})$ are $1,-1$ in some order. In particular, they are not equal.

    Now, view $H_\Sigma$ as a morphism from $(\emptyset,\emptyset)$ to $(\Sigma, H_\Sigma)$. Let $M$ be any morphism with source $(\Sigma, H_\Sigma)$. Then, in (\ref{eq:csig}), we have $A=B$, and so by Lemma \ref{lem:coinc-zero},
    \begin{equation*}
        c\ssig(M,H_\Sigma) = 0.
    \end{equation*}
    Likewise, if $H_\Sigma^\dag$ is the same handlebody (with orientation reversed) viewed as a morphism from $(\Sigma, H_\Sigma)$ to $(\emptyset, \emptyset)$,
    then for any morphism $M$ with target $(\Sigma, H_\Sigma)$, we have
    \begin{equation*}
        c\ssig(H_\Sigma^\dag,M) = 0.
    \end{equation*}

    Hence, applying (\ref{eq:div-4}) and working in $\mathbb{Z}/4$, we have
    \begin{equation*}
        \begin{split}
            m(H_\Sigma \cup_\Sigma M_f \cup_\Sigma H_\Sigma^\dag) &= m(H_\Sigma) + m(M_f) +m(H_\Sigma^\dag) \\
            &\ne m(H_\Sigma) + m(M_{f^2}) + m(H_\Sigma^\dag) = m(H_\Sigma \cup_\Sigma M_{f^2} \cup_\Sigma H_\Sigma^\dag).
        \end{split}
    \end{equation*}
    However, the closed 3-manifolds on the left and right are both $S^3$, and so have an orientation-preserving diffeomorphism between them, i.e. they are the same morphism in $\bord{or,exp}$. This is a contradiction.
\end{proof}

\subsection{Indivisibility of $[\mathbf{c}\ssigh]$}

Finally, this allows us to conclude that the group element $[c\ssigh] \in \Ext(\bord{or,exp},\mathbb{Z})$ is indivisible, and hence that $[\mathbf{c}\ssigh] \in \Ext(\Bord{or,exp},\mathbb{Z})$, is indivisible as well.

\begin{thm}\label{thm:indiv}
    For all $n \ge 2$, $[c\ssigh] \in \Ext(\bord{or,exp},\mathbb{Z})$ is not divisible by $n$.
\end{thm}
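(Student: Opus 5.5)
The plan is to reduce divisibility of $[c\ssigh]$ by an arbitrary $n\ge2$ to two facts: Proposition \ref{prop:indiv}, which handles the prime $2$, and the restriction to mapping class groups, which handles odd primes. First, it suffices to treat $n=p$ prime, since divisibility by $n$ implies divisibility by any prime factor of $n$. Since $\Ext(\bord{or,exp},\mathbb{Z})$ is an abelian group and $2[c\ssigh]=[c\ssig]$ (by Theorem \ref{thm:exists-sig/2} together with the restriction homomorphism of Lemma \ref{lem:res-cocyc}), the prime $p=2$ is immediate. If $[c\ssigh]=2x$, then $[c\ssig]=4x$, which contradicts Proposition \ref{prop:indiv}.

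For odd primes $p$, restrict along the inclusion $B\Gamma_g\to\bord{or,exp}$ for some fixed $g\ge 3$. The restriction map $\Ext(\bord{or,exp},\mathbb{Z})\to H^2(\Gamma_g;\mathbb{Z})\cong\mathbb{Z}$ is a group homomorphism, so if $[c\ssigh]=p\,y$, then its image is also divisible by $p$. By Theorem \ref{thm:csig-4}, the image of $[c\ssig]$ is $\pm4$. Because $H^2(\Gamma_g;\mathbb{Z})\cong\mathbb{Z}$ is torsion-free, the image of $[c\ssigh]$ must be $\pm2$. Since $p$ is odd, $p\nmid 2$, which is a contradiction.

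The only point needing care is that the restriction really is a homomorphism of the $\Ext$ groups compatible with the identification $2[c\ssigh]=[c\ssig]$. This holds because restriction of cocycles is additive and sends coboundaries to coboundaries, by the same argument as in Lemma \ref{lem:res-cocyc}. One should also check that the cocycle used for $\bord{sig/2}$ is indeed the restriction of $\mathbf{c}\ssigh$, so that the relation $2[c\ssigh]=[c\ssig]$ descends to $H^2(\Gamma_g)$.

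I expect the main (mild) obstacle to be relying on $H^2(\Gamma_g)\cong\mathbb{Z}$ for $g\ge3$. This torsion-freeness is what lets us halve $\pm4$ uniquely. If one prefers to avoid it, an alternative for odd $p$ is to work in genus $1$, where $H^2(\Gamma_1)\cong\mathbb{Z}/12$ and $[c\ssig]$ is $4$ times a generator. There the image of $[c\ssigh]$ is an element $z$ with $2z=4u$ for a generator $u$, so $z\in\{2u,8u\}$. Both $2u$ and $8u$ have order divisible by $3$, which rules out $p=3$ but not larger primes. Hence the $g\ge3$ argument is the one to use.
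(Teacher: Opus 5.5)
Your proposal is correct and takes essentially the same route as the paper: Proposition \ref{prop:indiv} rules out divisibility by $2$, and restricting to $H^2(\Gamma_g;\mathbb{Z})\cong\mathbb{Z}$ for $g\ge3$ and applying Theorem \ref{thm:csig-4} rules out everything else. The only cosmetic difference is that you first reduce to prime $n$, whereas the paper handles all $n\ge3$ at once: it notes that $[c\ssig]=2[c\ssigh]$ would then be divisible by $2n>4$ in $H^2(\Gamma_g)$, which is impossible.
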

\begin{proof}
    Suppose $[c\ssigh]\in\Ext(\bord{or,exp},\mathbb{Z})$ is divisible by some $n \ge 3$, then for any $g\ge 3$, $[c\ssig]\in H^2(\Gamma_g)$ is divisible by $2n$. However, this contradicts Theorem \ref{thm:csig-4}. On the other hand, Proposition \ref{prop:indiv} rules out the $n=2$ case.
\end{proof}
\begin{cor}
    For all $n \ge 2$, $[\mathbf{c}\ssigh]\in\Ext(\Bord{or,exp},\mathbb{Z})$ is not divisible by $n$.
\end{cor}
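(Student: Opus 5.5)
The plan is to argue by contradiction and transport divisibility along the restriction homomorphism of Lemma \ref{lem:res-cocyc}. Suppose $[\mathbf{c}\ssigh] = n[\mathbf{d}]$ in $\Ext(\Bord{or,exp},\mathbb{Z})$ for some $n\ge 2$ and some class $[\mathbf{d}] = [(d,\tilde{d},a,\ell,r)]$.

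Lemma \ref{lem:res-cocyc} gives a group homomorphism $\Ext(\Bord{or,exp},\mathbb{Z})\rightarrow\Ext(\bord{or,exp},\mathbb{Z})$, sending $[\mathbf{c}]$ to the class of its vertical-composition component $c$ restricted to the hom category of $(\emptyset,\emptyset)$. Because it is a homomorphism, applying it yields $[c\ssigh] = n[d]$ in $\Ext(\bord{or,exp},\mathbb{Z})$.

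The only point needing a brief check is that the image of $[\mathbf{c}\ssigh]$ really is the class $[c\ssigh]$ used in Theorem \ref{thm:indiv}. Two facts settle this:
\begin{itemize}
\item $\mathbf{c}\ssigh = (\mathbf{c}\ssig - \partial\mathbf{m})/2$, and its first component restricts to $(c\ssig - \partial m)/2$ on $\bord{or,exp}$, where $\partial m$ is the coboundary from (\ref{eq:cob-1}).
\item The restriction of $m$ to morphisms between closed surfaces is a legitimate cochain on $\bord{or,exp}$, since it vanishes on identities by Lemma \ref{lem:m-norm}.
\end{itemize}
So this restricted cocycle represents a class whose double is $[c\ssig]$. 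It is exactly the class called $[c\ssigh]$ in the previous subsection, which is defined as the image under this restriction in any case.

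The contradiction then follows directly from Theorem \ref{thm:indiv}, which says $[c\ssigh]$ is not divisible by any $n\ge2$. Hence $[\mathbf{c}\ssigh]$ is not divisible by $n$ either. There is no real obstacle here; the substantive work was done in Proposition \ref{prop:indiv} and Theorem \ref{thm:indiv}, and this corollary is the formal observation that divisibility passes along group homomorphisms.
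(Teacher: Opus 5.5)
Your proposal is correct and matches the paper's proof. The paper likewise pushes $[\mathbf{c}\ssigh]$ forward along the homomorphism of Lemma~\ref{lem:res-cocyc}, notes that homomorphisms preserve divisibility, and invokes Theorem~\ref{thm:indiv}. Your extra check that the image is $[c\ssigh]$ is sound, though not strictly needed: the proof of Theorem~\ref{thm:indiv} only uses that the class doubles to $[c\ssig]$.
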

\begin{proof}
    The homomorphism in Lemma \ref{lem:res-cocyc} sends $[\mathbf{c}\ssigh]$ to an element which, by Theorem \ref{thm:indiv}, is indivisble by $n$.
\end{proof}
\begin{rk}
    By Theorem \ref{thm:equiv-ext}, an equivalence of bicategories induces an isomorphism on the groups of extensions. Thus, this result is independent of the choice of model of $\Bord{or}$.
\end{rk}

\section{A finite presentation of $\Bord{sig/2}$}
\label{section:pres}

In the approach of \cite{BDSV4}, the classification of linear representations of $\Bord{or}$ and $\Bord{sig}$ was done by the construction of certain presentations for these symmetric monoidal bicategories. In this section, we provide a modification of these which presents the symmetric monoidal bicategory $\Bord{sig/2}$. Then, in the next section, we will use this, again following the approach of \cite{BDSV4}, to classify the linear representations of $\Bord{sig/2}$.

Throughout this section and the next, for a presentation $\mathcal{X}$, we let $\mathbf{F}(\mathcal{X})$ denote the free symmetric monoidal bicategory generated by $\mathcal{X}$. This is constructed explicitly in \cite[\textsection 2.10]{csp-phd}. It is easy to define symmetric monoidal functors out of $\mathbf{F}(\mathcal{X})$, as well as transformations between them: Let $\mathbf{C}$ be a symmetric monoidal bicategory. Then, by \cite[Theorem 2.78]{csp-phd}, the bicategory of symmetric monoidal functors $\mathbf{F}(\mathcal{X}) \rightarrow \mathbf{C}$ is equivalent to a certain bicategory $\mathcal{X}(\mathbf{C})$ of ``$\mathcal{X}$-data'' in $\mathbf{C}$. In particular, the objects of $\mathcal{X}(\mathbf{C})$ are assignments of each of the generating objects, 1-morphisms and 2-morphisms of $\mathcal{X}$ to objects, 1-morphisms and 2-morphisms of $\mathbf{C}$ such that the relations of $\mathcal{X}$ are satisfied. The 1-morphisms of $\mathcal{X}(\mathbf{C})$ are assignments of the generating objects and 1-morphisms to 1-morphisms and 2-morphisms in $\mathbf{C}$ satisfying a naturality relation for each generating 2-morphism of $\mathcal{X}$. We refer the reader to \cite[Definition 2.76]{csp-phd} for more details.

\subsection{The global half-signature presentation} In this subsection, we construct a presentation $\mathcal{G}$ which presents $\Bord{sig/2}$. This is a modification of the presentations given in \cite{BDSV4}. As we will be referring to the specific generators and relations of these presentations, we shall first reproduce them here.

The two new presentations in this subsection are the half-signature presentation $\mathcal{H}$ and the global half-signature presentation $\mathcal{G}$. These present $\Bord{csig/2}$ and $\Bord{sig/2}$ respectively, but the proof of this is delayed to \textsection\ref{subsection:constr-modg}.

\begin{defn}[{\cite[Definition 3.7]{BDSV4}}] \smallbordisms
    The \emph{ribbon presentation} $\mathcal{R}$ is the presentation with 
    \begin{itemize}
        \item Generating object:
            \begin{equation*}
                \begin{tz}
                    \node[Cyl, top, height scale=0]  at (0,0) {};
                \end{tz}
            \end{equation*}
        \item Generating 1-morphisms:
            \begin{equation*}
                \begin{tz}
                    \node[Pants, top, bot] (A) at (0,0) {};
                    \node[Copants, top, bot] (B) at (2,0) {};
                    \node[Cup, top] (C) at (4,0.1) {};
                    \node[Cap, bot] (D) at (6,-0.1) {};
                \end{tz}
            \end{equation*}
        \item Generating 2-morphisms:
            \begin{calign}
                \nonumber
                \begin{tz}
                    \node[Pants, top, bot, wide] (A) at (0,0) {};
                    \node[Pants,  bot, anchor=belt] (B) at (A.leftleg) {};    
                    \node[Cyl, bot, anchor=top] at (A.rightleg) {}; 
                \end{tz}    
                \rightleftdoublearrow{\alpha}{\alpha^{-1}}
                \begin{tz}
                    \node[Pants, top, bot, wide] (A) at (0,0) {};
                    \node[Pants,  bot, anchor=belt] (B) at (A.rightleg) {};    
                    \node[Cyl, bot, anchor=top] at (A.leftleg) {}; 
                \end{tz} 
                &
                \begin{tz}
                    \node[Pants, top, bot] (A) at (0,0) {};
                    \node[Cyl, bot, anchor=top] at (A.leftleg) {};
                    \node[Cup] at (A.rightleg) {};  
                \end{tz}
                \rightleftdoublearrow{\rho}{\rho^{-1}}
                \begin{tz}
                    \node[Cyl, bot, top, tall] at (0,0) {};
                \end{tz}
                \rightleftdoublearrow{\lambda^{-1}}{\lambda}
                \begin{tz}
                    \node[Pants, top, bot] (A) at (0,0) {};
                    \node[Cyl, bot, anchor=top] at (A.rightleg) {};
                    \node[Cup] at (A.leftleg) {};   
                \end{tz}
            \\ \nonumber
                \begin{tz}
                    \node[Pants, top, bot] (A) at (0,0) {};
                \end{tz}
                \rightleftdoublearrow{\beta}{\beta^{-1}}
                \begin{tz}
                    \node[Pants, top, bot] (A) at (0,0) {};
                    \node[BraidB, anchor=topleft, bot] at (A.leftleg) {};
                \end{tz}
                &
                \begin{tz}
                    \node[Cyl, top, bot] (A) at (0,0) {};
                \end{tz}
                \rightleftdoublearrow{\theta}{\theta^{-1}}
                \begin{tz}
                    \node[Cyl, top, bot] (A) at (0,0) {};
                \end{tz}
            \\ \nonumber
                \begin{tz} 
                    \node[Cyl, tall, top, bot] (A) at (0,0) {};
                    \node[Cyl, tall, top, bot] (B) at (2*\cobwidth, 0) {};
                \end{tz}
                \longxdoubleto{\eta}
                \begin{tz} 
                    \node[Pants, bot] (A) at (0,0) {};
                    \node[Copants, top, bot, anchor=belt] at (A.belt) {};
                \end{tz}
                &
                \begin{tz} 
                    \node[Pants, top, bot] (A) at (0,0) {};
                    \node[Copants, bot, anchor=leftleg] at (A.leftleg) {};
                \end{tz}
                \longxdoubleto{\epsilon}
                \begin{tz} 
                    \node[Cyl, top, bot, tall] (A) at (0,0) {};
                \end{tz}
            \\ \nonumber
                \begin{tz}
                    \draw[green] (0,0) rectangle (0.6, -0.6);  
                \end{tz}
                \longxdoubleto{\nu}
                \begin{tz}
                    \node[Cap, bot] (A) at (0,0) {};
                    \node[Cup] at (0,0) {};
                \end{tz}
                &
                \begin{tz}
                    \node[Cup, top] (A) at (0,0) {};
                    \node[Cap, bot] (B) at (0,-2*\cobheight) {};
                \end{tz}
                \longxdoubleto{\mu}
                \begin{tz}
                    \node[Cyl, top, bot, tall] (A) at (0,0) {};
                \end{tz}
            \\ \nonumber
                \begin{tz}
                    \node[Copants, top, bot] (A) at (0,0) {};
                    \node[Pants, bot, anchor=belt] (B) at (A.belt) {};
                \end{tz} 
                \longxdoubleto{\phi_1^{-1}}
                \begin{tz}
                    \node[Pants, top, bot] (A) at (0,0) {};
                    \node[Cyl, bot, anchor=top] (B) at (A.leftleg) {};
                    \node[Copants, bot, anchor=leftleg] (C) at (A.rightleg) {};
                    \node[Cyl, top, bot, anchor=bottom] (D) at (C.rightleg) {}; 
                \end{tz}
                &
                \begin{tz}
                    \node[Copants, top, bot] (A) at (0,0) {};
                    \node[Pants, bot, anchor=belt] (B) at (A.belt) {};
                \end{tz} 
                \longxdoubleto{\phi_2^{-1}}
                \begin{tz}
                    \node[Pants, top, bot] (A) at (0,0) {};
                    \node[Cyl, bot, anchor=top] (B) at (A.rightleg) {};
                    \node[Copants, bot, anchor=rightleg] (C) at (A.leftleg) {};
                    \node[Cyl, top, bot, anchor=bottom] (D) at (C.leftleg) {}; 
                \end{tz}
            \end{calign}
    \end{itemize}
    subject to the following relations:
    \begin{itemize}
        \item (Inverses) Each of $\omega=\alpha,\rho,\lambda,\beta,\theta$ satisfies $\omega\omega^{-1} = \id$ and $\omega^{-1}\omega=\id$.
        \item (Monoidal) $\alpha,\rho,\lambda$ satisfy the pentagon and triangle equations:
        \begin{equation*}
            \begin{tz}[xscale=2.1, yscale=1.2]
                \node (1) at (0,0)
                {
                $\begin{tikzpicture}
                    \node [Pants, wider, top, bot] (A) at (0,0) {};
                    \node [Pants, wide, bot, anchor=belt] (B) at (A.leftleg) {};
                    \node [Cyl, tall, bot, anchor=top] (C) at (A.rightleg) {};
                    \node[Pants, bot, anchor=belt] (D) at (B.leftleg) {};
                    \node[Cyl, bot, anchor=top] (E) at (B.rightleg) {};
                    \selectpart[green] {(A-belt) (B-leftleg) (A-rightleg)};
                    \selectpart[red] {(A-leftleg) (D-leftleg) (B-rightleg)};
                \end{tikzpicture}$
                };
                \node (2) at (1,1)
                {
                $\begin{tikzpicture}
                    \node [Pants, verywide, top, bot] (A) at (0,0) {};
                    \node [Pants, bot, anchor=belt] (B) at (A.rightleg) {};
                    \node [Cyl, bot, anchor=top] (C) at (A.leftleg) {};
                    \node[Pants, bot, anchor=belt] (D) at (C.bottom) {};
                    \node[Cyl, bot, anchor=top] (E) at (B.leftleg) {};
                    \node[Cyl, bot, anchor=top] (F) at (B.rightleg) {};
                    \selectpart[green] {(A-leftleg) (A-rightleg) (D-leftleg) (F-bottom)};
                \end{tikzpicture}$
                };
                \node (3) at (2,1)
                {
                $\begin{tikzpicture}
                    \node [Pants, verywide, top, bot] (A) at (0,0) {};
                    \node [Pants, bot, anchor=belt] (B) at (A.leftleg) {};
                    \node[Cyl, bot, anchor=top] (C) at (A.rightleg) {};
                    \node[Cyl, bot, anchor=top] (D) at (B.leftleg) {};
                    \node[Cyl, bot, anchor=top] (E) at (B.rightleg) {};
                    \node[Pants, bot, anchor=belt] (F) at (C.bottom) {};
                    \selectpart[green] {(A-belt) (B-leftleg) (A-rightleg)};
                \end{tikzpicture}$
                };
                \node (4) at (3,0)
                {
                $\begin{tikzpicture}
                    \node [Pants, wider, top, bot] (A) at (0,0) {};
                    \node [Cyl, tall, bot, anchor=top] (B) at (A.leftleg) {};
                    \node[Pants, bot, anchor=belt, wide] (C) at (A.rightleg) {};
                    \node[Cyl, bot, anchor=top] (D) at (C.leftleg) {};
                    \node[Pants, bot, anchor=belt] (E) at (C.rightleg) {};
                \end{tikzpicture}$
                };
                \node (5) at (1,-1)
                {
                $\begin{tikzpicture}
                    \node [Pants, veryverywide, top, bot] (A) at (0,0) {};
                    \node [Pants, wide, bot, anchor=belt] (B) at (A.leftleg) {};
                    \node [Cyl, tall, bot, anchor=top] (C) at (A.rightleg) {};
                    \node[Pants, bot, anchor=belt] (D) at (B.rightleg) {};
                    \node[Cyl, bot, anchor=top] (E) at (B.leftleg) {};
                    \selectpart[green] {(A-belt) (B-leftleg) (A-rightleg)};
                \end{tikzpicture}$
                };
                \node (6) at (2,-1)
                {
                $\begin{tikzpicture}
                    \node [Pants, veryverywide, top, bot] (A) at (0,0) {};
                    \node [Pants, wide, bot, anchor=belt] (B) at (A.rightleg) {};
                    \node [Cyl, tall, bot, anchor=top] (C) at (A.leftleg) {};
                    \node[Pants, bot, anchor=belt] (D) at (B.leftleg) {};
                    \node[Cyl, bot, anchor=top] (E) at (B.rightleg) {};
                    \selectpart[green] {(A-rightleg) (D-leftleg) (B-rightleg)};
                \end{tikzpicture}$
                };
                \begin{scope}[double arrow scope]
                    \draw (1) --  node[above left, green, pos=0.65]{$\alpha$} (2);
                    \draw (2) --  node[above]{$\varphi$} (3);
                    \draw (3) --  node[above right, pos=0.35]{$\alpha$} (4);
                    \draw (1) --  node[below left, red, pos=0.65]{$\textcolour{red}{\alpha}$} (5);
                    \draw (5) --  node[below]{$\alpha$} (6);
                    \draw (6) --  node[below right, pos=0.35]{$\alpha$} (4);
                \end{scope}
            \end{tz}
        \end{equation*}
        \begin{equation*}
            \begin{tz}[xscale=1, yscale=2, every to/.style={out=down,in=up}]
                \node (1) at (-1,1)
                {
                $\begin{tikzpicture}
                    \node (A) [Pants, bot] at (0,0) {};
                    \node (B) [Pants, wide, top, bot, anchor=leftleg] at (A.belt) {};
                    \node (C) [Cup] at (A.rightleg) {};
                    \node [Cyl, bot, anchor=top] (D) at (A.leftleg) {};
                    \node [Cyl, tall, bot, anchor=top] (E) at (B.rightleg) {};
                    \selectpart[green] {(B-belt) (A-leftleg) (B-rightleg)};
                    \selectpart[red] {(B-leftleg) (D-bottom) (A-rightleg)};
                \end{tikzpicture}$
                };
                \node (2) at (1,1)
                {
                $\begin{tikzpicture}
                    \node (A) [Pants, bot] at (0,0) {};
                    \node (B) [Pants, wide, top, bot, anchor=rightleg] at (A.belt) {};
                    \node (C) [Cup] at (A.leftleg) {};
                    \node (D) [Cyl, bot, anchor=top] at (A.rightleg) {};
                    \node [Cyl, tall, bot, anchor=top] at (B.leftleg) {};
                    \selectpart[green] {(B-rightleg) (A-leftleg) (D-bottom)};
                \end{tikzpicture}$
                };
                \node (3) at (0,0)
                {
                $\begin{tikzpicture}
                    \node (A) [Pants, top, bot] at (0,0) {};
                \end{tikzpicture}$
                };
                \begin{scope}[double arrow scope]
                    \draw (1) --  node[above, green]{$\alpha$} (2);
                    \draw (2) --  node[below right, pos=0.35]{$\lambda$} (3);
                    \draw (1) --  node[below left, red, pos=0.35]{$\textcolour{red}{\rho}$} (3);
                \end{scope}
            \end{tz}
        \end{equation*}
        In the pentagon relation, $\varphi$ denotes the canonical interchanger.
        \item (Balanced) $\beta$ provides a braided monoidal structure compatible with the twist $\theta$:
        \begin{equation*}
            \begin{tz}[xscale=2.4, yscale=1.2]
                \node (1) at (0,0)
                {
                $\begin{tikzpicture}
                    \node [Pants, wide, top, bot] (A) at (0,0) {};
                    \node [Pants, bot, anchor=belt] (B) at (A.leftleg) {};
                    \node [Cyl, bot, anchor=top] (C) at (A.rightleg) {};
                    \selectpart[green]{(A-leftleg) (B-leftleg) (B-rightleg)};
                \end{tikzpicture}$
                };

                \node (2) at (0.75,1)
                {
                $\begin{tikzpicture}
                    \node [Pants, wide, top, bot] (A) at (0,0) {};
                    \node [Pants, bot, anchor=belt] (B) at (A.rightleg) {};
                    \node [Cyl, bot, anchor=top] (C) at (A.leftleg) {};
                    \selectpart[green]{(A-belt) (A-leftleg) (A-rightleg)};
                \end{tikzpicture}$
                };

                \node (3) at (1.5,1)
                {
                $\begin{tikzpicture}
                    \node [Pants, wide, top, bot] (A) at (0,0) {};
                    \node[BraidB, wide, bot, anchor=topleft] (B) at (A.leftleg) {};
                    \node [Pants, bot, anchor=belt] (C) at (B.bottomright) {};
                    \node [Cyl, bot, anchor=top] (D) at (B.bottomleft) {};
                \end{tikzpicture}$
                };

                \node (4) at (2.25,1)
                {
                $\begin{tikzpicture}
                    \node [Pants, wide, top, bot] (A) at (0,0) {};
                    \node [Pants, bot, anchor=belt] (C) at (A.leftleg) {};
                    \node [Cyl, bot, anchor=top] (D) at (A.rightleg) {};
                    \node [BraidB, bot, anchor=topleft] (E) at (C.rightleg) {};
                    \node[Cyl, bot, anchor=top] (F) at (C.leftleg) {};
                    \node[BraidB, bot, anchor=topleft] (G) at (F.bottom) {};
                    \node[Cyl, bot, anchor=top] (H) at (G.bottomright) {};
                    \node[Cyl, bot, anchor=top] (I) at (G.bottomleft) {};
                    \node[Cyl, bot, anchor=top, tall] (J) at (E.bottomright) {};
                    \selectpart[green]{(A-belt) (C-leftleg) (D-bottom)};
                \end{tikzpicture}$
                };

                \node (5) at (3,0)
                {
                $\begin{tikzpicture}
                    \node [Pants, wide, top, bot] (A) at (0,0) {};
                    \node [Pants, bot, anchor=belt] (C) at (A.rightleg) {};
                    \node [Cyl, bot, anchor=top] (D) at (A.leftleg) {};
                    \node [BraidB, bot, anchor=topleft] (E) at (C.leftleg) {};
                    \node[Cyl, tall, bot, anchor=top] (F) at (A.leftleg) {};
                    \node[BraidB, bot, anchor=topleft] (G) at (F.bottom) {};
                    \node[Cyl, bot, anchor=top] (H) at (E.bottomright) {};
                \end{tikzpicture}$
                };

                \node (6) at (1,-1)
                {
                $\begin{tikzpicture}
                    \node [Pants, wide, top, bot] (A) at (0,0) {};
                    \node [Pants, bot, anchor=belt] (B) at (A.leftleg) {};
                    \node [Cyl, bot, anchor=top, tall] (C) at (A.rightleg) {};
                    \node [BraidB, bot, anchor=topleft] (D) at (B.leftleg) {};
                    \selectpart[green]{(A-belt) (B-leftleg) (A-rightleg)};
                \end{tikzpicture}$
                };

                \node (7) at (2.0,-1)
                {
                $\begin{tikzpicture}
                    \node [Pants, wide, top, bot] (A) at (0,0) {};
                    \node [Pants, bot, anchor=belt] (B) at (A.rightleg) {};
                    \node [Cyl, bot, anchor=top] (C) at (A.leftleg) {};
                    \node[BraidB, bot, anchor=topleft] (D) at (C.bottom) {};
                    \node[Cyl, bot, anchor=top] (E) at (B.rightleg) {};
                    \selectpart[green]{(A-rightleg) (B-leftleg) (B-rightleg)};
                \end{tikzpicture}$
                };

                \begin{scope}[double arrow scope]
                    \draw (1) --  node[above left]{$\alpha$} (2);
                    \draw (2) --  node[above]{$\beta$} (3);
                    \draw[-=, shorten <=-4pt] (3) --  (4);
                    \draw (4) --  node[above right]{$\alpha$} (5);
                    \draw (1) --  node[below left]{$\beta$} (6);
                    \draw (6) --  node[below]{$\alpha$} (7);
                    \draw (7) --  node[below right]{$\beta$} (5);
                \end{scope}
            \end{tz}
        \end{equation*}
        \begin{equation*}
            \begin{tz}[xscale=1.4, yscale=1.5]

                \node (1) at (0,0)
                {
                $\begin{tikzpicture}
                        \node[Pants, top, bot] (A) at (0,0) {};
                        \selectpart[green, inner sep=1pt]{(A-belt)};
                        \selectpart[red] {(A-leftleg) (A-rightleg) (A-belt)};
                \end{tikzpicture}$
                };
                \node (2) at (1,0)
                {
                $\begin{tikzpicture}
                        \node[Pants, top, bot] (A) at (0,0) {};
                \end{tikzpicture}$
                };

                \node (3) at (0,-1)
                {
                $\begin{tikzpicture}
                        \node[Pants, top, bot] (A) at (0,0) {};
                        \selectpart[green, inner sep=1pt]{(A-leftleg)};
                \end{tikzpicture}$
                };

                \node (4) at (1,-1)
                {
                $\begin{tikzpicture}
                        \node[Pants, top, bot] (A) at (0,0) {};
                        \selectpart[green, inner sep=1pt]{(A-rightleg)};
                \end{tikzpicture}$
                };

                \begin{scope}[double arrow scope]
                    \draw (1) -- node[above, green] {$\theta$} (2);
                    \draw (1) -- node[left, red] {$\textcolour{red}{\beta}^2$} (3);
                    \draw (3) -- node[below] {$\theta$} (4);
                    \draw (4) -- node[right] {$\theta$} (2);
                \end{scope}
            \end{tz}
        \end{equation*}
        \begin{equation*}
            \begin{tz}[xscale=1.6, yscale=2]
                \node (1) at (0,0) {$
                \begin{tikzpicture}
                        \node[Cup, top] (C) at (0,0) {};
                        \selectpart[green, inner sep=1pt]{(C-center)};
                \end{tikzpicture}$};
                \node (2) at (1,0) {$
                \begin{tikzpicture}
                        \node[Cup, top] (C) at (0,0) {};
                        \selectpart[green, inner sep=1pt]{(C-center)};
                \end{tikzpicture}$};
                \begin{scope}[double arrow scope]
                    \draw (1) --  node[above]{$\theta$} (2);
                \end{scope}
            \end{tz}
            \quad = \quad
            \id
        \end{equation*}
        \item (Adjunction) $\eta,\epsilon$ witness $\tikztinypants \dashv \tikztinycopants$ while $\nu,\mu$ witness $\tikztinycup \dashv \tikztinycap$, i.e. the following are the identity:
        \begin{calign} \nonumber
            \begin{tz}
                \node (1) [Cup, top] at (0,0) {};
                \node (2) [Cup, invisible] at (0,-1.5\cobheight) {};
                \node (3) [Cap, invisible] at (0,-1.5\cobheight) {};
                \selectpart[green]{(3) (2)}
            \end{tz}
            \longxdoubleto{\nu}
            \begin{tz}
                \node (1) [Cup, top] at (0,0) {};
                \node (2) [Cap, bot] at (0,-1.5\cobheight) {};
                \node (3) [Cup] at (0,-1.5\cobheight) {};
                \selectpart[green]{(1-center) (2-center)}
            \end{tz}
            \longxdoubleto{\mu}
            \begin{tz}
                \node[Cyl, top, bot, height scale=1.5] (X) at (0,0) {};
                \node[Cup] at (X.bottom) {};
            \end{tz}
            &
            \begin{tz}
                \node (1) [Cap, bot] at (0,-1.5\cobheight) {};
                \node (2) [Cup, invisible] at (0,0) {};
                \node (3) [Cap, invisible] at (0,0) {};
                \selectpart[green]{(3) (2)}
            \end{tz}
            \longxdoubleto{\nu}
            \begin{tz}
                \node (1) [Cap, bot] at (0,-1.5\cobheight) {};
                \node (2) [Cap, bot] at (0,0) {};
                \node (3) [Cup] at (0,0) {};
                \selectpart[green]{(1-center) (2-center)}
            \end{tz}
            \longxdoubleto{\mu}
            \begin{tz}
                \node[Cyl, bot, height scale=1.5] (X) at (0,0) {};
                \node[Cap, bot] at (X.top) {};
            \end{tz}
        \\ \nonumber
            \begin{tz}
                \node [Pants, top, bot] (A) at (0,0) {};
                \node[Cyl, bot, anchor=top, height scale=2] (B) at (A.leftleg) {};
                \node[Cyl, bot, anchor=top, height scale=2] (C) at (A.rightleg) {};
                \selectpart[green]{(A-leftleg) (A-rightleg) (B-bottom) (C-bottom)}
            \end{tz}
            \longxdoubleto{\eta}
            \begin{tz}
                \node [Pants, bot, top] (A) at (0,0) {};
                \node [Copants, bot, anchor=leftleg] (B) at (A.leftleg) {};
                \node [Pants, anchor=belt, bot] at (B.belt) {};
                \selectpart [green] {(A-leftleg) (A-belt) (A-rightleg) (B-belt)};
            \end{tz}
            \longxdoubleto{\epsilon}
            \begin{tz}
                \node [Pants, bot] (A) at (0,0) {};
                \node [Cyl, tall, bot, anchor=bot, top] at (A.belt) {};
            \end{tz}
            &
            \begin{tz}
                \node [Copants, bot] (A) at (0,0) {};
                \node[Cyl, anchor=bottom, height scale=2, top, bot] (B) at (A.leftleg) {};
                \node[Cyl, anchor=bottom, height scale=2, top, bot] (C) at (A.rightleg) {};
                \selectpart[green]{(C-bottom) (C-top) (B-bottom) (B-top)}
            \end{tz}
            \longxdoubleto{\eta}
            \begin{tz}
                \node [Copants, bot] (A) at (0,0) {};
                \node [Pants, anchor=leftleg, bot] (B) at (A.leftleg) {};
                \node [Copants, anchor=belt, top, bot] (C) at (B.belt) {};
                \selectpart [green]{(B-leftleg) (C-belt) (B-rightleg) (A-belt)};
            \end{tz}
            \longxdoubleto{\epsilon}
            \begin{tz}
                \node [Copants, top, bot] (A) at (0,0) {};
                \node [Cyl, tall, bot, anchor=top] at (A.belt) {};
            \end{tz}
        \end{calign}
        \item (Rigidity) $\phi_1^{-1},\phi_2^{-1}$ are inverses of the Frobeniusators, i.e. $\phi_1\phi_1^{-1} = \id$, $\phi_1^{-1}\phi_1 = \id$, $\phi_2\phi_2^{-1} = \id$, $\phi_2^{-1}\phi_2 = \id$ where $\phi_1,\phi_2$ are the composites given by
        \begin{align*}
            \phi_1 \quad&:=\quad
            \begin{tz}
                \node[Pants, top, bot] (A) at (0,0) {};
                \node[Cyl, bot, anchor=top] (B) at (A.leftleg) {};
                \node[Copants, bot, anchor=leftleg] (C) at (A.rightleg) {};
                \node[Cyl, top, bot, anchor=bottom] (D) at (C.rightleg) {}; 
                \selectpart[green, inner sep=1pt] {(A-belt) (D-top)};
            \end{tz}
            \longxdoubleto{\eta}
            \begin{tz}
                \node[Copants, top, wide, bot] (F) at (0,0) {};
                \node[Pants, bot, wide, anchor=belt] (G) at (F.belt) {};
                \node[Pants, bot, anchor=belt] (A) at (G.leftleg) {};
                \node[Cyl, bot, anchor=top] (B) at (A.leftleg) {};
                \node[Copants, bot, anchor=leftleg] (C) at (A.rightleg) {};
                \node[Cyl, bot, anchor=bottom] (X) at (C.rightleg) {}; 
                \selectpart[green] {(F-belt) (A-leftleg) (X-bottom)};
            \end{tz}
            \longxdoubleto{\alpha}
            \begin{tz}
                \node[Copants, top, wide, bot] (F) at (0,0) {};
                \node[Pants, bot, wide, anchor=belt] (G) at (F.belt) {};
                \node[Pants, bot, anchor=belt] (A) at (G.rightleg) {};
                \node[Cyl, tall, bot, anchor=top] (B) at (G.leftleg) {};
                \node[Copants, bot, anchor=leftleg] (C) at (A.leftleg) {};
                \selectpart[green] {(G-rightleg) (A-leftleg) (A-rightleg) (C-belt)};
            \end{tz}
            \longxdoubleto{\epsilon}
            \begin{tz}
                \node[Copants, top, bot] (A) at (0,0) {};
                \node[Pants, bot, anchor=belt] (B) at (A.belt) {};
            \end{tz}
        \\
            \phi_2 \quad&:=\quad
            \begin{tz}
                \node[Pants, top, bot] (A) at (0,0) {};
                \node[Cyl, bot, anchor=top] (B) at (A.rightleg) {};
                \node[Copants, bot, anchor=rightleg] (C) at (A.leftleg) {};
                \node[Cyl, top, bot, anchor=bottom] (D) at (C.leftleg) {}; 
                \selectpart[inner sep=1pt, green] {(D-top) (A-belt)};
            \end{tz}
            \longxdoubleto{\eta}
            \begin{tz}
                \node[Copants, top, wide, bot] (F) at (0,0) {};
                \node[Pants, bot, wide, anchor=belt] (G) at (F.belt) {};
                \node[Pants, bot, anchor=belt] (A) at (G.rightleg) {};
                \node[Cyl, bot, anchor=top] (B) at (A.rightleg) {};
                \node[Copants, bot, anchor=rightleg] (C) at (A.leftleg) {};
                \node[Cyl, bot, anchor=bottom] (X) at (C.leftleg) {}; 
                \selectpart[green] {(F-belt) (A-rightleg) (X-bottom)};
            \end{tz}
            \longxdoubleto{\alpha^{-1}}
            \begin{tz}
                \node[Copants, top, wide, bot] (F) at (0,0) {};
                \node[Pants, bot, wide, anchor=belt] (G) at (F.belt) {};
                \node[Pants, bot, anchor=belt] (A) at (G.leftleg) {};
                \node[Cyl, tall, bot, anchor=top] (B) at (G.rightleg) {};
                \node[Copants, bot, anchor=rightleg] (C) at (A.rightleg) {};
                \selectpart[green] {(G-leftleg) (A-leftleg) (A-rightleg) (C-belt)};
            \end{tz}
            \longxdoubleto{\epsilon}
            \begin{tz}
                \node[Copants, top, bot] (A) at (0,0) {};
                \node[Pants, bot, anchor=belt] (B) at (A.belt) {};
            \end{tz}
        \end{align*}
        \item (Ribbon) The twist $\theta$ satisfies
        \begin{equation*}
            \begin{tz}[xscale=1.4, yscale=2]
                \node (1) at (0,0)
                {
                $\begin{tikzpicture}
                        \node[Pants, bot] (A) at (0,0) {};
                        \node[Cap, bot] at (A.belt) {};
                        \selectpart[green, inner sep=1pt]{(A-leftleg)};
                \end{tikzpicture}$
                };
                \node (2) at (1,0)
                {
                $\begin{tikzpicture}
                        \node[Pants, bot] (A) at (0,0) {};
                        \node[Cap, bot] at (A.belt) {};
                \end{tikzpicture}$
                };
                \begin{scope}[double arrow scope]
                    \draw (1) -- node[above] {$\theta$} (2);
                \end{scope}
            \end{tz}
            \quad = \quad
            \begin{tz}[xscale=1.4, yscale=2]
                \node (1) at (0,0)
                {
                $\begin{tikzpicture}
                        \node[Pants, bot] (A) at (0,0) {};
                        \node[Cap, bot] at (A.belt) {};
                        \selectpart[green, inner sep=1pt]{(A-rightleg)};
                \end{tikzpicture}$
                };
                \node (2) at (1,0)
                {
                $\begin{tikzpicture}
                        \node[Pants, bot] (A) at (0,0) {};
                        \node[Cap, bot] at (A.belt) {};
                \end{tikzpicture}$
                };
                \begin{scope}[double arrow scope]
                    \draw (1) -- node[above] {$\theta$} (2);
                \end{scope}
            \end{tz} 
        \end{equation*}
    \end{itemize}
\end{defn}

The ribbon presentation is named as such as its linear representations (in the sense of Definition \ref{defn:rep}) correspond exactly to ribbon linear categories.

\begin{defn}
    The \emph{pivotal presentation} $\mathcal{P}$ is the 2-extension of the ribbon presentation $\mathcal{R}$ with 
    \begin{itemize} \smallbordisms
        \item Additional generating 2-morphisms:
        \begin{align*}
            \begin{tz} 
                \node[Pants, bot] (A) at (0,0) {};
                \node[Copants, top, bot, anchor=belt] at (A.belt) {};
            \end{tz}
            &\longxdoubleto{\eta ^\dag}
            \begin{tz} 
                \node[Cyl, tall, top, bot] (A) at (0,0) {};
                \node[Cyl, tall, top, bot] (B) at (2*\cobwidth, 0) {};
            \end{tz}
            &
            \begin{tz} 
                \node[Cyl, top, bot, tall] (A) at (0,0) {};
            \end{tz}
            &\longxdoubleto{\epsilon ^\dag}
            \begin{tz} 
                \node[Pants, top, bot] (A) at (0,0) {};
                \node[Copants, bot, anchor=leftleg] at (A.leftleg) {};
            \end{tz}
        \\
            \begin{tz}
                \node[Cap, bot] (A) at (0,0) {};
                \node[Cup] at (0,0) {};
            \end{tz}
            &\longxdoubleto{\nu ^\dag}{}
            \begin{tz}
                \draw[green] (0,0) rectangle (0.6, -0.6);  
            \end{tz}
            &
            \begin{tz}
                \node[Cyl, top, bot, tall] (A) at (0,0) {};
            \end{tz}
            &\longxdoubleto{\mu ^\dag}
            \begin{tz}
                \node[Cup, top] (A) at (0,0) {};
                \node[Cap, bot] (B) at (0,-2*\cobheight) {};
            \end{tz}
        \end{align*}
    \end{itemize}
    subject to the following additional conditions:
    \begin{itemize} \smallbordisms
        \item (Additional rigidity) $\phi_1^{-1},\phi_2^{-1}$ are equal to the following compositions:
        \begin{align*}
            \phi_1^{-1} \quad&=\quad
            \begin{tz}
                \node[Copants, top, bot] (A) at (0,0) {};
                \node[Pants, bot, anchor=belt] (B) at (A.belt) {};
                \selectpart[green, inner sep=1pt] {(B-rightleg)};
            \end{tz} 
            \longxdoubleto{\epsilon^\dagger}
            \begin{tz}
                \node[Copants, top, wide, bot] (F) at (0,0) {};
                \node[Pants, bot, wide, anchor=belt] (G) at (F.belt) {};
                \node[Pants, bot, anchor=belt] (A) at (G.rightleg) {};
                \node[Cyl, tall, bot, anchor=top] (B) at (G.leftleg) {};
                \node[Copants, bot, anchor=leftleg] (C) at (A.leftleg) {};
                \selectpart[green]{(F-belt) (G-leftleg) (A-rightleg)};
            \end{tz}
            \longxdoubleto{\alpha^{-1}}
            \begin{tz}
                \node[Copants, top, wide, bot] (F) at (0,0) {};
                \node[Pants, bot, wide, anchor=belt] (G) at (F.belt) {};
                \node[Pants, bot, anchor=belt] (A) at (G.leftleg) {};
                \node[Cyl, bot, anchor=top] (B) at (A.leftleg) {};
                \node[Copants, bot, anchor=leftleg] (C) at (A.rightleg) {};
                \node[Cyl, bot, anchor=bottom] (X) at (C.rightleg) {}; 
                \selectpart[green]{(F-leftleg) (F-rightleg) (G-leftleg) (G-rightleg)};
            \end{tz}
            \longxdoubleto{\eta^\dagger}
            \begin{tz}
                \node[Pants, top, bot] (A) at (0,0) {};
                \node[Cyl, bot, anchor=top] (B) at (A.leftleg) {};
                \node[Copants, bot, anchor=leftleg] (C) at (A.rightleg) {};
                \node[Cyl, top, bot, anchor=bottom] (D) at (C.rightleg) {}; 
            \end{tz}
        \\
            \phi_2^{-1} \quad&=\quad
            \begin{tz}
                \node[Copants, top, bot] (A) at (0,0) {};
                \node[Pants, bot, anchor=belt] (B) at (A.belt) {};
                \selectpart[green, inner sep=1pt] {(B-leftleg)};
            \end{tz} 
            \longxdoubleto{\epsilon^\dagger}
            \begin{tz}
                \node[Copants, top, wide, bot] (F) at (0,0) {};
                \node[Pants, bot, wide, anchor=belt] (G) at (F.belt) {};
                \node[Pants, bot, anchor=belt] (A) at (G.leftleg) {};
                \node[Cyl, tall, bot, anchor=top] (B) at (G.rightleg) {};
                \node[Copants, bot, anchor=rightleg] (C) at (A.rightleg) {};
                \selectpart[green]{(F-belt) (A-leftleg) (G-rightleg)};
            \end{tz}
            \longxdoubleto{\alpha}
            \begin{tz}
                \node[Copants, top, wide, bot] (F) at (0,0) {};
                \node[Pants, bot, wide, anchor=belt] (G) at (F.belt) {};
                \node[Pants, bot, anchor=belt] (A) at (G.rightleg) {};
                \node[Cyl, bot, anchor=top] (B) at (A.rightleg) {};
                \node[Copants, bot, anchor=rightleg] (C) at (A.leftleg) {};
                \node[Cyl, top, bot, anchor=bottom] (X) at (C.leftleg) {}; 
                \selectpart[green]{(F-leftleg) (F-rightleg) (G-leftleg) (G-rightleg)};
            \end{tz}
            \longxdoubleto{\eta^\dagger}
            \begin{tz}
                \node[Pants, top, bot] (A) at (0,0) {};
                \node[Cyl, bot, anchor=top] (B) at (A.rightleg) {};
                \node[Copants, bot, anchor=rightleg] (C) at (A.leftleg) {};
                \node[Cyl, top, bot, anchor=bottom] (D) at (C.leftleg) {}; 
            \end{tz}
        \end{align*}
        \item (Additional adjunction) $\eta^\dag,\epsilon^\dag$ witness $\tikztinycopants \dashv \tikztinypants$ while $\nu^\dag,\mu^\dag$ witness $\tikztinycap \dashv \tikztinycup$, i.e. the following are the identity:
        \begin{calign} \nonumber
            \begin{tz}
                \node[Cyl, top, bot, height scale=1.5] (X) at (0,0) {};
                \node[Cup] at (X.bottom) {};
                \selectpart[green]{(X-top) (X-bottom)}
            \end{tz}
            \longxdoubleto{\mu^\dag}
            \begin{tz}
                \node (1) [Cup, top] at (0,0) {};
                \node (2) [Cap, bot] at (0,-1.5\cobheight) {};
                \node (3) [Cup] at (0,-1.5\cobheight) {};
                \selectpart[green]{(3) (2)}
            \end{tz}
            \longxdoubleto{\nu^\dag}
            \begin{tz}
                \node (1) [Cup, top] at (0,0) {};
                \node (2) [Cup, invisible] at (0,-1.5\cobheight) {};
                \node (3) [Cap, invisible] at (0,-1.5\cobheight) {};
            \end{tz}
            &
            \begin{tz}
                \node[Cyl, bot, height scale=1.5] (X) at (0,0) {};
                \node[Cap, bot] at (X.top) {};
                \selectpart[green]{(X-top) (X-bottom)}
            \end{tz}
            \longxdoubleto{\mu^\dag}
            \begin{tz}
                    \node (1) [Cap, bot] at (0,-1.5\cobheight) {};
                    \node (2) [Cap, bot] at (0,0) {};
                    \node (3) [Cup] at (0,0) {};
                    \selectpart[green]{(3) (2)}
            \end{tz}
            \longxdoubleto{\nu^\dag}
            \begin{tz}
                    \node (1) [Cap, bot] at (0,-1.5\cobheight) {};
                    \node (2) [Cup, invisible] at (0,0) {};
                    \node (3) [Cap, invisible] at (0,0) {};
            \end{tz}
        \\ \nonumber
            \begin{tz}
                \node [Pants, bot] (A) at (0,0) {};
                \node (X) [Cyl, tall, bot, anchor=bot, top] at (A.belt) {};
                \selectpart[green]{(X-top) (X-bottom)}
            \end{tz}
            \longxdoubleto{\epsilon^\dag}
            \begin{tz}
                \node [Pants, bot, top] (A) at (0,0) {};
                \node [Copants, bot, anchor=leftleg] (B) at (A.leftleg) {};
                \node [Pants, anchor=belt, bot] (C) at (B.belt) {};
                \selectpart [green] {(A-leftleg) (A-rightleg) (C-leftleg)};
            \end{tz}
            \longxdoubleto{\eta^\dag}
            \begin{tz}
                \node [Pants, top, bot] (A) at (0,0) {};
                \node[Cyl, bot, anchor=top, height scale=2] (B) at (A.leftleg) {};
                \node[Cyl, bot, anchor=top, height scale=2] (C) at (A.rightleg) {};
            \end{tz}
            &
            \begin{tz}
                \node [Copants, top, bot] (A) at (0,0) {};
                \node (X) [Cyl, tall, bot, anchor=top] at (A.belt) {};
                \selectpart[green]{(A-belt) (X-bottom)}
            \end{tz}
            \longxdoubleto{\epsilon^\dag}
            \begin{tz}
                \node [Copants, bot] (A) at (0,0) {};
                \node [Pants, bot, anchor=leftleg] (B) at (A.leftleg) {};
                \node [Copants, anchor=belt, top, bot] (C) at (B.belt) {};
                \selectpart [green]{(C-leftleg) (C-rightleg) (B-rightleg) (B-leftleg)};
            \end{tz}
            \longxdoubleto{\eta^\dag}
            \begin{tz}
                \node [Copants, bot] (A) at (0,0) {};
                \node[Cyl, anchor=bottom, height scale=2, top, bot] (B) at (A.leftleg) {};
                \node[Cyl, anchor=bottom, height scale=2, top, bot] (C) at (A.rightleg) {};
            \end{tz}
        \end{calign}
        \item (Pivotality) The following equation as well as its rotation by $180^\circ$ about the $z$-axis hold:
        \begin{equation*}
            \begin{tz}
                \node[Cap] (A) at (0,0) {};
                \node[Cup] (B) at (0,0) {};
                \node[Cobordism Bottom End 3D] (AA) at (0,0) {};              
                \selectpart[green, inner sep=1pt] {(AA)};
            \end{tz}
            \longxdoubleto{\epsilon ^\dagger}
            \begin{tz}
                \node [Pants, bot] (A) at (0,0) {};
                \node [Cap, bot] at (A.belt) {};
                \node [Copants, bot, anchor=leftleg] (B) at (A.leftleg) {};
                \node [Cup] at (B.belt) {};
                \selectpart[green, inner sep=1pt] {(A-rightleg)};
            \end{tz}
            \longxdoubleto{\mu ^\dagger}
            \begin{tz}
                \node [Pants, bot] (A) at (0,0) {};
                \node [Cap, bot] at (A.belt) {};
                \node [Cyl, bot, height scale=1.5, anchor=top] (B) at (A.leftleg) {};
                \node [Cup] (C) at (A.rightleg) {};
                \node [Copants, bot, anchor=leftleg] (D) at (B.bottom) {};
                \node [Cap] (E) at (D.rightleg) {};
                \node [Cobordism Bottom End 3D] (B) at (D.rightleg) {};
                \node [Cup] at (D.belt) {};
                \selectpart[green] {(A-rightleg) (B)};
            \end{tz}
            \longxdoubleto{\mu}
            \begin{tz}
                \node [Pants, bot] (A) at (0,0) {};
                \node[Cap, bot] (X) at (A.belt) {};
                \node[Copants, bot, anchor=leftleg] (B) at (A.leftleg) {};
                \node[Cup] at (B.belt) {};
                \selectpart[green] {(X-center) (A-leftleg) (A-rightleg) (B-belt)};
            \end{tz}
            \longxdoubleto{\epsilon}
            \begin{tz}
                \node [Cap, bot] at (0,0) {};
                \node[Cup] at (0,0) {};
            \end{tz}
            \quad = \quad \id
        \end{equation*}
    \end{itemize}
\end{defn}
\begin{rk}
    The pivotal presentation $\mathcal{P}$ is referred to in \cite[Definition 3]{BDSV2} as the ``ribbon presentation''; we give it another name to avoid conflicting notation.
\end{rk}

From $\mathcal{P}$, adding one more relation yields a symmetric monoidal bicategory whose representations correspond to modular tensor categories with a choice of square root of the global dimension.

\begin{defn}[{\cite[Definition 3.9]{BDSV4}}] \label{defn:pres-m}
    The \emph{modular presentation} $\mathcal{M}$ is the pivotal presentation $\mathcal{P}$ with the extra relation:
    \begin{itemize} \smallbordisms
        \item (Modularity) The following equation as well as its rotation by $180^\circ$ about the $z$-axis hold:
        \begin{equation*}
            \begin{tz}[xscale=2, yscale=1.5]
                \node (1) at (0,-0.5)
                {$\begin{tikzpicture}
                    \node[Cyl, top, bot, tall] (A) at (0,0) {};
                \end{tikzpicture}$};

                \node (2) at (1,0)
                {$\begin{tikzpicture}
                    \node[Pants, top, bot] (A) at (0,0) {};
                    \node[Copants, bot, anchor=leftleg] (B) at (A.leftleg) {};
                    \selectpart[green, inner sep=1pt]{(A-leftleg)};
                    \selectpart[red, inner sep=1pt] {(A-rightleg)};
                \end{tikzpicture}$};

                \node (3) at (2,0)
                {$\begin{tikzpicture}
                    \node[Pants, top, bot] (A) at (0,0) {};
                    \node[Copants, bot, anchor=leftleg] (B) at (A.leftleg) {};
                \end{tikzpicture}$};

                \node (4) at (3,-0.5)
                {$\begin{tikzpicture}
                    \node[Cyl, top, bot, tall] (A) at (0,0) {};
                \end{tikzpicture}$};

                \node(5) at (1.5, -1)
                {$\begin{tikzpicture}
                    \node[Cup, top, bot] (A) at (0,0) {};
                    \node[Cap, bot] (B) at (0, -1.5*\cobheight) {};
                \end{tikzpicture}$};
                    
                \begin{scope}[double arrow scope]
                    \draw (1) -- node [above] {$\epsilon^\dagger$} (2);
                    \draw[] ([xshift=0pt] 2.0) to node [above, inner sep=1pt] {${\color{green}\theta}, \color{red}{\theta^{-1}}$} (3);
                    \draw (3) -- node [above] {$\epsilon$} (4);
                    \draw (1) -- node [below left] {$\mu^\dagger$} (5);
                    \draw (5) -- node [below right] {$\mu$} (4);
                \end{scope}
            \end{tz}
        \end{equation*}
    \end{itemize}
\end{defn}

This is a presentation of the componentwise variant of the signature bordism bicategory, which is defined later in Definition \ref{defn:bordcsig}.

\begin{thm}[{\cite{pres-bord,BDSV3}}]\label{conj:pres-csig}
    There exists an equivalence of symmetric monoidal bicategories $|-|_\mathcal{M}: \mathbf{F}(\mathcal{M}) \rightarrow \Bord{csig}$ between the symmetric monoidal bicategory generated by the modular presentation and the componentwise signature bordism bicategory.
\end{thm}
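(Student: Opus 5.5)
The strategy is to reduce to the oriented case, which is taken as an input (Theorem \ref{conj:pres-or}: the pivotal presentation $\mathcal{P}$, possibly with an anomaly-freeness relation, presents $\Bord{or}$). I would build $|-|_\mathcal{M}$ as a lift of the oriented functor $|-|_\mathcal{P}$ along the forgetful map $\Bord{csig}\to\Bord{or,exp}$.

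The first step constructs $|-|_\mathcal{M}$ on generators. Each generating object and 1-morphism of $\mathcal{M}$ is sent to its image in $\Bord{or,exp}$, expanded by choosing discs $D_S$ and handlebodies $H_\Sigma$ so that everything lies in the handlebody model. By the Remark following the computation of $\mathbf{c}\ssig$, horizontal composition then carries no Wall correction. Each generating 2-morphism $M$ is sent to $(M,\mathbf{0})$, i.e.\ each connected component is given signature $0$.

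The second step checks every relation of $\mathcal{M}$ in $\Bord{csig}$. Each relation already holds in $\Bord{or,exp}$, so it only remains to compare the signature tuples of the two sides. These tuples are sums of Wall invariants accumulated over the vertical compositions. For most relations (associativity, unit, adjunction, rigidity, pivotality, balancing) the relevant Lagrangians coincide: the gluings occur along mapping cylinders of diffeomorphisms preserving the handlebody kernel, or along tori where two of $A,B,C$ agree. In those cases Lemma \ref{lem:c-norm} and Lemma \ref{lem:coinc-zero} give zero on both sides.

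The modularity relation is the only one where a nonzero Wall invariant could appear. One side is the composite $\mu\circ\mu^\dagger$ through the cup–cap, which is a solid torus glued to a solid torus. The other side passes through $\theta$ on one leg and $\theta^{-1}$ on the other. I expect this relation to be the main obstacle, and would handle it as follows. Compute the relevant Lagrangians on the torus boundary explicitly, as in the proof of Proposition \ref{prop:indiv}. Then either show that the Wall terms contributed by $\theta$ and $\theta^{-1}$ cancel, or absorb any discrepancy by modifying the signature labels assigned to the generators (for example, giving $\epsilon^\dagger$ or $\mu^\dagger$ a nonzero label). Componentwise labels make this adjustment possible, since each component's label is independent.

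The third step proves the resulting functor is an equivalence. Essential surjectivity on objects and 1-morphisms is inherited from the oriented case. For 2-morphisms, a 2-morphism $(M,\mathbf{n})$ of $\Bord{csig}$ can be written as $(M,\mathbf{0})$ modified by shifts of the labels on its components. Each component-shift by $\pm1$ must be realised by a word in $\mathcal{M}$. The natural choice is the pairing of the closed $S^3$ via $\nu,\nu^\dagger$ with a $\theta$-loop, which produces a closed component whose Wall label is $\pm1$. Faithfulness is shown by comparing with $\mathcal{P}$: the kernel on 2-morphisms of $\mathbf{F}(\mathcal{M})\to\mathbf{F}(\mathcal{P})$ is generated by the free $\mathbb{Z}$-action per component, introduced exactly by the extra relation. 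An argument of this kind is the one attributed to \cite{pres-bord,BDSV3}, and I would follow it there rather than reprove the Cerf-theoretic input.
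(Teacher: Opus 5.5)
Your first two steps follow the route the paper indicates. Every generator goes to signature $\mathbf{0}$, as for $|-|_\mathcal{N}$ in the remark after Construction \ref{constr:modg}, and the relations are checked as in Lemmas \ref{lem:modg-r}--\ref{lem:modg-g}. In the modularity relation there is nothing to absorb. After horizontally composing with the identities of the cup and cap (harmless, since $\tilde{c}\ssig=0$), the middle 2-morphism ($\theta$ on one leg, $\theta^{-1}$ on the other) becomes the identity. Then $\epsilon^\dagger$ and $\epsilon$ determine the same Lagrangian, Lemma \ref{lem:coinc-zero} kills the Wall term, and both routes carry label $0$.

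The genuine gap is in your third step, and it starts with the comparison object. Theorem \ref{conj:pres-or} concerns $\mathcal{O}$, which includes the modularity relation, not $\mathcal{P}$. The functor relating the two presentations goes $\mathbf{F}(\mathcal{P})\to\mathbf{F}(\mathcal{M})$, not the reverse. In $\mathbf{F}(\mathcal{P})$ the anomaly is not even known to be invertible, since that argument uses modularity. The relation responsible for the $\mathbb{Z}^c$-ambiguity is not the one $\mathcal{M}$ adds to $\mathcal{P}$. It is the anomaly-freeness relation that $\mathcal{O}$ adds to $\mathcal{M}$. So the square you need is $\mathbf{F}(\mathcal{M})\to\mathbf{F}(\mathcal{O})$ lying over $\Bord{csig}\to\Bord{or,exp}$, with $|-|_\mathcal{O}$ bijective on 2-morphisms along the bottom.

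Your realisation of a label shift also leaves the fibre you need. Taking the monoidal product with the closed $S^3$ built from $\nu,\nu^\dagger$ and a $\theta$-loop replaces the underlying bordism $M$ by $M\sqcup S^3$, adding a new component with label $\pm1$. What you need is $(M,\mathbf{n})\mapsto(M,\mathbf{n}\pm e_i)$ with $M$ unchanged. The correct generator of the action is the anomaly $x=\epsilon\circ\theta\circ\epsilon^\dagger$ on a cylinder, inserted \emph{inside} the $i$-th component. It is central and invertible in $\mathbf{F}(\mathcal{M})$ \cite[Lemma 44, Corollary 45, Lemma 46]{BDSV2}, so it can be slid anywhere within a component. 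By Lemma \ref{lem:c-norm} and the Wall computation in the proof of Lemma \ref{lem:modg-g}, $|x|_\mathcal{M}=(\id,1)$. Composing it into a component therefore shifts exactly that component's label by $1$.

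With this, the argument closes as follows. $\mathbf{F}(\mathcal{O})$ is obtained from $\mathbf{F}(\mathcal{M})$ by setting the anomaly to the identity. Hence the fibre over a 2-morphism with $c$ components is an orbit of $\mathbb{Z}^c$ acting by componentwise powers of $x$. This action is free, because $|-|_\mathcal{M}$ turns it into the free shift action on labels. The fibres of $\Bord{csig}\to\Bord{or,exp}$ are also $\mathbb{Z}^c$-torsors, so equivariance gives bijectivity on 2-morphisms exactly as in Proposition \ref{prop:g-equiv}. This is the argument the paper means when it says the result follows from Theorem \ref{conj:pres-or} by the methods of \S\ref{subsection:constr-modg}.
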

\begin{rk}
    This is stated as \cite[Theorem 3.10]{BDSV4}, but the proof of this is forthcoming. It is shown in \cite{BDSV3} that this follows from Theorem \ref{conj:pres-or}, which is to be the main result of \cite{pres-bord}. Alternatively, one may also deduce this from Theorem \ref{conj:pres-or} via similar arguments to those in \textsection\ref{subsection:constr-modg}.
\end{rk}

\begin{defn}[{\cite[Definition 3.13]{BDSV4}}] \label{defn:pres-n}
    The \emph{global modular presentation} $\mathcal{N}$ is a 2-extension of the modular presentation $\mathcal{M}$ with
    \begin{itemize} \smallbordisms
        \item Additional invertible generating 2-morphism:
        \begin{equation*}
            \begin{tz}
                \draw[green] (0,0) rectangle (\cobwidth, \cobwidth);
            \end{tz}
            \rightleftdoublearrow{\xi}{\xi^{-1}}
            \begin{tz}
                \draw[green] (0,0) rectangle (\cobwidth, \cobwidth);
            \end{tz}
        \end{equation*}
    \end{itemize}
    subject to 
    \begin{itemize} \smallbordisms
        \item (Inverses) $\xi\xi^{-1} = \id = \xi^{-1}\xi$.
        \item (Global anomaly) $\xi$ satisfies
        \begin{equation*}
            \begin{tz}
                \node[Cap, bot] (A) at (0,0) {};
                \node[Cup] (B) at (0,0) {};
                \draw[green] (1*\cobwidth, -\cupheight) rectangle +(\cobwidth, 2*\cupheight);
            \end{tz}
            \longxdoubleto{\xi}
            \begin{tz}
                \node[Cap, bot] (A) at (0,0) {};
                \node[Cup] at (0,0) {};
                \draw[green] (1*\cobwidth, -\cupheight) rectangle +(\cobwidth, 2*\cupheight);
            \end{tz}
            \quad = \quad
            \begin{tz}
                \node[Cap, bot] (A) at (0,0) {};
                \node[Cup] (B) at (0,0) {};
                \selectpart[green, inner sep=1pt] {(A-center)};
            \end{tz}
            \longxdoubleto{\epsilon ^\dag}
            \begin{tz}
                \node[Cap, bot] (A) at (0,0) {};
                \node[Pants, bot, anchor=belt] (B) at (A) {};
                \node[Copants, bot, anchor=leftleg] (C) at (B.leftleg) {};
                \node[Cup] (D) at (C.belt) {};
                \selectpart[green, inner sep=1pt] {(B-rightleg)};
            \end{tz}
            \longxdoubleto{\theta}
            \begin{tz}
                \node[Cap, bot] (A) at (0,0) {};
                \node[Pants, bot, anchor=belt] (B) at (A) {};
                \node[Copants, bot, anchor=leftleg] (C) at (B.leftleg) {};
                \node[Cup] (D) at (C.belt) {};
                \selectpart[green] {(B-rightleg) (B-leftleg) (A-center) (C-belt)};
            \end{tz}
            \longxdoubleto{\epsilon}
            \begin{tz}
                \node[Cap, bot] (A) at (0,0) {};
                \node[Cup] (B) at (0,0) {};
            \end{tz}
        \end{equation*}
    \end{itemize}
\end{defn}
\begin{rk}
    This presentation $\mathcal{N}$ is written as $\mathcal{N}_1$ in \cite{BDSV4}. 
\end{rk}

\begin{thm}[{\cite{pres-bord,BDSV3}}]\label{conj:pres-sig}
    There exists an equivalence of symmetric monoidal bicategories $|-|_\mathcal{N}: \mathbf{F}(\mathcal{N}) \rightarrow \Bord{sig}$ between the symmetric monoidal bicategory generated by the global modular presentation and the signature bordism bicategory.
\end{thm}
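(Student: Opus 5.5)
The plan is to deduce this from Theorem~\ref{conj:pres-or} (the presentation $\mathbf{F}(\mathcal{O})\to\Bord{or}$ of the oriented bordism bicategory, forthcoming in \cite{pres-bord}), in the same way as Theorem~\ref{conj:pres-csig}. The global presentation $\mathcal{N}$ is obtained from the oriented one by adding the invertible generator $\xi$ on the empty 1-morphism and replacing the relations that fail in $\Bord{sig}$ by versions twisted by powers of $\xi$. We therefore treat $\mathbf{F}(\mathcal{N})$ as a candidate extension of $\mathbf{F}(\mathcal{O})$ by $\mathbb{Z}$, with the $\mathbb{Z}$-action given by whiskering with $\xi^n$.

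\textbf{Step 1: construct the comparison functor.} Using \cite[Theorem 2.78]{csp-phd}, we define $|-|_\mathcal{N}$ by giving $\mathcal{N}$-data in $\Bord{sig}$.
\begin{enumerate}
\item Send each generating object and 1-morphism to its image in $\Bord{or,exp}$, choosing discs and handlebodies for the expansions.
\item Send each generating 2-morphism $M$ to a chosen lift $(M,n_M)$; we take $n_M=0$ except where a correction is forced.
\item Send $\xi$ to $(I\times\emptyset,1)$, i.e.\ the identity of the empty 1-morphism together with a copy of $\mathbb{CP}^2$.
\end{enumerate}

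\textbf{Step 2: check the relations.} Each relation of $\mathcal{N}$ equates two composites of 2-morphisms whose underlying bordisms agree in $\Bord{or}$, by Theorem~\ref{conj:pres-or}. So only the integer terms need comparing, and these are computed with Wall's invariant through $c\ssig$ and $\tilde c\ssig$.
\begin{enumerate}
\item Most relations involve only genus-zero pieces or mapping cylinders preserving the Lagrangians. There Lemma~\ref{lem:coinc-zero} and Lemma~\ref{lem:c-norm} show that the integer terms vanish.
\item The global anomaly and modularity relations produce a solid torus glued into $S^2\times S^1$ with a twist. Here a single Wall invariant equal to $\pm1$ appears, and it is exactly absorbed by $\xi$.
\end{enumerate}

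\textbf{Step 3: show the functor is an equivalence.} There is a commutative square: the quotient $\mathbf{F}(\mathcal{N})\to\mathbf{F}(\mathcal{O})$ (set $\xi=\id$), then $|-|_\mathcal{O}$, agrees with $|-|_\mathcal{N}$ followed by the quotient $\Bord{sig}\to\Bord{or,exp}\simeq\Bord{or}$.
\begin{enumerate}
\item Objects and 1-morphisms are handled by the essential surjectivity and 1-level equivalence of $|-|_\mathcal{O}$.
\item On 2-morphism sets over fixed 1-morphisms, $|-|_\mathcal{N}$ is $\mathbb{Z}$-equivariant, since $\xi\mapsto 1$. It therefore suffices to show that $\mathbb{Z}$ acts freely and transitively on the fibres of $\mathbf{F}(\mathcal{N})\to\mathbf{F}(\mathcal{O})$.
\item Transitivity holds because every relation of $\mathcal{O}$ lifts to a relation of $\mathcal{N}$ up to a power of $\xi$.
\item Freeness holds because $|-|_\mathcal{N}$ detects the power of $\xi$: the signature detects it, as $\sigma(\mathbb{CP}^2)=1$.
\end{enumerate}
An equivariant map of $\mathbb{Z}$-torsors over a bijection is a bijection, so $|-|_\mathcal{N}$ is fully faithful on 2-morphisms, hence an equivalence.

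\textbf{Main obstacle.} The hard part is transitivity in Step~3: showing that the presentation $\mathcal{N}$ is not too small, so that no additional independent central generator is needed. I would argue via the extension groups. Any two lifts of a relation of $\mathcal{O}$ differ by a well-defined integer, and these integers assemble into a cocycle on $\mathbf{F}(\mathcal{O})$ that is compatible with the one induced by $\mathbf{c}\ssig$. If this fails, I would fall back on the Cerf-theoretic argument of \cite{pres-bord} and track the Wall invariant along each elementary move.
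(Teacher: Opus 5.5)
Your route is the one the paper indicates in the remark after Construction~\ref{constr:modg}. Send $\xi$ to $(\id_\emptyset,1)$ and every other generator $\gamma$ to $(|\gamma|_\mathcal{O},0)$, check the relations by tracking Wall terms as in Lemmas~\ref{lem:modg-r}--\ref{lem:modg-g}, and then run the $\mathbb{Z}$-torsor comparison of Proposition~\ref{prop:g-equiv} against Theorem~\ref{conj:pres-or}. Your freeness argument via $\sigma(\mathbb{CP}^2)=1$ is also the paper's. Two points in your write-up are wrong or misjudged, though neither breaks the argument.

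\textbf{The modularity relation.} This relation contains no $\xi$, so a $\pm1$ Wall term there could not be absorbed by anything. Fortunately none appears.
On the lower route all surfaces have genus $0$. On the upper route, after whiskering with the cup and cap as in Lemma~\ref{lem:modg-g}, the middle $\theta$, $\theta^{-1}$ step becomes a cylinder on the identity of the closed torus. Moreover $\epsilon^\dagger$ and $\epsilon$ attach along the same curve, so Lemma~\ref{lem:coinc-zero} gives $0$.
Only the global anomaly relation produces a nonzero term, namely $c\ssig(|\epsilon|_\mathcal{O},|\theta\epsilon^\dagger|_\mathcal{O})=+1$, as computed in Lemma~\ref{lem:modg-g}. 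You need this exact sign, not just $\pm1$, for it to match $\xi\mapsto 1$.
Consequently no corrections $n_M$ are needed: every generator other than $\xi$ goes to signature term $0$.

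\textbf{Transitivity is not the main obstacle.} It needs no cocycle or Cerf-theoretic fallback. By Definition~\ref{defn:pres-o}, $\mathcal{O}$ is $\mathcal{N}$ with the single extra relation $\xi=\id$. Since $\xi$ is an endomorphism of $\id_\emptyset$, it is central. Hence the fibre of $\mathbf{F}(\mathcal{N})\to\mathbf{F}(\mathcal{O})$ over $\psi$ is exactly $\{\xi^n\psi : n\in\mathbb{Z}\}$, which is what your Step~3(3) already says. All the genuinely hard content lies in Theorem~\ref{conj:pres-or}, which both you and the paper take as input.
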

\begin{rk}
    This is stated as \cite[Theorem 3.14]{BDSV4}, but the proof of this is forthcoming. It is shown in \cite{BDSV3} that this follows from Theorem \ref{conj:pres-or}, which is to be the main result of \cite{pres-bord}. Alternatively, one may also deduce this from Theorem \ref{conj:pres-or} via similar arguments to those in \textsection\ref{subsection:constr-modg}.
\end{rk}

\begin{defn}[{\cite[Definition 3.11]{BDSV4}}] \label{defn:pres-o}
    The \emph{anomaly-free modular presentation} $\mathcal{O}$ is a 2-extension of $\mathcal{N}$ with the extra relation:
    \begin{itemize}
        \item (Anomaly-freeness) $\xi = \id$.
    \end{itemize}
\end{defn}

\begin{thm}[{\cite{BDSV2,haioun,filippos-thesis,pres-bord}}]\label{conj:pres-or}
    There exists an equivalence of symmetric monoidal bicategories $|-|_\mathcal{O}: \mathbf{F}(\mathcal{O}) \rightarrow \Bord{or}$ between the symmetric monoidal bicategory generated by the anomaly-free modular presentation and the oriented bordism bicategory.
\end{thm}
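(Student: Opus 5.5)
The plan follows the cobordism-hypothesis-style strategy of Schommer-Pries \cite{csp-phd}, one dimension up. First I construct $|-|_\mathcal{O}$ on generators. The generating object goes to $S^1$. The four generating 1-morphisms go to the pair of pants, the copants, the disc viewed as a cup, and the disc viewed as a cap. The generating 2-morphisms go to the evident 3-dimensional bordisms with corners:
\begin{itemize}
\item $\alpha,\rho,\lambda,\beta,\theta$ go to mapping cylinders of the associativity diffeomorphism, the unit diffeomorphisms, the half-twist braiding and the Dehn twist along the cylinder;
\item $\eta,\epsilon,\nu,\mu$ and their daggers go to index $1$ and index $2$ handle attachments, i.e.\ the trace of a saddle on a surface and of capping a circle;
\item $\xi$ goes to the identity.
\end{itemize}
By \cite[Theorem 2.78]{csp-phd}, it then suffices to verify that every relation of $\mathcal{O}$ holds in $\Bord{or}$. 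Each relation asserts that two composites are diffeomorphic relative to the boundary. For the monoidal, balanced, ribbon, adjunction and pivotal relations this is a direct check, via handle cancellation or isotopies of diffeomorphisms. For the modularity relation, the left-hand composite is a solid torus glued in with a $\pm1$-framed twist; it is identified with $\mu\circ\mu^\dag$ by a Kirby-type slam-dunk/blow-down, and the framing sign is exactly what distinguishes the two rotations. Anomaly-freeness holds automatically, since $\xi\mapsto\id$; it encodes that in $\Bord{or}$ a blow-up by $\pm\mathbb{CP}^2$ has no trace.

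Next I show that $|-|_\mathcal{O}$ is essentially surjective on objects and 1-morphisms, and full on 2-morphisms.
\begin{itemize}
\item On objects and 1-morphisms, essential surjectivity comes from Schommer-Pries's two-dimensional result: a Morse function on a surface bordism decomposes it into pants, copants, cups, caps, braidings and cylinders, all of which lie in the image.
\item For fullness on 2-morphisms, take a bordism with corners $Y$ and choose a Morse function relative to the boundary, compatible with the vertical collars. This decomposes $Y$ into mapping cylinders of surface diffeomorphisms together with elementary handle attachments of index $0,1,2,3$. Handles of index $1$ and $2$ are realised by $\epsilon^\dag,\eta$ (respectively $\epsilon,\eta^\dag$) after sliding attaching regions into standard position; index $0$ and $3$ handles are realised by $\nu,\mu^\dag$ and their duals.
\item The mapping cylinders are realised because mapping class groups of surfaces with boundary are generated by Dehn twists (Lickorish), and a Dehn twist along an arbitrary curve is conjugate, via $\alpha,\beta$ and the Frobeniusators $\phi_i$, to $\theta$ on a standard cylinder.
\end{itemize}

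Finally, faithfulness on 2-morphisms is the main obstacle. Suppose two words in the generators have diffeomorphic images. The two induced generic Morse data on $Y$ are joined by a generic one-parameter family. By Cerf theory with corners, this family passes only through codimension-one events:
\begin{itemize}
\item births and deaths of critical points;
\item crossings of critical values;
\item handle slides;
\item, on the level of the gradient-like data, relations in the mapping class group.
\end{itemize}
I would show that each event changes the word by a relation of $\mathcal{O}$:
\begin{itemize}
\item births and deaths correspond to the adjunction zig-zag identities;
\item crossings correspond to interchange, since $\mathbf{F}(\mathcal{O})$ is a symmetric monoidal bicategory;
\item handle slides correspond to the rigidity, pivotality and modularity relations;
\item mapping class group relations (Wajnryb/Gervais presentations: braid, lantern and chain relations) are derived from the balanced and ribbon relations together with modularity;
\item the remaining global ambiguity is a framing change, which is killed by $\xi=\id$.
\end{itemize}
Carrying out this step requires a careful stratification of the space of functions on 3-manifolds with corners, which is where I expect the real work to lie (cf.\ \cite{filippos-thesis,haioun}). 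Here I would first try to reduce to closed 3-manifolds plus a fixed boundary collar, and then apply the known Cerf-theoretic presentation of the 3D bordism category by Juhász-type generators \cite{Juhasz}.
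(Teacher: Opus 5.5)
The paper does not prove this statement itself. It constructs $|-|_\mathcal{O}$ and checks the relations (Construction~\ref{constr:geom-o}, following \cite[Definition 3]{BDSV2}), which matches your first step. It then takes the equivalence from a two-stage argument: a Cerf-theoretic presentation of $\Bord{or}$ (the intended main result of \cite{BDSV1}, to be completed in \cite{pres-bord} on the basis of \cite{haioun,filippos-thesis}), followed by the algebraic argument of \cite{BDSV2} that this presentation and $\mathcal{O}$ present equivalent symmetric monoidal bicategories. Your second half tries to do both stages at once, and there the argument has genuine gaps.

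\textbf{Fullness.} The fullness step is wrong as stated. After whiskering and tensoring with identities, each of $\alpha,\lambda,\rho,\beta,\theta,\phi_1,\phi_2$, their inverses and the symmetric monoidal coherence cells only changes a word inside a genus-$0$ region. So composites of them can only replace a cut circle by a curve lying in a planar region bounded by the remaining cut circles and the boundary. Now take the torus with two boundary circles (the source of $\epsilon$). There, every non-separating cut circle reachable in this way must be homologous, in the capped-off torus, to $\pm$ the class of the legs. Hence no such composite realises the Dehn twist along a curve meeting each leg once, since that twist moves the class of the legs. In particular this twist is not conjugate to $\theta$ via $\alpha,\beta,\phi_i$. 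In \cite{BDSV2} such twists are instead obtained from composites passing through the handle generators, such as $A$ of Definition~\ref{defn:dehn-twists}. Their invertibility in $\mathbf{F}(\mathcal{O})$ already requires the modularity relation \cite[Proposition 25]{BDSV2}.

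\textbf{Faithfulness.} This step carries all of the content, and it is asserted rather than proved.
\begin{itemize}
\item The events you list are those of the Cerf theory of a single Morse function on $Y$, which is what underlies Juh\'asz's presentation of the $1$-category. But a word in $\mathbf{F}(\mathcal{O})$ also records a decomposition of every level surface into pants, copants, cups and caps. So you need a Cerf theory adapted to this extra structure; one dimension down, \cite{csp-phd} uses generic maps to $\mathbb{R}^2$ for this. This is the work the paper attributes to \cite{haioun,filippos-thesis}.
\item Deducing the relations forced by the Cerf theory from the balanced, ribbon, rigidity, pivotal and modularity relations is not a routine check. It is the algebraic content of \cite{BDSV2}.
\item Your proposed shortcut via \cite{Juhasz} does not work. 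Capping off the vertical boundary is not faithful on $2$-morphisms: $|\theta|_\mathcal{O}$ and the identity on the cylinder both cap off to $S^2\times I$, yet they differ in $\Bord{or}$, because the Dehn twist is non-trivial in the mapping class group of the annulus rel boundary. Keeping the collar fixed instead leaves you with manifolds with corners, where \cite{Juhasz} does not apply. Either way you would still need to express arbitrary surface diffeomorphisms and framed-sphere surgeries coherently in terms of $\mathcal{O}$, which is exactly the missing theorem.
\end{itemize}

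\textbf{Smaller slips in your first step.}
\begin{itemize}
\item $\nu$ and $\nu^\dag$ are the $0$- and $3$-handles, while $\mu^\dag$ is a $2$-handle.
\item The relation that needs a geometric check is the global anomaly relation inherited from $\mathcal{N}$. It requires $|\epsilon\circ\theta\circ\epsilon^\dag|_\mathcal{O}$ on the sphere to be $S^2\times I$ rel boundary, which does not follow from $\xi\mapsto\id$ alone. It does hold: the twisted attaching circle of $\epsilon$ meets the belt circle of $\epsilon^\dag$ once, so the handles cancel, and $S^2$ has no non-trivial mapping classes.
\end{itemize}
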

\begin{rk}
    This is stated as \cite[Theorem 3.12]{BDSV4}, and can be shown by combining the intended main result of \cite{BDSV1} with the arguments of \cite{BDSV2}. The former result will be proven in upcoming work of Bartlett, Douglas and Sytilidis \cite{pres-bord} by building upon the main results of \cite{haioun,filippos-thesis}. The other theorems of this flavour (Theorems \ref{conj:pres-csig}, \ref{conj:pres-sig}, \ref{conj:pres-csig/2} and \ref{conj:pres-sig/2}) can then be deduced from this theorem.
\end{rk}

Now, we construct modified versions of $\mathcal{M},\mathcal{N}$ which are better suited to the half-signature extension that we are working with.

\begin{defn} \label{defn:pres-mprime}
    The \emph{extended modular presentation} $\mathcal{M}'$ is a 2-extension of the pivotal presentation $\mathcal{P}$ with
    \begin{itemize} \smallbordisms
        \item Additional generating 2-morphism:
        \begin{equation*}
            \begin{tz}
                \node[Cyl, top, bot] (A) at (0,0) {};
            \end{tz}
            \rightleftdoublearrow{z}{z^{-1}}
            \begin{tz}
                \node[Cyl, top, bot] (A) at (0,0) {};
            \end{tz}
        \end{equation*}
    \end{itemize}
    subject to the following additional relations:
    \begin{itemize} \smallbordisms
        \item (Inverses) $zz^{-1} = \id = z^{-1}z$.
        \item (Centrality) $z$ satisfies 
        \begin{align*}
            \begin{tz}
                \node[Pants, top, bot] (A) at (0,0) {};
                \selectpart[green, inner sep=1pt] {(A-belt)};
            \end{tz}
            \longxdoubleto{z}
            \begin{tz}
                \node[Pants, top, bot] (A) at (0,0) {};
            \end{tz}
            \quad &= \quad
            \begin{tz}
                \node[Pants, top, bot] (A) at (0,0) {};
                \selectpart[green, inner sep=1pt] {(A-leftleg)};
            \end{tz}
            \longxdoubleto{z}
            \begin{tz}
                \node[Pants, top, bot] (A) at (0,0) {};
            \end{tz}
            \quad = \quad
            \begin{tz}
                \node[Pants, top, bot] (A) at (0,0) {};
                \selectpart[green, inner sep=1pt] {(A-rightleg)};
            \end{tz}
            \longxdoubleto{z}
            \begin{tz}
                \node[Pants, top, bot] (A) at (0,0) {};
            \end{tz}
        \\
            \begin{tz}
                \node[Copants, top, bot] (A) at (0,0) {};
                \selectpart[green, inner sep=1pt] {(A-belt)};
            \end{tz}
            \longxdoubleto{z}
            \begin{tz}
                \node[Copants, top, bot] (A) at (0,0) {};
            \end{tz}
            \quad &= \quad
            \begin{tz}
                \node[Copants, top, bot] (A) at (0,0) {};
                \selectpart[green, inner sep=1pt] {(A-leftleg)};
            \end{tz}
            \longxdoubleto{z}
            \begin{tz}
                \node[Copants, top, bot] (A) at (0,0) {};
            \end{tz}
            \quad = \quad
            \begin{tz}
                \node[Copants, top, bot] (A) at (0,0) {};
                \selectpart[green, inner sep=1pt] {(A-rightleg)};
            \end{tz}
            \longxdoubleto{z}
            \begin{tz}
                \node[Copants, top, bot] (A) at (0,0) {};
            \end{tz}
        \end{align*}
        \item (Extended modularity) The following equation as well as its rotation by $180^\circ$ about the $z$-axis hold:
        \begin{equation*}
            \begin{tz}[xscale=2, yscale=1.5]
                \node (1) at (0,-0.5)
                {$\begin{tikzpicture}
                    \node[Cyl, top, bot, tall] (A) at (0,0) {};
                \end{tikzpicture}$};

                \node (2) at (1,0)
                {$\begin{tikzpicture}
                    \node[Pants, top, bot] (A) at (0,0) {};
                    \node[Copants, bot, anchor=leftleg] (B) at (A.leftleg) {};
                    \selectpart[green, inner sep=1pt]{(A-leftleg)};
                    \selectpart[red, inner sep=1pt] {(A-rightleg)};
                \end{tikzpicture}$};

                \node (3) at (2,0)
                {$\begin{tikzpicture}
                    \node[Pants, top, bot] (A) at (0,0) {};
                    \node[Copants, bot, anchor=leftleg] (B) at (A.leftleg) {};
                \end{tikzpicture}$};

                \node (4) at (3,-0.5)
                {$\begin{tikzpicture}
                    \node[Cyl, top, bot, tall] (A) at (0,0) {};
                \end{tikzpicture}$};

                \node(5) at (2, -1)
                {$\begin{tikzpicture}
                    \node[Cup, top, bot] (A) at (0,0) {};
                    \node[Cap, bot] (B) at (0, -1.5*\cobheight) {};
                \end{tikzpicture}$};

                \node(6) at (1, -1)
                {$\begin{tikzpicture}
                    \node[Cyl, top, bot, tall] (A) at (0,0) {};
                \end{tikzpicture}$};
                    
                \begin{scope}[double arrow scope]
                    \draw (1) -- node [above] {$\epsilon^\dagger$} (2);
                    \draw[] ([xshift=0pt] 2.0) to node [above, inner sep=1pt] {${\color{green}\theta}, \color{red}{\theta^{-1}}$} (3);
                    \draw (3) -- node [above] {$\epsilon$} (4);
                    \draw (1) -- node [below left] {$z^{-1}$} (6);
                    \draw (5) -- node [below right] {$\mu$} (4);
                    \draw (6) -- node [below] {$\mu^\dag$} (5);
                \end{scope}
            \end{tz}
        \end{equation*}
    \end{itemize}
\end{defn}

As noted in \cite[Corollary 45]{BDSV2}, the centrality relation allows for $z$ to be moved around within each connected component, and hence $z$ can be commuted past all of the other generators of $\mathcal{M'}$. Thus, $z$ is central with respect to both horizontal and vertical composition.

The half-signature analogues to $\mathcal{M},\mathcal{N}$ are given as follows:
\begin{defn} \label{defn:pres-fg}
    The \emph{half-signature presentation} $\mathcal{H}$ is the extended modular presentation $\mathcal{M}'$ with one extra relation:
    \begin{itemize} \smallbordisms
        \item (Anomaly-freeness)
        \begin{equation*}
            \begin{tz}
                \node[Cap, bot] (A) at (0,0) {};
                \node[Cup] (B) at (0,0) {};
            \end{tz}
            \longxdoubleto{\id}
            \begin{tz}
                \node[Cap, bot] (A) at (0,0) {};
                \node[Cup] at (0,0) {};
            \end{tz}
            \quad = \quad
            \begin{tz}
                \node[Cap, bot] (A) at (0,0) {};
                \node[Cup] (B) at (0,0) {};
                \selectpart[green, inner sep=1pt] {(A-center)};
            \end{tz}
            \longxdoubleto{\epsilon ^\dag}
            \begin{tz}
                \node[Cap, bot] (A) at (0,0) {};
                \node[Pants, bot, anchor=belt] (B) at (A) {};
                \node[Copants, bot, anchor=leftleg] (C) at (B.leftleg) {};
                \node[Cup] (D) at (C.belt) {};
                \selectpart[green, inner sep=1pt] {(B-rightleg)};
            \end{tz}
            \longxdoubleto{\theta}
            \begin{tz}
                \node[Cap, bot] (A) at (0,0) {};
                \node[Pants, bot, anchor=belt] (B) at (A) {};
                \node[Copants, bot, anchor=leftleg] (C) at (B.leftleg) {};
                \node[Cup] (D) at (C.belt) {};
                \selectpart[green] {(B-rightleg) (B-leftleg) (A-center) (C-belt)};
            \end{tz}
            \longxdoubleto{\epsilon}
            \begin{tz}
                \node[Cap, bot] (A) at (0,0) {};
                \node[Cup] (B) at (0,0) {};
            \end{tz}
        \end{equation*}
    \end{itemize}
\end{defn}

This is a presentation of the componentwise variant of the half-signature bordism bi\-category defined in Definition \ref{defn:bordcsig}.

\begin{thm}\label{conj:pres-csig/2}
    There exists an equivalence of symmetric monoidal bicategories $|-|_\mathcal{H}: \mathbf{F}(\mathcal{H}) \rightarrow \Bord{csig/2}$ between the symmetric monoidal bicategory generated by the half-signature presentation and the componentwise half-signature bordism bicategory.
\end{thm}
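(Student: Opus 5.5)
We plan to deduce the statement from Theorem \ref{conj:pres-or}, in the same way that Theorems \ref{conj:pres-csig} and \ref{conj:pres-sig} are deduced. The first step is to construct a symmetric monoidal functor $|-|_\mathcal{H}: \mathbf{F}(\mathcal{H}) \rightarrow \Bord{csig/2}$. By \cite[Theorem 2.78]{csp-phd}, this amounts to giving $\mathcal{H}$-data in $\Bord{csig/2}$.

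On the generating objects and 1-morphisms, and on the generating 2-morphisms of $\mathcal{P}$, we take the bordisms used by $|-|_\mathcal{O}$. Each such 2-morphism $M$ is connected, and we lift it to $(M,n)$, where $n$ is the even representative $n \equiv m(M) \pmod 2$ determined by Definition \ref{defn:sig/2}. Concretely, we take the lift of the corresponding $\mathcal{M}$-generator in $\Bord{csig}$ and shift its signature term by $0$ or $1$ so that it has the correct parity. Since the $\Bord{csig}$ lifts already satisfy the $\mathcal{P}$ relations, only the parity adjustments need to be checked. Each such relation equates two composites with the same underlying bordism. By (\ref{eq:condA}) and (\ref{eq:condB}), even composites of even morphisms stay even, so the two sides differ by an even integer. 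We then have to show this difference is actually zero, which we do by comparing with the integer discrepancies already computed in \cite{BDSV4}.

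The new generator $z$ is sent to $(I\times S^1\times I, \pm 2)$ on the cylinder, i.e. the identity shifted by one unit of the $\mathbb{Z}$-action on $\Bord{csig/2}$, which is twice that of $\Bord{csig}$. Centrality then holds because the $\mathbb{Z}$-action commutes with composition. In $\mathcal{M}$, the modularity relation identifies two composites whose signature terms differ by one unit. In $\Bord{csig/2}$, the left side of the extended modularity relation is odd unless it is corrected by $z^{-1}$. We fix the sign of $z$ so that extended modularity holds. The anomaly-freeness relation is verified by computing the signature term of the $S^2\times I$-type 2-morphism ($\epsilon^\dagger,\theta,\epsilon$) using Wall's invariant, which is the computation giving $\xi$ in \cite{BDSV4}. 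The point is that halving the signature lattice turns the anomaly $\xi^2$ in $\Bord{csig}$ into the trivial element here.

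The second step is to show that $|-|_\mathcal{H}$ is an equivalence. Consider the commutative square formed by $\mathbf{F}(\mathcal{H})\rightarrow\mathbf{F}(\mathcal{O})$, which forgets $z$ by sending $z\mapsto\id$, and $\Bord{csig/2}\rightarrow\Bord{or}$. By Theorem \ref{conj:pres-or}, $|-|_\mathcal{O}$ is an equivalence. Both vertical maps are bijective on objects and 1-morphisms and surjective on 2-morphisms, so $|-|_\mathcal{H}$ is essentially surjective on objects and 1-morphisms and full on 2-morphisms up to the fibres. It therefore remains to compare the fibres over each 2-morphism of $\Bord{or}$, componentwise.

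On the target side, the fibre over a connected 2-morphism is a $\mathbb{Z}$-torsor. On the source side, every 2-morphism can be rewritten by centrality as $z^k$ times a word in the $\mathcal{P}$-generators. We must show two things: that $z$ acts freely and transitively on the fibre of $\mathbf{F}(\mathcal{H})\rightarrow\mathbf{F}(\mathcal{O})$ over each connected 2-morphism, and that $|z| = $ the generator of the torsor. Transitivity follows because the relation $\xi=\id$ in $\mathcal{O}$ is already imposed in $\mathcal{H}$ via anomaly-freeness, so the only relations of $\mathcal{O}$ not present in $\mathcal{H}$ are those involving $z$. Freeness follows by applying $|-|_\mathcal{H}$, since $z^k$ maps to the shift by $k$.

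The main obstacle is the parity bookkeeping: checking that every relation holds with exactly matching integers, not merely matching parities, especially for the adjunction and rigidity relations involving $\nu,\mu$, which change $b_0$ and $b_1$ of $\ol{M}$. We would handle this case by case by evaluating $m$ on each generator, using Lemma \ref{lem:surface-gluing} and Lemma \ref{lem:m-norm}.
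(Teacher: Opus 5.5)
\textbf{There is a genuine gap in the construction of $|-|_\mathcal{H}$.} Your reduction is sound as far as it goes: the $\Bord{csig}$ lifts with signature term $0$ already satisfy all relations, so each relation in $\Bord{csig/2}$ reduces to comparing the total shifts on its two sides. The gap is that the shifts on $\eta^\dag,\epsilon^\dag,\nu^\dag,\mu^\dag$ are not determined by parity; the even lifts of each bordism form a $\mathbb{Z}$-torsor, so there is no ``even representative''. Your literal prescription (shift by $0$ or $1$ to reach the right parity) puts $+1$ on all four daggers, and this breaks the additional adjunction relations. In the adjunction for $\mu^\dag,\nu^\dag$ every intermediate surface has genus $0$, so the Wall terms vanish. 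The composite $\nu^\dag\circ\mu^\dag$ then carries signature term $1+1=2$, while the identity carries $0$. The same happens for $\eta^\dag,\epsilon^\dag$. Your proposed remedy, evaluating $m$ on generators via Lemmas \ref{lem:surface-gluing} and \ref{lem:m-norm}, cannot fix this, because $m$ only sees residues mod $2$.

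What you need is to solve the relations for the integer shifts $s(\gamma)$. The additional adjunction, additional rigidity and pivotality relations all have vanishing Wall terms, and they force $s(\eta^\dag)=s(\mu^\dag)=-s(\epsilon^\dag)=-s(\nu^\dag)=a$ with $a$ odd. Writing $|z|_\mathcal{H}=(\id,2b)$, extended modularity compares $-2b+a$ on the lower route with $-a$ on the upper route, so $b=a$. Anomaly-freeness then gives $-a+1=0$, because its only nonzero Wall term is $c\ssig(|\epsilon|_\mathcal{O},|\theta\epsilon^\dag|_\mathcal{O})=1$ from the meridian/longitude computation. Hence $a=1$. This is exactly Construction \ref{constr:modg}: $+1$ on $\eta^\dag,\mu^\dag$, $-1$ on $\epsilon^\dag,\nu^\dag$, and $z\mapsto(\id,2)$.

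Two of your heuristics also point the wrong way. First, the modularity relation holds on the nose in $\Bord{csig}$, since both routes have signature term $0$. The discrepancy that $z^{-1}$ absorbs in $\Bord{csig/2}$ is $s(\mu^\dag)-s(\epsilon^\dag)=2$, which the dagger shifts create. Second, the anomaly composite $\epsilon\theta\epsilon^\dag$ on the sphere has signature term $1$ in $\Bord{sig}$, which is one copy of $\xi$, not $\xi^2$, and it is odd. Halving the lattice does not trivialise it; it becomes the identity only because the $-1$ on $\epsilon^\dag$ cancels the Wall term.

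Once the lifts are fixed correctly, your second step is the paper's argument, adapted from Proposition \ref{prop:g-equiv}. You compare the $\mathbb{Z}^c$-torsor fibres over the equivalence $|-|_\mathcal{O}$, with $z$ on a component matching a shift by $2$ on that component.
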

\begin{rk}
    The proof of this theorem is in \textsection\ref{subsection:constr-modg}, where we construct the symmetric monoidal functor $|-|_\mathcal{H}$ and deduce from Theorem \ref{conj:pres-or} that $|-|_\mathcal{H}$ is an equivalence of symmetric monoidal bicategories.
\end{rk}

\begin{defn}\label{defn:pres-g}
    The \emph{global half-signature presentation} $\mathcal{G}$ is the 2-extension of $\mathcal{H}$ with 
    \begin{itemize} \smallbordisms
        \item Additional invertible generating 2-morphism:
        \begin{equation*}
            \begin{tz}
                \draw[green] (0,0) rectangle (\cobwidth, \cobwidth);
            \end{tz}
            \rightleftdoublearrow{\zeta}{\zeta^{-1}}
            \begin{tz}
                \draw[green] (0,0) rectangle (\cobwidth, \cobwidth);
            \end{tz}
        \end{equation*}
    \end{itemize}
    subject to 
    \begin{itemize} \smallbordisms
        \item (Inverses) $\zeta\zeta^{-1} = \id = \zeta^{-1}\zeta$.
        \item (Global $z$) $\zeta$ satisfies 
        \begin{equation*}
            \begin{tz}
                \node[Cyl, top, bot] (A) at (0,0) {};
                \draw[green] (1*\cobwidth, -\cupheight) rectangle +(\cobwidth, 2*\cupheight);
            \end{tz}
            \longxdoubleto{\zeta}
            \begin{tz}
                \node[Cyl, top, bot] (A) at (0,0) {};
                \draw[green] (1*\cobwidth, -\cupheight) rectangle +(\cobwidth, 2*\cupheight);
            \end{tz}
            \quad = \quad
            \begin{tz}
                \node[Cyl, top, bot] (A) at (0,0) {};
            \end{tz}
            \longxdoubleto{z}
            \begin{tz}
                \node[Cyl, top, bot] (A) at (0,0) {};
            \end{tz}
        \end{equation*}
    \end{itemize}
\end{defn}

\begin{thm}\label{conj:pres-sig/2}
    There exists an equivalence of symmetric monoidal bicategories $|-|_\mathcal{G}: \mathbf{F}(\mathcal{G}) \rightarrow \Bord{sig/2}$ between the symmetric monoidal bicategory generated by the global half-signature presentation and the half-signature bordism bicategory.
\end{thm}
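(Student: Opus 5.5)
I would build $|-|_\mathcal{G}$ by lifting the known equivalence $|-|_\mathcal{O}$ of Theorem \ref{conj:pres-or} and then checking that the lift is still an equivalence. Compose with the equivalence $\Bord{or,exp}\rightarrow\Bord{or}$, so everything is over $\Bord{or,exp}$. On generating objects and 1-morphisms, $|-|_\mathcal{G}$ agrees with $|-|_\mathcal{O}$: the circle, pants, copants, cup and cap, each equipped with a disc/handlebody filling. Since $\Bord{sig/2}$ has the same objects and 1-morphisms as $\Bord{or,exp}$, this is forced.

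For each generating 2-morphism $\omega$ of $\mathcal{G}$ I choose a lift in $\Bord{sig/2,out}$ of the bordism $|\omega|_\mathcal{O}$. Concretely, I pick an even pair $(M_\omega,n_\omega)$ with $n_\omega\equiv m(M_\omega)\pmod 2$.
\begin{itemize}
\item For $\alpha,\lambda,\rho,\beta,\theta$, their inverses, $\phi_i^{-1}$ and the daggered (co)units, I take the unique even lift whose signature term is as close as possible to the lift used in \cite{BDSV4} for $|-|_\mathcal{M}$, adjusted by $m$. Mapping cylinders of diffeomorphisms extending to the fillings get $n=0$ by Lemma \ref{lem:m-norm}.
\item The generator $z$ goes to the identity cylinder with signature term $1$ in $\Bord{sig/2}$, i.e. $2$ in $\Bord{sig}$.
\item The generator $\zeta$ goes to the empty 2-morphism $S^3$-free identity of $\emptyset$ with term $1$.
\end{itemize}
By the universal property \cite[Theorem 2.78]{csp-phd}, I then need to check each relation of $\mathcal{G}$. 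Each relation holds in $\Bord{or,exp}$ because it holds for $|-|_\mathcal{O}$, so both sides have the same underlying bordism. It then suffices to compare signature terms, equivalently even-ness plus the $\mathbb{Z}$-offset. The inverse, monoidal, balanced, adjunction, rigidity, pivotal and centrality relations are relations already checked in $\Bord{sig}$ for $|-|_\mathcal{M}$ in \cite{BDSV4}, up to our parity rescaling, and $m$-corrections telescope via (\ref{eq:condA}) and (\ref{eq:condB}).

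The genuinely new checks are the extended modularity relation and anomaly-freeness. Extended modularity compares the $\theta^{\pm1}$-composite on the cylinder with $\mu\mu^\dag z^{-1}$. Anomaly-freeness says the $S^2\times S^1$-type composite $\epsilon\theta\epsilon^\dag$ on the sphere has total half-signature $0$. For these I compute Wall's invariants explicitly on tori via Lemma \ref{lem:maslow} and the formula (\ref{eq:csig-def}), together with $m$ of the resulting closed 3-manifolds ($S^3$, $S^2\times S^1$). The point is that in $\Bord{sig}$ the corresponding discrepancy is $\pm1$ for the cylinder relation (the \cite{BDSV4} $\xi$ square-root phenomenon), and $m$ supplies the missing parity so that it becomes integral. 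I expect this bookkeeping, fixing the lifts of $\epsilon^\dag,\mu^\dag$ so that both the global anomaly relation and extended modularity hold simultaneously, to be the main obstacle. I would first fix the lifts of $\mu,\mu^\dag,\nu,\nu^\dag$ by the adjunction relations and only then solve for the value $z$ must take.

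For the equivalence, I compare the free bicategories. There is a map $\mathbf{F}(\mathcal{G})\rightarrow\mathbf{F}(\mathcal{O})$ killing $z,\zeta$ (setting $\zeta=\id$ forces $z=\id$ by global $z$), compatible with the quotient $\Bord{sig/2}\rightarrow\Bord{or,exp}$. Essential surjectivity on objects and 1-morphisms and fullness mod $\mathbb{Z}$ follow from Theorem \ref{conj:pres-or}. It remains to show that on each hom-set of 2-morphisms, $|-|_\mathcal{G}$ is a $\mathbb{Z}$-equivariant map of free $\mathbb{Z}$-torsors over the corresponding $|-|_\mathcal{O}$-bijection. Equivariance holds because $\zeta$ maps to the generator. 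Freeness of the $\zeta$-action in $\mathbf{F}(\mathcal{G})$ is witnessed by the functor itself, since $\zeta^n\mapsto n$. Surjectivity onto fibres follows from the $\zeta$-action, and injectivity from the fact that any two 2-morphisms of $\mathbf{F}(\mathcal{G})$ with the same image in $\mathbf{F}(\mathcal{O})$ differ by a power of $\zeta$. This last fact I would prove by observing that the kernel of $\mathbf{F}(\mathcal{G})\rightarrow\mathbf{F}(\mathcal{O})$ on 2-morphisms is generated by $z$ and $\zeta$, which are central and identified by the global $z$ relation. Monoidality follows from Lemma \ref{lem:sig/2-mon}.
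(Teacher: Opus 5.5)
\textbf{Overall structure.} Your overall architecture is the paper's. You agree with $|-|_\mathcal{O}$ on objects and 1-morphisms, send each generating 2-morphism to an even lift in $\Bord{sig/2,out}$, check the relations on signature terms, and finish with the $\mathbb{Z}$-torsor comparison against $|-|_\mathcal{O}$. Your values for $z$ and $\zeta$ (signature term $2$ in $\Bord{sig}$ units) are exactly those of Construction \ref{constr:modg}, and the torsor step is essentially Proposition \ref{prop:g-equiv}.

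\textbf{The gap: the relation check.} Conditions (\ref{eq:condA}), (\ref{eq:condB}) and Lemma \ref{lem:maslow} are statements mod $2$. Both sides of each relation have the same underlying bordism and are composites of even 2-morphisms, so parity agreement is automatic. What must be shown is an equality of integers, so nothing \emph{telescopes}.

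Your rule for the daggered generators is also not well defined. Each of $\eta^\dag,\epsilon^\dag,\nu^\dag,\mu^\dag$ has odd $m$, so both $\pm1$ are \emph{closest} even lifts to $0$, and the relations force a sign pattern. Take the undaggered units and counits at $0$, and note that the Wall contributions vanish (genus-$0$ middle surfaces, or coinciding Lagrangians via Lemma \ref{lem:coinc-zero}). Then:
\begin{itemize}
\item the additional adjunction, additional rigidity and pivotality relations force $n(\eta^\dag)=n(\mu^\dag)=-n(\epsilon^\dag)=-n(\nu^\dag)=:t$; any uniform sign, e.g.\ all $+1$, already breaks the dagger adjunction for $\mu^\dag,\nu^\dag$, since $2\neq 0$;
\item extended modularity forces $n(z)=2t$;
\item anomaly-freeness forces $t=c_{\mathrm{sig}}(|\epsilon|_\mathcal{O},|\theta\epsilon^\dag|_\mathcal{O})$.
\end{itemize}

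\textbf{The missing computation.} What is needed is the actual value, not the parity, of that Wall invariant. The paper computes it with the explicit intersection form on the torus: in the meridian/longitude basis, $A,B,C$ are spanned by $\binom01,\binom10,\binom11$, and the result is $+1$. This gives $\eta^\dag,\mu^\dag\mapsto 1$ and $\epsilon^\dag,\nu^\dag\mapsto -1$, and it is what makes your a priori choice $z\mapsto 2$ consistent. Lemma \ref{lem:maslow} cannot detect this sign; with the opposite sign, your fixed value of $z$ would contradict anomaly-freeness.

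A smaller point: the composite in the anomaly-freeness relation is diffeomorphic to the identity of the sphere, so $\overline{M}=S^3$. The $S^2\times S^1$ arises from $\mu\mu^\dag$ in extended modularity.

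\textbf{The appeal to \cite{BDSV4}.} Citing checks \emph{already done for $|-|_\mathcal{M}$ in \cite{BDSV4}} does not work as stated. $|-|_\mathcal{M}$ lands in $\Bord{csig}$, and \cite{BDSV4} only states the presentation results, deferring their proofs. If you want to recycle a verification in $\Bord{sig}$, the clean route is to factor through $\mathbf{F}(\mathcal{N})$ and compose with $|-|_\mathcal{N}$, using
\[
\eta^\dag\mapsto\xi\eta^\dag,\quad \epsilon^\dag\mapsto\xi^{-1}\epsilon^\dag,\quad \nu^\dag\mapsto\xi^{-1}\nu^\dag,\quad \mu^\dag\mapsto\xi\mu^\dag,\quad z,\zeta\mapsto\xi^2.
\]
That still presupposes the relation checks for $|-|_\mathcal{N}$, and its global anomaly relation requires exactly the same signed Wall computation.
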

\begin{rk}
    The proof of this theorem is in \textsection\ref{subsection:constr-modg}, where we construct the symmetric monoidal functor $|-|_\mathcal{G}$ and deduce from Theorem \ref{conj:pres-or} that $|-|_\mathcal{G}$ is an equivalence of symmetric monoidal bicategories.
\end{rk}

By construction, the symmetric monoidal bicategories presented by the various presentations defined above readily admit symmetric monoidal functors between them, resulting in the following commuting diagram:
\begin{equation} \label{eq:fs}
        \begin{tikzcd}
        &&& {\mathbf{F}(\mathcal{H})} & {\mathbf{F}(\mathcal{G})} & \\
        {\mathbf{F}(\mathcal{R})} & {\mathbf{F}(\mathcal{P})} & {\mathbf{F}(\mathcal{M}')} &&& {\mathbf{F}(\mathcal{O})} \\
        &&& {\mathbf{F}(\mathcal{M})} & {\mathbf{F}(\mathcal{N})}
        \arrow[from=1-4, to=1-5]
        \arrow[from=1-5, to=2-6]
        \arrow[from=2-1, to=2-2]
        \arrow[from=2-2, to=2-3]
        \arrow[from=2-2, to=3-4]
        \arrow[from=2-3, to=1-4]
        \arrow[from=2-3, to=3-4]
        \arrow[from=3-4, to=3-5]
        \arrow[from=3-5, to=2-6]
    \end{tikzcd}
\end{equation}
Most of these arise naturally from taking a 2-extension, i.e. sending each generator to its respective counterpart. We describe the remaining ones:
\begin{itemize}
    \item The functor $\mathbf{F}(\mathcal{M'}) \rightarrow \mathbf{F}(\mathcal{M})$ is given by sending the central element $z$ to the identity. Then, the extended modularity relation in $\mathcal{M'}$ reduces to the modularity relation in $\mathcal{M}$. Alternatively, we may think of $\mathcal{M}$ as a 2-extension of $\mathcal{M}'$ with the added relation that $z=\id$.
    \item The functor $\mathbf{F}(\mathcal{G}) \rightarrow \mathbf{F}(\mathcal{O})$ is given by sending $z$ and $\zeta$ to the identity. The anomaly-freeness relation in $\mathcal{G}$ then follows from anomaly-freeness in $\mathcal{O}$. Alternatively, we may think of $\mathcal{O}$ as a 2-extension of $\mathcal{G}$ with the added relations that $z=\id,\zeta=\id$.
\end{itemize}
In essence, downwards arrows correspond to strengthening the extended modularity relation to the modularity relation, while upwards arrows correspond to imposing anomaly-freeness.

Under the equivalences given by the Theorems \ref{conj:pres-csig}, \ref{conj:pres-sig}, \ref{conj:pres-or}, \ref{conj:pres-csig/2} and \ref{conj:pres-sig/2}, we may rewrite the rightmost five terms in (\ref{eq:fs}) as the following quotients:
\begin{equation*}
        \begin{tikzcd}
        {\Bord{csig/2}} & {\Bord{sig/2}} & \\
        && {\Bord{or}} \\
        {\Bord{csig}} & {\Bord{sig}}
        \arrow[from=1-1, to=1-2]
        \arrow[from=1-2, to=2-3]
        \arrow[from=3-1, to=3-2]
        \arrow[from=3-2, to=2-3]
    \end{tikzcd}
\end{equation*}
Here, the leftmost two arrows are given by summing the signature terms, and the rightmost two arrows are given by forgetting the signature terms.

Even though $\Bord{sig/2}$ admits inclusions to $\Bord{sig}$ as described in \textsection\ref{subsection:aut-bordsig}, none of these are compatible with the rest of the functors in (\ref{eq:fs}). The functor $|-|_\mathcal{N}$ defined in \cite{BDSV3} sends each generating 2-morphism to some bordism with signature term $0$. However, depending on the choice of inclusion of $\Bord{sig/2}$, one of the sets $\set{\eta,\epsilon,\nu,\mu}$ and $\set{\eta^\dag,\epsilon^\dag,\nu^\dag,\mu^\dag}$ will not be sent by $|-|_\mathcal{N}$ to 2-morphisms in the subextension $\Bord{sig/2}$. In our construction of $|-|_\mathcal{G}$ in Proposition \ref{prop:fg-sig/2}, the latter set will be sent to bordisms with signature term $\pm 1$.

\subsection{An equivalence $\mathbf{F}(\mathcal{G}) \rightarrow \Bord{sig/2}$} \label{subsection:constr-modg}

In this subsection, we shall deduce Theorem \ref{conj:pres-sig/2} from Theorem \ref{conj:pres-or} by constructing an equivalence of symmetric monoidal bicategories $|-|_\mathcal{G}: \mathbf{F}(\mathcal{G}) \rightarrow \Bord{sig/2}$. This will allow us to apply the results in the next section on the classification of linear representations of $\mathbf{F}(\mathcal{G})$ to obtain the classification of linear representations of $\Bord{sig/2}$.

The proof of Theorem \ref{conj:pres-csig/2} is very similar, and so we provide a brief sketch of this at the end of this subsection. Additionally, the methods here provide an alternative way to \cite{BDSV3} to deduce Theorems \ref{conj:pres-csig} and \ref{conj:pres-sig} from Theorem \ref{conj:pres-or}. We refer the reader to the remark following Construction \ref{constr:modg}.

Before defining $|-|_\mathcal{G}$, we first review the construction of the symmetric monoidal functor $|-|_\mathcal{O}: \mathbf{F}(\mathcal{O}) \rightarrow \Bord{or}$ described in \cite{BDSV2}. Recall that in order to define $|-|_\mathcal{O}$, it suffices to specify the images of each of the generating objects, 1-morphisms and 2-morphisms, and then to check that each of the relations are satisfied.
\begin{constr}[{\cite[Definition 3]{BDSV2}}] \label{constr:geom-o}
    the symmetric monoidal functor $|-|_\mathcal{O}: \mathbf{F}(\mathcal{O}) \rightarrow \Bord{or,exp}$ is defined as follows.

    At the object and 1-morphism level, the generators are sent to the corresponding object/1-morphism as suggested by the pictures.
    \begin{itemize}
        \item The generating object $\tikztinycirc$ is sent to the pair $(\mathbb{S},\mathbb{D})$ comprising the standard circle and the standard bounding disc.
        \item The generating 1-morphisms $\tikztinypants,\tikztinycopants,\tikztinycup,\tikztinycap$ are sent to $(\Sigma,H_\Sigma)$, where $\Sigma$ is the standard pants, copants, cup and cap respectively, and $H_\Sigma = D^3$, filling the ``insides'' of the depicted surfaces. Viewing these $D^3$s as being embedded in $\mathbb{R}^3$ in the obvious way, we may orient them according to the right-handed orientation on $\mathbb{R}^3$, which in turn induces an orientation on the respective surfaces.
    \end{itemize}
    At the level of 2-morphisms, the images of the generators are as given below.
    \begin{itemize}
        \item $\alpha,\lambda,\rho,\phi_1,\phi_2$ and their inverses are sent to the mapping cylinders of the obvious diffeomorphisms relative to the boundary as suggested by the pictured surfaces.
        \item $\theta$ is sent to a mapping cylinder of the right-handed Dehn twist, while $\beta$ is sent to the mapping cylinder of a left-handed half-twist; see \cite[(19)]{BDSV2}.
        \item $\nu$ implements to $0$-handle addition while its reverse $\nu^\dag$ implements $3$-handle addition.
        \item $\mu$ implements the $1$-handle addition linking the two components while its reverse $\mu^\dag$ implements $2$-handle addition along the equator of the cylinder.
        \item $\eta$ implements the $1$-handle addition linking the two components while its reverse $\eta^\dag$ implements the $2$-handle addition along the longitudinal circle separating the surface laterally.
        \item $\epsilon^\dag$ implements the $1$ handle addition linking the front and the back of the cylinder while its reverse $\epsilon$ implements $2$-handle addition along the longitude of the torus with two boundary components.
        \item $\xi$ is sent to the identity 2-morphism.
    \end{itemize}

    One may check that these assignments indeed satisfy the relations of $\mathcal{O}$:
    \begin{itemize}
        \item All relations involving only invertible generators (the monoidal, balanced and ribbon relations and the various relations encoding inverses) clearly hold at the level of diffeomorphisms, and so hold for their mapping cylinders.
        \item The (additional) adjunction and (additional) rigidity relations correspond to handle cancellation. Likewise, in the pivotality relation (and its $180^\circ$ rotation), $\mu^\dag\epsilon^\dag$ is by handle cancellation in fact the mapping cylinder of a diffeomorphism, and $\epsilon\mu$ is the mapping cylinder of the inverse diffeomorphism.
        \item The modularity and anomaly-freeness relations are addressed in \cite[Remarks 5 \& 8]{BDSV2}. The global anomaly relation follows from the anomaly-freeness relation.
    \end{itemize}
\end{constr}

We will now build upon this $|-|_\mathcal{O}$ to define a symmetric monoidal functor $|-|_\mathcal{G}: \mathbf{F}(\mathcal{G}) \rightarrow \Bord{sig/2}$. We shall first state the images of the generating object, 1-morphisms and 2-morphisms of $\mathcal{G}$, and then check that this assignment defines a symmetric monoidal functor. Here, recall that $\Bord{sig/2,out}$ has the same objects and 1-morphisms as $\Bord{or,exp}$, and its 2-morphisms are pairs consisting of a 2-morphism in $\Bord{or,exp}$ and a signature term $n\in\mathbb{Z}$ satisfying a parity condition as in Definition \ref{defn:sig/2}.

\begin{constr}\label{constr:modg}
    Define $|-|_\mathcal{G}: \mathbf{F}(\mathcal{G}) \rightarrow \Bord{sig/2,out}$ on the generating object, 1-morphisms and 2-morphisms as follows:
    \begin{itemize}
        \item Generating object and 1-morphisms: same as $|-|_\mathcal{O}$.
        \item Generating 2-morphisms: For each generating 2-morphisms $\gamma$ of the pivotal presentation $\mathcal{P}$, set $|\gamma|_\mathcal{G} := (|\gamma|_\mathcal{O},0)$. For the remaining generating 2-morphisms, set
            \begin{align*}
                |\eta^\dag|_\mathcal{G} := (|\eta^\dag|_\mathcal{O}, 1) &\quad\quad |\epsilon^\dag|_\mathcal{G} := (|\epsilon^\dag|_\mathcal{O}, -1) \\
                |\nu^\dag|_\mathcal{G} := (|\nu^\dag|_\mathcal{O}, -1) &\quad\quad |\mu^\dag|_\mathcal{G} := (|\mu^\dag|_\mathcal{O}, 1) \\
                |z|_\mathcal{G} := \pa{\id_{\tikztinycyl}, 2} &\quad\quad |\zeta| _\mathcal{G} := (\emptyset, 2).
            \end{align*}
    \end{itemize}
\end{constr}
\begin{rk}
    In \cite{BDSV3}, there is a similar functor $|-|_\mathcal{N}: \mathbf{F}(\mathcal{N})\rightarrow \Bord{sig}$ sending $\xi$ to $(\id_{\emptyset},1)$ and each other generator $\gamma$ to $(|\gamma|_\mathcal{O},0)$. This may be checked to satisfy the relations of $\mathcal{N}$ in a similar way to the proof of Proposition \ref{prop:fg-sig/2} below, which then allows us to deduce from Theorem \ref{conj:pres-or} that $|-|_\mathcal{N}$ is an equivalence of symmetric monoidal bicategories via the same argument as Proposition \ref{prop:g-equiv}. We may construct a functor $\mathbf{F}(\mathcal{G}) \rightarrow \mathbf{F}(\mathcal{N})$ compatible with $|-|_\mathcal{G}, |-|_\mathcal{N}$ and the inclusion $\Bord{sig/2,out} \rightarrow \Bord{sig}$ as follows: send each generator of $\mathcal{G}$ that comes from $\mathcal{P}$ to the corresponding generator in $\mathcal{N}$, and let $\eta^\dag\mapsto \xi\eta^\dag, \epsilon^\dag\mapsto \xi^{-1}\epsilon^\dag, \nu^\dag\mapsto \xi^{-1}\nu^\dag, \mu^\dag\mapsto \xi\mu^\dag$ and $z\mapsto \xi^2 \id_{\tikztinycyl},\zeta\mapsto \xi^2$. This, however, is \textit{not} compatible with the functors in (\ref{eq:fs}). Similarly, the inclusion corresponding to $\Bord{sig/2,in}\hookrightarrow \Bord{sig}$ would instead require the images of $\eta,\epsilon,\nu,\mu$ to be adjusted by a $\xi^\pm$ term, which is again incompatible with the functors in (\ref{eq:fs}).
\end{rk}

\begin{prop}\label{prop:fg-sig/2}
    Construction \ref{constr:modg} defines a symmetric monoidal functor $|-|_\mathcal{G}: \mathbf{F}(\mathcal{G}) \rightarrow \Bord{sig/2,out}$ such that the following diagram commutes:
    \[\begin{tikzcd}
        {\mathbf{F}(\mathcal{G})} && {\Bord{sig/2,out}} \\
        \\
        {\mathbf{F}(\mathcal{O})} && {\Bord{or,exp}}
        \arrow["{|-|_\mathcal{G}}", from=1-1, to=1-3]
        \arrow[from=1-1, to=3-1]
        \arrow["q", from=1-3, to=3-3]
        \arrow["{|-|_\mathcal{O}}", from=3-1, to=3-3]
    \end{tikzcd}\]
    Here, the left arrow is given by the relevant functor in (\ref{eq:fs}); it sends $z$ and $\zeta$ to identity 2-morphisms.
\end{prop}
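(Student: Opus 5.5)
The plan is to reduce everything to the unextended functor $|-|_\mathcal{O}$ of Construction \ref{constr:geom-o}, plus a bookkeeping of signature terms. Commutativity of the square is immediate from the definitions. On generators coming from $\mathcal{P}$, the composite $q\circ|-|_\mathcal{G}$ is $|-|_\mathcal{O}$. The generators $z,\zeta$ go to $\id_{\tikztinycyl}$ and $\id_\emptyset$, which is exactly what the left arrow followed by $|-|_\mathcal{O}$ gives. So the content lies in two checks. First, the proposed images are genuine 2-morphisms of $\Bord{sig/2,out}$, i.e.\ the signature term $n$ has the parity $m(|\gamma|_\mathcal{O})$. Second, every relation of $\mathcal{G}$ holds on the nose in $\Bord{sig/2,out}$, not merely after applying $q$.

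\emph{Parity.} For $\alpha,\lambda,\rho,\beta,\theta,\phi_1^{\pm1},\phi_2^{\pm1}$ the images are mapping cylinders of diffeomorphisms that extend over the filling balls $D^3$. Lemma \ref{lem:m-norm} then gives $m=0$, matching signature term $0$. For $z$ and $\zeta$ the terms are even and the underlying bordisms are identities, so again $m=0$. For the handle attachments $\nu,\mu,\eta,\epsilon$ and their daggers, I compute $m(M)=b_1(\ol{M})+b_0(\ol{M})+\tfrac12 b_1(\ol{\parout M})+b_0(\ol{\parout M})$ directly. Each $\ol{M}$ is obtained by gluing balls and standard handle pieces, so it is a small closed 3-manifold ($S^3$, $S^1\times S^2$, or a disjoint union of these). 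The expectation is that the undaggered generators give $m=0$ and the daggered ones give $m=1$; this asymmetry comes from the $\parout$ term in $m$. That would be consistent with the $\pm1$ assignments, and I would verify it case by case.

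\emph{Relations.} Since $q\circ|-|_\mathcal{G}$ satisfies all relations (they hold for $|-|_\mathcal{O}$), each relation holds in $\Bord{sig/2,out}$ up to an integer discrepancy in the signature term. To compute these discrepancies uniformly, I compose with the inclusion $\Bord{sig/2,out}\hookrightarrow\Bord{sig}$. I then use the functor $|-|_\mathcal{N}$ of Theorem \ref{conj:pres-sig}, together with the assignment in the remark after Construction \ref{constr:modg}: $\eta^\dag\mapsto\xi\eta^\dag$, $\epsilon^\dag\mapsto\xi^{-1}\epsilon^\dag$, $\nu^\dag\mapsto\xi^{-1}\nu^\dag$, $\mu^\dag\mapsto\xi\mu^\dag$, $z\mapsto\xi^2$, $\zeta\mapsto\xi^2$. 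Because $\xi$ is central, a relation of $\mathcal{G}$ holds once its image in $\mathbf{F}(\mathcal{N})$ holds, and that reduces to comparing net powers of $\xi$ on each side against a relation of $\mathcal{N}$.
\begin{itemize}
\item Relations of $\mathcal{R}$ involve no $\xi$, so they hold outright.
\item Additional adjunction has net exponent $\xi^{-1}\xi$ or $\xi\xi^{-1}$, so it holds.
\item Additional rigidity and pivotality pair $\epsilon^\dag$ with $\eta^\dag$ or $\mu^\dag$, giving net exponent $0$, so they hold.
\item Extended modularity: the top path carries $\xi^{-1}$ and the bottom path $z^{-1}\mu^\dag$ carries $\xi^{-2}\xi=\xi^{-1}$, so it reduces to modularity in $\mathcal{N}$.
\item Anomaly-freeness in $\mathcal{H}$: the right-hand side becomes $\xi^{-1}$ times the global-anomaly composite, which equals $\xi$, giving the identity.
\item Centrality and global $z$ are immediate, since $z$ and $\zeta$ both map to $\xi^2$.
\end{itemize}
Monoidality is automatic because disjoint union adds signature terms (Lemma \ref{lem:sig/2-mon}).

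\emph{Main obstacle.} The remark only asserts that $|-|_\mathcal{N}$ satisfies the relations of $\mathcal{N}$, and to avoid circularity I would verify the needed cases directly. The invertible relations are fine because their images lie on the identity component. The real work is in the relations involving $\theta$ with handle attachments, namely modularity and the global anomaly. There one must compute the Wall invariants $c\ssig$ arising in each vertical composite via (\ref{eq:csig}), reducing mod parity by Lemma \ref{lem:maslow} and using Lemma \ref{lem:coinc-zero} wherever two Lagrangians coincide. The global anomaly composite $\epsilon\circ\theta\circ\epsilon^\dag$ on $S^2$ is the one case where I expect a nonzero Wall invariant ($\pm1$). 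Getting its sign correct relative to the sign choices for the daggers is the step I expect to be most delicate.
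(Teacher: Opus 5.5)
Your proposal is correct, but it is organised differently from the paper's proof.

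\textbf{The paper's route.} The paper checks each relation of $\mathcal{G}$ directly in $\Bord{sig}$. It adds the prescribed signature terms ($0$, $\pm1$, $\pm2$) to the Wall terms $c\ssig$. These vanish by genus $0$ or by Lemma \ref{lem:coinc-zero}, except in the anomaly-freeness relation. There $c\ssig(|\epsilon|_\mathcal{O},|\theta\epsilon^\dag|_\mathcal{O})=1$ cancels the $-1$ carried by $\epsilon^\dag$.

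\textbf{Your route.} You split the same computation into a topological part and an algebraic part.
\begin{itemize}
\item The topological part is checking that the all-zero assignment $|-|_\mathcal{N}$, with $|\xi|_\mathcal{N}=(\id_\emptyset,1)$, satisfies the relations of $\mathcal{N}$. This needs exactly the paper's Wall-invariant computations, including the sign $+1$ you flag. The paper obtains that sign from $A,B,C$ spanned by $(0,1),(1,0),(1,1)$ in meridian--longitude coordinates.
\item The algebraic part is the exponent count showing $\Phi:\mathbf{F}(\mathcal{G})\to\mathbf{F}(\mathcal{N})$ is well defined. Your counts are all correct.
\item Then $\iota\circ|-|_\mathcal{G}$ and $|-|_\mathcal{N}\circ\Phi$ agree on generators. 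Since $\iota$ is injective on 2-morphisms, the relations hold in $\Bord{sig/2,out}$.
\end{itemize}

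\textbf{What each buys.} Your route cleanly separates the Wall-invariant input, which depends only on the underlying bordisms, from the $\pm1$ bookkeeping. It also establishes $|-|_\mathcal{N}$ and the compatibility noted in the remark after Construction \ref{constr:modg} at the same time. This reverses the paper's logic: the paper justifies $|-|_\mathcal{N}$ by analogy with this proposition. The paper's route is shorter and needs neither the central action of $\xi$ nor the auxiliary functor.

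\textbf{Two small corrections.}
\begin{itemize}
\item Every generator other than $\zeta$ has $\ol{M}\cong S^3$, so the $S^1\times S^2$ case you hedge against never arises. The parities $m=0$ for undaggered generators and $m=1$ for daggered ones then follow from the $\ol{\parout M}$ term alone.
\item Lemma \ref{lem:coinc-zero} is needed in more places than modularity and the global anomaly. The (additional) adjunction and rigidity relations for $\eta,\epsilon$ pass through genus-$1$ surfaces, so their Wall terms vanish only because two Lagrangians coincide.
\end{itemize}
For modularity, the paper horizontally composes with identities on the cup and cap, which turns $\theta\star\theta^{-1}$ into an identity on a closed torus. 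That is the convenient way to see the Wall term vanishes.
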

\begin{proof}
    Note that $|-|_\mathcal{G}$ as given in Construction \ref{constr:modg} agrees with $|-|_\mathcal{O}$ on the generating object and 1-morphisms, and for each generator $\gamma$ of $\mathcal{G}$, $|\gamma|_\mathcal{G}$ is sent to a lift of $|\gamma|_\mathcal{O}$ in $\Bord{sig/2,out}$. It thus suffices to verify that this $|-|_\mathcal{G}$ indeed defines a symmetric monoidal functor, and then the compatibility with $|-|_\mathcal{O}$ will be automatic by construction.

    This will be checked over a series of lemmas. In Lemma \ref{lem:modg-parity}, we check that Construction \ref{constr:modg} indeed sends the generating 2-morphisms to 2-morphisms of $\Bord{sig/2,out}$, i.e. that the parity condition on the signature term $n$ is satisfied. Then, we shall check that this indeed satisfies the relations of $\mathcal{G}$. By the functoriality of $|-|_\mathcal{O}$, it suffices to check the signature term for each relation. This is done in Lemmas \ref{lem:modg-r}, \ref{lem:modg-p} and \ref{lem:modg-g}.
\end{proof}

\begin{lem} \label{lem:modg-parity}
    The assignments in Construction \ref{constr:modg} send the generating 2-morphisms to 2-morphisms in $\Bord{sig/2,out}$.
\end{lem}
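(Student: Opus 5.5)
The plan is to check, generator by generator, the parity condition defining $\Bord{sig/2,out}$: for each generating 2-morphism $\gamma$ with assigned value $|\gamma|_\mathcal{G}=(|\gamma|_\mathcal{O},n_\gamma)$, show that
\[
n_\gamma\equiv m(|\gamma|_\mathcal{O}) = b_1(\ol{M})+b_0(\ol{M})+\tfrac12 b_1(\ol{\parout M})+b_0(\ol{\parout M}) \pmod 2 .
\]
Here $\tfrac12 b_1$ of the closed surface $\ol{\parout M}$ is its total genus. The generators split into three groups, and each group is handled differently.

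\emph{Invertible generators and the new central generators.} For $\alpha,\lambda,\rho,\beta,\theta,\phi_1^{-1},\phi_2^{-1}$ and their inverses, $|\gamma|_\mathcal{O}$ is a mapping cylinder and $n_\gamma=0$. I would invoke Lemma \ref{lem:m-norm}, which requires the underlying diffeomorphism to extend over the fillings $H_\Sigma$.

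\begin{itemize}
\item All the relevant composites of generating 1-morphisms (pants, copants, cylinders, cups and caps) have genus-zero closures $\ol{\Sigma}$, i.e.\ disjoint unions of $S^2$.
\item Their fillings are therefore disjoint unions of $3$-balls, obtained by gluing the $D^3$'s of Construction \ref{constr:geom-o} along discs in a tree pattern.
\item By Alexander's trick, any diffeomorphism of a union of $2$-spheres extends over the balls, including the Dehn twist and the half-twist.
\end{itemize}

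If this extension argument needs care, I would instead compute directly: $\ol{M}$ is then a union of $c$ copies of $S^3$ and $\ol{\parout M}$ is $c$ spheres, so $m = c + c \equiv 0$. For $z$, the underlying 2-morphism is an identity, so $m=0$ and $n=2$ is even. For $\zeta$, the underlying 2-morphism is $\id_\emptyset$, so again $m=0$ and $n=2$ is even.

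\emph{Handle generators $\eta,\epsilon,\nu,\mu$, which have $n=0$.} Here I would identify $\ol{M}$ explicitly.

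\begin{itemize}
\item For $\nu$: $M$ is a $3$-ball from $\emptyset$ to the sphere $\mathrm{cap}\circ\mathrm{cup}$, so $\ol{M}=S^3$ and $\ol{\parout M}=S^2$. This gives $m=1+1\equiv 0$.
\item For $\mu$, $\eta$, $\epsilon$: attaching a $1$-handle to a boundary built from balls yields closures that are $S^3$ or $S^1\times S^2$. In each case I would compute $b_0,b_1$ of $\ol{M}$ and of the outgoing closure via the Mayer--Vietoris approach already used in Proposition \ref{prop:condA}, and check that $m\equiv 0$.
\end{itemize}

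\emph{Dagger generators, reduced to the undaggered ones.} For $\gamma^\dag$, the underlying bordism is $|\gamma|_\mathcal{O}$ with the roles of in and out swapped. Hence $\ol{|\gamma^\dag|_\mathcal{O}}\cong -\ol{|\gamma|_\mathcal{O}}$, which has the same Betti numbers, and so
\[
m(\gamma^\dag)+m(\gamma)\equiv \tfrac12 b_1(\ol{\parin\gamma})+b_0(\ol{\parin\gamma})+\tfrac12 b_1(\ol{\parout\gamma})+b_0(\ol{\parout\gamma}).
\]
Since $m(\gamma)\equiv 0$ by the previous group, it remains to evaluate genus plus components at both ends of each handle generator.

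\begin{itemize}
\item $\nu$: $0 + 1 = 1$, so $m(\nu^\dag)\equiv 1$, matching $n=-1$.
\item $\mu$: the source $\mathrm{cap}\circ\mathrm{cup}$ closes up to two spheres, giving $2$, and the cylinder gives $1$; total $3$, so $m(\mu^\dag)\equiv 1$, matching $n=1$.
\item $\eta$: two cylinders give $2$, and pants$\circ$copants closes to one sphere giving $1$; total $3$, so $m(\eta^\dag)\equiv1$, matching $n=1$.
\item $\epsilon$: the genus-one source gives $1+1=2$, and the cylinder gives $1$; total $3$, so $m(\epsilon^\dag)\equiv1$, matching $n=-1$.
\end{itemize}

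The main obstacle is the explicit identification of $\ol{M}$ for the handle generators $\mu,\eta,\epsilon$, especially $\epsilon$, whose incoming boundary has genus one. This requires tracking exactly how the $D^3$ fillings are glued. I would handle it by writing $\ol{M}$ as the filling of $\parin M$ with one handle attached, capped by the filling of $\parout M$. Its $b_0$ and $b_1$ are then read off from a handle decomposition, with the Euler characteristic of a closed $3$-manifold being zero providing a consistency check.
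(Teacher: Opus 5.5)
Your strategy is the same as the paper's: verify $n\equiv m(M)\pmod 2$ generator by generator, using $b_0,b_1$ of $\ol{M}$ and $\ol{\parout M}$. Your reduction of the daggered generators via $\ol{|\gamma^\dag|_\mathcal{O}}\cong-\ol{|\gamma|_\mathcal{O}}$ is valid, but it saves nothing. The paper simply notes $\ol{M}\cong S^3$ and reads off $m(\gamma^\dag)$ from $\ol{\parout M}\in\{S^2\sqcup S^2,\,S^1\times S^1,\,\emptyset\}$.

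The gap is the step you defer for $\mu,\eta,\epsilon$. There you allow $\ol{M}$ to be ``$S^3$ or $S^1\times S^2$'' and plan to ``check that $m\equiv 0$''. That dichotomy is exactly what the lemma turns on:
\begin{itemize}
\item For $S^1\times S^2$ one has $b_1+b_0=2$ rather than $1$. Since $\ol{\parout M}\cong S^2$ in all three cases, this gives $m(\mu)\equiv m(\eta)\equiv m(\epsilon)\equiv 1$.
\item Your reduction would then give $m(\gamma^\dag)\equiv 0$. The odd signature terms $\pm1$ assigned to $\mu^\dag,\eta^\dag,\epsilon^\dag$ would then violate the parity condition.
\item The Euler characteristic check you propose cannot detect this, since both candidates have $\chi=0$.
\end{itemize}
What is missing is the identification $\ol{M}\cong S^3$, which the paper asserts for every generator other than $\zeta$.

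For $\mu$ and $\eta$ this follows from data you already have. $\ol{M}\setminus(H_{\parin M}\sqcup H_{\parout M})$ is $\ol{\parin M}\times I$ with a single $1$-handle attached. Since $\ol{\parin M}\cong S^2\sqcup S^2$ while $\ol{\parout M}\cong S^2$, the two feet of the handle lie on different components. So $\ol{M}$ is two balls joined by a $1$-handle and capped by a ball, i.e.\ $S^3$.

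For $\epsilon$ the answer depends on the attaching curve. Note that $\epsilon$, read from source to target, is a $2$-handle, not a $1$-handle.
\begin{itemize}
\item $H_{\parin M}=D^3\cup_{D^2\sqcup D^2}D^3$ is a solid torus whose meridian is a leg circle, since that circle bounds the glued disc $D_S$.
\item The $2$-handle is attached along the longitude. Hence the complement of $H_{\parin M}$ in $\ol{M}$ is a solid torus whose meridian is the longitude.
\item The two meridians meet once, so $\ol{M}$ is the genus-one Heegaard splitting of $S^3$.
\end{itemize}
Equivalently, use the Mayer--Vietoris computation from the proof of Proposition \ref{prop:condA}. There the kernels of $H_1(\ol{\parin M})$ into the two pieces are the distinct lines spanned by $\binom{0}{1}$ and $\binom{1}{0}$, as in the proof of Lemma \ref{lem:modg-g}. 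Hence $b_1(\ol{M})=0$.

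Had the $2$-handle been attached along the leg circle, you would get $S^1\times S^2$. Then $|\epsilon^\dag|_\mathcal{G}=(|\epsilon^\dag|_\mathcal{O},-1)$ would not lie in $\Bord{sig/2,out}$. With this identification in place, your computation goes through.
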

\begin{proof}
    We check that the condition on the signature $n \equiv m(M) \pmod{2}$ is satisfied. Note that for every generator except for $\zeta$, the underlying bordism $M$ has $\ol{M} = S^3$.
    
    In the case of the generators in $\mathcal{P}$ as well as $z$, we also have $\ol{\parout M} = S^2$, and so $m(M)$ as defined in Definition \ref{defn:sig/2} is $0$.
    
    For the remaining non-invertible generators, $\ol{\parout M}$ is $S^2 \sqcup S^2$ in the case of $\eta^\dag$ and $\mu^\dag$, $S^1\times S^1$ in the case of $\epsilon^\dag$, and $\emptyset$ in the case of $\nu^\dag$. In all of these cases, $m(M)$ is odd.
    
    Finally, in the case of $\zeta$, we have $M = \emptyset$, and so $m(M) = 0$. Indeed, these values of $m(M)$ match the parities of the corresponding signature terms.
\end{proof}

\begin{lem} \label{lem:modg-r}
    The assignments in Construction \ref{constr:modg} satisfy the relations of $\mathcal{R}$.
\end{lem}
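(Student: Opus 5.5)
We need to show that every relation of $\mathcal{R}$ holds after applying Construction \ref{constr:modg}. By the functoriality of $|-|_\mathcal{O}$ (Construction \ref{constr:geom-o}), the two sides of each relation already have the same underlying 2-morphism in $\Bord{or,exp}$. It therefore suffices to check that the signature terms agree. Every generator of $\mathcal{R}$ lies in $\mathcal{P}$, so each is sent to $(|\gamma|_\mathcal{O},0)$. The signature term of either side of a relation is then the sum of the cocycle values $c\ssig$ and $\tilde{c}\ssig$ picked up at each vertical and horizontal composition, together with the associator and unitor contributions $a\ssig=\ell\ssig=r\ssig=0$. The plan is to show that every such cocycle term vanishes, so that both sides carry signature term $0$.

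\textbf{Invertible generators.} First I treat the relations built only from invertible generators: the inverse, monoidal (pentagon, triangle), balanced and ribbon relations. Here $\alpha,\rho,\lambda,\beta,\theta,\phi_i^{-1}$ and their inverses are mapping cylinders of diffeomorphisms between the standard surfaces. The exception is $\phi_i^{-1}$, which I handle below.

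Each standard diffeomorphism used by $|-|_\mathcal{O}$ (associativity, unit, braiding half-twist, Dehn twist about a boundary-parallel curve) extends over the $D^3$ fillings $H_\Sigma$. Each therefore preserves the Lagrangian $\ker(H_1(\ol\Sigma)\to H_1(H_\Sigma))$. In fact, since every $H_\Sigma$ here is a ball, $\ol\Sigma$ is a sphere and $H_1(\ol\Sigma)=0$.

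Consequently, for vertical compositions of such 2-morphisms, Lemma \ref{lem:c-norm} gives $c\ssig=0$ directly. More simply, $V=H_1(\ol\Sigma)=0$, so Wall's invariant vanishes. For horizontal compositions, the relevant $\ol\Sigma$ in (\ref{eq:ctildesig}) is obtained by gluing discs $D_S$ to each other, where each $D_S$ is a disjoint union of standard discs. So $\ol\Sigma$ is a union of spheres, and again $\tilde{c}\ssig=0$.

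\textbf{Non-invertible generators.} For the adjunction relations (with $\eta,\epsilon,\nu,\mu$) and the rigidity relations, the same observation applies. All objects are unions of $(\mathbb{S},\mathbb{D})$. Every 1-morphism appearing in these relations is a composite of standard pieces whose filling $H$ is a boundary-connected sum of balls, hence a ball (or a disjoint union of balls). Thus every gluing surface $\ol\Sigma$ is a union of $2$-spheres.

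The one point needing care is that a composite 1-morphism such as $\tikztinypants$ followed by $\tikztinycopants$ has genus one. However, its expanded filling is $H_{\Sigma}\cup_{D_S}H_{\Sigma'}$, which is a solid torus, so its boundary torus has nonzero $H_1$. This case deserves attention: a vertical composition through such a 1-morphism $\Sigma$ has $V=H_1(T^2)\neq 0$.

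\textbf{Main obstacle.} I expect this to be the main obstacle: the vertical compositions in the adjunction relations (e.g.\ $\epsilon\circ\eta$ through the pants--copants--pants composite) and in the definition of $\phi_i$ (through $\eta$, $\alpha$, $\epsilon$). My plan there is to apply Lemma \ref{lem:maslow} together with Lemma \ref{lem:coinc-zero}. I would identify the three Lagrangians $A,B,C$ in $H_1(T^2)$ as the kernels into $\ol{M'}\setminus\mathring H_\Sigma$, into $H_\Sigma$, and into $\ol M\setminus\mathring H_\Sigma$, and show that two of them coincide. Concretely, for $\eta$ (a 1-handle) the composite $\ol M$ is $S^3$ with the solid torus $H_\Sigma$ standardly embedded, so $C$ is the longitude of the complementary solid torus. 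Similarly for $\epsilon$ (a 2-handle along a curve), $A$ is spanned by the attaching curve. In the standard pictures one of these coincides with $B$, so Lemma \ref{lem:coinc-zero} gives zero.

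If a case does not yield coincident Lagrangians, the fallback is different. I would compare both sides using the cocycle condition (\ref{eq:cocyc-1}) and the fact that each side of an adjunction or rigidity relation is equal in $\Bord{or,exp}$ to a mapping cylinder. Both then lift to elements whose difference is the total Wall correction. I would compute that correction explicitly as a signature of a closed $4$-manifold built from $2$-handles on $B^4$ along unknots with framing $0$, which is zero.
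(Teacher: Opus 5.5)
Your overall strategy matches the paper's:
\begin{itemize}
\item reduce to signature terms, using $\tilde{c}\ssig=a\ssig=\ell\ssig=r\ssig=0$ here;
\item dispose of every vertical composition through a genus-$0$ surface, since $V=H_1(\ol\Sigma)=0$;
\item for the $\eta$--$\epsilon$ adjunctions and the rigidity relations, where the intermediate surface closes up to a torus, show that two of $A,B,C$ coincide and apply Lemma~\ref{lem:coinc-zero}.
\end{itemize}
The gap is the last step, which is the only nontrivial one. You leave it as ``one of these coincides with $B$'', and the geometry you offer for it points the wrong way.

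Let $M$ be $|\eta|_\mathcal{O}$ whiskered by the pants, or by the source of $\phi_i$. Then $\ol{M}$ is not $S^3$. It is two copies of $S^3$ minus two balls, glued along two spheres, so $\ol{M}\cong S^1\times S^2$. In a standard genus-one splitting of $S^3$ the two meridians are dual, so your picture would actually give $B\neq C$. Separately, the longitude of $N=\ol{M}\setminus\mathring{H}_\Sigma$ does not lie in $C=\ker(H_1(\ol\Sigma)\to H_1(N))$, so the description of $C$ is also off.

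What is true, and what the paper uses, is $B=C$. The argument:
\begin{itemize}
\item The belt circle $\gamma$ of the $1$-handle bounds the cocore in $N$, so $\gamma$ spans $C$.
\item $\gamma$ separates the two strands that $\eta$ joins. Once the two outer circles of the target of $|\eta|_\mathcal{O}$ are capped, $\gamma$ becomes the core of an annulus whose boundary circles are the two circles along which that target is glued to the whiskering surface.
\item So in the torus $\ol\Sigma$, $\gamma$ is homologous to a gluing circle. That circle bounds one of the gluing discs $D$ inside $H_\Sigma$, so it spans $B$.
\item The same holds for $\alpha^{\pm1}\eta$, because the associator diffeomorphism extends over the handlebodies.
\end{itemize}

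The coincidence cannot be $A=B$. Your $S^3$ is really $\ol{\epsilon\circ\eta}=N\cup N'$. There, handle cancellation makes the attaching curve of $\epsilon$ (spanning $A$) meet $\gamma$ (spanning $C=B$) exactly once, so $A\neq B$. With $B=C$ established, the closed-$4$-manifold fallback is unnecessary.
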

\begin{proof}
    For most of these relations, all intermediate surfaces have genus $0$, and so the $c\ssig$ term arising from Wall's invariant is $0$. This leaves us with the adjunction relations for $\eta,\epsilon$ and the rigidity relations.

    In both of the adjunction relations, note that $|\eta|_\mathcal{O} \setminus \mathring{H}_{\parout |\eta|_\mathcal{O}}$ defines the same Lagrangian subspace as $H_{\parout |\eta|_\mathcal{O}}$, and so by Lemma \ref{lem:coinc-zero}, we again have $c\ssig(|\epsilon|_\mathcal{O},|\eta|_\mathcal{O})=0$.
    
    For the rigidity relations, the same holds true for $\eta$ and the composites $\alpha^\pm \eta$.
\end{proof}
    
\begin{lem} \label{lem:modg-p}
    The assignments in Construction \ref{constr:modg} satisfy the additional relations of $\mathcal{P}$.
\end{lem}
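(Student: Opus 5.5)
The additional relations of $\mathcal{P}$ are the additional rigidity, additional adjunction and pivotality relations. As in Lemma \ref{lem:modg-r}, the underlying bordisms of both sides already agree by the functoriality of $|-|_\mathcal{O}$, so only the signature terms need comparing. For each side of each relation, this term is the sum of the signature terms assigned to the generators in Construction \ref{constr:modg}, plus the Wall invariant corrections $c\ssig$ (and $\tilde{c}\ssig$) from each vertical and horizontal composition. Horizontal whiskering by the genus-$0$ generating 1-morphisms only contributes $\tilde{c}\ssig$ terms computed on spheres, which vanish. Every intermediate surface $\ol{\parout M}$ has genus at most $1$, so each Wall invariant lives on a symplectic space of dimension at most $2$. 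It can therefore be computed by hand: it is $0$ when two of the three Lagrangians coincide (Lemma \ref{lem:coinc-zero}), and $\pm 1$ when the three lines are pairwise distinct.

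\textbf{Additional adjunction.} Each composite pairs one daggered generator with one undaggered generator, contributing signature terms $\pm1$ and $0$ respectively. For example, $\nu^\dag$ gets $-1$, and $\mu^\dag$ gets $+1$. The composite is the identity, which has signature term $0$ by Lemma \ref{lem:c-norm}. So I need the single Wall correction to equal the negative of that generator term. For $\nu,\mu$ and $\mu^\dag,\nu^\dag$ the middle surface is $S^2$, which forces $c\ssig=0$. So the sign bookkeeping has to be absorbed differently: the relation should be read as a vertical composite with whiskering, and $|\nu^\dag|$ and $|\mu^\dag|$ get whiskered by cups and caps. I would recheck the exact composite carefully. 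The point is that in $\mathcal{P}$ each daggered generator appears exactly once in each adjunction composite, while the identity has term $0$. So the consistency requirement is that a single $\pm1$ Wall invariant appears, computed on the torus $\ol{\parout|\epsilon^\dag|_\mathcal{O}}$ or $\ol{\parout|\eta^\dag|}$.

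\textbf{Concretely.} For $\epsilon^\dag$ followed by $\eta^\dag$, the middle surface closes up to a torus. The three Lagrangians are:
\begin{itemize}
\item $B$, the kernel into the standard $H_\Sigma$, which kills the meridian of the $D^3$ filling;
\item $C$, coming from the $1$-handle attached by $\epsilon^\dag$, which kills a longitude;
\item $A$, coming from the $2$-handle of $\eta^\dag$, which kills the separating or lateral curve.
\end{itemize}
I would identify these three lines in $H_1(T^2)$ and show they are pairwise distinct, giving $\sigma=\pm1$. With the orientation convention fixed, this cancels $|\eta^\dag|+|\epsilon^\dag| = 1-1 = 0$ where needed, or supplies the $\pm1$ needed against a single daggered term. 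Lemma \ref{lem:modg-parity} already guarantees that the parities match, so only signs are at stake.

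\textbf{Additional rigidity.} Here the identities express $\phi_i^{-1}$ (term $0$) as $\eta^\dag\alpha^{\mp1}\epsilon^\dag$. The generator terms give $1+0-1=0$. It then suffices to show that the Wall corrections sum to zero. I would show this by exhibiting, at each composition, a coincidence of Lagrangians: $\alpha$ is a mapping cylinder preserving the handlebody kernel, so Lemma \ref{lem:c-norm} applies on both sides. The remaining composition of $\epsilon^\dag$ with $\eta^\dag$ through the genus-$2$-type middle surface needs a direct check that two of $A,B,C$ agree. I expect this to hold because the $1$-handle of $\epsilon^\dag$ and the $2$-handle of $\eta^\dag$ are disjoint from each other's attaching data.

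\textbf{Pivotality.} The generator terms are $|\epsilon^\dag|+|\mu^\dag| = -1+1 = 0$, and $\epsilon,\mu$ contribute $0$. The compositions pass through a torus with boundary capped off, and I would check that the three successive Wall corrections vanish, again via coinciding Lagrangians. The $180^\circ$ rotation follows by the symmetric argument.

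\textbf{Main obstacle.} The hardest step is the orientation-sensitive sign of the one nonzero Wall invariant on the torus. Everything else reduces to Lemma \ref{lem:coinc-zero}. To pin that sign down, I would compute it through the explicit formula $\langle a,a'\rangle=\omega(a,b')$ with the right-handed conventions of Construction \ref{constr:geom-o}. As a cross-check, the choice $|\nu^\dag|=-1$, $|\mu^\dag|=+1$ must reproduce the known value of the $S^1\times S^2$ closed-manifold computation.
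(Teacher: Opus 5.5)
\textbf{The gap is in your treatment of the additional adjunction relations.} You read each composite as pairing a daggered generator with an undaggered one. In $\mathcal{P}$, however, both 2-morphisms in each composite are daggered: the relations are $\mu^\dag$ followed by $\nu^\dag$, and $\epsilon^\dag$ followed by $\eta^\dag$ (with whiskering).

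So the generator terms are $1+(-1)=0$ and $-1+1=0$. These already match the $0$ of the identity. What must be shown is that every Wall correction vanishes, not that a single $\pm1$ appears.

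Your ``Concretely'' step therefore sets out to prove something false. On the torus obtained after $\epsilon^\dag$, the Lagrangian $C$ is not spanned by a longitude. The $1$-handle of $\epsilon^\dag$ creates one of the two tubes of pants$\circ$copants. Hence $\ol{|\epsilon^\dag|_\mathcal{O}}\setminus\mathring H_{\parout|\epsilon^\dag|_\mathcal{O}}$ is a solid torus whose meridian is a leg circle. That leg circle is also the meridian of $H_{\parout|\epsilon^\dag|_\mathcal{O}}$, the solid torus obtained by gluing the $D^3$ fillings along the leg discs.

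So $C=B$, and in fact the lateral attaching curve of $\eta^\dag$ is homologous to the same circle, so $A$ coincides too. Then $c\ssig=0$ by Lemma \ref{lem:coinc-zero}. This is the dagger version of the observation in Lemma \ref{lem:modg-r}, and it is what the paper means by ``the same reasoning as the versions without the daggers''.

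Your own parity remark should have flagged this. Both sides of the relation are composites of even 2-morphisms, hence even. With generator terms summing to $0$, the Wall correction must therefore be even. A value of $\pm1$ would make the relation fail, not hold.

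Consequently the ``main obstacle'' you describe does not exist. No nonzero Wall invariant occurs in this lemma; the only nonzero one in the whole verification is in the anomaly-freeness relation, Lemma \ref{lem:modg-g}. The $S^1\times S^2$ cross-check is irrelevant.

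Your bookkeeping for rigidity ($-1+0+1$) and pivotality ($-1+1+0+0$) is right and matches the paper, as does the plan of showing that all Wall corrections vanish. Three corrections:
\begin{itemize}
\item In rigidity, the reason two Lagrangians agree is again $C=B$, for $\epsilon^\dag$ and for $\alpha^{\mp1}\epsilon^\dag$ (since $\alpha$ preserves the handlebody Lagrangian). It is not disjointness of attaching data.
\item The middle surface in rigidity is a torus, not genus $2$.
\item Your rigidity paragraph expects a coincidence of Lagrangians for an $\epsilon^\dag$-then-$\eta^\dag$ composite through a torus, while your adjunction paragraph asserts the three lines are pairwise distinct. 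These cannot both be right.
\end{itemize}
For pivotality, the paper obtains $c\ssig(|\epsilon|_\mathcal{O},|\mu|_\mathcal{O})=0$ and $c\ssig(|\mu^\dag|_\mathcal{O},|\epsilon^\dag|_\mathcal{O})=0$ from the same coincidence, and uses that the central surface has genus $0$.
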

\begin{proof}
    Firstly, the additional rigidity and adjunction relations are satisfied by the same reasoning as the versions without the daggers; see the proof of Lemma \ref{lem:modg-r}.
    
    For the pivotality relation, note that $c\ssig(|\epsilon|_\mathcal{O},|\mu|_\mathcal{O}) = 0$ by Lemma \ref{lem:coinc-zero} as the Lagrangian subspace defined by $|\mu|_\mathcal{O} \setminus \mathring{H}_{\parout |\mu|_\mathcal{O}}$ is the same as the one defined by $H_{\parout |\mu|_\mathcal{O}}$. Similarly, $c\ssig(|\mu^\dag|_\mathcal{O},|\epsilon^\dag|_\mathcal{O}) = 0$. Finally, $c\ssig(|\epsilon\mu|_\mathcal{O},|\mu^\dag\epsilon^\dag|_\mathcal{O}) = 0$ as the central surface has genus $0$. Hence, the overall signature term for the composition in the pivotality relation is $0$.
\end{proof}

\begin{lem} \label{lem:modg-g}
    The assignments in Construction \ref{constr:modg} satisfy the remaining relations of $\mathcal{G}$.
\end{lem}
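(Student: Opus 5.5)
The plan is to do what Lemmas \ref{lem:modg-r} and \ref{lem:modg-p} do. By the functoriality of $|-|_\mathcal{O}$, the underlying bordisms on both sides of each relation already agree. So for each remaining relation of $\mathcal{G}$, namely the inverse relations for $z$ and $\zeta$, centrality, extended modularity (and its $180^\circ$ rotation), anomaly-freeness and global $z$, it suffices to compare the total signature terms of the two sides. Each total is the sum of the assigned terms of Construction \ref{constr:modg} plus the Wall terms $c\ssig$ and $\tilde{c}\ssig$ from (\ref{eq:csig-def}) and (\ref{eq:ctildesig-def}). Any Wall term whose intermediate surface $\ol{\Sigma}$ has genus $0$ vanishes, and so does any Wall term where two of the three Lagrangians coincide (Lemma \ref{lem:coinc-zero}, Lemma \ref{lem:c-norm}).

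\emph{The easy relations.} For the inverses I set $|z^{-1}|_\mathcal{G}=(\id,-2)$ and $|\zeta^{-1}|_\mathcal{G}=(\emptyset,-2)$. Composing with $z$ or $\zeta$ then gives terms $2-2=0$, with no Wall correction since identity cylinders are involved (Lemma \ref{lem:c-norm}). For centrality, whiskering $(\id_{\tikztinycyl},2)$ onto any leg or belt of the pants or copants yields $(\id,2)$ on that 1-morphism. The horizontal Wall terms vanish because all the surfaces $\ol{\Sigma}$ have genus $0$, so all three expressions agree. For the global $z$ relation, the monoidal product of $(\emptyset,2)$ with $(\id_{\tikztinycyl},0)$ is $(\id_{\tikztinycyl},2)=|z|_\mathcal{G}$, by additivity of the signature under disjoint union.

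\emph{Extended modularity.} On the lower path, the terms are $-2$ from $z^{-1}$, $+1$ from $\mu^\dag$ and $0$ from $\mu$. The intermediate surfaces are spheres or discs, so the Wall terms vanish, and the total is $-1$. On the upper path, the terms are $-1$ from $\epsilon^\dag$ and $0$ from $\theta,\theta^{-1}$ and $\epsilon$. The intermediate 1-morphism is a genus-one surface. My key observation is that the two leg circles are homologous in the closed surface $\ol{\Sigma}$, so the twist $\theta$ on one leg composed with $\theta^{-1}$ on the other acts trivially on $H_1(\ol{\Sigma})$. Therefore, at each vertical composition, the Lagrangian determined by the mapping cylinder coincides with one of the others. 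By Lemma \ref{lem:coinc-zero} all Wall terms vanish, and the total is $-1$, matching the lower path. The rotated version is handled in the same way.

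\emph{Anomaly-freeness, the main obstacle.} The left side has term $0$. The right side is $\epsilon\circ\theta\circ\epsilon^\dag$ on $\tikztinycap\circ\tikztinypants\circ\tikztinycopants\circ\tikztinycup$, whose closure is a torus. The assigned terms contribute $-1$, so I must show that the Wall terms contribute exactly $+1$. As in Proposition \ref{prop:indiv}, I will identify $H_1(T^2)\cong\mathbb{R}^2$ and write down three lines: the Lagrangian from the solid torus created by $\epsilon^\dag$ (the $1$-handle), its image under the Dehn twist $\theta$, and the Lagrangian killed by the $2$-handle of $\epsilon$ along the longitude. The composite $\theta\circ\epsilon^\dag$ should have Wall term $0$, because its Lagrangians coincide. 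The remaining composite with $\epsilon$ involves three pairwise distinct lines, which gives $\pm1$. Fixing the sign is the delicate step. I would first check it against the orientation conventions of Construction \ref{constr:geom-o} by a direct computation of $\langle\cdot,\cdot\rangle$. As a cross-check, I would use the fact that in \cite{BDSV3} the same composite with all terms $0$ equals $|\xi|_\mathcal{N}=(\id_\emptyset,1)$, so the Wall contribution must be $+1$. This gives total $0$, as required.
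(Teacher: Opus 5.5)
The gap is in the upper path of the extended modularity relation, and the anomaly-freeness step is set up incorrectly and left incomplete.

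\textbf{Extended modularity.} Your reduction to signature terms, the inverse, centrality and global $z$ relations, and the lower path are fine. Working on $H_1(\ol{\Sigma})$ of the closed torus, where the twist on one leg and the inverse twist on the other act trivially, is also a legitimate substitute for the paper's whiskering with the identities on the cup and cap. The problem is the claim that this makes every Wall term on the upper path vanish.

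Write $\tau$ for the cylinder of $\theta$ on one leg and $\theta^{-1}$ on the other. The upper path picks up $c_{\mathrm{sig}}(\tau,|\epsilon^\dag|_\mathcal{O})+c_{\mathrm{sig}}(|\epsilon|_\mathcal{O},\tau\circ|\epsilon^\dag|_\mathcal{O})$. Lemma \ref{lem:c-norm} kills the first term. In the second term the three Lagrangians are:
\begin{itemize}
\item $A$, spanned by the curve capped off by the $2$-handle of $\epsilon$;
\item $B=\ker(H_1(\ol{\Sigma})\to H_1(H_\Sigma))$, the meridian;
\item $C=\tau_*(\text{Lagrangian of }\epsilon^\dag)$, which equals the Lagrangian of $\epsilon^\dag$ since $\tau_*$ is trivial.
\end{itemize}
Your remark that the mapping cylinder's Lagrangian coincides with another one does not apply to this term. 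It vanishes only because $\epsilon$ is the reverse of $\epsilon^\dag$: the attaching circle of its $2$-handle is the belt circle of the $1$-handle of $\epsilon^\dag$, so $A=C$. This is exactly the fact the paper invokes ("both correspond to $2$-handle addition along the same curve"), and your argument never supplies it. It is not a formality: replace $\tau$ by $\theta$ and this same term is the nonzero Wall term in the anomaly relation.

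\textbf{Anomaly-freeness.} That same fact shows your three lines are misidentified. Since the Lagrangians of $\epsilon$ and $\epsilon^\dag$ agree, the lines you list (that of $\epsilon^\dag$, its $\theta$-image, and that of $\epsilon$) are not pairwise distinct. The triple actually entering $c_{\mathrm{sig}}(|\epsilon|_\mathcal{O},|\theta\epsilon^\dag|_\mathcal{O})$ is:
\begin{itemize}
\item $A$, the longitude;
\item $B$, the handlebody Lagrangian, i.e. the meridian, which is the twist curve;
\item $C$, the $\theta$-image of the longitude, i.e. meridian plus longitude.
\end{itemize}
These are pairwise distinct, so $\pm1$ is correct, but not for the reason you give.

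More seriously, the sign is the whole content of this check, and you leave it undone. Your cross-check via \cite{BDSV3} presupposes that $|-|_\mathcal{N}$ respects the global anomaly relation. As the remark after Construction \ref{constr:modg} says, that is verified by this very computation, so it is not independent.

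The paper carries the computation out. Use the basis consisting of the right-handed meridian and the anticlockwise longitude, so the intersection form is $ad-bc$. Then $A,B,C$ are spanned by $(0,1),(1,0),(1,1)$. From $(0,1)+(1,0)+(-1,-1)=0$ one gets $\langle(0,1),(0,1)\rangle=\omega((0,1),(1,0))=-1$. Hence $\sigma(V;A,B,C)=-1$, the Wall term is $+1$, and it cancels the $-1$ assigned to $\epsilon^\dag$.
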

\begin{proof}
    The centrality relation for $z$ is clearly satisfied, as is the global $z$ relation linking $\zeta$ and z. It remains to check that our assignment satisfies the extended modularity and anomaly-freeness relations (and their $180^\circ$ rotations about the $z$-axis).

    In the lower route of the extended modularity relation, there is no contribution from Wall's invariant as all the surfaces have genus $0$. Thus, the overall signature term is $-2+1+0 = -1$. To calculate the signature of the upper route, we may use the fact that $\tilde{c}\ssig = 0$ and take the horizontal composition of these with $\id_{\tikztinycup}$ and $\id_{\tikztinycap}$. By (\ref{eq:cocyc-2}) and (\ref{eq:middle-four}), the $c\ssig$ terms computed in this way remain the same. Then, the middle 2-morphism is now the identity 2-morphism, and thus the overall contribution from Wall's invariant is $0$ by Lemma \ref{lem:coinc-zero} as $\epsilon^\dag$ and $\epsilon$ both correspond to 2-handle addition along the same curve. The overall signature of the upper route is thus $-1+0+0=-1$, which agrees with the lower route. The argument for the $180^\circ$ rotation of the extended modularity relation about the $z$-axis is identical.

    Lastly, we check the anomaly-freeness relation. Note that the Dehn twist $\theta$ satisfies the conditions for Lemma \ref{lem:c-norm}. Thus, it suffices to determine $c\ssig(|\epsilon|_\mathcal{O},|\theta\epsilon^\dag|_\mathcal{O})$.
    Let $V,A,B,C$ be the vector spaces as defined in (\ref{eq:csig-def}). We pick a basis of $V$ given by the meridian (the curve along which $\theta$ is Dehn twisting), oriented in the right-handed way, and the longitude, oriented anticlockwise. Under this identification of $V$ with $\mathbb{R}^2$, the intersection form is given by $$\left\langle \binom{a}{b}, \binom{c}{d} \right\rangle = ad-bc.$$ Now, $A,B,C$ are spanned by $\binom01, \binom10, \binom11$ respectively; these satisfy the linear relation $$\binom01 + \binom10 + \binom{-1}{-1} = 0.$$ Then as $$\left\langle \binom01, \binom10 \right\rangle = -1 < 0,$$ we have $c\ssig(|\epsilon|_\mathcal{O},|\theta\epsilon^\dag|_\mathcal{O}) = -\sigma(V;A,B,C) = 1$.

    Overall, the total signature of this composition is $(-1+0+0) + 1 = 0$, and thus the anomaly-freeness relation is satisfied. The argument for the $180^\circ$ rotation of the anomaly-freeness relation about the $z$-axis is identical.
\end{proof}

Finally, we may use Proposition \ref{prop:fg-sig/2} to deduce Theorem \ref{conj:pres-sig/2} from Theorem \ref{conj:pres-or}. 

\begin{prop}\label{prop:g-equiv}
    $|-|_\mathcal{G}: \mathbf{F}(\mathcal{G})\rightarrow\Bord{sig/2,out}$ is an equivalence of symmetric monoidal bicategories.
\end{prop}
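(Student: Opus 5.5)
We prove that $|-|_\mathcal{G}$ is an equivalence by checking it on each of the three levels: essentially surjective on objects, essentially surjective on 1-morphisms, and bijective on 2-morphisms between any fixed pair of 1-morphisms. The bottom functor $|-|_\mathcal{O}$ in the commutative square of Proposition \ref{prop:fg-sig/2} is an equivalence by Theorem \ref{conj:pres-or}. The quotient $q$ is the identity on objects and 1-morphisms. Hence essential surjectivity on objects and on 1-morphisms for $|-|_\mathcal{G}$ follows from that for $|-|_\mathcal{O}$. The left vertical functor $\mathbf{F}(\mathcal{G})\to\mathbf{F}(\mathcal{O})$ is also the identity on generating objects and 1-morphisms. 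The real work is at the level of 2-morphisms.

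\textbf{Fullness on 2-morphisms.} Fix 1-morphisms $f,g$ in $\mathbf{F}(\mathcal{G})$ and a 2-morphism $(M,n)\colon |f|_\mathcal{G}\Rightarrow|g|_\mathcal{G}$ in $\Bord{sig/2,out}$.
\begin{enumerate}
\item Since $|-|_\mathcal{O}$ is full, there is a 2-morphism $\gamma$ in $\mathbf{F}(\mathcal{O})$ with $|\gamma|_\mathcal{O}=M$.
\item Lift $\gamma$ to some $\hat\gamma$ in $\mathbf{F}(\mathcal{G})$. This is possible because every generator of $\mathcal{O}$ other than $\xi$ is also a generator of $\mathcal{G}$, and $\xi$ maps to the identity, so it can be lifted to an identity.
\item Then $|\hat\gamma|_\mathcal{G}=(M,n')$ for some $n'$. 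Both $(M,n)$ and $(M,n')$ are even, so $n-n'=2k$ for some $k\in\mathbb{Z}$.
\item Compose $\hat\gamma$ with $\zeta^{k}$, tensoring with the empty bordism. Since $|\zeta|_\mathcal{G}=(\emptyset,2)$, this shifts the signature term by exactly $2k$, giving a preimage of $(M,n)$.
\end{enumerate}

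\textbf{Faithfulness on 2-morphisms.} Suppose $\gamma_1,\gamma_2\colon f\Rightarrow g$ in $\mathbf{F}(\mathcal{G})$ have the same image in $\Bord{sig/2,out}$.
\begin{enumerate}
\item Their images in $\mathbf{F}(\mathcal{O})$ agree, by faithfulness of $|-|_\mathcal{O}$.
\item I claim the kernel of $\mathbf{F}(\mathcal{G})\to\mathbf{F}(\mathcal{O})$ on 2-morphisms consists only of powers of $\zeta$. More precisely: if two 2-morphisms of $\mathbf{F}(\mathcal{G})$ become equal in $\mathbf{F}(\mathcal{O})$, then $\gamma_2=\zeta^k\cdot\gamma_1$ for some $k$.
\item Given the claim, the images in $\Bord{sig/2,out}$ differ in signature term by $2k$. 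Since the images are equal, $k=0$, and so $\gamma_1=\gamma_2$.
\end{enumerate}

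\textbf{Proving the claim.} $\mathcal{O}$ is obtained from $\mathcal{G}$ by adding the relations $z=\id$ and $\zeta=\id$. Any relation in $\mathcal{O}$ is therefore a relation in $\mathcal{G}$ up to inserting copies of $z^{\pm1}$ and $\zeta^{\pm1}$. Here I check the extra relations of $\mathcal{O}$: the global anomaly relation and $\xi=\id$, where $\xi\mapsto\id$ in $\mathcal{G}$. The anomaly-freeness relation of $\mathcal{O}$ is exactly the relation of $\mathcal{H}$, and the modularity relation differs from extended modularity by a factor of $z$, so these check out. Two facts then let me collect all inserted generators into a single power:
\begin{itemize}
\item $z$ is central (by \cite[Corollary 45]{BDSV2}) and, by the global $z$ relation, equals $\zeta$ tensored with an identity.
\item $\zeta$ is an endomorphism of $\id_\emptyset$, and in a symmetric monoidal bicategory such endomorphisms act centrally on all 2-morphisms.
\end{itemize}
Hence every inserted generator can be moved out and collected into a single factor $\zeta^k$.

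\textbf{Main obstacle.} The delicate step is making this rigorous: showing that the equivalence relation generated by the relations of $\mathcal{O}$ on 2-morphism words pulls back to ``equal up to a power of $\zeta$'' in $\mathbf{F}(\mathcal{G})$. I would argue this using the description of $\mathbf{F}$ in \cite[\textsection 2.10]{csp-phd}. The plan is to define a $\mathbb{Z}$-grading on 2-morphisms of $\mathbf{F}(\mathcal{G})$ by the signature term, so that $\mathbf{F}(\mathcal{G})$ is an extension of $\mathbf{F}(\mathcal{O})$ by $\mathbb{Z}$ generated by $\zeta$. Then I would check that the comparison of extensions is compatible along the square of Proposition \ref{prop:fg-sig/2}. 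This kernel computation is the step I expect to be hardest.
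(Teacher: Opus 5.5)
Your proposal is correct and follows essentially the same route as the paper. Both arguments compare with the bijection $|-|_\mathcal{O}$ on 2-morphisms, identify the fibres of $\mathbf{F}(\mathcal{G})\to\mathbf{F}(\mathcal{O})$ as $\zeta$-orbits using centrality of $z,\zeta$ and the global $z$ relation, and match these with the shift-by-$2$ fibres of $q$ via $|\zeta|_\mathcal{G}=(\emptyset,2)$. The paper packages your separate fullness and faithfulness checks as $\mathbb{Z}$-equivariance of a map of $\mathbb{Z}$-torsors lying over a bijection, and asserts the kernel claim you flag at the same level of detail you give. If you want that claim fully rigorous, the cleanest way is to note that ``differing by a power of $\zeta$'' is a congruence whose quotient satisfies the relations of $\mathcal{O}$, so the universal property of $\mathbf{F}(\mathcal{O})$ factors the quotient map through $\mathbf{F}(\mathcal{O})$.
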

\begin{proof}
    It is clear that $|-|_\mathcal{G}$ is essentially surjective and essentially full. It remains to show that it is fully faithful, i.e. bijective on 2-morphisms.

    Let $f,g$ be any two 1-morphisms in $\mathbf{F}(\mathcal{G})$ with the same source and target. We abuse notation to let $f,g$ denote the corresponding 1-morphisms in $\mathbf{F}(\mathcal{O})$. Then as $|-|_\mathcal{O}$ is fully faithful, it induces a bijection between $\Hom_{\mathbf{F}(\mathcal{O})}(f,g)$ and $\Hom_{\Bord{or,exp}}(|f|_\mathcal{O},|g|_\mathcal{O})$.

    Recall that $q$ induces a function $\Hom_{\Bord{sig/2,out}}(|f|_\mathcal{O},|g|_\mathcal{O})\rightarrow \Hom_{\Bord{or,exp}}(|f|_\mathcal{O},|g|_\mathcal{O})$, sending a $\mathbb{Z}$-family of 2-morphisms in $\Bord{sig/2,out}$ to each 2-morphism in $\Bord{or}$. We want to obtain an analogous statement for the functor $\mathbf{F}(\mathcal{G})\rightarrow\mathbf{F}(\mathcal{O})$. To do this, we study the preimage of each 2-morphism of $\mathbf{F}(\mathcal{O})$ under this functor. Recall that this sends the central generators $z,\zeta$ to identity 2-morphisms and all other generators of $\mathcal{G}$ to their counterparts in $\mathcal{O}$. As $z$ and $\zeta$ are central and $z$ can be written in terms of $\zeta$ via the global $z$ relation, the preimage of each $\psi$ is exactly the set of all 2-morphisms of the form $\zeta^n \psi$, where $n$ is an integer. Note that $\zeta$ is torsion-free as $|\zeta|_\mathcal{G}$ is, so every preimage is a $\mathbb{Z}$-family of 2-morphisms.

    At the level of 2-morphisms, the diagram in Proposition \ref{prop:fg-sig/2} is now the following commuting diagram of sets:
    \[\begin{tikzcd}
        {\Hom_{\mathbf{F}(\mathcal{G})}(f,g)} && {\Hom_{\Bord{sig/2,out}}(|f|_\mathcal{G},|g|_\mathcal{G})} \\
        \\
        {\Hom_{\mathbf{F}(\mathcal{O})}(f,g)} && {\Hom_{\Bord{or,exp}}(|f|_\mathcal{O},|g|_\mathcal{O})}
        \arrow[from=1-1, to=1-3]
        \arrow[from=1-1, to=3-1]
        \arrow[from=1-3, to=3-3]
        \arrow[from=3-1, to=3-3]
    \end{tikzcd}\]
    The bottom arrow is a bijection of sets. For both vertical arrows, the preimage of each element is a $\mathbb{Z}$-torsor. It thus suffices to show that the top arrow is compatible with these $\mathbb{Z}$-actions. Indeed, $\mathbb{Z}$ acts by multiplication by $\zeta$ on the left and by shifting the signature by $2$ on the right, while by definition, we have $|\zeta|_\mathcal{G} = (\emptyset,2)$.
\end{proof}

This concludes the proof of Theorem \ref{conj:pres-sig/2}.

The proof of Theorem \ref{conj:pres-csig/2} (and similarly, Theorem \ref{conj:pres-sig}, mutandis mutatis) is analogous. The equivalence $|-|_\mathcal{H}$ is defined exactly as in Construction \ref{constr:modg} (but forgetting $\zeta$); checking that this defines a symmetric monoidal functor compatible with $|-|_\mathcal{O}$ is the same as the proof of Proposition \ref{prop:fg-sig/2}. We may then follow the argument of the proof of Proposition \ref{prop:g-equiv}; the only change is that the preimage of a 2-morphism with $c$ connected components is now a $\mathbb{Z}^c$-torsor. This acts by composing the relevant component with $z$ on one side and shifting the signature of the relevant component by $2$ on the other side; by construction, these are compatible under $|-|_\mathcal{H}$.

\section{Classifying representations of $\mathbf{F}(\mathcal{H})$ and $\mathbf{F}(\mathcal{G})$}
\label{section:rep-mtc}

The main results of \cite{BDSV4} include a classification of linear representations of $\mathbf{F}(\mathcal{M})$ and $\mathbf{F}(\mathcal{N})$ in terms of modular tensor categories and certain choices of square root. The aim of this subsection to prove an analogous result for $\mathbf{F}(\mathcal{H})$ and $\mathbf{F}(\mathcal{G})$; linear representations of these are classified in a similar way to those of $\mathbf{F}(\mathcal{M})$ and $\mathbf{F}(\mathcal{N})$, but without any choice of square root. Then, combining this with the results of the previous section, we will obtain the classification of linear representations of $\Bord{sig/2}$.

\subsection{Review of linear categorical concepts} Before we proceed, we first briefly review the relevant linear categorical concepts. This serves the purpose of fixing notation and terminology.

The main concept in this section is that of a linear representation of a symmetric monoidal bicategory.
\begin{defn}[{\cite[Definition 2.7]{BDSV4}}] \label{defn:rep}
    $\Vect$ is the symmetric monoidal bicategory with
    \begin{itemize}
        \item Objects: Cauchy-complete $k$-linear categories
        \item 1-morphisms: $k$-linear functors
        \item 2-morphisms: Natural transformations
    \end{itemize}
    The monoidal structure is given by the Cauchy completion of the enriched tensor product.

    A \emph{linear representation} of a symmetric monoidal bicategory $\mathbf{C}$ is a symmetric monoidal functor $\mathbf{C} \rightarrow \Vect$.
\end{defn}
\begin{rk}
    As laid out in \cite[Appendix A]{BDSV4}, there are other bicategorical analogues of $\vect$, but our results here apply for these as well.
\end{rk}

A related target bicategory is Kapranov--Voevodsky's \cite{KV} bicategory of 2-vector spaces $\KV$.
\begin{defn}
    $\KV$ is the symmetric monoidal bicategory with 
    \begin{itemize}
        \item Objects: positive integers.
        \item 1-morphisms: matrices of finite-dimensional $k$-vector spaces.
        \item 2-morphisms: matrices of $k$-linear maps.
    \end{itemize}
    Composition of 1-morphisms and horizontal composition of 2-morphisms is given by matrix multiplication, with $\oplus$ and $\otimes$ playing the role of the usual addition and multiplication. Vertical composition of 2-morphisms is given by the entrywise composition.

    This has a symmetric monoidal structure given by multiplication on objects, and on the 1- and 2-morphisms, the pairwise tensor products of entries.
\end{defn}

This is equivalent to the bicategory of finite semisimple linear categories, with the integer $s$ corresponding to the linear category $\vect^{\oplus s}$. Thus, $\KV$ admits a fully faithful functor into $\Vect$. We will see in the next subsection that the linear representations of bicategories in question all factor through $\KV$.

The aim of this section is to classify linear representations of $\mathbf{F}(\mathcal{H})$ and $\mathbf{F}(\mathcal{G})$ in terms of modular tensor categories. These are semisimple ribbon linear categories whose braiding satisfies a certain non-degeneracy condition. We provide brief definitions here, and refer the reader to \cite[\textsection 3.1]{partB} for more details.
\begin{defn}[{\cite[Definition 8.10.1]{EGNO}}]
    A \emph{ribbon category} $\mathcal{C}$ is a rigid balanced braided monoidal category whose twist is compatible with the duals.
\end{defn}
\begin{defn}\label{defn:ss-ribbon}
    A \emph{semisimple ribbon category} is a ribbon linear category which is linearly equivalent to $\vect^{\oplus s}$ for some $s$ and whose unit $\mathbbm{1}$ is simple.

    For such a category, pick representatives $\mathbbm{1}=X_1,\ldots,X_s$ of the isomorphism classes of simple objects, and let $\beta_{ij}:X_i\otimes X_j \rightarrow X_j\otimes X_i$ be the braiding for each $i,j$. The \emph{$S$-matrix} is the $s\times s$ matrix whose $(i,j)$-entry is given by $S_{ij} := \tr(\beta_{ij}\beta_{ji})$.

    A \emph{modular tensor category} is a semisimple ribbon category whose $S$-matrix is invertible.
\end{defn}

We now list some useful invariants of modular tensor categories. Again, pick representatives $\mathbbm{1}=X_1,\ldots,X_s$ of the isomorphism classes of simple objects.
\begin{defn}\label{defn:cat-consts}
    For a semisimple ribbon category, its \emph{global dimension} is given by $D := \sum_{i=1}^s \dim(X_s)^2$ and its \emph{Gauss sums} are $p_\pm := \sum_{i=1}^s \theta_{X_s}^{\pm 1}\dim(X_s)^2$ where $\theta_i$ is the twist of $X_i$. The \emph{anomaly} is the ratio $\frac{p_+}{p_-}$.
\end{defn}
Moreover, we record a useful relation between the global dimension and Gauss sums:
\begin{lem}[{\cite[Corollary 3.1.10]{BK}}]\label{lem:dpp}
    In a modular tensor category, we have $D = p_+p_-$.
\end{lem}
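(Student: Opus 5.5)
The plan is the standard Gauss-sum argument via the $S$-matrix, which uses only the balancing axiom, the fusion rules, and the invertibility of $S$. Write $d_i = \dim(X_i)$, $\theta_i$ for the twist scalar of $X_i$, $N_{ij}^k$ for the fusion multiplicities, and $i^*$ for the index with $X_{i^*}\cong X_i^*$. Recall that $d_{i^*}=d_i$ and $\theta_{i^*}=\theta_i$.

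\emph{Step 1 (balancing formula).} Taking the trace of the balancing axiom $\theta_{X_i\otimes X_j} = (\theta_{X_i}\otimes\theta_{X_j})\beta_{ji}\beta_{ij}$ and decomposing $X_i\otimes X_j$ into simples gives
\begin{equation*}
\theta_i\theta_j S_{ij} = \sum_k N_{ij}^k\,\theta_k d_k .
\end{equation*}
Applying the same computation to the inverse twist and inverse braiding (equivalently, to the mirror category) gives
\begin{equation*}
\theta_i^{-1}\theta_j^{-1}S_{ij^*} = \sum_k N_{ij}^k\,\theta_k^{-1}d_k ,
\end{equation*}
where I use that $\tr(\beta_{ji}^{-1}\beta_{ij}^{-1})$ on $X_i\otimes X_j$ equals $S_{ij^*}$ after dualising one strand. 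This identification is the point I would check most carefully. The reason is that $k$ has arbitrary characteristic, so I must avoid any complex-conjugation shortcut.

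\emph{Step 2 (Gauss sum eigen-identities).} Multiply the first formula by $d_i$ and sum over $i$. Using $\sum_i N_{ij}^k d_i = d_k d_j$, which follows from $N_{ij}^k = N_{kj^*}^{i}$ and multiplicativity of dimension, I get
\begin{equation*}
\sum_i \theta_i d_i S_{ij} = p_+\,\theta_j^{-1}d_j .
\end{equation*}
The second formula similarly gives $\sum_i \theta_i^{-1}d_i S_{ij^*} = p_-\,\theta_j d_j$. In vector form, set $v=(\theta_i d_i)_i$ and let $C$ be the charge-conjugation permutation $i\mapsto i^*$. Then
\begin{equation*}
Sv = p_+\,(\theta_j^{-1}d_j)_j, \qquad SC\,(\theta_j^{-1}d_j)_j = p_-\,v .
\end{equation*}
I also use that $S$ is symmetric, so the row and column sums above agree.

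\emph{Step 3 (the modularity input).} The main obstacle is the identity $S^2 = D\,C$. Here modularity enters: invertibility of $S$ yields, via the Verlinde-type argument that the columns $(S_{ij}/d_j)_i$ are characters of the fusion ring, that the simples together satisfy $\sum_k S_{ik}S_{kj} = D\,\delta_{ij^*}$. I would prove it by first establishing $S_{ik}S_{jk}/d_k = \sum_l N_{ij}^l S_{lk}$. This shows the vectors $(S_{ik}/d_k)_i$ are ring homomorphisms $K_0\to k$. Then invertibility of $S$ forces these homomorphisms to be distinct, which gives orthogonality and hence the formula for $S^2$. I expect this to be the step with real content, and I might simply cite it from \cite{BK} or \cite{EGNO}.

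\emph{Step 4 (conclusion).} Since $C$ commutes with $S$ and $Cv=v$, the identities of Step 2 combine to
\begin{equation*}
D\,v = S^2Cv = S\,C\,(Sv) = p_+\,SC\,(\theta_j^{-1}d_j)_j = p_+p_-\,v .
\end{equation*}
As $v_1=\theta_{\mathbbm{1}}d_{\mathbbm{1}}=1\neq 0$, it follows that $D=p_+p_-$.
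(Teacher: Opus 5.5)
\textbf{Verdict.} Your proof is correct. The paper gives no argument of its own here; it only cites \cite{BK}, where the identity appears alongside the modular ($\mathrm{SL}_2(\mathbb{Z})$-type) relations for the unnormalised $S$- and $T$-matrices.

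\textbf{Comparison with the cited route.} You use the same two inputs: the Gauss-sum identity coming from the balancing axiom, and $S^2=DC$ coming from invertibility of $S$. You use them in a leaner form, though. You only need how $S$ acts on the two vectors $v=(\theta_i d_i)_i$ and $w=(\theta_i^{-1}d_i)_i$, not the full matrix relation between $S$ and $T$. You then read the answer off the single equation $Dv=p_+p_-v$ with $v_1=1$, so you never divide by $D$ or $p_\pm$. This makes the argument self-contained and visibly valid in arbitrary characteristic, which the paper's convention on $k$ requires. The framework in \cite{BK} yields more (the full projective modular action), but the paper does not need that here.

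\textbf{The step you flagged.} It is fine.
\begin{itemize}
\item The inverse of $\beta_{ji}\beta_{ij}\in\mathrm{End}(X_i\otimes X_j)$ is $\beta_{ij}^{-1}\beta_{ji}^{-1}$. Your $\beta_{ji}^{-1}\beta_{ij}^{-1}$ lives on $X_j\otimes X_i$ instead, but has the same trace by cyclicity.
\item That trace is indeed $S_{ij^*}$. Naturality of the braiding against evaluation and coevaluation, together with the hexagon axioms, identifies $\beta_{X,Y^*}$ and $\beta_{Y^*,X}$ with the mates of $\beta_{X,Y}^{-1}$ and $\beta_{Y,X}^{-1}$. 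This is the algebraic form of reversing one component of the mirror Hopf link, and no conjugation is involved.
\end{itemize}

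\textbf{Two small points on Step~3.}
\begin{itemize}
\item You divide by $d_k$. This is legitimate in any characteristic: for simple $X$ the unit occurs exactly once in $X\otimes X^*$, so $\dim X$ is a composite of nonzero multiples of the inclusion of, and projection onto, that summand.
\item Step~3 can be finished without any semisimplicity of $K_0\otimes k$. Set $h_j\colon[X_i]\mapsto S_{ij}/d_j$ and $z=\sum_i d_i[X_i]$. Then $[X_k]z=d_kz$, so $h_j(z)=0$ whenever $h_j\neq h_1$, and invertibility of $S$ guarantees $h_j\neq h_1$ for $j\neq 1$. This gives $\sum_i d_iS_{ij}=D\delta_{j1}$. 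The product formula then yields $\sum_i S_{ki}S_{il^*}=\sum_m N_{kl^*}^m\sum_i d_iS_{im}=D\delta_{kl}$, which is exactly $S^2=DC$.
\end{itemize}
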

\begin{rk}
    In \cite{BDSV4}, this is instead taken to be the definition of global dimension.
\end{rk}

\subsection{Representations of $\mathbf{F}(\mathcal{H})$ and $\mathbf{F}(\mathcal{G})$}

We now present a modification of the arguments of \cite{BDSV4} which classifies the linear representations of $\mathbf{F}(\mathcal{H})$ and $\mathbf{F}(\mathcal{G})$ in terms of modular tensor category.

First, we note that \cite[\textsection 3]{BDSV2} and \cite[\textsection 4, \textsection 5.1-5.2]{BDSV4} do not use the modularity relation of $\mathcal{M}$, and so the results there apply as well to $\mathcal{P}$ and any of the presentations 2-extending it. We summarise the relevant results below.

Let $Z:\mathbf{F}(\mathcal{R}) \rightarrow \Vect$ be a symmetric monoidal functor. Then, by \cite[Lemma 3.4]{BDSV4}, the images of $\tikztinycup,\tikztinypants$ as well as $\alpha,\rho,\lambda,\beta,\theta$ under $Z$ endow $Z(\tikztinycirc)$ with the structure of a linear balanced braided monoidal category. Moreover, by \cite[Theorem 4.9]{BDSV4}, $Z(\tikztinycirc)$ is rigid with a compatible twist, and so is a ribbon linear category. Conversely, the data of this ribbon category $Z(\tikztinycirc)$ determines exactly the images under $Z$ of the generating object, 1-morphisms and 2-morphisms of $\mathcal{R}$. We refer the reader to \cite[Propositions 4.3-4.7]{BDSV4} for an explicit description of these images, and to \cite[Proposition 5.7]{partB} for an equivalent reformulation.

Now, let $Z:\mathbf{F}(\mathcal{P}) \rightarrow \Vect$ be a symmetric monoidal functor. The proofs in \cite[Appendix A]{BDSV4} apply for any 2-extension of $\mathcal{P}$, and so by \cite[Corollary A.23]{BDSV4}, $Z$ factors through $\KV$. (In particular, this would still hold true if we replaced $\Vect$ with one of the other bicategorical analogues of $\vect$ in \cite[Appendix A]{BDSV4}.) A consequence \cite[Lemma 5.3]{BDSV4} of this is that the ribbon category $Z(\tikztinycirc)$ is a finite direct sum of semisimple ribbon categories in the sense of Definition \ref{defn:ss-ribbon}. We may thus restrict ourselves to the case where $Z(\tikztinycirc)$ is linearly equivalent to $\vect^{\oplus s}$ for some $s$ and the unit of $Z(\tikztinycirc)$ is simple.

In this case, the images of the remaining generating 2-morphisms $\eta^\dag,\epsilon^\dag,\nu^\dag,\mu^\dag$ of $\mathcal{P}$ may be determined up to a nonzero scalar $p$.  We refer the reader to \cite[Propositions 5.4-5.6]{BDSV4} for explicit formulas for these images, and to \cite[Proposition 5.8]{partB} (with each instance of $p_+$ replaced by $p$) for an equivalent reformulation. In Lemma \ref{lem:z-value}, when considering symmetric monoidal functors $\mathbf{F}(\mathcal{H}) \rightarrow \Vect$, the anomaly-freeness relation will allow for us to solve for $p$.

Now, we proceed as in \cite{BDSV4}, but replacing the modularity relation in $\mathcal{M}$ with the extended modularity relation in $\mathcal{M}'$. Consider a functor $Z:\mathbf{F}(\mathcal{M}')\rightarrow \Vect$, so it factors through $\KV$. Then, let us consider the following composite maps:
\begin{defn}[{\cite[Definition 21]{BDSV2}}] \label{defn:dehn-twists} \smallbordisms
    Let $\II,A,\III$ denote the following compositions:
    \begin{align*} 
    \II \quad&:=\quad
    \begin{aligned}  \begin{tikzpicture}[xscale=2]
    \node (1) at (0,0)
    {
    $\begin{tikzpicture}
        \node[Copants, top, bot] (A) at (0,0) {};
        \node[Pants, bot, anchor=belt] (B) at (A.belt) {};
    \end{tikzpicture}$
    };
    \node (2) at (1,0)
    {
    $\begin{tikzpicture}
        \node[Pants, top, bot] (A) at (0,0) {};
        \node[Cyl, bot, anchor=top] (B) at (A.leftleg) {};
        \node[Copants, bot, anchor=leftleg] (C) at (A.rightleg) {};
        \node[Cyl, bot, anchor=bottom, top] (D) at (C.rightleg) {};
        \selectpart[green, inner sep=1pt] {(A-rightleg)};
    \end{tikzpicture}$
    };
    \node (3) at (2,0)
    {
    $\begin{tikzpicture}
        \node[Pants, bot, top] (A) at (0,0) {};
        \node[Cyl, bot, anchor=top] (B) at (A.leftleg) {};
        \node[Copants, bot, anchor=leftleg] (C) at (A.rightleg) {};
        \node[Cyl, bot, anchor=bottom, top] (D) at (C.rightleg) {};
    \end{tikzpicture}$
    };
    \node (4) at (3,0)
    {
    $\begin{tikzpicture}
        \node[Copants, top, bot] (A) at (0,0) {};
        \node[Pants, bot, anchor=belt] (B) at (A.belt) {};
    \end{tikzpicture}$
    };
    \begin{scope}[double arrow scope]
        \draw (1) --  node[above]{$\phi_1^{-1}$} (2);
        \draw (2) --  node[above]{$\theta$} (3);
        \draw (3) --  node[above]{$\phi_1$} (4);
    \end{scope} \end{tikzpicture} \end{aligned}
    \\
    A \quad&:=\quad
    \begin{tz}[xscale=2, yscale=2]
    \node (1) at (0,0)
    {
    $\begin{tikzpicture}
        \node[Pants, top, bot] (A) at (0,0) {};
        \node[Copants, bot, anchor=leftleg] (B) at (A.leftleg) {};
        \selectpart[green, inner sep=1pt] {(B-belt)};
    \end{tikzpicture}$
    };
    \node [inner sep=0pt] (2) at (1,0)
    {
    $\begin{tikzpicture}
        \node[Pants, top, bot] (A) at (0,0) {};
        \node[Copants, bot, anchor=leftleg] (B) at (A.leftleg) {};
        \node[Pants, bot, anchor=belt] (C) at (B.belt) {};
        \node[Copants, bot, anchor=leftleg] (D) at (C.leftleg) {};
        \selectpart[green] {(A-leftleg) (A-rightleg) (C-leftleg) (C-rightleg)};
    \end{tikzpicture}$
    };
    \node [inner sep=0pt] (3) at (2,0)
    {
    $\begin{tikzpicture}
        \node[Pants, top, bot] (A) at (0,0) {};
        \node[Copants, bot, anchor=leftleg] (B) at (A.leftleg) {};
        \node[Pants, bot, anchor=belt] (C) at (B.belt) {};
        \node[Copants, bot, anchor=leftleg] (D) at (C.leftleg) {};
        \selectpart[green] {(A-belt) (A-leftleg) (A-rightleg) (B-belt)};
    \end{tikzpicture}$
    };
    \node (4) at (3,0)
    {
    $\begin{tikzpicture}
        \node[Pants, top, bot] (A) at (0,0) {};
        \node[Copants, bot, anchor=leftleg] (B) at (A.leftleg) {};
    \end{tikzpicture}$
    };
    \begin{scope}[double arrow scope]
        \draw (1) -- node[above] {$\epsilon^\dagger$} (2);
        \draw (2) -- node[above] {$\II^{-1}$} (3);
        \draw (3) -- node[above] {$\epsilon$} (4);
    \end{scope}
    \end{tz}
    \\
    \III \quad&:=\quad
    \begin{tz}[xscale=2, yscale=2]
    \node (1) at (0,0)
    {
    $\begin{tikzpicture}
            \node[Pants, top, bot] (A) at (0,0) {};
            \node[Copants, bot, anchor=leftleg] (B) at (A.leftleg) {};
            \selectpart[green, inner sep=1pt] {(A-leftleg)};
    \end{tikzpicture}$
    };
    \node (2) at (1,0)
    {
    $\begin{tikzpicture}
            \node[Pants, top, bot] (A) at (0,0) {};
            \node[Copants, bot, anchor=leftleg] (B) at (A.leftleg) {};
    \end{tikzpicture}$
    };
    \node (3) at (2,0)
    {
    $\begin{tikzpicture}
            \node[Pants, top, bot] (A) at (0,0) {};
            \node[Copants, bot, anchor=leftleg] (B) at (A.leftleg) {};
            \selectpart[green, inner sep=1pt] {(A-leftleg)};
    \end{tikzpicture}$
    };
    \node (4) at (3,0)
    {
    $\begin{tikzpicture}
            \node[Pants, top, bot] (A) at (0,0) {};
            \node[Copants, bot, anchor=leftleg] (B) at (A.leftleg) {};
    \end{tikzpicture}$
    };
    \begin{scope}[double arrow scope]
        \draw (1) -- node[above] {$\theta$} (2);
        \draw (2) -- node[above] {$A$} (3);
        \draw (3) -- node[above] {$\theta$} (4);
    \end{scope}
    \end{tz}
    \end{align*}
\end{defn}
\begin{rk}
    Under $|-|_\mathcal{O}$, $\II$ and $A$ are sent to the mapping cylinders of certain Dehn twists; see \cite[Figure 1]{BDSV2} and the accompanying text.
\end{rk}

Note that $\II$ is clearly invertible, as it is the composition of invertible 2-morphisms. One may show that $A$, and hence $\III$, is invertible as well:
\begin{prop}
    The composites $A,\III$ are invertible.
\end{prop}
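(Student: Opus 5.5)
The plan is to work in $\mathbf{F}(\mathcal{M}')$ and construct an explicit two-sided inverse of $A$ from the generators. Invertibility of $\III$ then follows at once: $\III = \theta \circ A \circ \theta$ (suitably whiskered), $\theta$ is invertible by the inverses relation of $\mathcal{R}$, and a composite of invertible 2-morphisms is invertible. So everything reduces to $A$.

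Recall $A = \epsilon \circ \II^{-1} \circ \epsilon^\dag$. Geometrically, under $|-|_\mathcal{O}$ it is the mapping cylinder of a Dehn twist; see the remark after Definition \ref{defn:dehn-twists}. This suggests the candidate inverse
\[ A' := \epsilon \circ \II \circ \epsilon^\dag , \]
possibly corrected by a power of the central element $z$.

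To compute $A' \circ A = \epsilon \circ \II \circ \epsilon^\dag \circ \epsilon \circ \II^{-1} \circ \epsilon^\dag$, I will attack the middle factor $\epsilon^\dag \circ \epsilon$. It is not the identity, but I will rewrite it as follows.
\begin{itemize}
\item Use the rigidity relations ($\phi_1^{\pm1}$), the additional rigidity relations expressing $\phi_1^{-1}$ through $\epsilon^\dag, \alpha^{-1}, \eta^\dag$, and the interchange law, to move $\II = \phi_1 \theta \phi_1^{-1}$ so that its $\theta$ sits on a leg of a pants--copants configuration bracketed by $\epsilon^\dag$ and $\epsilon$.
\item The resulting pattern is exactly the upper route of the extended modularity relation, namely $\epsilon \circ (\theta \otimes \theta^{-1}) \circ \epsilon^\dag$ with one leg twisted each way. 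Extended modularity replaces it by $\mu \circ \mu^\dag \circ z^{-1}$, i.e.\ a cut along a cylinder.
\item After this substitution, the adjunction relations for $\nu,\mu$ and $\nu^\dag,\mu^\dag$, together with the pivotality relation, should collapse the remaining $\epsilon^\dag$, $\epsilon$ and the cup/cap pieces to an identity 2-morphism.
\item Centrality of $z$ lets me move the leftover $z^{\pm1}$ factors anywhere, so any surplus can be absorbed by redefining $A' := z^{\pm1} \epsilon \circ \II \circ \epsilon^\dag$.
\end{itemize}
The same manipulation, run with the $180^\circ$-rotated extended modularity relation, gives $A \circ A' = \id$.

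The main obstacle is the second bullet: actually bringing the composite into the exact shape of the extended modularity relation. The Dehn twist $\II$ lives on the copants--pants surface, while the modularity relation concerns the pants--copants torus. Getting from one to the other requires a careful bookkeeping sequence of Frobeniusator and associator moves. If that rewriting proves unwieldy, I would fall back on the argument of \cite{BDSV2}, which exhibits inverses for these Dehn twist composites using the modularity relation. I would reproduce it verbatim, replacing each use of modularity by extended modularity and tracking the resulting central factors $z^{\pm1}$. Since these factors are central and invertible, they cannot obstruct invertibility.
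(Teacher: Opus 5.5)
Your proposal is correct and matches the paper's argument. The paper uses the same candidate $A^\dag$ (obtained from $A$ by replacing $\II^{-1}$ with $\II$) and reruns the proof of \cite[Proposition 25]{BDSV2} with extended modularity in place of modularity, obtaining $A\circ A^\dag = A^\dag\circ A = z^{-1}$; invertibility of $z$ then makes $zA^\dag$ the inverse, exactly as in your fallback, and $\III$ is invertible as $A$ composed with the invertible $\theta$'s (your direct Frobeniusator rewriting is not needed).
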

\begin{proof}
    Let $A^\dag$ be the composite given by replacing $\II^{-1}$ in the definition of $A$ with $\II$. In \cite[Proposition 25]{BDSV2}, it is shown that in $\mathbf{F}(\mathcal{M})$, we have $A\circ A^\dag = A^\dag\circ A = \id$. Following the same proof and replacing all instances of the modularity relation with the extended modularity relation, we instead obtain $A\circ A^\dag = A^\dag\circ A = z^{-1}$ in $\mathbf{F}(\mathcal{M}')$ (where $z^{-1}$ may be taken to act anywhere on the torus with two boundary components). Since $z$ is invertible, we nonetheless obtain that $A$ is invertible. 

    Then, $\III$ is invertible as it is the composition of $A$ with invertible 2-morphisms.
\end{proof}

By computing the action of $Z(\III)$ on a torus (viewed as the composition of a cup, copants, pants and cap), we may show that the braiding of $Z(\tikztinycirc)$ is non-degenerate, i.e. that it is a modular tensor category.

\begin{prop}[{\cite[Proposition 5.14]{BDSV4}}]
    $Z$ sends the torus to an $s$-dimensional vector space. On this vector space, $Z(\III)$ acts as $\frac1p$ times the $S$-matrix of $Z(\tikztinycirc)$.
\end{prop}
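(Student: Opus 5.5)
The plan is to compute $Z(\III)$ on the torus by unwinding its definition into the explicit images of the generators. The key point is that the relevant computations in \cite{BDSV4} for $\mathbf{F}(\mathcal{M})$ never use the modularity relation, so they carry over verbatim to $\mathbf{F}(\mathcal{M}')$.

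First, fix the image of the torus. View the torus as the composite of cup, copants, pants and cap. Since $Z$ factors through $\KV$ and $Z(\tikztinycirc)\simeq\vect^{\oplus s}$ with simples $X_1,\ldots,X_s$, the explicit formulas of \cite[Propositions 4.3--4.7]{BDSV4} compute $Z$ of this composite. Pants is sent to the tensor product, copants to its adjoint, and the cup to the unit. The resulting vector space is
\[
\bigoplus_{i}\Hom(X_i\otimes X_i^*,\mathbbm{1}),
\]
which is $s$-dimensional, with a canonical basis indexed by the simples $X_i$.

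Next, compute $Z(\III)=Z(\theta)\circ Z(A)\circ Z(\theta)$ whiskered with cup and cap.
\begin{itemize}
\item The two $\theta$ factors act diagonally on this basis by the twists $\theta_i$.
\item $A=\epsilon\circ\II^{-1}\circ\epsilon^\dag$ (suitably whiskered). Here $\II$ is built from $\phi_1^{\pm1}$ and $\theta$, so $Z(\II)$ is determined by the ribbon data via \cite[Propositions 4.3--4.7]{BDSV4}.
\item The non-invertible generators $\epsilon,\epsilon^\dag$ have images given by \cite[Propositions 5.4--5.6]{BDSV4}. These involve the scalar $p$, entering as $p^{-1}$ through $\epsilon^\dag$.
\end{itemize}
Composing gives a matrix whose $(i,j)$ entry is a trace of a composite of the braiding $\beta_{ij}\beta_{ji}$ conjugated by twists. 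The twist factors coming from $\theta$ and from the $\theta$ inside $\II^{-1}$ cancel via the balancing axiom $\theta_{X\otimes Y}=\beta^2(\theta_X\otimes\theta_Y)$. What remains is $\tr(\beta_{ij}\beta_{ji})/p=S_{ij}/p$.

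I will follow the proof of \cite[Proposition 5.14]{BDSV4} line by line, checking only that no step invokes modularity or $z$. Since $z$ does not appear in $\III$, this check should be straightforward.

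The main obstacle is bookkeeping the normalisation of $\epsilon^\dag$ and getting the orientation of the Dehn twists right, so that the result is exactly $\frac1p S$ rather than $\frac1p S^{-1}$ or a conjugate. I would settle this first by evaluating on the unit basis vector $i=1$, where the answer must be $\dim(X_j)/p$, and fix conventions accordingly.
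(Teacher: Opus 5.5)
Your proposal is correct and takes essentially the same approach as the paper, which simply imports \cite[Proposition 5.14]{BDSV4}. That import rests on the observation you make: the computation of $Z(\III)$ only composes the images of generators of $\mathcal{P}$, whose images depend on the scalar $p$ but never use the modularity relation or $z$, so it carries over verbatim to functors out of $\mathbf{F}(\mathcal{M}')$. Your account of how the twists cancel is only heuristic, but since you defer to the line-by-line argument of \cite{BDSV4} for that bookkeeping, nothing is missing.
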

\begin{cor} \label{cor:mprime-mtc}
    Given a symmetric monoidal functor $Z: \mathbf{F}(\mathcal{M}') \rightarrow \Vect$ such that the unit of $Z(\tikztinycirc)$ is simple, the ribbon category $Z(\tikztinycirc)$ is a modular tensor category.
\end{cor}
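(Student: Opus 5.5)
The corollary follows by combining the preceding proposition with the invertibility of $\III$; there is essentially nothing new to prove. By the earlier discussion, a symmetric monoidal functor $Z:\mathbf{F}(\mathcal{M}')\rightarrow\Vect$ factors through $\KV$, since \cite[Appendix A]{BDSV4} applies to any 2-extension of $\mathcal{P}$. Hence $Z(\tikztinycirc)$ is a finite direct sum of semisimple ribbon categories \cite[Lemma 5.3]{BDSV4}. With the hypothesis that the unit is simple, $Z(\tikztinycirc)$ is itself a semisimple ribbon category in the sense of Definition \ref{defn:ss-ribbon}, linearly equivalent to $\vect^{\oplus s}$. In particular the $S$-matrix is defined, and so is the nonzero scalar $p$ governing the images of $\eta^\dag,\epsilon^\dag,\nu^\dag,\mu^\dag$. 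It therefore remains only to show that $S$ is invertible.

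To do this, consider the torus $T$ obtained as the composite cup, copants, pants, cap, viewed as an endo-1-morphism of the empty object. Whisker the 2-morphism $\III$ (an endomorphism of pants followed by copants) with the cup on top and the cap on the bottom. This gives a 2-morphism $\id_{\tikztinycap}\star\III\star\id_{\tikztinycup}$ from $T$ to $T$. By the previous proposition, $Z(T)$ is an $s$-dimensional vector space, and $Z$ of this whiskered 2-morphism acts on it as $\frac1p S$.

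The previous proposition also shows that $\III$ is invertible in $\mathbf{F}(\mathcal{M}')$: the extended modularity relation gives $A\circ A^\dag=z^{-1}$, and $\III$ is $A$ composed with invertible 2-morphisms. Whiskering by identities preserves invertibility, and functors preserve invertibility, so $Z(\id\star\III\star\id)$ is an invertible linear endomorphism of $Z(T)$. Hence $\frac1p S$ is invertible, and since $p\neq 0$, so is $S$. Therefore $Z(\tikztinycirc)$ is a modular tensor category.

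The one point needing care is that the computation in \cite[Proposition 5.14]{BDSV4} identifying $Z(\III)$ on the torus with $\frac1p S$ never invokes the (ordinary) modularity relation. If it did, we would need to redo it in $\mathcal{M}'$, tracking an extra factor of $Z(z)$. Such a factor acts as an invertible scalar on each component, so it would not affect the conclusion. I expect this check to be the only potential obstacle, and a minor one.
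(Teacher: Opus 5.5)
Your proposal is correct and matches the paper's proof, which is exactly the one-line argument that $\III$ is invertible in $\mathbf{F}(\mathcal{M}')$ (because $A\circ A^\dag = A^\dag\circ A = z^{-1}$) and that $Z(\III)$ acts on the $s$-dimensional torus space as $\frac1p S$ with $p\neq 0$, so $S$ is invertible. Your extra details (reducing to the semisimple case via the factorisation through $\KV$, whiskering $\III$ by the cup and cap, and checking that \cite[Proposition 5.14]{BDSV4} does not use the modularity relation) spell out what the paper leaves implicit; note that the invertibility of $\III$ comes from the proposition before the one citing \cite[Proposition 5.14]{BDSV4}, not from that proposition itself.
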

\begin{proof}
    As $\III$ is invertible and $p$ is non-zero, the $S$-matrix of $Z(\tikztinycirc)$ is invertible.
\end{proof}

Now, we want to compute the value of $p$, which as of yet is still indeterminate. This will arise from working with $\mathcal{H}$ and using the extra anomaly-freeness relation. Before that, we shall first perform some computations in $\mathbf{F}(\mathcal{M}')$. In analogy to \cite[Definition 40]{BDSV2}, we let the \emph{anomaly} $x$ denote the composition 
\begin{equation*} \smallbordisms
    x\quad:=\quad
    \begin{tz}
        \node [Cyl, top, bot, tall] (A) at (0,0) {};
    \end{tz}
    \longxdoubleto{\epsilon ^\dag}
    \begin{tz}
            \node[Pants, top, bot] (A) at (0,0) {};
            \node[Copants, bot, anchor=leftleg] (B) at (A.leftleg) {};
            \selectpart[inner sep=1pt, green] {(A-rightleg)};
    \end{tz}
    \longxdoubleto{\theta}
    \begin{tz}
            \node[Pants, top, bot] (A) at (0,0) {};
            \node[Copants, bot, anchor=leftleg] (B) at (A.leftleg) {};
    \end{tz}
    \longxdoubleto{\epsilon}
    \begin{tz}
        \node [Cyl, top, bot, tall] (A) at (0,0) {};
    \end{tz}
\end{equation*}
in $\mathbf{F}(\mathcal{M}')$ (and correspondingly, its 2-extensions).

\begin{lem}
    The anomaly $x$ is central with respect to the horizontal and vertical composition of 2-morphisms.
\end{lem}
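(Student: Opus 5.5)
The plan is to follow \cite[Lemma 41]{BDSV2}, where the analogous statement is proved for $\mathbf{F}(\mathcal{M})$. That proof reduces centrality of $x$ to the centrality of a generator that can be moved within a connected component. The basic observation is that $x$ is an invertible 2-endomorphism of $\id_{\tikztinycyl}$, and such a 2-endomorphism can be transported along any 1-morphism using the generators already available. So it suffices to prove the following ``sliding'' relations, analogous to the centrality relation for $z$ in Definition \ref{defn:pres-mprime}:
\begin{itemize}
\item $x$ whiskered onto the belt of $\tikztinypants$ equals $x$ whiskered onto either leg;
\item the same holds for $\tikztinycopants$;
\item $x$ can be absorbed through the cup and cap, where it is whiskered onto the unique boundary circle.
\end{itemize}

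Granting these, the argument proceeds as in \cite[Corollary 45]{BDSV2}. Every 1-morphism of $\mathbf{F}(\mathcal{M}')$ is built from the generating 1-morphisms and cylinders, so $x$ can be moved to any boundary circle of a connected component. Centrality for horizontal composition then follows by interchange. For vertical composition, I would check that $x$ commutes past each generating 2-morphism $\alpha,\lambda,\rho,\beta,\theta,\eta,\epsilon,\nu,\mu,\phi_i^{\pm1}$, their daggers, and $z$. The strategy is to slide $x$ onto a circle that is not touched by the generator, or onto an input circle and then to an output circle, using naturality of the interchanger.

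For the pants, I would expand $x$ on the belt as $\epsilon\circ\theta\circ\epsilon^\dag$. The first step is to use the rigidity relations ($\phi_1,\phi_2$ and their daggers) together with associativity $\alpha$ to move the inserted handle $\tikztinypants\circ\tikztinycopants$ from the belt to a leg. This is exactly the string-diagram manipulation that shows the Frobeniusator identities let a ``handle'' pass through a pair of pants. The twist $\theta$ on the handle's leg is then carried along by naturality. For the cup and cap, I would use the pivotality relation and the adjunction relations.

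The main obstacle is verifying that the BDSV2 argument never used the modularity relation. If it did, I would redo that step using extended modularity, which introduces $z^{\pm1}$ factors. Since $z$ is already known to be central, these factors appear symmetrically on both sides and cancel. I expect the handle-sliding to rely only on the relations of $\mathcal{P}$, so the proof should carry over verbatim to $\mathbf{F}(\mathcal{M}')$ and hence to its 2-extensions $\mathcal{H}$ and $\mathcal{G}$.
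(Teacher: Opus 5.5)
Your proposal follows essentially the same route as the paper. The paper also observes that the proof of the pants/copants sliding relation in \cite[Lemma 44]{BDSV2} (rather than Lemma 41) uses only relations of $\mathcal{P}$, so $x$ satisfies the same centrality relation as $z$, and it then concludes exactly as in \cite[Corollary 45]{BDSV2}. Your contingency plan is therefore unnecessary, and the claim that any $z^{\pm1}$ factors from extended modularity would cancel symmetrically is not justified in general. The cup/cap clause is also superfluous, since those 1-morphisms have a single boundary circle.
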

\begin{proof}
    We follow the proof of the corresponding result in \cite{BDSV2}. Note that the proof of \cite[Lemma 44]{BDSV2} only uses relations from $\mathcal{P}$, and so $x$ satisfies the centrality relation that $z$ satisfies in Definition \ref{defn:pres-mprime}. We conclude as in \cite[Corollary 45]{BDSV2}.
\end{proof}

Let $x^\dag$ denote $x$ but with the $\theta$ in the composition replaced by $\theta^{-1}$. In $\mathbf{F}(\mathcal{M})$, this was the inverse of $x$. In $\mathbf{F}(\mathcal{M}')$, however, this is now off by a $z$ term.

\begin{lem}
    In $\mathbf{F}(\mathcal{M}')$, $x^\dag x = z^{-1} = xx^\dag$.
\end{lem}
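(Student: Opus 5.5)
I would follow the proof of the corresponding statement in \cite{BDSV2} (where $x^\dag x = \id = xx^\dag$ is shown in $\mathbf{F}(\mathcal{M})$), tracking exactly where the modularity relation is used and replacing each such use by the extended modularity relation of Definition \ref{defn:pres-mprime}. Since the extended modularity relation differs from the modularity relation only by an extra factor $z^{-1}$ on the cylinder, and $z$ is central for both compositions (by the centrality relation and the remark following Definition \ref{defn:pres-mprime}), each such replacement contributes a factor $z^{-1}$ that can be pulled out to the outside of the composite. The goal is to see that modularity is invoked exactly once in computing $x^\dag x$, so that the result is $z^{-1}$ rather than $\id$.

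Concretely, I would write $x^\dag x$ as the vertical composite $\epsilon\,\theta^{-1}\,\epsilon^\dag\,\epsilon\,\theta\,\epsilon^\dag$ on the cylinder. The middle piece $\epsilon^\dag\epsilon$ is a 2-morphism from the torus-with-two-boundaries to itself. I would then use the extended modularity relation in its reversed form: by invertibility of $z$ it reads
\[
\epsilon\,(\theta\star\theta^{-1})\,\epsilon^\dag \;=\; \mu\,\mu^\dag\, z^{-1},
\]
and, up to the pivotal relations, expresses the composite $\epsilon^\dag$-then-$\epsilon$ through $\mu^\dag\mu$ (the surgery through a cup and cap). The middle piece becomes a cup--cap factorisation, and the remaining $\theta$ and $\theta^{-1}$, now on the two legs, can be slid to the same leg using the ribbon relation (the twist on a pants capped off is the same on either leg) and then cancelled. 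Finally the outer $\epsilon^\dag$, $\mu^\dag$, $\mu$, $\epsilon$ cancel using the adjunction and pivotality relations of $\mathcal{P}$, exactly as in \cite{BDSV2}. The surviving $z^{-1}$ gives $x^\dag x = z^{-1}$.

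For $xx^\dag$ the same argument applies with $\theta$ and $\theta^{-1}$ interchanged. This is where the green/red labels and the $180^\circ$ rotation of the extended modularity relation are used, so the same single factor $z^{-1}$ appears. Alternatively, since $x$ and $x^\dag$ are both central endomorphisms of $\id$ on the cylinder, they commute, which gives $xx^\dag = x^\dag x$ directly.

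The main obstacle is verifying that the \cite{BDSV2} argument uses the modularity relation exactly once, and that no other relation of $\mathcal{P}$ secretly depends on it. I would check this by going line by line through the proof of \cite[Lemma 41--43]{BDSV2}. If modularity is instead used twice, with opposite orientations, the factors would have to be tracked with signs, giving $z^{-1}z^{\pm1}$. In that case I would re-derive the count by composing with $\id_{\tikztinycup}$ and $\id_{\tikztinycap}$ as in Lemma \ref{lem:modg-g}, and compare the image under $|-|_\mathcal{H}$. The signature terms there are $-1$ and $-1$ for $x$ and $x^\dag$ respectively, giving $-2 = $ signature of $|z^{-1}|_\mathcal{H}$, which is consistent with the claimed answer.
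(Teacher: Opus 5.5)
Your overall strategy is the paper's. You take the proof of \cite[Lemma 46]{BDSV2}, replace the use of modularity by extended modularity, and pull the resulting central $z^{-1}$ outside. For the other order, your ``alternatively'' route (centrality of $x$) is exactly what the paper does. The paper asserts that modularity enters exactly once, in the penultimate step of the computation of $xx^\dag$.

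The concrete mechanism you sketch would not work, so it does not supply the count you identify as the main obstacle.
\begin{itemize}
\item Write $T$ for the torus with two boundary circles and $\theta_L,\theta_R$ for the twists on its two legs. The extended modularity relation rewrites $\epsilon\circ(\theta_L\theta_R^{-1})\circ\epsilon^\dag$, an endomorphism of the cylinder, as $\mu\circ\mu^\dag\circ z^{-1}$.
\item It says nothing about the middle piece $\epsilon^\dag\circ\epsilon$ of $x^\dag x$, which is an endomorphism of $T$. It also says nothing about an untwisted $\epsilon\circ\epsilon^\dag$. So ``the middle piece becomes a cup--cap factorisation'' is unjustified.
\item In $x^\dag x=\epsilon\theta_R^{-1}\epsilon^\dag\epsilon\theta_R\epsilon^\dag$ the two twists sit on the right legs of two different copies of $T$, separated by $\epsilon^\dag\circ\epsilon$. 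No subcomposite has the shape of the left-hand side of the relation.
\item One first needs a rearrangement by $\mathcal{P}$-relations that puts $\theta$ and $\theta^{-1}$ on the two legs of a single copy of $T$, between one $\epsilon^\dag$ and one $\epsilon$. Presumably this uses the centrality of $x$ to move one anomaly inside the other.
\item Applying the relation there consumes both twists. Nothing is left to slide with the ribbon relation, which in any case only identifies twists on the legs of a pants whose belt is capped.
\item The sign of the $z$-factor is governed by the direction in which the relation is used. It is not governed by which rotated version is used, since both carry $z^{-1}$.
\end{itemize}

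Your fallback is a sound way to fix the exponent without auditing \cite{BDSV2} line by line. Once the $\mathcal{P}$-relations and extended modularity give $xx^\dag=z^k$ in $\mathbf{F}(\mathcal{M}')$, apply $\mathbf{F}(\mathcal{M}')\to\mathbf{F}(\mathcal{H})\xrightarrow{|-|_\mathcal{H}}\Bord{csig/2}$. There $|z|_\mathcal{H}=(\id,2)$ has infinite order, so comparing signature terms determines $k$. This only needs $|-|_\mathcal{H}$ to be a functor, not an equivalence.

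However, the numbers you quote are wrong.
\begin{itemize}
\item By the computation in Lemma \ref{lem:modg-g}, $|x|$ has signature term $-1+1=0$, because of the Wall term $c\ssig(|\epsilon|_\mathcal{O},|\theta\epsilon^\dag|_\mathcal{O})=1$.
\item $|x^\dag|$ has signature term $-1-1=-2$, because that Wall term changes sign when $\theta$ is replaced by $\theta^{-1}$.
\item You omitted both Wall terms. They cancel in the sum, which is why you still obtained $-2=2k$, i.e.\ $k=-1$.
\end{itemize}
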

\begin{proof}
    We follow the proof of \cite[Lemma 46]{BDSV2}. The only change necessary is to apply the extended modularity relation instead of the modularity relation in the penultimate step, which introduces an extra factor of $z^{-1}$. We thus have $xx^\dag = z^{-1}$, and by the centrality of $x$, the other equality holds as well.
\end{proof}

On the other hand, there is an explicit formula for $Z(x),Z(x^\dag)$ given by directly computing the composition of the images of the relevant generating 2-morphisms whose composition defines $x,x^\dag$.
\begin{lem}[{\cite[Lemma 5.18]{BDSV4}}] \label{lem:x-action}
    Let $p_\pm$ denote the Gauss sums of the modular tensor category $Z(\tikztinycirc)$, as defined in Definition \ref{defn:cat-consts}. Let $p$ be the scalar on which the images of $\eta^\dag,\epsilon^\dag,\nu^\dag,\mu^\dag$ depend. Then, $Z(x)$ acts as multiplication by $\frac{p_+}p$ while $Z(x^\dag)$ acts as multiplication by $\frac{p_-}p$.
\end{lem}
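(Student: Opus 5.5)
This is a direct computation: write $x$ as the vertical composite $\epsilon\circ(\theta\star\id)\circ\epsilon^\dag$ and evaluate its image under $Z$ on the cylinder $Z(\tikztinycyl)$, which is the identity functor of the semisimple category $\mathcal{C}=Z(\tikztinycirc)$. A natural endo-transformation of $\id_{\mathcal{C}}$ is determined by one scalar per simple $X_i$. Since $x$ is central and the unit is simple, it will in fact suffice to evaluate on $X_1=\mathbbm{1}$, and the answer should be a single scalar.

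I will use the explicit descriptions of the images of the generators recalled above: \cite[Propositions 4.3--4.7]{BDSV4} for $\epsilon$ and $\theta$, and \cite[Propositions 5.4--5.6]{BDSV4} for $\epsilon^\dag$, which depend on the undetermined scalar $p$. Under these, $Z(\tikztinypants)$ is the tensor product and $Z(\tikztinycopants)$ its right adjoint, so the torus with two boundary circles goes to the functor
\begin{equation*}
X\mapsto \bigoplus_j X\otimes X_j\otimes X_j^*.
\end{equation*}
The unit $Z(\epsilon^\dag)$ of the dagger adjunction should be the map $X\to\bigoplus_j X\otimes X_j\otimes X_j^*$ built from the coevaluations, weighted by $\dim(X_j)/p$; this is precisely where $p$ enters. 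The counit $Z(\epsilon)$ should be the evaluation summed over $j$.

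The composite then reads as follows. Coevaluate into $X\otimes X_j\otimes X_j^*$, apply the twist $\theta_{X_j}$ on the leg corresponding to the right leg of the pants, and evaluate. Each summand contributes $\theta_j\dim(X_j)$ times the pairing $\mathrm{ev}\circ\mathrm{coev}=\dim(X_j)$, giving
\begin{equation*}
\frac1p\sum_j\theta_j\dim(X_j)^2=\frac{p_+}{p}.
\end{equation*}
Replacing $\theta$ by $\theta^{-1}$ gives $\frac{p_-}{p}$ for $x^\dag$.

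The main obstacle is keeping the normalisations straight. One has to confirm which leg carries the twist and that the correct dimension factors arise, in particular that the $\epsilon^\dag$ normalisation contributes $\dim(X_j)/p$ rather than $1/(p\dim X_j)$. I would check this against the (additional) adjunction relation for $\epsilon^\dag,\eta^\dag$, which fixes these weights. If the summand does not obviously give a scalar multiple of $\id_X$ for general $X$, I would fall back on the centrality of $x$ together with the simplicity of the unit: evaluate only on $X=\mathbbm{1}$, where the computation is literally the trace of the twist weighted by dimensions. This recovers exactly the argument of \cite[Lemma 5.18]{BDSV4}, which uses no modularity relation and so applies verbatim for $\mathcal{M}'$.
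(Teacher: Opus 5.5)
Your proposal is correct and follows the same route as the paper, which cites \cite[Lemma 5.18]{BDSV4}: evaluate $\epsilon\circ\theta^{\pm1}\circ\epsilon^\dag$ directly from the explicit images of the generators. Your normalisation $\dim(X_j)/p$ for $\epsilon^\dag$ and your centrality fallback $Z(x)_X=\id_X\otimes Z(x)_{\mathbbm{1}}$ are both sound. The one inaccuracy is your remark that the (additional) adjunction relation fixes the weights: it only fixes the product of the corresponding weights in $\epsilon^\dag$ and $\eta^\dag$, so the $\dim(X_j)$-dependence should be taken from the explicit solution in \cite[Propositions 5.4--5.6]{BDSV4}, which also uses the other relations of $\mathcal{P}$.
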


Now, we work in $\mathbf{F}(\mathcal{H})$, so we can use the anomaly-freeness relation.
\begin{lem} \label{lem:z-value}
    Let $Z: \mathbf{F}(\mathcal{H}) \rightarrow \Vect$. Then we have $p=p_+$, and $Z(z)$ acts as multiplication by the anomaly $\frac{p_+}{p_-}$.
\end{lem}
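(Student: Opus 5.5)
The plan is to evaluate both sides of the anomaly-freeness relation of $\mathcal{H}$ under $Z$, using Lemma \ref{lem:x-action}, and then to use the relation $xx^\dag = z^{-1}$ in $\mathbf{F}(\mathcal{M}')$ to read off $Z(z)$. Throughout, we reduce as before to the case where the unit of $Z(\tikztinycirc)$ is simple. Then $Z(\tikztinycirc)$ is a modular tensor category by Corollary \ref{cor:mprime-mtc}, so $p_\pm$ are defined and nonzero: Lemma \ref{lem:dpp} gives $p_+p_- = D \neq 0$.

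\textbf{Step 1: determine $p$.} The right-hand side of the anomaly-freeness relation is $\epsilon\circ\theta\circ\epsilon^\dag$ on the sphere $\tikztinycup\,\tikztinycap$. This is precisely the anomaly $x$ (a 2-endomorphism of the cylinder) horizontally composed with $\id_{\tikztinycup}$ and $\id_{\tikztinycap}$. By Lemma \ref{lem:x-action}, $Z(x)$ acts as multiplication by the scalar $\frac{p_+}{p}$ on $Z$ of the cylinder. Horizontal composition with identities therefore yields $\frac{p_+}{p}$ times the identity on $Z$ of the sphere. The left-hand side is the identity, and $Z(\text{sphere})$ is nonzero: it is $\Hom(\mathbbm{1},\mathbbm{1})\cong k$, since the unit is simple. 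Comparing the two sides gives $\frac{p_+}{p} = 1$, that is, $p = p_+$.

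\textbf{Step 2: compute $Z(z)$.} Lemma \ref{lem:x-action} also gives $Z(x^\dag) = \frac{p_-}{p} = \frac{p_-}{p_+}$. Applying $Z$ to the identity $xx^\dag = z^{-1}$ from $\mathbf{F}(\mathcal{M}')$, we get
\[
Z(z)^{-1} = \frac{p_+}{p_+}\cdot\frac{p_-}{p_+} = \frac{p_-}{p_+}.
\]
Hence $Z(z)$ acts as multiplication by $\frac{p_+}{p_-}$, which is the anomaly.

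\textbf{Main obstacle.} The delicate point is Step 1. We must make sure that the scalar action of $Z(x)$ on $Z$ of the cylinder really transfers to multiplication by the same scalar on the sphere after capping. This should follow from the fact that $Z$ is a functor, so it preserves horizontal composition with identity 2-morphisms. We also need the value of $Z$ on the sphere to be nonzero. The remaining steps are direct substitution into results already stated.
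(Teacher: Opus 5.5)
Your argument is correct and follows the same route as the paper: anomaly-freeness forces $Z(x)$ to be the identity, so Lemma~\ref{lem:x-action} gives $p=p_+$, and applying $Z$ to $xx^{\dagger}=z^{-1}$ then gives $Z(z)=\frac{p_+}{p_-}$. Your Step 1 is a more careful version of the paper's one-line ``$x=\id$'': the relation is imposed on the sphere rather than on the cylinder, and you bridge this correctly by using that $Z(x)$ is a scalar and that $Z$ of the sphere is nonzero.
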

\begin{proof}
    By the anomaly-freeness relation, we have $x = \id$, and so $x^\dag = z^{-1}$. Comparing these to Lemma \ref{lem:x-action}, we obtain the desired results.
\end{proof}
\begin{rk}
    Here we see the difference between $\mathcal{M}$ and $\mathcal{H}$: for the former, the analogous arguments in \cite[Lemma 5.19]{BDSV4} obtained a value of $p^2$ in terms of $p_\pm$, which determined $p$ up to a choice of sign. In contrast, we have determined $p$ exactly, without any sign ambiguity.
\end{rk}
\begin{rk}
    In this context, the ``anomaly-freeness'' relation is a bit of a misnomer; it is named as such because in going from $\mathcal{M}$ to $\mathcal{O}$ in \cite{BDSV4}, this relation forces the anomaly to be $1$. In our case, the anomaly-freeness relation merely fixes the value of $p$ and the image of $z$ under $Z$, but we nonetheless keep the name for consistency.
\end{rk}

Finally, we show that representations of $\mathbf{F}(\mathcal{H})$ indeed correspond to finite direct sums of modular tensor categories.
\begin{thm} \label{thm:rep-fh}
    Linear representations of $\mathbf{F}(\mathcal{H})$ are classified by finite direct sums of modular tensor categories.
\end{thm}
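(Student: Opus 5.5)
We will prove the theorem by establishing a bijection, in both directions, between equivalence classes of symmetric monoidal functors $Z:\mathbf{F}(\mathcal{H})\rightarrow\Vect$ and finite direct sums of modular tensor categories. We follow the structure of the classification of representations of $\mathbf{F}(\mathcal{M})$ in \cite{BDSV4}. First we reduce to the simple case. By \cite[Corollary A.23]{BDSV4}, which applies to any 2-extension of $\mathcal{P}$, $Z$ factors through $\KV$. By \cite[Lemma 5.3]{BDSV4}, $Z(\tikztinycirc)$ then decomposes as a finite direct sum of semisimple ribbon categories with simple unit, and $Z$ splits as a corresponding direct sum of representations. It therefore suffices to treat the case where the unit of $Z(\tikztinycirc)$ is simple.

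In that case Corollary \ref{cor:mprime-mtc} shows that $Z(\tikztinycirc)$ is a modular tensor category. This gives the map from representations to modular tensor categories. We check that it is injective on equivalence classes as follows.
\begin{enumerate}
\item The ribbon category $Z(\tikztinycirc)$ determines the images of all generators of $\mathcal{R}$, by \cite[Propositions 4.3--4.7]{BDSV4}.
\item The images of $\eta^\dag,\epsilon^\dag,\nu^\dag,\mu^\dag$ are determined up to the single scalar $p$, by \cite[Propositions 5.4--5.6]{BDSV4}.
\item Lemma \ref{lem:z-value} forces $p=p_+$ and forces $Z(z)$ to be multiplication by $p_+/p_-$. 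So no further choice remains, in contrast to the square-root choice in \cite{BDSV4}.
\end{enumerate}
Equivalences of representations should correspond to ribbon equivalences of the underlying categories, exactly as in \cite{BDSV4}; we will cite that argument, since the new generator $z$ is determined by the rest.

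For surjectivity, start from a modular tensor category $\mathcal{C}$. Assign the generators of $\mathcal{P}$ using the formulas of \cite[Propositions 4.3--4.7, 5.4--5.6]{BDSV4} with $p:=p_+$, and set $Z(z)$ to be multiplication by the anomaly $p_+/p_-$ on each component. We must verify the relations of $\mathcal{H}$.
\begin{itemize}
\item The relations of $\mathcal{P}$ hold for any nonzero $p$, by \cite{BDSV4}.
\item Centrality and the inverse relation for $z$ are immediate, since $Z(z)$ is a nonzero scalar.
\item Anomaly-freeness asks that $Z(x)=\id$, which holds by Lemma \ref{lem:x-action} since $p_+/p=1$.
\item Extended modularity is the remaining relation.
\end{itemize}
Comparing extended modularity with the modularity relation of $\mathcal{M}$, the lower route is altered by $z^{-1}$. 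In \cite{BDSV4} the modularity relation was verified under the constraint $p^2=p_+p_-=D$, using Lemma \ref{lem:dpp}. The plan is to redo that computation with a general $p$, tracking how each route scales. The lower route $\mu\mu^\dag$ scales like $1/p$, say, while the upper route involves the $S$-matrix identity $S^2\propto D$. The resulting equation should read $Z(z)^{-1}\cdot(\text{lower})=(\text{upper})$, with the ratio equal to $p^2/D=p_+^2/(p_+p_-)=p_+/p_-$. This matches our choice of $Z(z)$. The $180^\circ$ rotation is handled identically, with $p_-$ appearing symmetrically.

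We expect the main obstacle to be this verification of extended modularity: keeping the normalisations in the formulas of \cite{BDSV4} straight, so that the discrepancy factor comes out exactly as $p_+/p_-$ and not its inverse. A useful cross-check is that the identities $x^\dag x=z^{-1}$ and $Z(x^\dag)=p_-/p$ from Lemma \ref{lem:x-action} already force $Z(z)=p/p_-$. Finally, reassembling direct sums gives the classification by finite direct sums of modular tensor categories.
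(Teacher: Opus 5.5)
Your proposal is correct and follows the paper's proof essentially step for step. You reduce to simple unit, get modularity from Corollary~\ref{cor:mprime-mtc}, use Lemma~\ref{lem:z-value} to force $p=p_+$ and $Z(z)=p_+/p_-$ (which gives injectivity), and build $Z_{\mathcal{C}}$ from \cite[Proposition 6.1]{BDSV4}, where extended modularity is the only new check and both routes reduce via Lemma~\ref{lem:dpp} to $\delta_{i1}p_-$. One small slip: the lower route $\mu\mu^\dag$ scales like $p$ (it is $\delta_{i1}p$), not $1/p$, while the upper route is $\delta_{i1}D/p$, and this is exactly what makes your stated ratio $p^2/D=p_+/p_-$ come out right.
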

\begin{proof}
    Given some $Z: \mathbf{F}(\mathcal{H}) \rightarrow \Vect$, the ribbon category $Z(\tikztinycirc)$ is a finite direct sum of modular tensor categories, by Corollary \ref{cor:mprime-mtc}. As $\mathcal{H}$ extends $\mathcal{R}$, any equivalence $Z \simeq Z'$ would define an equivalence of ribbon linear categories $Z(\tikztinycirc) \simeq Z'(\tikztinycirc)$. Hence, the assignment $Z \mapsto Z(\tikztinycirc)$ from equivalence classes of representations to equivalence classes of finite direct sums of modular tensor categories is well-defined.

    Conversely, given a modular tensor category $\mathcal{C}$, we may define a symmetric monoidal functor $Z_{\mathcal{C}}: \mathbf{F}(\mathcal{H}) \rightarrow \KV \rightarrow \Vect$ as in \cite[Proposition 6.1]{BDSV4}: let $Z_\mathcal{C}$ send $\tikztinycirc$ to the object of $\KV$ given by the number of isomorphism classes of simple objects of $\mathcal{C}$ and 1-morphisms to matrices of spaces of labels of their associated ribbon graphs. Then, on the generating 2-morphisms, define $Z_\mathcal{C}$ as in \cite[Propositions 4.3-4.7]{BDSV4} and \cite[Propositions 5.4-5.6]{BDSV4}, with the scalar $p$ in the latter group being $p=p_+$. For the final generating 2-morphism $z$, let $Z_{\mathcal{C}}(z)$ be multiplication by $\frac{p_+}{p_-}$. It suffices to verify that these assignments satisfy the relations of $\mathcal{H}$. These are all directly addressed in the proof of \cite[Proposition 6.1]{BDSV4} except for the extended modularity relation. For this, modifying the computations for the modularity relation in the proof of \cite[Proposition 6.1]{BDSV4} along with an application of Lemma \ref{lem:dpp} gives us that the top path in the extended modularity relation corresponds to multiplication by $$\frac1{p_+} \delta_{i1} p_+p_- = \delta_{i1} p_-$$ while the bottom path corresponds to multiplication by $$\frac{p_-}{p_+} \delta_{i1} p_+ = \delta_{i1} p_-$$ and so the extended modularity relation indeed holds. In the general case, we may extend this construction by taking direct sums.

    Moreover, by following the proof of \cite[Proposition 6.2]{BDSV4}, a braided monoidal equivalence $\mathcal{C} \simeq \mathcal{C}'$ preserving the twist induces an equivalence of representations $Z_{\mathcal{C}} \simeq Z_{\mathcal{C}'}$. Thus, the assignment $\mathcal{C} \mapsto Z_{\mathcal{C}}$ is indeed a well-defined map from the set of equivalence classes of finite direct sums of modular tensor categories to the set of equivalence classes of representations of $\mathbf{F}(\mathcal{H})$.

    By construction, for any $Z: \mathbf{F}(\mathcal{H}) \rightarrow \Vect$, we have $Z \simeq Z_{Z(\tikztinycirc)}$. In particular, here we use the uniqueness of the value $p$ as proven in Lemma \ref{lem:z-value}. Conversely, for any modular tensor category $\mathcal{C}$, the ribbon category $Z_\mathcal{C}(\tikztinycirc)$ is exactly $\mathcal{C}$. Hence, the two maps above indeed define a bijection.
\end{proof}
\begin{rk}
    The proof of \cite[Theorem 1]{BDSV4} shows that representations of $\mathbf{F}(\mathcal{M})$ are classified by finite direct sums of modular categories with a choice of square root of the anomaly of each summand. By replacing $\mathcal{M}$ with $\mathcal{H}$, we have essentially removed the choice of square roots.
\end{rk}

Likewise, \cite[Theorem 3]{BDSV4} shows that representations of $\mathbf{F}(\mathcal{N})$ are classified by finite direct sums of modular tensor categories whose anomalies are equal along with a choice of square root of this anomaly. The following result for $\mathbf{F}(\mathcal{G})$ may be thought of as a version of this without the choice of square root.
\begin{thm} \label{thm:rep-fg}
    Linear representations of $\mathbf{F}(\mathcal{G})$ are classified by finite direct sums of modular tensor categories whose anomalies are equal.
\end{thm}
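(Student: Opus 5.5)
Given $Z:\mathbf{F}(\mathcal{G})\rightarrow\Vect$, restrict along $\mathbf{F}(\mathcal{H})\rightarrow\mathbf{F}(\mathcal{G})$ from (\ref{eq:fs}). By Theorem \ref{thm:rep-fh} and its proof, this restriction is determined up to equivalence by the ribbon category $Z(\tikztinycirc)=\bigoplus_{j}\mathcal{C}_j$, a finite direct sum of modular tensor categories. By Lemma \ref{lem:z-value}, applied summandwise, $Z(z)$ acts on the $j$-th summand as multiplication by the anomaly $\frac{p_+^{(j)}}{p_-^{(j)}}$ of $\mathcal{C}_j$.

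Next consider the extra generator $\zeta$. It is an invertible 2-endomorphism of $\id_{\id_\emptyset}$, so $Z(\zeta)$ is an invertible endomorphism of the monoidal unit of $\Vect$. Since $Z$ factors through $\KV$, this is multiplication by a scalar $\lambda\in k^\times$. The global $z$ relation says that $\zeta$ placed beside $\id_{\tikztinycyl}$ equals $z$. Applying $Z$, the scalar $\lambda$ acts uniformly on all of $Z(\tikztinycirc)$, so $Z(z)=\lambda\cdot\id$. Comparing with the summandwise description of $Z(z)$, all anomalies $\frac{p_+^{(j)}}{p_-^{(j)}}$ coincide, with common value $\lambda$. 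Hence $Z\mapsto Z(\tikztinycirc)$ lands in finite direct sums of modular tensor categories with equal anomalies. As in Theorem \ref{thm:rep-fh}, equivalent representations give equivalent ribbon categories, so this map is well defined on equivalence classes.

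Conversely, let $\mathcal{C}=\bigoplus_j\mathcal{C}_j$ have all anomalies equal to $\lambda$. Take $Z_\mathcal{C}$ on $\mathcal{H}$ as constructed in the proof of Theorem \ref{thm:rep-fh}, summed over $j$, and set $Z_\mathcal{C}(\zeta):=\lambda$. The inverses relation is immediate since $\lambda\neq0$. The global $z$ relation holds because $Z_\mathcal{C}(z)$ is $\lambda$ on every summand. The remaining relations are those of $\mathcal{H}$ and hold already. Equivalences $\mathcal{C}\simeq\mathcal{C}'$ of ribbon categories induce equivalences $Z_\mathcal{C}\simeq Z_{\mathcal{C}'}$ as in \cite[Proposition 6.2]{BDSV4}; no extra data on $\zeta$ is needed, since $\zeta$ lives on the empty 1-morphism and the naturality condition there is automatic for scalars.

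It remains to show $Z\simeq Z_{Z(\tikztinycirc)}$. On $\mathcal{H}$ this is Theorem \ref{thm:rep-fh}, which uses the uniqueness of $p$ from Lemma \ref{lem:z-value}. On $\zeta$, both functors send it to the same scalar $\lambda$, because $\lambda$ is forced by the global $z$ relation. I expect this last check to be the main obstacle. One must confirm that the transformation $Z\simeq Z_{Z(\tikztinycirc)}$ built for $\mathcal{H}$ satisfies the naturality condition for the new generator $\zeta$. This reduces to the statement that the transformation's component at $\id_\emptyset$ commutes with scalar 2-endomorphisms, which holds in $\KV$.
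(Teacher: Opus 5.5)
Your proposal is correct and follows the same route as the paper's proof. You restrict to $\mathcal{H}$, use the global $z$ relation together with Lemma~\ref{lem:z-value} to force $Z(\zeta)$ to equal the anomaly of every summand, and conversely extend the $Z_{\mathcal{C}}$ of Theorem~\ref{thm:rep-fh} by sending $\zeta$ to the common anomaly; your naturality check at $\zeta$ only spells out what the paper leaves implicit.

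One degenerate case is glossed over by both you and the paper. If $Z$ sends the circle to the zero category, the global $z$ relation is vacuous and $Z(\zeta)$ can be any scalar in $k^\times$, so your claim that $\lambda$ is forced, and hence the bijection, needs at least one nonzero summand.
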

\begin{proof}
    The proof is analogous to that of \cite[Theorem 3]{BDSV4}. Given a linear representation of $\mathbf{F}(\mathcal{G})$, $\zeta$ must be sent to multiplication by some scalar $a$. The global $z$ relation along with Lemma \ref{lem:z-value} implies that $a = \frac{p_+}{p_-}$ for each direct summand of $Z(\tikztinycirc)$. Conversely, given any modular tensor category $\mathcal{C}$, sending $\zeta$ to $\frac{p_+}{p_-}$ extends the $Z_\mathcal{C}$ defined in Theorem~\ref{thm:rep-fh} to a symmetric monoidal functor $Z_\mathcal{C}:\mathbf{F}(\mathcal{G}) \rightarrow \Vect$.
\end{proof}
\begin{rk}
    The simple representations of $\mathbf{F}(\mathcal{H})$ and $\mathbf{F}(\mathcal{G})$ are the same: both correspond to modular tensor categories. The added condition for the representations of $\mathbf{F}(\mathcal{G})$ is merely a restriction of which simple representations we may take direct sums of.
\end{rk}

By combining Theorems \ref{thm:rep-fh} and \ref{thm:rep-fg} with Theorems \ref{conj:pres-csig/2} and \ref{conj:pres-sig/2} respectively, we obtain our main results.
\begin{thm}\label{thm:main-csig/2}
    Linear representations of $\Bord{csig/2}$ are classified by finite direct sums of modular tensor categories.
\end{thm}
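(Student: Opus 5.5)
The statement follows by transporting Theorem \ref{thm:rep-fh} along the equivalence of Theorem \ref{conj:pres-csig/2}. First, I will use that the equivalence of symmetric monoidal bicategories $|-|_\mathcal{H}: \mathbf{F}(\mathcal{H}) \rightarrow \Bord{csig/2}$ has a symmetric monoidal pseudo-inverse $G$. This inverse exists because $|-|_\mathcal{H}$ is essentially surjective, essentially full and fully faithful on 2-morphisms; in the symmetric monoidal setting it can be promoted to a symmetric monoidal adjoint equivalence.

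Precomposition then gives functors between the bicategories of symmetric monoidal functors,
\[
-\circ|-|_\mathcal{H} : \mathrm{Fun}^{\otimes}(\Bord{csig/2},\Vect) \rightarrow \mathrm{Fun}^{\otimes}(\mathbf{F}(\mathcal{H}),\Vect)
\]
and $-\circ G$ in the other direction. The unit and counit of the adjoint equivalence, whiskered by a representation $Z$, give symmetric monoidal natural equivalences $Z \circ |-|_\mathcal{H} \circ G \simeq Z$ and $Z' \circ G \circ |-|_\mathcal{H} \simeq Z'$. Consequently these two functors induce mutually inverse bijections on equivalence classes of linear representations.

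Composing with the bijection of Theorem \ref{thm:rep-fh}, which sends $Z' \mapsto Z'(\tikztinycirc)$, gives the claimed classification. Explicitly, a representation $Z$ of $\Bord{csig/2}$ is sent to the finite direct sum of modular tensor categories $Z(|\tikztinycirc|_\mathcal{H}) = Z(\mathbb{S},\mathbb{D})$, namely the value on the standard circle with its bounding disc. Conversely, a finite direct sum $\mathcal{C}$ of modular tensor categories is sent to $Z_\mathcal{C}\circ G$.

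The only point needing care is that equivalences of representations are preserved under these transports, i.e. that whiskering a symmetric monoidal transformation by a symmetric monoidal functor stays symmetric monoidal. This is standard (cf. \cite[\textsection 2]{csp-phd}), so I expect no real obstacle. All the substantive input is already contained in Theorem \ref{conj:pres-csig/2}, which rests on Theorem \ref{conj:pres-or}, and in Theorem \ref{thm:rep-fh}.
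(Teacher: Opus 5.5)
Your proposal is correct and is exactly the paper's argument: the paper deduces this theorem in one line from Theorem \ref{conj:pres-csig/2} and Theorem \ref{thm:rep-fh}. Your transport of equivalence classes of representations along the symmetric monoidal equivalence $|-|_\mathcal{H}$ (via a pseudo-inverse and whiskered unit and counit) just spells out the standard step the paper leaves implicit.
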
 
\begin{proof}
    This follows from Theorems \ref{conj:pres-csig/2} and \ref{thm:rep-fh}.
\end{proof}
\begin{thm} \label{thm:main-sig/2}
    Linear representations of $\Bord{sig/2}$ are classified by finite direct sums of modular tensor categories whose anomalies are equal.
\end{thm}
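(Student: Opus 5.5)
The plan is to transport the classification of Theorem \ref{thm:rep-fg} along the equivalence of Theorem \ref{conj:pres-sig/2}, in the same way that Theorem \ref{thm:main-csig/2} was obtained from Theorems \ref{conj:pres-csig/2} and \ref{thm:rep-fh}.

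First, I will fix the model. By Construction \ref{constr:sig/2-out} and the subsequent proposition identifying $\Bord{sig/2,out}$ with the class $[\mathbf{c}\ssigh]$, we may take $\Bord{sig/2}$ to be $\Bord{sig/2,out}$. By \textsection\ref{subsection:aut-bordsig}, the other choice $\Bord{sig/2,in}$ is isomorphic to it as a symmetric monoidal extension. By Theorem \ref{thm:equiv-ext}, any other model of $\Bord{or}$ yields an equivalent symmetric monoidal bicategory. Hence the set of equivalence classes of linear representations does not depend on the model, and it suffices to treat $\Bord{sig/2,out}$.

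Second, I will invoke Theorem \ref{conj:pres-sig/2}, concretely Proposition \ref{prop:g-equiv}. It says that $|-|_\mathcal{G}:\mathbf{F}(\mathcal{G})\to\Bord{sig/2,out}$ is an equivalence of symmetric monoidal bicategories. Choose a symmetric monoidal inverse equivalence $G$. Precomposition with $|-|_\mathcal{G}$ and with $G$ then gives mutually inverse equivalences between the bicategory of symmetric monoidal functors $\Bord{sig/2,out}\to\Vect$ and the bicategory of symmetric monoidal functors $\mathbf{F}(\mathcal{G})\to\Vect$. In particular, equivalence classes of linear representations of the two are in bijection.

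Third, I will apply Theorem \ref{thm:rep-fg}. It identifies equivalence classes of representations of $\mathbf{F}(\mathcal{G})$ with equivalence classes of finite direct sums of modular tensor categories with equal anomalies. Composing the two bijections gives the statement. Concretely, a representation $Z$ of $\Bord{sig/2}$ is sent to the ribbon category $Z(|\tikztinycirc|_\mathcal{G})=Z(\mathbb{S},\mathbb{D})$.

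There is no real obstacle beyond bookkeeping. The one point needing care is that precomposition with a symmetric monoidal equivalence induces an equivalence on functor bicategories, and so a bijection on equivalence classes. This is a standard fact about symmetric monoidal bicategories. The substantive input is inherited: the validity of Theorem \ref{conj:pres-sig/2} rests on Theorem \ref{conj:pres-or}, so the result is conditional in exactly the way stated in the introduction.
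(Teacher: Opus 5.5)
Your proposal is correct and is exactly the paper's argument: the theorem follows by transporting the classification of representations of $\mathbf{F}(\mathcal{G})$ (Theorem \ref{thm:rep-fg}) along the symmetric monoidal equivalence $|-|_\mathcal{G}$ of Theorem \ref{conj:pres-sig/2}, and so is conditional on Theorem \ref{conj:pres-or}. One minor point: Theorem \ref{thm:equiv-ext} only concerns the non-monoidal $\Ext$ groups, so it is not quite the right citation for model-independence of the symmetric monoidal bicategory; nothing depends on this, since the paper works with $\Bord{sig/2,out}$ over $\Bord{or,exp}$ throughout.
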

\begin{proof}
    This follows from Theorems \ref{conj:pres-sig/2} and \ref{thm:rep-fg}.
\end{proof}

\appendix

\section{Extensions of equivalent bicategories} \label{section:equiv-ext}

Throughout this paper, we have used various models for the oriented bordism bicategory and its extensions. It is, however, not clear a priori that equivalences of bicategories and compatible with the group of extensions constructed in Proposition \ref{prop:ext-gp}. The aim of this appendix is to show that the groups of extensions are invariant under equivalences of bicategories. In particular, Theorems \ref{thm:exists-sig/2} and \ref{thm:indiv} do not depend on the choice of model of $\Bord{or}$.

We will follow the notation of \cite[\textsection 4]{JY} throughout this appendix.

\subsection{Pullbacks of extensions}

Let $(F,F^2,F^0): \mathbf{C} \rightarrow \mathbf{D}$ be a pseudofunctor between bicategories, i.e. a lax functor for which all $F_{g,f}^2$ and $F_x^0$ are invertible. For each abelian group $A$, we shall define a group homomorphism $F^*:\Ext(\mathbf{D},A) \rightarrow \Ext(\mathbf{C},A)$ via a pullback construction.

\begin{defn} \label{defn:pb-ext}
    Let $\hatD$ be an extension of $\mathbf{D}$ by an abelian group $A$ and let $(F,F^2,F^0): \mathbf{C} \rightarrow \mathbf{D}$ be a pseudofunctor. The \emph{pullback} $\hatC = F^*\hatD$ is an extension of $\mathbf{C}$ by $A$ defined as follows: for each 2-morphism $\beta$ of $\mathbf{D}$, choose a lift $\widehat{\beta}$ to $\hatD$ such that each identity 2-morphism lifts to an identity 2-morphism. Then, let $\hatC$ be the bicategory with 
    \begin{itemize}
        \item Objects and 1-morphisms: same as $\mathbf{C}$.
        \item 2-morphisms: pairs $(\alpha,\beta)$ of 2-morphisms in $\mathbf{C},\hatD$ respectively such that $F\alpha = q\beta$.
    \end{itemize}
    Vertical composition of 2-morphisms is done entrywise, while the horizontal composition of $(\alpha,\beta)$ and $(\alpha',\beta')$ where $\alpha:f\rightarrow g$ and $\alpha':f'\rightarrow g'$ is given by $$\pa{\alpha'\star\alpha, \widehat{F_{a',b'}^2}(\beta'\star\beta)\pa{\widehat{F_{a,b}^2}}^{-1}}.$$

    Given composable 1-morphisms $f,g,h$, their associator in $\hatC$ is given by $$\pa{\alpha_{h,g,f},\widehat{F_{h,gf}^2}\pa{\id_{Fh}\star\widehat{F_{g,f}^2}}\mathcal{A}_{Fh,Fg,Ff}\pa{\widehat{F_{h,g}^2}\star\id_{Ff}}^{-1}\pa{\widehat{F_{hg,f}^2}}^{-1}}$$ where $\mathcal{A}_{Fh,Fg,Ff}$ denotes the associator of $Ff,Fg,Fh$ in $\hatD$.

    For each 1-morphism $f:x\rightarrow y$, its identity 2-morphism is given by $(\id_f,\id_{Ff})$ and its left and right unitors are respectively given by $$\pa{\lambda_f, \mathcal{L}_{Ff}\pa{\widehat{F_y^0}\star\id_{Ff}}^{-1}\pa{\widehat{F_{\id_y,f}^2}}^{-1}}\qquad \text{and} \qquad \pa{\rho_f,\mathcal{R}_{Ff}\pa{\id_{Ff}\star\widehat{F_x^0}}^{-1}\pa{\widehat{F_{f,\id_x}^2}}^{-1}}$$ where $\mathcal{L}_{Ff},\mathcal{R}_{Ff}$ denote the left and right unitors of $Ff$ in $\hatD$.
\end{defn}
One may verify the unity (triangle) and pentagon axioms directly: the terms arising from the $F^2,F^0$ cancel out, and so these hold by the unity and pentagon axioms on $\mathbf{C},\hatD$.

While this construction may a priori depend on the choice of lift of each 2-morphism of $\hatD$, one may show that a different choice of lift would yield an equivalent extension:
\begin{lem}
    Consider another choice of lift given by $a_\beta \cdot \widehat{\beta}$ (where $a_\beta\in A$) for each 2-morphism $\beta$. The pullback constructed in this way is equivalent to $\hatC$.
\end{lem}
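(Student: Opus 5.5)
We construct an explicit equivalence of extensions $\Phi:\hatC\rightarrow\hatC'$. Here $\hatC$ is the pullback built from the lifts $\widehat{\beta}$, and $\hatC'$ is the pullback built from the lifts $a_\beta\cdot\widehat{\beta}$. On objects and 1-morphisms $\Phi$ is the identity. The key observation is that the 2-morphisms of both pullbacks are literally the same set, namely pairs $(\alpha,\beta)$ with $F\alpha=q\beta$. The choice of lift enters only through the structure data: horizontal composition, associators and unitors. These involve the lifts $\widehat{F^2_{g,f}}$ and $\widehat{F^0_x}$ of the identity-type 2-cells $F^2,F^0$.

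We therefore take $\Phi$ to be the identity on 2-morphisms. We equip it with laxity constraints that absorb the discrepancy. Write $n(g,f):=a_{F^2_{g,f}}$ and $k(x):=a_{F^0_x}$. We set
\[
\Phi^2_{g,f}:=\pa{\id_{gf},\,n(g,f)\cdot\id_{F(gf)}}, \qquad \Phi^0_x:=\pa{\id_{\id_x},\,k(x)\cdot\id_{F\id_x}}.
\]
Up to sign conventions, this is exactly the data $(m,n,k)$ with $m=0$ from Definition \ref{defn:coboundary}. The condition that identity 2-morphisms lift to identities ensures $a_{\id}=0$, so $m(\id_f)=0$ holds trivially.

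The checks run in the following order. Vertical functoriality is immediate, since vertical composition is entrywise in both pullbacks. For naturality of $\Phi^2$, we compare the two horizontal composites. In $\hatC'$ the second entry is $\pa{a\cdot\widehat{F^2_{a',b'}}}(\beta'\star\beta)\pa{a'\cdot\widehat{F^2_{a,b}}}^{-1}$. The $A$-action commutes with both compositions, so this equals $\pa{n(g',f')-n(g,f)}$ times the $\hatC$ composite, which is precisely what naturality of $\Phi^2$ requires. The lax associativity and lax unity conditions are checked the same way. The new associator differs from the old by
\[
n(h,gf)+n(g,f)-n(h,g)-n(hg,f),
\]
because the terms $\id\star\widehat{F^2}$ pick up the shift through compatibility of $\star$ with the action. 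This matches the associativity equation for $\Phi^2$ with $m(\alpha_{h,g,f})=0$. The unitors shift by $-k(y)-n(\id_y,f)$ and by $-k(x)-n(f,\id_x)$ respectively, again matching the lax unity equations. Finally, $\Phi$ is the identity on the underlying data, so it commutes with the projections to $\mathbf{C}$ and with the $A$-action. It is also invertible, with inverse given by the negated constraints.

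The main obstacle is bookkeeping the signs consistently. The pullback formulas contain inverses of lifts, and $(a\cdot\gamma)^{-1}=(-a)\cdot\gamma^{-1}$. One must verify that each of the four lax-functor coherence axioms reduces to an identity among the $n$'s and $k$'s, with all other factors cancelling. I would do this by first recording the general principle that any composite in $\hatD$ involving lifts with shifts $a_i$, entering with signs $\pm$, equals $\pa{\sum\pm a_i}$ times the unshifted composite. Each axiom then becomes a linear identity in $A$, checkable by inspecting which $F^2,F^0$ lifts appear on each side.
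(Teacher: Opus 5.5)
Your strategy is the paper's. Both pullbacks have literally the same 2-morphisms $(\alpha,\beta)$, the comparison functor is the identity on them, and everything is carried by laxity constraints that are identity 2-cells shifted by $a_{F^2_{g,f}}$ and $a_{F^0_x}$. The paper, however, builds the functor in the opposite direction: $G:\hatC'\rightarrow\hatC$ with $G^2_{g,f}=a_{F^2_{g,f}}\cdot\id_{gf}$ and $G^0_x=a_{F^0_x}\cdot\id_{\id_x}$. For your direction $\Phi:\hatC\rightarrow\hatC'$ these constraints have the wrong sign, so the verifications you assert fail as written.

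Use the paper's (Johnson--Yau) convention $\Phi^2_{g,f}:\Phi g\circ\Phi f\Rightarrow\Phi(gf)$, with $\Phi$ the identity on pairs. Then naturality of $\Phi^2$ for $\alpha:f\rightarrow f'$ and $\beta:g\rightarrow g'$ says $\Phi^2_{g',f'}\circ(\beta\star'\alpha)=(\beta\star\alpha)\circ\Phi^2_{g,f}$, where $\star,\star'$ are the horizontal compositions in $\hatC,\hatC'$. You correctly found $\beta\star'\alpha=\pa{n(g',f')-n(g,f)}\cdot(\beta\star\alpha)$. With $\Phi^2=+n$, the left side is $\pa{2n(g',f')-n(g,f)}\cdot(\beta\star\alpha)$ and the right side is $n(g,f)\cdot(\beta\star\alpha)$. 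These differ whenever $n(g',f')\neq n(g,f)$.

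The same doubling appears in the other checks. The associator of $\hatC'$ is that of $\hatC$ shifted by $d:=n(h,gf)+n(g,f)-n(h,g)-n(hg,f)$. Lax associativity demands $n_\Phi(h,g)+n_\Phi(hg,f)-n_\Phi(g,f)-n_\Phi(h,gf)=d$, which for $n_\Phi=n$ becomes $-d=d$. Lax left unity demands $n_\Phi(\id_y,f)+k_\Phi(y)=-k(y)-n(\id_y,f)$, which similarly fails for $n_\Phi=n$, $k_\Phi=k$.

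All three conditions hold exactly when you take $\Phi^2_{g,f}=(-n(g,f))\cdot\id$ and $\Phi^0_x=(-k(x))\cdot\id$. Equivalently, your inverse ``with negated constraints'' $\hatC'\rightarrow\hatC$ is precisely the paper's $G$, and for that functor your bookkeeping goes through verbatim. This is a one-line fix rather than a missing idea. Still, as written, your claims that the shift is ``precisely what naturality requires'' and that the associator shift ``matches the associativity equation'' are false.
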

\begin{proof}
    Let $\hatC'$ be the pullback constructed from this new choice of lift. We may construct an equivalence $(G,G^2,G^0):\hatC'\rightarrow\hatC$ directly. Indeed, for each pair of composable 1-morphisms $f,g$, let $$G_{f,g}^2 = a_{F_{g,f}^2} \cdot \id_{gf}$$ and for each object $x$, let $$G_x^0 = a_{F_x^0} \cdot \id_{\id_x}.$$ Then letting $G$ send each $(\alpha,\beta)$ to $(\alpha,\beta)$, we have a pseudofunctor $(G,G^2,G^0):\hatC'\rightarrow\hatC$. This is clearly has an inverse, and is thus an equivalence of extensions of $\mathbf{C}$.
\end{proof}

We shall now describe this construction in terms of cocycles. Given a lift $\widehat{\beta}$ of each 2-morphism $\beta$ of $\mathbf{D}$ to $\hatD$, we may obtain a cocycle $\mathbf{c} = (c,\tilde{c},a,\ell,r)$ for the extension $\hatD$. Correspondingly, there is a choice of lift of each 2-morphism $\alpha$ of $\mathbf{C}$ to the 2-morphism $\widehat{\alpha} := (\alpha,\widehat{F\alpha})$ of $\hatC$. We shall compute the cocycle $F^*\mathbf{c} = (F^*c,F^*\tilde{c},F^*a,F^*\ell,F^*r)$ for this choice of lifts.

Firstly, by definition of vertical composition in $\hatC$, we have
\begin{equation} \label{eq:pb-first}
    F^*c(\beta,\alpha) = c(F\beta,F\alpha).
\end{equation}
Next, from the definition of horizontal composition in $\hatC$, for horizontally composable 2-morphisms $\alpha,\beta$ in $\mathbf{C}$ we have $\widehat{F(\beta\star\alpha)}\widehat{F_{a,b}^2} = \widehat{F_{a',b'}^2}\pa{\widehat{F\beta}\star \widehat{F\alpha}}$, and so we have
\begin{equation}
    F^*\tilde{c}(\beta,\alpha) = \tilde{c}(F\beta,F\alpha) + c(F^2_{a',b'}, F\beta\star F\alpha) - c(F(\beta\star\alpha), F^2_{a,b}).
\end{equation}
where $\alpha:a\rightarrow a'$, $\beta: b\rightarrow b'$.

Simiarly, the definition of associators in $\hatC$ gives
\begin{equation}
    \begin{aligned}
        F^*a(h,g,f) &= a(Fh,Fg,Ff) + c(F^2_{h,gf}, \id_{Fh}\star F^2_{g,f}, \alpha_{Fh,Fg,Ff}) + \tilde{c}(\id_{Fh}, F^2_{g,f}) \\
        &\quad- c(F\alpha_{h,g,f}, F^2_{hg,f}, F^2_{h,g}\star\id_{Ff}) - \tilde{c}(F^2_{h,g},\id_{Ff})
    \end{aligned}
\end{equation}
for any composable 1-morphisms $f,g,h$ while the definitions of the left and right unitors give
\begin{equation}
    F^*\ell(f) = \ell(Ff) - c(F\lambda_f, F^2_{\id_y,f}, F^0_y\star\id_{Ff}) - \tilde{c}(F^0_y,\id_{Ff})
\end{equation}
and
\begin{equation} \label{eq:pb-last}
    F^*r(f) = r(Ff) - c(F\rho_f, F^2_{f,\id_x}, \id_{Ff}\star F^0_x) - c'(\id_{Ff}, F^0_x)
\end{equation}
for each 1-morphism $f:x\rightarrow y$.

One may easily check that a different choice of cocycle $\mathbf{c}$ (i.e. adding a coboundary) will result in a $F^*\mathbf{c}$ which differs from the original by a coboundary. In other words, pullbacks of equivalent extensions are equivalent, and so the pseudofunctor $(F,F^2,F^0):\mathbf{C}\rightarrow\mathbf{D}$ induces a homomorphism $F^*:\Ext(\mathbf{D},A) \rightarrow \Ext(\mathbf{C},A)$.

Moreover, if $(G, G^2, G^0): \mathbf{D} \rightarrow \mathbf{E}$ is also a pseudofunctor, then following the definition of the composite pseudofunctor $(GF,(GF)^2,(GF)^0)$ given in \cite[Definition 4.1.26]{JY}, one may check directly from Definition \ref{defn:pb-ext} that $(GF)^* = F^*G^*$.

\subsection{Strong transformations induce equivalences of extensions}

The computations here involve many diagrams of 2-morphisms which commute up to an action of an element $a$ of $A$. These will look like
\[\begin{tikzcd}
	\bullet & |[text=blue]| a & \bullet
	\arrow[curve={height=30pt}, from=1-1, to=1-3]
	\arrow[curve={height=-30pt}, from=1-1, to=1-3]
\end{tikzcd}\]
where each arrow may be replaced by a composition of arrows. The arrows represent 2-morphisms; the label $a$ of the face is the element of $A$ which, upon acting on the anti-clockwise route, yields the clockwise route. Faces which commute are left blank.

For example, the data of vertical composition may be written as
\[\begin{tikzcd}[cramped]
	\bullet && \bullet \\
	& |[text=blue]| {c(\beta,\alpha)} \\
	&& \bullet
	\arrow["{\widehat\alpha}", from=1-1, to=1-3]
	\arrow["{\widehat{\beta\alpha}}"', curve={height=30pt}, from=1-1, to=3-3]
	\arrow["{\widehat\beta}", from=1-3, to=3-3]
\end{tikzcd}\]

These may be combined in the obvious ways:
\begin{lem}\label{lem:merging}
    Suppose we have diagrams of the form:
    \[\begin{tikzcd}[cramped]
        \bullet && |[text=blue]| b && \bullet && \bullet & |[text=blue]| c & \bullet \\
        & \bullet & |[text=blue]| a & \bullet &&&& \bullet & |[text=blue]| d & \bullet
        \arrow[curve={height=-30pt}, from=1-1, to=1-5]
        \arrow[from=1-1, to=2-2]
        \arrow[curve={height=-30pt}, from=1-7, to=1-9]
        \arrow[from=1-7, to=2-8]
        \arrow[from=1-9, to=2-10]
        \arrow[curve={height=30pt}, from=2-2, to=2-4]
        \arrow[curve={height=-18pt}, from=2-2, to=2-4]
        \arrow[from=2-4, to=1-5]
        \arrow[from=2-8, to=1-9]
        \arrow[curve={height=30pt}, from=2-8, to=2-10]
    \end{tikzcd}\]
    Then, their outer faces may be written respectively as
    \[\begin{tikzcd}[cramped]
        \bullet && \bullet && \bullet && \bullet \\
        & |[text=blue]| {a+b} &&&& |[text=blue]| {c+d} \\
        \bullet && \bullet && \bullet && \bullet
        \arrow[from=1-1, to=1-3]
        \arrow[from=1-1, to=3-1]
        \arrow[from=1-5, to=1-7]
        \arrow[from=1-5, to=3-5]
        \arrow[from=1-7, to=3-7]
        \arrow[from=3-1, to=3-3]
        \arrow[from=3-3, to=1-3]
        \arrow[from=3-5, to=3-7]
    \end{tikzcd}\]
\end{lem}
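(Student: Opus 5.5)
The plan is to unwind the convention defining a labelled face and reduce both claims to the equivariance and associativity of the $A$-action on 2-morphisms of $\hatC$ (or $\hatD$). Recall that a face labelled $a$ means that acting by $a$ on the anticlockwise composite gives the clockwise composite. Each face in the diagrams is therefore an equation of the form $a\cdot(\text{anticlockwise}) = \text{clockwise}$, where both sides are composites of the 2-morphisms along the edges. The proof then consists of composing these equations and using that the $A$-action commutes with vertical composition, as required in the definition of an extension: $(g\cdot\beta)\circ\alpha = g\cdot(\beta\circ\alpha) = \beta\circ(g\cdot\alpha)$.

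For the first diagram, where the face labelled $a$ is nested inside the face labelled $b$, proceed as follows. Let $u$ be the incoming edge, $w$ the outgoing edge, $\gamma$ and $\delta$ the lower and upper inner arcs, and $\epsilon$ the outer arc. Fix the orientation convention so that the inner face gives $a\cdot(w\circ\gamma\circ u) = w\circ\delta\circ u$ after whiskering by $u$ and $w$. Whiskering is legitimate because the action commutes with composition. The outer face then reads $b\cdot(w\circ\delta\circ u) = \epsilon$. Substituting the first equation into the second and using $b\cdot(a\cdot x) = (a+b)\cdot x$ gives $(a+b)\cdot(w\circ\gamma\circ u) = \epsilon$. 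This is exactly the face labelled $a+b$ in the redrawn square, where the inner arc $\delta$ has been deleted.

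For the second diagram, two faces share the edge between the middle vertices, so the argument is a pasting along a common edge. The face labelled $c$ expresses one route around the left triangle in terms of the other, and the face labelled $d$ does the same for the right triangle. Compose the equation for the $c$-face with the remaining edge of the $d$-face, and apply the $d$-face equation to the resulting composite; again use that the action passes through composition. The shared edge then cancels out of the combined statement, leaving the outer square with label $c+d$.

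The only real subtlety is bookkeeping. One has to check that in each picture the clockwise/anticlockwise orientations of the sub-faces match the orientation of the outer face, so that the labels add rather than subtract. This amounts to reading off which composite is the source of each equation. If an orientation happened to be reversed, the label would become $-a$, since the action is free and by a group. The pictured configuration is arranged so that both labels contribute positively, and verifying this is the step I expect to require the most care, though it is not a genuine obstacle.
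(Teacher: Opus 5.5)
Your proposal is correct and is essentially the paper's argument: the paper's one-line proof compares each outer route with the route through the middle (shared) edge. That is exactly your substitution of one face equation into the other, using that the $A$-action commutes with vertical composition and that $b\cdot(a\cdot x)=(a+b)\cdot x$. (Your ``whiskering'' by $u$ and $w$ is really vertical pre- and post-composition, and the orientation check you flag does come out positive in both pictures, since the middle route is the anticlockwise route of one face and the clockwise route of the other.)
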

\begin{proof}
    In both cases, compare the outer routes with the unique route through the middle edge.
\end{proof}

In essence, as long as two faces may be merged such that the new merged face consists of one single clockwise route and one single anticlockwise route, we may do so, adding the values of the faces. We will be using this method of merging faces to prove the following computational result:

\begin{prop}\label{prop:equiv-cocyc}
    Let $(F,F^2,F^0), (G,G^2,G^0)$ be pseudofunctors $\mathbf{C} \rightarrow \mathbf{D}$ such that there exists a strong transformation $\theta: F\rightarrow G$. Then $F^*, G^*: \Ext(\mathbf{D},A) \rightarrow \Ext(\mathbf{C},A)$ are equal as homomorphisms.
\end{prop}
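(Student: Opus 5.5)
We show that for every cocycle $\mathbf{c}$ on $\mathbf{D}$ the pulled-back cocycles $F^*\mathbf{c}$ and $G^*\mathbf{c}$ differ by a coboundary in the sense of Definition~\ref{defn:coboundary}. Equivalently, we construct an equivalence of extensions $F^*\hatD\to G^*\hatD$ directly from $\theta$. Fix lifts $\widehat{\beta}$ for the 2-morphisms of $\mathbf{D}$, with identities lifting to identities, and use them to write both pullbacks. The strong transformation $\theta$ supplies, for each object $x$, a 1-morphism $\theta_x:Fx\to Gx$. For each 1-morphism $f:x\to y$ it supplies an invertible 2-cell $\theta_f:(Gf)\theta_x\Rightarrow\theta_y(Ff)$, natural in $f$ and compatible with $F^2,G^2,F^0,G^0$.

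The candidate coboundary data $(m,n,k)$ comes from reading off the failure of these axioms to commute on the nose after lifting. We define $m(\alpha)$ for $\alpha:f\Rightarrow f'$ by the naturality square of $\theta$. Its two routes are $\widehat{\theta_{f'}}\circ(\widehat{G\alpha}\star\id_{\theta_x})$ and $(\id_{\theta_y}\star\widehat{F\alpha})\circ\widehat{\theta_f}$, and we set $m(\alpha)$ to be the element of $A$ measuring their difference, written as a labelled face in the notation above. Similarly, $n(g,f)$ is the label of the face obtained by lifting the compatibility axiom of $\theta$ with $F^2_{g,f},G^2_{g,f}$. Finally, $k(x)$ is the label of the face obtained by lifting the unit axiom involving $F^0_x,G^0_x$. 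In each case we correct by the appropriate associator and unitor labels $a,\ell,r$ of $\mathbf{c}$ and by the $c,\tilde c$ terms needed to compose the lifts. Since $\theta_{\id_f}$-naturality is trivial when identities lift to identities, we get $m(\id_f)=0$, as required.

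Next we verify equations (\ref{eq:cob-1})--(\ref{eq:cob-0}) for $G^*\mathbf{c}-F^*\mathbf{c}$ against $(m,n,k)$, using the formulas (\ref{eq:pb-first})--(\ref{eq:pb-last}). Each verification is a diagram chase. We paste the relevant lifted diagram, namely the naturality squares for $\beta\circ\alpha$, for $\beta\star\alpha$, or the associativity hexagon, out of faces whose labels are known: values of $c,\tilde c,a,\ell,r$ and of $m,n,k$. The outer boundary of the pasted diagram commutes in $\mathbf{D}$, and its lift picks up exactly the difference $G^*\mathbf{c}-F^*\mathbf{c}$. We then merge faces repeatedly using Lemma~\ref{lem:merging} and sum the labels, so each equation reduces to an identity among terms already present. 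The vertical case (\ref{eq:cob-1}) is straightforward: naturality for $\beta\circ\alpha$ is the vertical pasting of the squares for $\alpha$ and $\beta$, and the middle-four relation (\ref{eq:middle-four}) handles the whiskerings by $\theta_x$ and $\theta_y$.

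We expect the main obstacle to be the horizontal composition and associator equations. There the face for $\beta\star\alpha$ must be decomposed through the transformation axiom relating $\theta_{gf}$ to $\theta_g$, $\theta_f$ and $F^2,G^2$. This decomposition involves several associators of $\hatD$, so the bookkeeping of $a$- and $\tilde c$-terms is heavy. To keep it manageable, we first use the coherence theorem to replace $\mathbf{D}$ and $\hatD$ by strictifications; by the pullback construction this does not change the classes involved. Failing that, we organise the diagrams so that every associator face occurs in cancelling pairs. Once all five equations hold, $[F^*\mathbf{c}]=[G^*\mathbf{c}]$ by Proposition~\ref{prop:ext-gp}, so $F^*=G^*$ as homomorphisms.
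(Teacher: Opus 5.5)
Your proposal follows the paper's proof: there too $m$, $n$, $k$ are the face labels of the lifted naturality square, the lifted compatibility diagram with $F^2,G^2$, and the lifted unit diagram of $\theta$. The identity $F^*\mathbf{c}-G^*\mathbf{c}=\partial(m,n,k)$ (your opposite sign is harmless after replacing $(m,n,k)$ by its negative) is then checked componentwise by pasting diagrams in $\hatD$ and merging faces with Lemma~\ref{lem:merging} and middle-four exchange, working directly with the associators and unitors rather than strictifying.

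If you do want the strictification shortcut, be careful: invariance of $\Ext$ under equivalences (Theorem~\ref{thm:equiv-ext}) is deduced from this very proposition. You would therefore have to verify directly that $\hatD$ is the pullback of the strictified extension, and use $(GF)^*=F^*G^*$, rather than appeal to that invariance.
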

\begin{proof}
    Let $\mathbf{c} = (c,\tilde{c},a,\ell,r)$ be a cocycle for $\mathbf{D}$. Recall that this corresponds to an extension $\hatD$ as well as a choice of section, i.e. some 2-morphism $\widehat{\alpha}$ in $\hatD$ for each 2-morphism $\alpha$ in $\mathbf{D}$.

    For a 2-morphism $\alpha: f \rightarrow g$, where $f,g: x \rightarrow y$, let $m(\alpha)$ be such that we have
    \[\begin{tikzcd}[cramped]
        {Gf\theta_x} && {Gg\theta_x} \\
        & |[text=blue]| {m(\alpha)} \\
        {\theta_yFf} && {\theta_yFg}
        \arrow["{\widehat{G\alpha}\star\id}", from=1-1, to=1-3]
        \arrow["{\theta_f}"', from=1-1, to=3-1]
        \arrow["{\theta_g}", from=1-3, to=3-3]
        \arrow["{\id\star\widehat{F\alpha}}"', from=3-1, to=3-3]
    \end{tikzcd}\]
    Explicitly, this is
    \begin{equation*}
        m(\alpha) = c(\theta_g, G\alpha \star \id_{\theta_x}) + \tilde{c}(G\alpha, \id_{\theta_x}) - c(\id_{\theta_y}\star F\alpha, \theta_f) - \tilde{c}(F\alpha, \id_{\theta_y}).
    \end{equation*}
    We may write down formulas for $n,k$ (defined below) in a similar way, but these are more cumbersome, and so are omitted.

    For 1-morphisms $f: x\rightarrow y$, $g: y\rightarrow z$, let $n(g,f)$ such that
    \[\begin{tikzcd}[cramped, row sep=large]
        && {G(gf)\theta_x} \\
        {(GgGf)\theta_x} && |[text=blue]| {n(g,f)} && {\theta_zF(gf)} \\
        {Gg(Gf\theta_x)} & {Gg(\theta_yFf)} & {(Gg\theta_y)Ff} & {(\theta_zFg)Ff} & {\theta_z(FgFf)}
        \arrow["{\id\star\widehat\theta_f}", from=3-1, to=3-2]
        \arrow["{\mathcal{A}^{-1}}", from=3-2, to=3-3]
        \arrow["{\widehat\theta_g\star\id}", from=3-3, to=3-4]
        \arrow["{\mathcal{A}}", from=3-4, to=3-5]
        \arrow["{\id\star\widehat{F^2_{g,f}}}", from=3-5, to=2-5]
        \arrow["{\mathcal{A}}", from=2-1, to=3-1]
        \arrow["{\widehat{G^2_{g,f}}\star\id}"', from=2-1, to=1-3]
        \arrow["{\widehat\theta_{gf}}"', from=1-3, to=2-5]
    \end{tikzcd}\]
    The $\mathcal{A}$ denote the relevant associators in $\hatD$. For example, the leftmost arrow is the lift $(\alpha_{Gg,Gf,\theta_x}, a(Gg,Gf,\theta_x))$ of the associator $\alpha_{Gg,Gf,\theta_x}$ in $\mathbf{D}$. The diagram commutes upon taking the quotient to $\mathbf{D}$, by lax naturality.

    Finally, for each object $x$, let $k(x)$ be such that
    \[\begin{tikzcd}[cramped,column sep=tiny, row sep=large]
        && {G\id_x\theta_x} \\
        {\id_{Gx}\theta_x} && |[text=blue]| {k(x)} && {\theta_xF\id_x} \\
        & {\theta_x} && {\theta_x\id_{Fx}}
        \arrow["{\theta_{\id_x}}", from=1-3, to=2-5]
        \arrow["{\widehat{G^0_x}\star\id}", from=2-1, to=1-3]
        \arrow["{\mathcal{L}_{\theta_x}}"', from=2-1, to=3-2]
        \arrow["{\mathcal{R}_{\theta_x}^{-1}}"', from=3-2, to=3-4]
        \arrow["{\id\star\widehat{F^0_x}}", from=3-4, to=2-5]
    \end{tikzcd}\]
    Here, $\mathcal{L}_{\theta_x}$ and $\mathcal{R}_{\theta_x}$ denote the left and right unitors of $\theta_x$ in $\hatD$. The diagram commutes upon taking the quotient to $\mathbf{D}$, by lax unity.

    We shall show that $F^*\mathbf{c} - G^*\mathbf{c}$ is equal to the coboundary of $(m,n,k)$. This is checked componentwise in Lemmas \ref{lem:equiv-c}, \ref{lem:equiv-cprime}, \ref{lem:equiv-a} and \ref{lem:equiv-lr}.
\end{proof}

\begin{lem}\label{lem:equiv-c}
    For vertically composable 2-morphisms $\alpha: f\rightarrow g$, $\beta: g\rightarrow h$ in $\hatC$, where $f,g,h:x \rightarrow y$, we have
    \begin{equation} \label{eq:equiv-c}
        c(F\beta, F\alpha) - c(G\beta, G\alpha) = m(\beta\alpha) - m(\alpha) - m(\beta).
    \end{equation}
\end{lem}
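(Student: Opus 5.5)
The plan is to compute the single element of $A$ labelling a pasted diagram in two ways, using the face-merging rule of Lemma \ref{lem:merging}. Consider the square with corners $Gf\theta_x$, $Gh\theta_x$, $\theta_yFf$, $\theta_yFh$. Its vertical edges are $\widehat\theta_f$ and $\widehat\theta_h$. Its top route is $\widehat{G\beta}\star\id$ after $\widehat{G\alpha}\star\id$, and its bottom route is $\id\star\widehat{F\beta}$ after $\id\star\widehat{F\alpha}$. Inserting the middle vertical edge $\widehat\theta_g$ splits the square into the two defining squares for $m(\alpha)$ and $m(\beta)$. By Lemma \ref{lem:merging}, the outer face is therefore labelled $m(\alpha)+m(\beta)$.

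Next, I would compare the same outer face with the defining square for $m(\beta\alpha)$, whose horizontal edges are the single lifts $\widehat{G(\beta\alpha)}\star\id$ and $\id\star\widehat{F(\beta\alpha)}$. Strict functoriality of $F$ and $G$ on 2-morphisms gives $G(\beta\alpha)=G\beta\, G\alpha$ and $F(\beta\alpha)=F\beta\, F\alpha$ in $\mathbf{D}$. Hence $\widehat{G\beta}\widehat{G\alpha}=c(G\beta,G\alpha)\cdot\widehat{G(\beta\alpha)}$, and similarly on the $F$ side.

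To transport this through whiskering, I would use compatibility of the $A$-action with horizontal composition together with the fact that identities lift to identities. This gives $(\widehat{G\beta}\star\id)(\widehat{G\alpha}\star\id)=(\widehat{G\beta}\widehat{G\alpha})\star\id$ by middle-four exchange in $\hatD$. That expression equals $c(G\beta,G\alpha)\cdot(\widehat{G(\beta\alpha)}\star\id)$. The same holds on the $F$ side with $c(F\beta,F\alpha)$. So the top triangle, comparing the two-step top route with the one-step one, is labelled $\pm c(G\beta,G\alpha)$, and the bottom triangle is labelled $\pm c(F\beta,F\alpha)$.

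Merging these two triangles with the $m(\beta\alpha)$ square gives a second expression for the outer face, namely $m(\beta\alpha)+c(G\beta,G\alpha)-c(F\beta,F\alpha)$. Equating the two expressions yields (\ref{eq:equiv-c}). The main obstacle is the sign and orientation bookkeeping in this final step: deciding which route counts as clockwise, so that the $G$-triangle enters with $+$ and the $F$-triangle with $-$.

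I would check this directly against the explicit formula for $m(\alpha)$ given in the proof of Proposition \ref{prop:equiv-cocyc}. Substituting that formula and expanding $m(\beta\alpha)-m(\alpha)-m(\beta)$ reduces the claim to cocycle identities. The first is associativity (\ref{eq:cocyc-1}) for the triples $(\theta_h,G\beta\star\id,G\alpha\star\id)$ and $(\id\star F\beta,\id\star F\alpha,\theta_f)$. The second is middle-four exchange (\ref{eq:middle-four}) with one pair taken to be identities, which gives $\tilde{c}(G\beta G\alpha,\id)+c(G\beta\star\id,G\alpha\star\id)=\tilde{c}(G\beta,\id)+\tilde{c}(G\alpha,\id)+c(G\beta,G\alpha)$, together with its $F$ analogue. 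The third is naturality of $\theta$ in $\mathbf{D}$, used to identify the composite 2-morphisms. After these substitutions all remaining terms should cancel, confirming the signs.
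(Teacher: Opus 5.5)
Your proposal is correct and is essentially the paper's proof: the paper pastes the $m(\alpha)$ and $m(\beta)$ squares (glued along $\widehat\theta_g$) together with the two comparison faces for $c(G\beta,G\alpha)$ and $c(F\beta,F\alpha)$, obtained from middle-four exchange, inside the boundary of the $m(\beta\alpha)$ square. It then merges faces via Lemma~\ref{lem:merging} to obtain exactly $m(\alpha)+m(\beta)=m(\beta\alpha)+c(G\beta,G\alpha)-c(F\beta,F\alpha)$, with the signs you predict. One small correction to your optional explicit-formula check: after the two associativity instances you list and naturality of $\theta$, the four leftover terms cancel only by a third instance of (\ref{eq:cocyc-1}), for the triple $(\id_{\theta_y}\star F\beta,\ \theta_g,\ G\alpha\star\id_{\theta_x})$.
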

\begin{proof}
    Consider the following diagram in $\hatD$: 

    \[\begin{tikzcd}[cramped, column sep=tiny, row sep=small]
        {Gf\circ\theta_x} &&&&& {} & {\theta_y\circ Ff} \\
        &&& |[text=blue]| {-m(\alpha)} \\
        & |[text=blue]| {c(G\beta,G\alpha)} & {Gg\circ\theta_x} && {\theta_y\circ Fg} & |[text=blue]| {-c(F\beta,F\alpha)} \\
        &&& |[text=blue]| {-m(\beta)} \\
        {Gh\circ\theta_x} &&&&&& {\theta_y\circ Fh}
        \arrow["{\widehat{\theta}_f}", from=1-1, to=1-7]
        \arrow["{\widehat{G\alpha}\star\id}", from=1-1, to=3-3]
        \arrow["{\widehat{G(\beta\alpha)}\star\id}"', from=1-1, to=5-1]
        \arrow["{\id\star\widehat{F\alpha}}"', from=1-7, to=3-5]
        \arrow["{\id\star\widehat{F(\beta\alpha)}}", from=1-7, to=5-7]
        \arrow["{\widehat{\theta}_g}", from=3-3, to=3-5]
        \arrow["{\widehat{G\beta}\star\id}", from=3-3, to=5-1]
        \arrow["{\id\star\widehat{F\beta}}"', from=3-5, to=5-7]
        \arrow["{\widehat{\theta}_h}", from=5-1, to=5-7]
    \end{tikzcd}\]
    The left face is
    \begin{equation}\label{eq:first-middle-four}
        \tilde{c}(G\alpha,\id_{\theta_x}) + \tilde{c}(G\beta,\id_{\theta_x}) + c(G\beta\star\id_{\theta_x},G\alpha\star\id_{\theta_x}) - \tilde{c}(G(\beta\alpha),\id_{\theta_x}) = c(G\beta,G\alpha)
    \end{equation}
    by an application of (\ref{eq:middle-four}). A similar computation holds for the right face.

    The outer face contributes a term of $-m(\beta\alpha)$, so applying Lemma \ref{lem:merging} repeatedly (for example, starting from the top face and merging in a clockwise fashion), we have
    \begin{equation*}
        -m(\beta\alpha) = c(G\beta,G\alpha) - c(F\beta,F\alpha) - m(\alpha) - m(\beta)
    \end{equation*}
    which rearranges to (\ref{eq:equiv-c}).
\end{proof}

\begin{lem}\label{lem:equiv-cprime}
    For horizontally composable 2-morphisms $\alpha:f\rightarrow f', \beta:g\rightarrow g'$, where  $f,f':x\rightarrow y$ and $g,g':y\rightarrow z$, we have
    \begin{equation} \label{eq:equiv-cprime}
        F^*\tilde{c}(\beta,\alpha) - G^*\tilde{c}(\beta,\alpha) = m(\beta\star\alpha) - m(\alpha) - m(\beta) - n(g',f') + n(g,f). 
    \end{equation}
\end{lem}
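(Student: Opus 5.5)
We will prove (\ref{eq:equiv-cprime}) in the same way as Lemma \ref{lem:equiv-c}: build a single diagram in $\hatD$ whose outer face has value $-m(\beta\star\alpha)$ (or its negative, depending on orientation), subdivide it into faces whose values are known, and merge them using Lemma \ref{lem:merging}. Concretely, the outer boundary is the square defining $m(\beta\star\alpha)$, with vertices $G(gf)\theta_x$, $G(g'f')\theta_x$, $\theta_zF(gf)$, $\theta_zF(g'f')$ and edges $\widehat{G(\beta\star\alpha)}\star\id$, $\id\star\widehat{F(\beta\star\alpha)}$, $\widehat\theta_{gf}$, $\widehat\theta_{g'f'}$.

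The interior is filled as follows. On the $G$-side, we insert $(GgGf)\theta_x$ and $(Gg'Gf')\theta_x$, connected to $G(gf)\theta_x$ and $G(g'f')\theta_x$ by $\widehat{G^2_{g,f}}\star\id$ and $\widehat{G^2_{g',f'}}\star\id$, and to each other by $(\widehat{G\beta}\star\widehat{G\alpha})\star\id$. That face is exactly the defining relation for $G^*\tilde c(\beta,\alpha)$, up to middle-four corrections of the same type as (\ref{eq:first-middle-four}). The $F$-side is treated symmetrically and contributes $-F^*\tilde c(\beta,\alpha)$. Next, the two hexagon/octagon faces defining $n(g,f)$ and $n(g',f')$ connect $(GgGf)\theta_x$ to $\theta_z(FgFf)$, and $(Gg'Gf')\theta_x$ to $\theta_z(Fg'Ff')$, through the long associator routes. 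Between these two long routes we place the naturality squares: the $m(\alpha)$-square whiskered by $Gg$ or $Fg'$, the $m(\beta)$-square whiskered by $Ff$ or $Gg'$, and naturality-of-associator squares for $\widehat{G\beta}\star\widehat{G\alpha}$ and so on. Together these cover the region.

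The remaining work is to evaluate the auxiliary faces.
\begin{itemize}
\item The whiskered $m$-squares carry exactly $m(\alpha)$ or $m(\beta)$ once middle-four exchange (\ref{eq:middle-four}) is used to commute whiskering with the lifted 2-cells, because the horizontal whiskering by identities is absorbed by (\ref{eq:cocyc-2}) and (\ref{eq:middle-four}).
\item The associator naturality faces have value $0$ by the horizontal associativity condition (\ref{eq:horiz-assoc}) in $\hatD$.
\item The interchange faces, where $\id\star\widehat\theta_f$ passes $\widehat{G\beta}\star\id$, vanish by middle-four exchange.
\end{itemize}
Summing with signs according to orientation gives
\[
-m(\beta\star\alpha) = G^*\tilde c(\beta,\alpha) - F^*\tilde c(\beta,\alpha) - m(\alpha) - m(\beta) - n(g',f') + n(g,f),
\]
which rearranges to (\ref{eq:equiv-cprime}).

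The main obstacle is bookkeeping rather than any new idea. We must check that every interior face really has one clockwise and one anticlockwise route, so that Lemma \ref{lem:merging} applies. We must also verify that the whiskered $m$-squares and the associator-naturality faces contribute exactly $m(\cdot)$ and $0$. The latter requires carefully tracking the $\tilde c$ and $c$ terms through (\ref{eq:horiz-assoc}), where mismatched parenthesizations force extra associator edges. We would first draw the full diagram with all bracketings explicit, then verify each face by expanding into cocycle terms and cancelling with (\ref{eq:middle-four}) and (\ref{eq:horiz-assoc}).
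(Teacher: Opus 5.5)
Your proposal is correct and is essentially the paper's proof. The paper pastes together the same faces: the $G^*\tilde c$ and $F^*\tilde c$ squares via (\ref{eq:middle-four}), the $m(\alpha)$, $m(\beta)$, $m(\beta\star\alpha)$ and $n(g,f)$ faces, and associator-naturality squares that commute on the nose in $\hatD$. The only differences are cosmetic: the paper takes the $n(g',f')$ octagon rather than the $m(\beta\star\alpha)$ square as the outer face, and it whiskers the $m(\alpha)$ and $m(\beta)$ squares directly by $\widehat{G\beta}$ and $\widehat{F\alpha}$ instead of splitting off separate interchange squares. One small slip: the $m(\alpha)$-square is whiskered by $Gg$ or $Gg'$ and the $m(\beta)$-square by $Ff$ or $Ff'$, not by $Fg'$ and $Gg'$.
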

\begin{proof}
    We have the following diagram in $\hatD$:

    \[\begin{tikzcd}[cramped,column sep=tiny, row sep=small]
        {(Gg'Gf')\theta_x} &&&&&& {Gg'(Gf'\theta_x)} \\
        \\
        & |[text=blue]| {-G^*\tilde{c}(\beta,\alpha)} & {(Gg Gf)\theta_x} && {Gg(Gf\theta_x)} & |[text=blue]| {m(\alpha)} \\
        {G(g'f')\theta_x} &&&&&& {Gg'(\theta_yFf')} \\
        && {G(gf)\theta_x} && {Gg(\theta_yFf)} \\
        & |[text=blue]| {-m(\beta\star\alpha)} && |[text=blue]| {-n(g,f)} \\
        && {\theta_zF(gf)} && {(Gg\theta_y)Ff} \\
        {\theta_zF(g'f')} &&&&&& {(Gg'\theta_y)Ff'} \\
        & |[text=blue]| {F^*\tilde{c}(\beta,\alpha)} & {\theta_z(FgFf)} && {(\theta_zFg)Ff} & |[text=blue]| {m(\beta)} \\
        \\
        {\theta_z(Fg'Ff')} &&&&&& {(\theta_zFg')Ff'}
        \arrow["{\mathcal{A}}", from=1-1, to=1-7]
        \arrow["{\widehat{G^2_{f',g'}}\star\id}"', from=1-1, to=4-1]
        \arrow["{\id\star\widehat\theta_f'}", from=1-7, to=4-7]
        \arrow["{(\widehat{G\beta}\star\widehat{G\alpha})\star\id}"', from=3-3, to=1-1]
        \arrow["{\mathcal{A}}", from=3-3, to=3-5]
        \arrow["{\widehat{G^2_{f,g}}\star\id}"', from=3-3, to=5-3]
        \arrow["{\widehat{G\beta}\star(\widehat{G\alpha}\star\id)}", from=3-5, to=1-7]
        \arrow["{\id\star\widehat\theta_f}", from=3-5, to=5-5]
        \arrow["{\widehat\theta_{g'f'}}"', from=4-1, to=8-1]
        \arrow["{\mathcal{A}^{-1}}", from=4-7, to=8-7]
        \arrow["{\widehat{G(\beta\star\alpha)}\star\id}", from=5-3, to=4-1]
        \arrow["{\widehat\theta_{gf}}"', from=5-3, to=7-3]
        \arrow["{\widehat{G\beta}\star(\id\star\widehat{F\alpha})}"', from=5-5, to=4-7]
        \arrow["{\mathcal{A}^{-1}}", from=5-5, to=7-5]
        \arrow["{\id\star\widehat{F(\beta\star\alpha)}}"', from=7-3, to=8-1]
        \arrow["{(\widehat{G\beta}\star\id)\star\widehat{F\alpha}}", from=7-5, to=8-7]
        \arrow["{\widehat\theta_g\star\id}", from=7-5, to=9-5]
        \arrow["{\widehat\theta_g\star\id}", from=8-7, to=11-7]
        \arrow["{\id\star\widehat{F^2_{f,g}}}", from=9-3, to=7-3]
        \arrow["{\id\star(\widehat{F\beta}\star\widehat{F\alpha})}", from=9-3, to=11-1]
        \arrow["{\mathcal{A}}", from=9-5, to=9-3]
        \arrow["{(\id\star\widehat{F\beta})\star\widehat{F\alpha}}"', from=9-5, to=11-7]
        \arrow["{\id\star\widehat{F^2_{f',g'}}}", from=11-1, to=8-1]
        \arrow["{\mathcal{A}}", from=11-7, to=11-1]
    \end{tikzcd}\]
    The squares whose faces are unlabelled commute in $\hatD$, by naturality of associators. The top-left and bottom-left squares require an application of (\ref{eq:middle-four}), similar to the derivation of (\ref{eq:first-middle-four}) in the previous lemma.

    We may apply Lemma \ref{lem:merging} repeatedly, starting from the top face and working clockwise to the bottom-left face, then merging the central face, and finally the remaining two faces on the left. As the outer rectangle contributes a $-n(g',f')$ term, we obtain 
    \begin{equation*}
        -n(g',f') = -n(g,f) + m(\alpha) + m(\beta) - m(\beta\star\alpha) + F^*\tilde{c}(\beta,\alpha) - G^*\tilde{c}(\beta,\alpha),
    \end{equation*}
    whence (\ref{eq:equiv-cprime}) follows.
\end{proof}

\begin{lem}\label{lem:equiv-a}
    Let $f:w\rightarrow x, g:x\rightarrow y, h:y\rightarrow z$ be 1-morphisms. Then,
    \begin{equation}\label{eq:equiv-a}
        F^*a(h,g,f) - G^*a(h,g,f) = n(h,g) + n(hg,f) - n(g,f) - n(h,gf) + m(\alpha_{h,g,f})
    \end{equation}
\end{lem}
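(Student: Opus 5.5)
We prove the statement in the same way as Lemmas \ref{lem:equiv-c} and \ref{lem:equiv-cprime}: draw one large diagram in $\hatD$ whose outer boundary compares the two lifted associators, tile its interior with faces whose labels we know, and read off (\ref{eq:equiv-a}) using Lemma \ref{lem:merging}. The natural boundary is the modification axiom for a strong transformation with respect to associators (cf.\ \cite[Definition 4.2.1]{JY}). The source is $((GhGg)Gf)\theta_w$ and the target is $\theta_z F(h(gf))$. One route passes through $G$: first apply $G^2$ twice to reach $G((hg)f)\theta_w$, then $\widehat{G\alpha_{h,g,f}}\star\id$, then $\widehat\theta_{h(gf)}$. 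The other route first applies the associator $\mathcal{A}_{Gh,Gg,Gf}$ in $\hatD$, then threads $\theta$ through $Gf$, $Gg$, $Gh$ one at a time using $\widehat\theta_f,\widehat\theta_g,\widehat\theta_h$ and associators of $\hatD$, and finally applies $\mathcal{A}_{Fh,Fg,Ff}$ and $F^2$ twice.

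The interior faces are chosen to match the definitions. First, the faces defining $n(h,g)$ and $n(hg,f)$ (the latter whiskered appropriately) appear on the route through $G((hg)f)$. Second, the faces defining $n(g,f)$ and $n(h,gf)$ appear on the route through $G(h(gf))$, with orientation opposite to the first pair. Third, the face defining $m(\alpha_{h,g,f})$ compares $\widehat{G\alpha}\star\id$ followed by $\widehat\theta_{h(gf)}$ with $\widehat\theta_{(hg)f}$ followed by $\id\star\widehat{F\alpha}$. Fourth, there are two ``associator'' faces, one on the $G$ side and one on the $F$ side. These compare $\widehat{G\alpha_{h,g,f}}$, together with the lifts $\widehat{G^2}$ and the associator of $\hatD$, with its unhatted composite. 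By the definition of the pullback associator in Definition \ref{defn:pb-ext}, these faces are labelled exactly $G^*a(h,g,f)-a(Gh,Gg,Gf)$ and $F^*a(h,g,f)-a(Fh,Fg,Ff)$, up to sign. The $a(Gh,Gg,Gf)$ and $a(Fh,Fg,Ff)$ contributions will be absorbed when we pass between the two ends.

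The remaining faces fall into three kinds, and each should have label $0$ or cancel.
\begin{enumerate}
\item Naturality squares for the associators of $\hatD$ against whiskered $\widehat\theta$'s. These commute on the nose because associators in $\hatD$ are natural.
\item Faces that move a whiskering through a composite. These are handled by (\ref{eq:middle-four}), exactly as in (\ref{eq:first-middle-four}).
\item Pentagon faces relating the several associators of $\hatD$ among the five- and four-fold composites of $Gh,Gg,Gf,\theta,Fh,\dots$. These commute because $\hatD$ is a bicategory, so its pentagon holds with its lifted associators.
\end{enumerate}
The whole diagram commutes after projecting to $\mathbf{D}$, by the modification axiom. Merging faces with Lemma \ref{lem:merging}, going clockwise, the outer face evaluates to $0$. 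Collecting the labels then gives
\[
F^*a-G^*a = n(h,g)+n(hg,f)-n(g,f)-n(h,gf)+m(\alpha_{h,g,f}).
\]

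The main obstacle is bookkeeping, not any single step. The $n$ faces are defined with specific bracketings, for example $Gg(\theta_yFf)$. To glue them into one planar diagram we must insert whiskered copies, such as $\id_{Gh}\star(\text{face for } n(g,f))$ and $(\text{face for } n(h,g))\star\id_{Ff}$. We then need to check that whiskering a face by a lifted identity does not change its label. This should follow from (\ref{eq:middle-four}) together with $\tilde c(\id,\id)=0$ and (\ref{eq:cocyc-2}), but it must be verified face by face. To organise this, I would first draw the projected modification-axiom diagram in $\mathbf{D}$ with every associator made explicit. Then I would refine each region until every face is one of the recognised types above, and only after that attach labels.
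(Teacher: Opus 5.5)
Your overall strategy of tiling a diagram in $\hatD$ and merging faces with Lemma \ref{lem:merging} is the right kind of argument. However, the step that is supposed to close it fails.

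You take as boundary your two routes from $((GhGg)Gf)\theta_w$ to $\theta_zF(h(gf))$. You then conclude that the outer face ``evaluates to $0$'' because the diagram commutes after projecting to $\mathbf{D}$. Commutativity in $\mathbf{D}$ only says that the two routes differ in $\hatD$ by \emph{some} element of $A$. The routes are composites of chosen lifts $\widehat{G^2_{-,-}}$, $\widehat{G\alpha_{h,g,f}}$, $\widehat\theta_{h(gf)}$, $\widehat\theta_f$, $\widehat\theta_g$, $\widehat\theta_h$, $\widehat{F^2_{-,-}}$. The element separating them is exactly what a tiling computes, and it is not $0$ in general. For instance, $\widehat\theta_{h(gf)}$ occurs on your first route only, so changing the chosen lift of $\theta_{h(gf)}$ by $b\in A$ changes this element by $b$.

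Relatedly, with that boundary your seven labelled faces do not form a single planar tiling. Write the label as the $a\in A$ with first route $=a\cdot{}$second route. The region between your routes is tiled \emph{either} by $G^*a(h,g,f)$, $n(g,f)$, $n(h,gf)$ together with naturality, interchange and pentagon faces of $\hatD$, giving $n(h,gf)+n(g,f)-G^*a(h,g,f)$. \emph{Or} it is tiled by $m(\alpha_{h,g,f})$, $n(hg,f)$, $n(h,g)$, $F^*a(h,g,f)$ and such faces, giving $m(\alpha_{h,g,f})+n(hg,f)+n(h,g)-F^*a(h,g,f)$.

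The missing idea is to compare two tilings of the \emph{same} boundary rather than declare a boundary trivial. Equating the two expressions above is precisely (\ref{eq:equiv-a}). Equivalently, glue one tiling inside-out onto the other, so that the remaining outer face genuinely commutes in $\hatD$. This is the paper's proof. Its two diagrams share an outer border. The first contains $F^*a(h,g,f)$, $-G^*a(h,g,f)$, $-n(hg,f)$, $-m(\alpha_{h,g,f})$ and $n(h,gf)$; the second contains $n(h,g)$ and $-n(g,f)$. After gluing, the outer face is the pentagon of associators of $\hatD$ on $\theta_z,Fh,Fg,Ff$, which has label $0$ because $\hatD$ is a bicategory.

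Two smaller corrections:
\begin{itemize}
\item Every associator edge must be the genuine associator $\mathcal{A}$ of $\hatD$, or your naturality and pentagon faces will not commute on the nose. With that convention, Definition \ref{defn:pb-ext} labels the associator faces exactly $\pm F^*a(h,g,f)$ and $\pm G^*a(h,g,f)$, so there are no stray $a(Fh,Fg,Ff)$ or $a(Gh,Gg,Gf)$ terms to absorb.
\item A strong transformation has no separate ``modification axiom'' for associators. Commutativity of your boundary in $\mathbf{D}$ follows from lax naturality, $2$-cell naturality and lax associativity of $F,G$, but as explained it is not the fact needed here.
\end{itemize}
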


\begin{proof}
    The two diagrams on the next page have the same outer border. Turning the first one inside-out (and reversing its face labels), we may combine these to obtain a larger diagram whose outer face is the top pentagon of the first diagram.
    
    \[\begin{tikzcd}[cramped,column sep=tiny,row sep=2.25em]
        &&& {((\theta_zFh)Fg)Ff} \\
        {(\theta_z(FhFg))Ff} && {\theta_z((FhFg)Ff)} && {\theta_z(Fh(FgFf))} && {(\theta_zFh)(FgFf)} \\
        {(\theta_zF(hg))Ff} && {\theta_z(F(hg)Ff)} & |[text=blue]| {F^*a(h,g,f)} & {\theta_z(FhF(gf))} && {(\theta_zFh)F(gf)} \\
        {(G(hg)\theta_x)Ff} && {\theta_zF((hg)f)} && {\theta_zF(h(gf))} && {(Gh\theta_y)F(gf)} \\
        \\
        {G(hg)(\theta_xFf)} && {G((hg)f)\theta_w} && {G(h(gf))\theta_w} && {Gh(\theta_yF(gf))} \\
        {G(hg)(Gf\theta_w)} && {(G(hg)Gf)\theta_w} & |[text=blue]| {-G^*a(h,g,f)} & {(GhG(gf))\theta_w} && {Gh(G(gf)\theta_w)} \\
        {(GhGg)(Gf\theta_w)} && {((GhGg)Gf)\theta_w} && {(Gh(GgGf))\theta_w} && {Gh((GgGf)\theta_w)} \\
        &&& {Gh(Gg(Gf\theta_w))}
        \arrow["{\mathcal{A}\star\id}"', from=1-4, to=2-1]
        \arrow["{\mathcal{A}}", from=1-4, to=2-7]
        \arrow["{\mathcal{A}}"', from=2-1, to=2-3]
        \arrow["{(\id\star\widehat{F^2_{h,g}})\star\id}", from=2-1, to=3-1]
        \arrow["{\id\star\mathcal{A}}", from=2-3, to=2-5]
        \arrow["{\id\star(\widehat{F^2_{h,g}}\star\id)}", from=2-3, to=3-3]
        \arrow["{\id\star(\id\star\widehat{F^2_{g,f}})}", from=2-5, to=3-5]
        \arrow["{\mathcal{A}}", from=2-7, to=2-5]
        \arrow["{\id\star\widehat{F^2_{g,f}}}", from=2-7, to=3-7]
        \arrow["{\mathcal{A}}"', from=3-1, to=3-3]
        \arrow["{\id\star\widehat{F^2_{hg,f}}}"', from=3-3, to=4-3]
        \arrow["{\id\star\widehat{F^2_{h,gf}}}", from=3-5, to=4-5]
        \arrow["{\mathcal{A}}", from=3-7, to=3-5]
        \arrow["{\widehat\theta_{hg}\star\id}", from=4-1, to=3-1]
        \arrow["{-n(hg,f)}"{description, style = {font = \normalsize}}, draw=none, from=4-1, to=6-3, color=blue]
        \arrow["{\id\star\widehat{F\alpha_{h,g,f}}}", from=4-3, to=4-5]
        \arrow["{-m(\alpha_{h,g,f})}"{description, style = {font = \normalsize}}, draw=none, from=4-3, to=6-5, color=blue]
        \arrow["{n(h,gf)}"{description, style = {font = \normalsize}}, draw=none, from=4-5, to=6-7, color=blue]
        \arrow["{\widehat\theta_h\star\id}"', from=4-7, to=3-7]
        \arrow["{\mathcal{A}^{-1}}", from=6-1, to=4-1]
        \arrow["{\widehat\theta_{hg,f}}"', from=6-3, to=4-3]
        \arrow["{\widehat{G\alpha_{h,g,f}}\star\id}"', from=6-3, to=6-5]
        \arrow["{\widehat\theta_{h,gf}}", from=6-5, to=4-5]
        \arrow["{\mathcal{A}^{-1}}"', from=6-7, to=4-7]
        \arrow["{\id\star\widehat\theta_f}", from=7-1, to=6-1]
        \arrow["{\widehat{G^2_{hg,f}}\star\id}", from=7-3, to=6-3]
        \arrow["{\mathcal{A}}"', from=7-3, to=7-1]
        \arrow["{\widehat{G^2_{h,gf}}\star\id}"', from=7-5, to=6-5]
        \arrow["{\mathcal{A}}", from=7-5, to=7-7]
        \arrow["{\id\star\widehat\theta_{gf}}"', from=7-7, to=6-7]
        \arrow["{\widehat{G^2_{h,g}}\star\id}", from=8-1, to=7-1]
        \arrow["{\mathcal{A}}"', from=8-1, to=9-4]
        \arrow["{(\widehat{G^2_{h,g}}\star\id)\star\id}", from=8-3, to=7-3]
        \arrow["{\mathcal{A}}"', from=8-3, to=8-1]
        \arrow["{\mathcal{A}\star\id}"', from=8-3, to=8-5]
        \arrow["{(\id\star\widehat{G^2_{g,f}})\star\id}"', from=8-5, to=7-5]
        \arrow["{\mathcal{A}}", from=8-5, to=8-7]
        \arrow["{\id\star(\widehat{G^2_{g,f}}\star\id)}"', from=8-7, to=7-7]
        \arrow["{\id\star\mathcal{A}}", from=8-7, to=9-4]
    \end{tikzcd}\]

    \[\begin{tikzcd}[cramped,column sep=tiny,row sep=2.25em]
        &&& {((\theta_zFh)Fg)Ff} \\
        {(\theta_z(FhFg))Ff} && |[text=blue]| {n(h,g)} && {((Gh\theta_y)Fg)Ff} && {(\theta_zFh)(FgFf)} \\
        {(\theta_zF(hg))Ff} && {(Gh(\theta_yFg))Ff} && {(Gh\theta_y)(FgFf)} && {(\theta_zFh)F(gf)} \\
        {(G(hg)\theta_x)Ff} && {(Gh(Gg\theta_x))Ff} && {Gh(\theta_y(FgFf))} && {(Gh\theta_y)F(gf)} \\
        {G(hg)(\theta_xFf)} && {((GhGg)\theta_x)Ff} && {Gh((\theta_yFg)Ff)} && {Gh(\theta_yF(gf))} \\
        {G(hg)(Gf\theta_w)} && {(GhGg)(\theta_xFf)} && {Gh((Gg\theta_x)Ff)} && {Gh(G(gf)\theta_w)} \\
        {(GhGg)(Gf\theta_w)} && {Gh(Gg(\theta_xFf))} && |[text=blue]| {-n(g,f)} && {Gh((GgGf)\theta_w)} \\
        &&& {Gh(Gg(Gf\theta_w))}
        \arrow["{\mathcal{A}\star\id}"', from=1-4, to=2-1]
        \arrow["{\mathcal{A}}", from=1-4, to=2-7]
        \arrow["{(\id\star\widehat{F^2_{h,g}})\star\id}", from=2-1, to=3-1]
        \arrow["{(\widehat\theta_h\star\id)\star\id}"{pos=0.8}, from=2-5, to=1-4]
        \arrow["{\mathcal{A}}"', from=2-5, to=3-5]
        \arrow["{\id\star\widehat{F^2_{g,f}}}", from=2-7, to=3-7]
        \arrow["{\mathcal{A}^{-1}\star\id}"', from=3-3, to=2-5]
        \arrow["{\mathcal{A}}", from=3-3, to=5-5]
        \arrow["{\widehat\theta_h\star\id}", from=3-5, to=2-7]
        \arrow["{\id\star\widehat{F^2_{g,f}}}", from=3-5, to=4-7]
        \arrow["{\widehat\theta_{hg}\star\id}", from=4-1, to=3-1]
        \arrow["{(\id\star\widehat\theta_g)\star\id}", from=4-3, to=3-3]
        \arrow["{\mathcal{A}}"', from=4-3, to=6-5]
        \arrow["{\mathcal{A}^{-1}}", from=4-5, to=3-5]
        \arrow["{\id\star(\id\star\widehat{F^2_{g,f}})}"{description}, from=4-5, to=5-7]
        \arrow["{\widehat{\theta}_h\star\id}"', from=4-7, to=3-7]
        \arrow["{\mathcal{A}^{-1}}", from=5-1, to=4-1]
        \arrow["{(\widehat{G^2_{h,g}}\star\id)\star\id}"{description}, from=5-3, to=4-1]
        \arrow["{\mathcal{A}\star\id}"', from=5-3, to=4-3]
        \arrow["{\id\star\mathcal{A}}", from=5-5, to=4-5]
        \arrow["{\mathcal{A}^{-1}}"', from=5-7, to=4-7]
        \arrow["{\id\star\widehat\theta_f}", from=6-1, to=5-1]
        \arrow["{\widehat{G^2_{h,g}}\star\id}", from=6-3, to=5-1]
        \arrow["{\mathcal{A}^{-1}}"', from=6-3, to=5-3]
        \arrow["{\mathcal{A}}", from=6-3, to=7-3]
        \arrow["{\id\star(\widehat\theta_g\star\id)}"', from=6-5, to=5-5]
        \arrow["{\id\star\widehat\theta_{gf}}"', from=6-7, to=5-7]
        \arrow["{\widehat{G^2_{h,g}}\star\id}", from=7-1, to=6-1]
        \arrow["{\id\star\widehat\theta_f}"', from=7-1, to=6-3]
        \arrow["{\mathcal{A}}"', from=7-1, to=8-4]
        \arrow["{\id\star\mathcal{A}^{-1}}", from=7-3, to=6-5]
        \arrow["{\id\star(\widehat{G^2_{g,f}}\star\id)}", from=7-7, to=6-7]
        \arrow["{\id\star\mathcal{A}}", from=7-7, to=8-4]
        \arrow["{\id\star(\id\star\widehat\theta_f)}"'{pos=0}, from=8-4, to=7-3]
    \end{tikzcd}\]
    In the second diagram, the $n(h,g)$ and $-n(g,f)$ faces require the use of (\ref{eq:middle-four}), as in (\ref{eq:first-middle-four}). We may repeatedly apply Lemma \ref{lem:merging} by merging the faces in the following order:
    \begin{itemize}
        \item In the first diagram, merge the bottom two faces of each of the three columns.
        \item Merge the three merged faces into one.
        \item Merge this to the octagonal face at the bottom-right of the second diagram.
        \item Merge this to the bottom pentagon of the first diagram.
        \item Merge this to the bottom-left quadrilateral of the second diagram, then merge repeatedly clockwise up to and including the octagonal face in the top-left.
        \item Merge this to the three central faces of the second diagram, from bottom to top.
        \item Merge this to the two remaining faces in the top-right of the second diagram.
        \item Merge this to the top-left and top-right squares of the first diagram, and then to the final hexagonal face.
    \end{itemize}
    Thus, we have merged all the faces except for the outer face (the top pentagon in the first diagram).

    Therefore, we conclude that
    \begin{equation*}
        F^*a(h,g,f) - G^*a(h,g,f) - n(hg,f) + n(h,gf) - m(\alpha_{h,g,f}) = n(h,g) - n(g,f),
    \end{equation*}
    which rearranges to (\ref{eq:equiv-a}).
\end{proof}

\begin{lem}\label{lem:equiv-lr}
    Let $f:x\rightarrow y$ be a 1-morphism in $\mathbf{C}$. Then
    \begin{equation}\label{eq:equiv-l}
        F^*\ell(f) - G^*\ell(f) = k(y) + n(\id_y,f) - m(\lambda_f)
    \end{equation}
    and
    \begin{equation}\label{eq:equiv-r}
        F^*r(f) - G^*r(f) = k(x) + n(f,\id_x) - m(\rho_f).
    \end{equation}
\end{lem}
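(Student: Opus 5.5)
We follow the same face-merging strategy as in Lemmas \ref{lem:equiv-c}--\ref{lem:equiv-a}: build a diagram in $\hatD$ whose faces carry elements of $A$, each face being either a known cocycle term, one of $m,n,k$, or commuting by naturality of unitors/associators in $\hatD$. Lemma \ref{lem:merging} then yields a linear identity among these labels. We treat (\ref{eq:equiv-l}); (\ref{eq:equiv-r}) is the mirror image, with $\rho$, $\mathcal{R}$ and $F^0_x,G^0_x$ acting on the right.

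For (\ref{eq:equiv-l}), the outer face will be the square defining $m(\lambda_f)$:
\[G(\id_y f)\theta_x \to Gf\,\theta_x \to \theta_y Ff\]
versus
\[G(\id_y f)\theta_x \to \theta_y F(\id_y f)\to \theta_y Ff,\]
with horizontal arrows $\widehat{G\lambda_f}\star\id$ and $\id\star\widehat{F\lambda_f}$. Inside, we expand $\widehat{G\lambda_f}$ using the definition of the left unitor of the pullback (Definition \ref{defn:pb-ext}). Its $G$-component is $\mathcal{L}_{Gf}(\widehat{G^0_y}\star\id)^{-1}(\widehat{G^2_{\id_y,f}})^{-1}$, and the comparison with the chosen lift $\widehat{G\lambda_f}$ contributes precisely $G^*\ell(f)$ by (the derivation of) the unitor formula for $F^*\ell$. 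The same expansion on the $F$-side contributes $-F^*\ell(f)$. Between these two expansions we insert the face defining $n(\id_y,f)$, which passes from $G(\id_y)Gf\,\theta_x$ to $\theta_y F(\id_y)Ff$ through $\widehat\theta_{\id_y}$ and $\widehat\theta_f$. We also insert the face defining $k(y)$, whiskered by $Ff$ (respectively by $Gf$ through associators), which converts $\widehat\theta_{\id_y}$ paired with $\widehat{G^0_y},\widehat{F^0_y}$ into unitors $\mathcal{L},\mathcal{R}$ on $\theta_y$.

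The remaining faces should commute in $\hatD$. These are: naturality of $\mathcal{A}$ and of $\mathcal{L}$ with respect to the lifted 2-cells; the triangle (unity) axiom in $\hatD$, relating $\mathcal{R}_{\theta_y}\star\id$, $\mathcal{A}$ and $\id\star\mathcal{L}_{Ff}$; and the compatibility $\mathcal{L}_{\theta_y Ff}\cong(\mathcal{L}_{\theta_y}\star\id)\mathcal{A}$, which is a consequence of the bicategory axioms in $\hatD$. Where a face involves whiskering a composite, the $\tilde c$ and $c$ terms cancel via (\ref{eq:middle-four}), exactly as in (\ref{eq:first-middle-four}). Merging all inner faces with Lemma \ref{lem:merging} should give
\[-m(\lambda_f) = F^*\ell(f)-G^*\ell(f) - k(y) - n(\id_y,f),\]
up to sign bookkeeping, which rearranges to (\ref{eq:equiv-l}).

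The main obstacle is the bookkeeping of this diagram. We need to arrange the faces so that each merge has a single clockwise and a single anticlockwise route, and to orient the $k(y)$ face correctly, since it sits whiskered by $Ff$ and must be transported past an associator. We also need to confirm that the auxiliary unitor identities in $\hatD$ hold on the nose, which they do because $\hatD$ is itself a bicategory. We would draw the diagram first, then merge the faces clockwise, starting from the $G^*\ell$ face and ending at the $F^*\ell$ face.
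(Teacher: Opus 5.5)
Your proposal is correct and is essentially the paper's proof: the paper builds the same diagram in $\hatD$, with faces labelled $-k(y)$ (whiskered by $Ff$), $F^*\ell(f)$, $-G^*\ell(f)$ and $m(\lambda_f)$, together with commuting faces coming from naturality of $\mathcal{A}$ and $\mathcal{L}$, the triangle axiom, and the $\mathcal{L}_{\theta_yFf}$ compatibility. It merges these via Lemma \ref{lem:merging} to get $n(\id_y,f) = F^*\ell(f) - G^*\ell(f) + m(\lambda_f) - k(y)$, which is exactly your final identity; the only cosmetic difference is that the paper takes the $n(\id_y,f)$ face, rather than the $m(\lambda_f)$ square, as the outer boundary, and this does not change the resulting relation.
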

\begin{proof}
    Consider the following diagram in $\hatD$:
    \[\begin{tikzcd}[row sep=huge]
        {(\theta_yF\id_y)Ff} & {(G\id_y\theta_y)Ff} & {G\id_y(\theta_yFf)} & {G\id_y(Gf\theta_x)} \\
        {(\theta_y\id_{Fy})Ff} & {(\id_{Gy}\theta_y)Ff} & {\id_{Gy}(\theta_yFf)} & {\id_{Gy}(Gf\theta_x)} \\
        {\theta_y(\id_{Fy}Ff)} & {\theta_yFf} & {Gf\theta_x} & {(\id_{Gy}Gf)\theta_x} \\
        {\theta_y(F\id_yFf)} & {\theta_yF(\id_yf)} & {G(\id_yf)\theta_x} & {(G\id_yGf)\theta_x}
        \arrow["{\mathcal{A}}"', shift right=7, curve={height=50pt}, from=1-1, to=4-1]
        \arrow["{\theta_{\id_y}\star\id}"', from=1-2, to=1-1]
        \arrow["{\mathcal{A}^{-1}}"', from=1-3, to=1-2]
        \arrow["{\id\star\theta_f}"', from=1-4, to=1-3]
        \arrow["{(\id\star\widehat{F^0_y})\star\id}"{description}, from=2-1, to=1-1]
        \arrow["{-k(y)}"{description, style = {font = \normalsize}}, draw=none, from=2-1, to=1-2, color=blue]
        \arrow["{\mathcal{A}^{-1}}", from=3-1, to=2-1]
        \arrow["{(\widehat{G^0_y}\star\id)\star\id}"{description}, from=2-2, to=1-2]
        \arrow["{\mathcal{L}_{\theta_y}\star\id}"{description}, from=2-2, to=3-2]
        \arrow["{\widehat{G^0_y}\star\id}"{description}, from=2-3, to=1-3]
        \arrow["{\mathcal{A}^{-1}}"', from=2-3, to=2-2]
        \arrow["{\mathcal{L}_{\theta_yFf}}", from=2-3, to=3-2]
        \arrow["{\widehat{G^0_y}\star\id}"', from=2-4, to=1-4]
        \arrow["{\id\star\theta_f}"', from=2-4, to=2-3]
        \arrow["{\mathcal{L}_{Gf\theta_x}}"', from=2-4, to=3-3]
        \arrow["{\id\star\mathcal{L}_{Ff}}", from=3-1, to=3-2]
        \arrow["{\id\star(\widehat{F^0_y}\star\id)}"{description}, from=3-1, to=4-1]
        \arrow["{\mathcal{R}^{-1}_{\theta_y}\star\id}"{description}, from=3-2, to=2-1]
        \arrow["{\theta_f}"', from=3-3, to=3-2]
        \arrow["{\mathcal{A}}"', from=3-4, to=2-4]
        \arrow["{\mathcal{L}_{Gf}\star\id}"', from=3-4, to=3-3]
        \arrow["{(\widehat{G^0_y}\star\id)\star\id}"{description}, from=3-4, to=4-4]
        \arrow["{F^*\ell(f)}"{description, style = {font = \normalsize}}, draw=none, from=4-1, to=3-2, color=blue]
        \arrow["{\id\star\widehat{F^2_{\id_y,f}}}"', from=4-1, to=4-2]
        \arrow["{\id\star\widehat{F\lambda_f}}"{description}, from=4-2, to=3-2]
        \arrow["{m(\lambda_f)}"{description, style = {font = \normalsize}}, draw=none, from=4-2, to=3-3, color=blue]
        \arrow["{\widehat{G\lambda_f}\star\id}"{description}, from=4-3, to=3-3]
        \arrow["{-G^*\ell(f)}"{description, style = {font = \normalsize}}, draw=none, from=4-3, to=3-4, color=blue]
        \arrow["{\theta_{\id_yf}}", from=4-3, to=4-2]
        \arrow["{\mathcal{A}}"', shift right=7, curve={height=50pt}, from=4-4, to=1-4]
        \arrow["{\widehat{G^2_{\id_y,f}}\star\id}", from=4-4, to=4-3]
    \end{tikzcd}\]
    The $-k(y)$, $F^*\ell(f)$ and $-G^*\ell(f)$ faces require (\ref{eq:middle-four}).

    Starting from the top-left face, merge the middle triangular face, then proceed clockwise and merge the remaining faces. Then noting that the outer face contributes a term of $n(\id_y,f)$, Lemma \ref{lem:merging} gives
    \begin{equation*}
        n(\id_y,f) = F^*\ell(f) - G^*\ell(f) + m(\lambda_f) - k(y),
    \end{equation*}
    which rearranges to (\ref{eq:equiv-l}).

    The computation for (\ref{eq:equiv-r}) is similar.
\end{proof}

This completes the proof of Proposition \ref{prop:equiv-cocyc}. Finally, we deduce:

\begin{thm}\label{thm:equiv-ext}
    Let $F: \mathbf{C} \rightarrow \mathbf{D}$ be an equivalence of bicategories. Then $F^*: \Ext(\mathbf{D}, A) \rightarrow \Ext(\mathbf{C}, A)$ is an isomorphism.
\end{thm}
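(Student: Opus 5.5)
The natural approach is to exhibit an inverse to $F^*$ using a weak inverse of $F$. Choose a pseudofunctor $G:\mathbf{D}\rightarrow\mathbf{C}$ together with invertible strong transformations $GF\Rightarrow \id_{\mathbf{C}}$ and $FG\Rightarrow\id_{\mathbf{D}}$; these exist by the standard characterisation of biequivalences (e.g.\ \cite[\textsection 7]{JY}), since an equivalence of bicategories is in particular a pseudofunctor admitting such a weak inverse. The pullback construction of Definition \ref{defn:pb-ext} then gives homomorphisms $F^*:\Ext(\mathbf{D},A)\rightarrow\Ext(\mathbf{C},A)$ and $G^*:\Ext(\mathbf{C},A)\rightarrow\Ext(\mathbf{D},A)$. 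The aim is to show that $G^*$ is a two-sided inverse of $F^*$.

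The first step is functoriality: as noted after (\ref{eq:pb-last}), $(GF)^* = F^*G^*$ and $(FG)^* = G^*F^*$. The second step is to note that the identity pseudofunctor $\id_{\mathbf{C}}$ induces the identity on $\Ext(\mathbf{C},A)$. To see this, take the lifts in Definition \ref{defn:pb-ext} to be the given section; since $\id^2$ and $\id^0$ are identities lifted to identities, formulas (\ref{eq:pb-first})--(\ref{eq:pb-last}) return $\mathbf{c}$ itself, up to the normalisation terms. The only nontrivial check is that terms such as $c(\id,\cdot)$ and $\tilde{c}(\id_{\id},\id)$ vanish, and this follows from (\ref{eq:cocyc-2}) and (\ref{eq:middle-four}).

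The third step applies Proposition \ref{prop:equiv-cocyc} to the strong transformations $GF\Rightarrow\id_{\mathbf{C}}$ and $FG\Rightarrow\id_{\mathbf{D}}$. This gives $F^*G^* = (GF)^* = \id_{\mathbf{C}}^* = \id$, and similarly $G^*F^* = \id$. Hence $F^*$ is an isomorphism with inverse $G^*$.

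The main subtlety is whether a pseudo-inverse exists at all within the paper's framework of pseudofunctors and strong transformations. Proposition \ref{prop:equiv-cocyc} requires only a strong transformation, not an invertible one, so only the existence of $G$ and the two transformations matters. If "equivalence of bicategories" is instead taken to mean a pseudofunctor that is essentially surjective on objects, essentially surjective on 1-morphisms and fully faithful on 2-morphisms, I would invoke the bicategorical Whitehead theorem to produce $G$. A minor remaining point is that $\id^*$ is literally the identity on classes, which the normalised-lift choice handles.
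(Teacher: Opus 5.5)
Your proposal is correct and follows the same route as the paper: choose a weak inverse $G$ with strong transformations $GF\Rightarrow\id_{\mathbf{C}}$ and $FG\Rightarrow\id_{\mathbf{D}}$, then apply Proposition \ref{prop:equiv-cocyc} to conclude that $F^*$ and $G^*$ are mutually inverse. Your explicit checks that $(GF)^*=F^*G^*$ and that $\id^*$ is the identity on $\Ext$ (via (\ref{eq:cocyc-2}) and (\ref{eq:middle-four})) only fill in steps the paper leaves implicit.
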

\begin{proof}
    There exists $G: \mathbf{D} \rightarrow \mathbf{C}$ such that $FG \simeq \id$ and $GF \simeq \id$. Thus, by Proposition \ref{prop:equiv-cocyc}, $G^*$ and $F^*$ are inverses to each other.
\end{proof}

\printbibliography

@inbook{KV,
	title = {2-categories and Zamolodchikov tetrahedra equations},
	booktitle = {Algebraic groups and their generalizations: quantum and infinite-dimensional methods (University Park, PA, 1991)},
	series = {Proc. Sympos. Pure Math.},
	volume = {56},
	year = {1994},
	pages = {177{\textendash}259},
	publisher = {Amer. Math. Soc., Providence, RI},
	organization = {Amer. Math. Soc., Providence, RI},
	author = {Kapranov, M. M. and Voevodsky, V. A.}
    }

@article{MR,
    author = {Gregor Masbaum and Justin D. Roberts},
    journal = {Mathematische Annalen},
    number = {1},
    pages = {131--150},
    title = {On central extensions of mapping class groups},
    volume = {302},
    year = {1995},
    doi = {10.1007/BF01444490}
    }

@misc{csp-phd,
    title={The classification of two-dimensional extended topological field theories}, 
    author={Christopher J. Schommer-Pries},
    year={2009},
    eprint={1112.1000},
    archivePrefix={arXiv},
    primaryClass={math.AT},
    note={Revised 2014 version of PhD thesis, Department of Mathematics, University of California, Berkeley}
    }

@misc{csp-inv-tft,
    title={Invertible Topological Field Theories}, 
    author={Christopher Schommer-Pries},
    year={2017},
    eprint={1712.08029},
    archivePrefix={arXiv},
    primaryClass={math.AT},
    }

@article{Rohlin,
    author = {Vladimir A. Rohlin},
    title = {New results in the theory of four-dimensional manifolds},
    journal = {Doklady Akad. Nauk SSSR (N.S.)},
    volume = {84},
    year = {1952},
    pages = {221--224},
    }

@article{Harer,
    author = {John Harer},
    year = {1983},
    pages = {221--239},
    title = {The second homology group of the mapping class group of an orientable surface},
    volume = {72},
    journal = {Inventiones Mathematicae},
    doi = {10.1007/BF01389321}
    }

@article{korkstip,
    title={The second homology groups of mapping class groups of orientable surfaces},
    volume={134},
    DOI={10.1017/S0305004102006461},
    number={3},
    journal={Mathematical Proceedings of the Cambridge Philosophical Society},
    author={Mustafa Korkmaz and András I. Stipsicz},
    year={2003},
    pages={479--489}
    }

@article{Meyer,
    author = {Werner Meyer},
    journal = {Mathematische Annalen},
    pages = {239--264},
    title = {Die Signatur von Flächenbündeln},
    volume = {201},
    year = {1973},
    doi = {10.1007/BF01427946}
    }

@misc{Walker,
    author = {Kevin Walker},
    note = {Preliminary Version \#2},
    title = {On Witten's 3-manifold invariants},
    year = {1991},
    url = {https://canyon23.net/math/1991TQFTNotes.pdf}
    }

@article{Wall,
    author = {C. T. C. Wall},
    journal = {Inventiones Mathematicae},
    pages = {269--274},
    title = {Non-additivity of the signature.},
    volume = {7},
    year = {1969}
    }

@book{LV,
    title={The Weil representation, Maslov index and Theta series}, 
    author={Gérard Lion and Michèle Vergne},
    isbn={9780817630072},
    year={1980},
    publisher={Progress in Mathematics},
    volume={6}
    }

@article{CLM,
    author = {Cappell, Sylvain E. and Lee, Ronnie and Miller, Edward Y.},
    title = {On the Maslov index},
    journal = {Communications on Pure and Applied Mathematics},
    volume = {47},
    number = {2},
    pages = {121-186},
    doi = {10.1002/cpa.3160470202},
    year = {1994}
    }

@article{Gilmer,
    author = {Patrick M. Gilmer},
    year = {2001},
    title = {Integrality for TQFTs},
    volume = {125},
    journal = {Duke Mathematical Journal},
    doi = {10.1215/S0012-7094-04-12527-8}
    }

@article{GM,
    title = {Maslov index, Lagrangians, mapping class groups and TQFT},
    author = {Patrick M. Gilmer and Gregor Masbaum},
    pages = {1067--1106},
    volume = {25},
    number = {5},
    journal = {Forum Mathematicum},
    doi = {doi:10.1515/form.2011.143},
    year = {2013}
    }

@article{Atiyah-TQFT,
    author = {Michael F. Atiyah},
    journal = {Publications Mathématiques de l’Institut des Hautes Scientifiques},
    pages = {175--186},
    title = {Topological quantum field theories},
    volume = {68},
    year = {1988},
    doi = {doi:10.1007/BF02698547}
    }

@inproceedings{Segal,
    author = {Segal, Graeme},
    title = {The definition of conformal field theory},
    booktitle = {Symposium on Topology, Geometry and Quantum Field Theory (Segalfest)},
    pages = {421--575},
    month = {6},
    year = {2002}
    }

@book{BK,
    title = {Lectures on Tensor Categories and Modular Functors},
    author = {Bojko Bakalov and Kirillov, Jr., Alexander},
    publisher = {American Mathematical Society},
    year = {2001},
    volume = {21}
    }

@article{BHMV,
    title = {Three-manifold invariants derived from the Kauffman bracket},
    journal = {Topology},
    volume = {31},
    number = {4},
    pages = {685--699},
    year = {1992},
    issn = {0040-9383},
    doi = {doi:10.1016/0040-9383(92)90002-Y},
    author = {C. Blanchet and N. Habegger and G. Masbaum and P. Vogel}
    }

@misc{BDSV1,
    author = {Bruce Bartlett and Christopher L. Douglas and Christopher J. Schommer-Pries and Jamie Vicary},
    note = {Unpublished manuscript},
    title = {A finite presentation of the 3-dimensional bordism bicategory},
    }

@misc{BDSV2,
      title={Extended 3-dimensional bordism as the theory of modular objects}, 
      author={Bruce Bartlett and Christopher L. Douglas and Christopher J. Schommer-Pries and Jamie Vicary},
      year={2014},
      eprint={1411.0945},
      archivePrefix={arXiv},
      primaryClass={math.GT},
    }

@misc{BDSV3,
    author = {Bruce Bartlett and Christopher L. Douglas and Christopher J. Schommer-Pries and Jamie Vicary},
    note = {Unpublished manuscript},
    title = {Extensions of symmetric monoidal bicategories},
    }

@misc{BDSV4,
    title={Modular categories as representations of the 3-dimensional bordism 2-category}, 
    author={Bruce Bartlett and Christopher L. Douglas and Christopher J. Schommer-Pries and Jamie Vicary},
    year={2015},
    eprint={1509.06811},
    archivePrefix={arXiv},
    primaryClass={math.AT},
    }

@phdthesis{dr-thesis,
    title={Construction of extended topological quantum field theories},
    author={De Renzi, Marco},
    year={2017},
    school={Université Sorbonne Paris Cité},
    shorthand = {DeR17}
    }

@phdthesis{filippos-thesis,
    title = {On surgery presentations of bordism bicategories},
    author = {Sytilidis, Filippos Ilarion},
    year = {2025},
    school = {University of Oxford},
    url = {https://sites.google.com/view/fsytilidis/phd-thesis}
    }

@article{AS-Novikov,
    doi = {doi:10.2307/1970717},
    author = {Michael F. Atiyah and Isadore M. Singer},
    journal = {Annals of Mathematics},
    number = {3},
    pages = {546--604},
    title = {The Index of Elliptic Operators: III},
    volume = {87},
    year = {1968}
    }

@article{cohom-cat,
    title = {Cohomology of small categories},
    journal = {Journal of Pure and Applied Algebra},
    volume = {38},
    number = {2},
    pages = {187--211},
    year = {1985},
    issn = {0022-4049},
    doi = {10.1016/0022-4049(85)90008-8},
    author = {Hans-Joachim Baues and Günther Wirsching}
    }

@book{JY,
    title={2-dimensional categories}, 
    author={Niles Johnson and Donald Yau},
    isbn={9780198871385},
    year={2021},
    publisher={Oxford University Press},
    }

@misc{FST,
    title={Fully local Reshetikhin-Turaev theories}, 
    author={Daniel S. Freed and Claudia I. Scheimbauer and Constantin Teleman},
    year={2026},
    eprint={2601.05518v1},
    archivePrefix={arXiv},
    primaryClass={math.QA},
    }

@book{EGNO,
    title={Tensor Categories},
    author={Etingof, Pavel and Gelaki, Shlomo and Nikshych, Dmitri and Ostrik, Victor},
    volume={205},
    year={2015},
    publisher={American Mathematical Society},
    series={Mathematical Surveys and Monographs}
    }

@book{Kock, 
    series={London Mathematical Society Student Texts},
    title={Frobenius Algebras and 2-D Topological Quantum Field Theories},
    publisher={Cambridge University Press},
    author={Kock, Joachim},
    year={2003},
    collection={London Mathematical Society Student Texts}}

@inbook{Lawrence,
    author = {R. J. Lawrence},
    year = {1993},
    title = {Triangulations, Categories and Extended Topological Field Theories},
    booktitle = {Quantum Topology},
    chapter = {},
    pages = {191-208},
    doi = {10.1142/9789812796387_0011},
    }

@article{Mumford,
    title = {Abelian quotients of the Teichmüller modular group},
    journal = { Journal d’Analyse Mathématique},
    volume = {18},
    pages = {227-244},
    year = {1967},
    doi = {10.1007/BF02798046},
    author = {David Mumford},
    }

@article{Powell,
    doi = {10.2307/2043120},
    author = {Jerome Powell},
    journal = {Proceedings of the American Mathematical Society},
    number = {3},
    pages = {347--350},
    publisher = {American Mathematical Society},
    title = {Two Theorems on the Mapping Class Group of a Surface},
    volume = {68},
    year = {1978}
    }

@misc{Korkmaz,
      title={Low-dimensional homology groups of mapping class groups: a survey}, 
      author={Mustafa Korkmaz},
      year={2003},
      eprint={math/0307111},
      archivePrefix={arXiv},
      primaryClass={math.GT}
}

@article{Witten,
    author = {Witten, Edward},
    title = {Quantum Field Theory and the Jones Polynomial},
    doi = {10.1007/BF01217730},
    journal = {Commun. Math. Phys.},
    volume = {121},
    pages = {351--399},
    year = {1989}
    }

@article{Atiyah-2framings,
    title = {On framings of 3-manifolds},
    journal = {Topology},
    volume = {29},
    number = {1},
    pages = {1-7},
    year = {1990},
    issn = {0040-9383},
    doi = {10.1016/0040-9383(90)90021-B},
    author = {Michael Atiyah}
    }

@article{Abrams,
    author = {Abrams, Lowell},
    title = {Two-dimensional topological quantum field theories and Frobenius algebras},
    doi = {10.1142/S0218216596000333},
    journal = {Journal of Knot Theory and Its Ramifications},
    volume = {5},
    pages = {569--587},
    year = {1996}
    }

@article{Juhasz,
    title = {Defining and classifying TQFTs via surgery},
    journal = {Quantum Topology},
    volume = {9},
    year = {2018},
    number = {2},
    pages = {229--321},
    doi = {10.4171/QT/108},
    author = {Juhász, András}
    }

@article{Turaev-Maslov,
    title = {First symplectic Chern class and Maslov indices},
    journal = {Journal of Mathematical Sciences},
    volume = {37},
    year = {1987},
    pages = {1115--1127},
    doi = {10.1007/BF01086635},
    author = {Turaev, V. G.}
    }

@misc{haioun,
    title={Defining extended TQFTs via handle attachments}, 
    author={Benjamin Haïoun},
    year={2024},
    eprint={2412.14649},
    archivePrefix={arXiv},
    primaryClass={math.GT},
    }

@misc{pres-bord,
    title={A finite presentation of the three dimensional bordism bicategory},
    author={Bruce Bartlett and Christopher Douglas and Filippos Sytilidis},
    year={2026},
    note={In preparation}
}

@misc{partB,
    title={All once-extended 3D TQFTs are Reshetikhin--Turaev theories},
    author={Glen Lim},
    year={2026}
}
\end{document}